\documentclass[a4paper]{amsart}
\usepackage{amssymb,stmaryrd,float}
\usepackage[shortlabels]{enumitem}
\providecommand*\texorpdfstring[2]{#1}

\newcommand\dm[1]{\mathring{#1}}

\newcommand\wo{\mathrel{\lhd}}
\newcounter{condition}

\newtheorem{thm}{Theorem}[section]

\newtheorem*{thma}{Theorem A}
\newtheorem*{thmb}{Theorem B}

\newtheorem{lemma}[thm]{Lemma}
\newtheorem{cor}[thm]{Corollary}
\newtheorem{claim}{Claim}[thm]
\newtheorem{prop}[thm]{Proposition}
\newtheorem{fact}[thm]{Fact}

\theoremstyle{definition}
\newtheorem{defn}[thm]{Definition}
\newtheorem{definition}[thm]{Definition}
\newtheorem{notation}[thm]{Notation}
\newtheorem{example}[thm]{Example}
\newtheorem{dependencies}[claim]{Dependencies}
\newtheorem{q}[thm]{Question}

\theoremstyle{remark}
\newtheorem{remark}[thm]{Remark}
\newtheorem{remarks}[thm]{Remarks}
\newtheorem{conv}[thm]{Convention}

\newcommand*\cvec[1]{\vec{\mathcal#1}}
\newcommand\diagonal{\bigtriangleup}

\newcommand\s{\subseteq}
\newcommand\sq{\sqsubseteq}
\newcommand*\sqleft[1]{\mathrel{_{#1}{\sqsubseteq}}}
\newcommand\sqx{\sqleft{\chi}}
\newcommand\sql{\sqleft{\lambda}}
\newcommand*\conc[2]{#1{}^\curvearrowright #2}
\newcommand*\sqleftup[1]{\mathrel{^{#1}{\sqsubseteq}}}
\newcommand*\sqstarleftup[1]{\mathrel{^{#1}{\sqsubseteq^*}}}
\newcommand*\sqleftboth[2]{\mathrel{^{#1}_{#2}{\sqsubseteq}}}
\newcommand*\sqstarleftboth[2]{\mathrel{^{#1}_{#2}{\sqsubseteq^*}}}
\newcommand\br{\blacktriangleright}
\newcommand\pvec{\fbox{${\cdots}$}\hspace{1pt}}
\newcommand\defaultaction{\textsf{extend}}
\newcommand\sealantichain{\textsf{anti}}
\newcommand\sealautomorphism{\textsf{auto}}
\newcommand\free{\textsf{free}}
\newcommand\sealpath{\textsf{sealpath}}
\newcommand\symdiff{\mathbin\triangle}
\DeclareMathOperator\suc{succ}

\fboxsep0.1mm
\def\sd{\framebox[3.0mm][l]{$\diamondsuit$}\hspace{0.5mm}{}}

\hyphenation{post-processing}
\renewcommand\restriction{\mathbin\upharpoonright}
\renewcommand\mid{\mathrel{|}\allowbreak}
\newcommand*\pred[1]{#1_\downarrow}
\newcommand*\cone[1]{{#1}^\uparrow}
\DeclareMathOperator{\height}{ht}
\DeclareMathOperator{\reg}{Reg}
\DeclareMathOperator{\cf}{cf}
\DeclareMathOperator{\dom}{dom}
\DeclareMathOperator{\im}{Im}
\DeclareMathOperator{\otp}{otp}
\DeclareMathOperator{\acc}{acc}
\DeclareMathOperator{\nacc}{nacc}
\DeclareMathOperator{\mup}{mup}
\DeclareMathOperator{\tr}{Tr}
\DeclareMathOperator{\p}{P}
\newcommand\on{\text{ORD}}
\newcommand*\axiomfont[1]{\textsf{\textup{#1}}}
\newcommand\zfc{\axiomfont{ZFC}}
\newcommand\gch{\axiomfont{GCH}}

\newcommand\sh{\axiomfont{SH}}
\newcommand\ch{\textup{CH}}
\newcommand\ns{\textup{NS}}
\newcommand\bd{\textup{bd}}
\newcommand\ind{\textup{ind}}
\newcommand\stree{\subsetneq}

\title[Microscopic approach, part II]{A microscopic approach to Souslin-tree construction, Part II}

\author{Ari Meir Brodsky}
\author{Assaf Rinot}
\address{Department of Mathematics, Bar-Ilan University, Ramat-Gan 5290002, Israel.}
\urladdr{http://u.math.biu.ac.il/~brodska/}
\urladdr{http://www.assafrinot.com}

\begin{document}
\begin{abstract}
In Part~I of this series, we presented the microscopic approach to Souslin-tree constructions,
and argued that all known $\diamondsuit$-based constructions of Souslin trees with various additional properties may be rendered as applications of our approach. 
In this paper, we show that constructions following the same approach may be carried out even in the absence of $\diamondsuit$. 
In particular, we obtain a new weak sufficient condition for the existence of Souslin trees at the level of a strongly inaccessible cardinal.

We also present a new construction of a Souslin tree with an ascent path,
thereby increasing the consistency strength of such a tree's nonexistence from a Mahlo cardinal to a weakly compact cardinal.

Section 2 of this paper is targeted at newcomers with minimal background. 
It offers a comprehensive exposition of the subject of constructing Souslin trees and the challenges involved.
\end{abstract}

\maketitle
\tableofcontents

\section{Introduction}

The systematic study of set-theoretic trees was pioneered by \DJ uro Kurepa in the 1930s~\cite{kurepa1935ensembles},
in the context of examining Souslin's Problem. Souslin's Problem 
goes back a century, to 1920~\cite{Souslin}, and its most succinct formulation is:
\begin{quote}
Is every linearly ordered
topological space satisfying the countable chain condition (ccc) necessarily separable?
\end{quote}
A counterexample would be called a \emph{Souslin line},\footnote{If we remove the constraint that the topology be induced by a linear order,
then there is no difficulty in obtaining a counterexample, 
such as the \emph{countable complement topology} on any uncountable set 
\cite[Counterexample~20]{steen1978counterexamples}.} 
while the conjecture that the answer is ``yes'' (meaning that a Souslin line does \emph{not} exist)
has come to be called \emph{Souslin's Hypothesis} (\sh).

In the course of attempting to prove \sh,
Kurepa showed 
in 1935~\cite{kurepa1935ensembles} that the problem can be reformulated in terms of trees,\footnote{\label{footnote:Souslin-equivalence}Kurepa states the equivalence between the topological and tree-based formulations of Souslin's Problem
(at the level of $\aleph_1$)
explicitly 
in~\cite[\S12.D.2, pp.~124--125]{kurepa1935ensembles} \cite[p.~111]{kurepa1996selected}
and \cite[Section~8, p.~134]{Kurepa-Aronszajn} \cite[p.~119]{kurepa1996selected}.
Several sources \cite[p.~1116]{rudin1969souslin}, \cite[Section~2.1, p.~202]{solovay1971iterated}, 
\cite[p.~12]{devlin1974souslin}, \cite[\S2, p.~421]{malykhin1996suslin}
attribute the reformulation to E.~W. Miller in 1943~\cite{miller1943note},
perhaps because Kurepa's thesis and early papers were written in French.
Others (\cite[p.~213]{MR1711574}, \cite[p.~3]{kanamori2011historical}, Todorcevic in~\cite[p.~9]{kurepa1996selected})
acknowledge that Miller rediscovered Kurepa's result.}
and thus 
``eliminated topological considerations from Souslin's Problem and 
reduced it to a problem of combinatorial set theory''~\cite[p.~3]{kanamori2011historical}.
Kurepa's result is that the existence of a Souslin line is equivalent to the existence of 
(what we now call) an $\aleph_1$-Souslin tree, that is,
a tree of size $\aleph_1$ that includes neither an uncountable branch nor an uncountable antichain.\footnote{Detailed definitions will be given in Section~\ref{rightway}.}

Further progress toward resolving Souslin's problem came only in the 1960s,
after the advent of the forcing technique,
when it became apparent that Souslin's problem (at the level of $\aleph_1$) is independent of \zfc:
Jech~\cite{jech1967non}, Tennenbaum~\cite{tennenbaum1968souslin}, and Jensen~\cite{jensen1968souslin}
gave consistent constructions of $\aleph_1$-Souslin trees,
while Solovay and Tennenbaum \cite{solovay1971iterated} proved the consistency of \sh.
Amazingly enough, the resolution of this single problem led to key discoveries in set theory:
various notions of trees \cite{kurepa1935ensembles},
forcing axioms and the method of iterated forcing \cite{solovay1971iterated},
the \emph{diamond} and \emph{square} principles \cite[\S5--6]{jensen1972fine},
and the theory of iteration without adding reals \cite[Chapter~VIII]{devlin1974souslin}.

Most of the early work around Souslin's problem focused on the level of $\aleph_1$,
and even to this day,
most of the standard references in set theory,
including \cite{MR3408725}, \cite{MR597342}, \cite{todorcevic1984trees}, \cite{MR1028781}, \cite{just1997discovering}, 
\cite{MR1697766}, \cite{MR1924429}, 
provide a construction of a $\kappa$-Souslin tree only for the case $\kappa=\aleph_1$.
However, Souslin's problem admits a natural generalization to higher cardinals.
Indeed, Kurepa proved the following more general equivalence:

\begin{fact}[Kurepa, \cite{kurepa1935ensembles}]\label{SH-equiv-general}
For any regular uncountable cardinal $\kappa$,
the following are equivalent:\footnote{Although Kurepa's explicit statements 
(referenced in footnote~\ref{footnote:Souslin-equivalence})
refer specifically to the level of $\aleph_1$,
he proves the result for arbitrary infinite cardinals via
the equivalence $P_2 \iff P_5$ of the Fundamental Theorem
in the Appendix~\cite[\S C.3, pp.~132--133]{kurepa1935ensembles}.}
\begin{itemize}
\item
Every tree of size $\kappa$ contains
either a branch of size $\kappa$ or an antichain of size $\kappa$
(that is, there is no $\kappa$-Souslin tree);
\item
Every linearly ordered topological space satisfying the $\kappa$-chain condition ($\kappa$\nobreakdash-cc)
has a dense subset of cardinality $<\kappa$.
\end{itemize}
\end{fact}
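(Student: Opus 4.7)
The plan is to prove both implications by contraposition, translating between trees and linearly ordered topological spaces (abbreviated LOTS) via two classical constructions: from a line, one builds a tree of nested open intervals; from a tree, one builds a LOTS of branches under a lexicographic order.

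For the \emph{LOTS-to-tree} direction, assume $L$ is a LOTS satisfying $\kappa$-cc that admits no dense subset of cardinality $<\kappa$, and construct a $\kappa$-Souslin tree, contradicting the first bullet. After a standard preliminary reduction we may assume $L$ is densely ordered. Build a tree $T$ of nonempty open intervals of $L$, ordered by reverse inclusion, by recursion on levels $\alpha<\kappa$: at a successor level, split each interval $I$ from the previous level by choosing interior points to produce a maximal pairwise disjoint family of proper open subintervals of $I$; at a limit level $\delta$, attach a new node exactly for those descending $\delta$-chains whose intersection has nonempty interior, taking that interior as the new node. The $\kappa$-cc guarantees that each level has cardinality $<\kappa$; the density hypothesis guarantees the recursion survives for $\kappa$ levels. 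Finally, any putative $\kappa$-branch of $T$ would give a strictly descending $\kappa$-chain of open intervals of $L$; by the regularity of $\kappa$, at least one of the two endpoint sequences would be strictly monotone on a cofinal subset of $\kappa$, producing a pairwise disjoint $\kappa$-sized family of nonempty open intervals and contradicting $\kappa$-cc.

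For the \emph{tree-to-LOTS} direction, assume $T$ is a $\kappa$-Souslin tree and construct a LOTS satisfying $\kappa$-cc without a dense subset of size $<\kappa$, contradicting the second bullet. After a standard normalization, assume every node of $T$ has at least two incomparable extensions, and fix a linear order on the set of immediate successors of each node. Let $L$ be the set of maximal branches of a suitable completion of $T$, endowed with the lexicographic order induced from these choices and the order topology. Each node $t\in T$ corresponds to a convex open subset of $L$, namely the set of branches through $t$, and these subsets form a base for the topology; hence a pairwise disjoint family of nonempty open intervals of $L$ yields an antichain in $T$, and Souslinity gives $\kappa$-cc. Were $D\s L$ dense with $|D|<\kappa$, then for each node $t\in T$ some branch $f(t)\in D$ would pass through $t$; since $|T|=\kappa$ and $|D|<\kappa$, regularity of $\kappa$ forces some fiber $f^{-1}(b)$ to have cardinality $\kappa$, and this fiber, being a subset of the chain $b$, is a $\kappa$-chain in $T$, contradicting Souslinity.

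The principal difficulty lies in the LOTS-to-tree direction, specifically the limit-stage bookkeeping that must simultaneously keep level sizes below $\kappa$, ensure the recursion reaches height $\kappa$, and prevent $\kappa$-branches; the endpoint-monotonicity argument is the crucial ingredient that extracts a $\kappa$-sized antichain in $L$ from any hypothetical $\kappa$-branch in $T$, using only $\kappa$-cc and the regularity of $\kappa$.
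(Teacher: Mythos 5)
The paper states this equivalence as a Fact with a citation to Kurepa's thesis and does not give a proof, so there is nothing in the paper to compare against. Your sketch follows the classical two-way translation between Souslin lines and Souslin trees; the overall architecture is correct, but the step you single out as the principal difficulty is only asserted, and there are a couple of further omissions.

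In the LOTS-to-tree direction, the claim that the recursion survives $\kappa$ levels conceals the following argument: if $T_\delta=\emptyset$ at some limit $\delta<\kappa$, the set $E$ of all endpoints of intervals occurring at levels $<\delta$ has size $<\kappa$ by regularity, yet must be dense. Indeed, if some nonempty open $J$ were disjoint from $E$, an induction on $\alpha<\delta$ places $J$ inside a unique $I_\alpha\in T_\alpha$ at each level (at a successor, $J$ meets the dense union of the split family and, avoiding all new endpoints, cannot straddle any piece; at a limit, $J$ lies in the interior of the intersection), so the resulting $\delta$-chain has $J$ in the interior of its intersection and $T_\delta\ne\emptyset$ after all. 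That $E$ is dense then contradicts the hypothesis. You also leave out the check that the constructed tree has no $\kappa$-antichain: two incomparable nodes are disjoint intervals (trace both histories down to the first level at which they diverge), so $\kappa$-cc forbids $\kappa$-antichains and not just large levels. In the tree-to-LOTS direction, the claim that the cones $U_t=\{b\in L : t\in b\}$ are open and form a base is the technical heart of that implication and is hidden in the phrase ``a suitable completion of $T$''; it depends on the normalization, e.g.\ ordering each immediate-successor set densely without endpoints so that no $U_t$ acquires a least or greatest member. Given that base claim, the pigeonhole step producing a $\kappa$-chain from a small dense set is correct as you state it.
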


The preceding leads to the following definition.

\begin{definition}[{\cite[p.~292]{jensen1972fine}}]
For any regular uncountable cardinal $\kappa$,
the \emph{$\kappa$-Souslin hypothesis} ($\sh_\kappa$) 
asserts that there are no $\kappa$-Souslin trees.
\end{definition}
Jensen proved \cite[Theorem~6.2]{jensen1972fine} 
that, assuming $V=L$, for every regular uncountable cardinal $\kappa$,
$\sh_\kappa$ holds iff $\kappa$ is weakly compact.
Subsequently, many combinatorial constructions of $\kappa$-Souslin trees (for regular uncountable cardinals $\kappa$ that are not weakly compact) from axioms 
weaker than $V=L$ have appeared.\footnote{See \cite[Theorem~IV.2.4]{MR750828}, as well as Fact~1.2 of \cite{paper24}
and the historical remarks preceding it.}
However, 
the classical constructions of $\kappa$-Souslin trees generally depend on the nature of $\kappa$:
that is, on whether $\kappa$ is the successor of a regular cardinal
\cite{MR485361}, \cite{sh:e4}, \cite{MR830071}; 
the successor of a singular cardinal \cite{MR836425}, \cite[\S4]{rinot11}; 
or an inaccessible cardinal \cite{Sh:624}.

Furthermore,
the classical $\diamondsuit$-based constructions all require $\diamondsuit$ to concentrate on a
\emph{nonreflecting stationary set},
in order to ensure that sealing antichains doesn't prevent us from later building higher levels of the tree. 
Thus, classical methods cannot be applied in scenarios where all stationary sets reflect,
and thus they allow inferring
the consistency of only a Mahlo cardinal from $\gch$ and the non-existence of a higher Souslin tree.

In addition, there is a zoo of consistent constructions of $\kappa$-Souslin trees satisfying additional properties,
such as complete, regressive, rigid, homogeneous, specializable, non-specializable, admitting an ascent path, omitting an ascending path, free and uniformly coherent.
Again, construction of a $\kappa$-Souslin tree with any desired property often depends on the nature of $\kappa$,
and in some cases even depends on whether $\kappa$ is the successor of a singular cardinal 
of countable or of uncountable cofinality \cite{MR1376756}.
To obtain the additional features, constructions include extensive 
bookkeeping, counters, timers, coding and decoding, 
whose particular nature makes it difficult to transfer the process from one type of cardinal to another.

What happens if we want to replace an axiom known to imply the existence of a $\kappa_0$-Souslin tree with strong properties
by an axiom from which a plain $\kappa_1$-Souslin tree can be constructed?
Do we have to revisit each scenario and tailor each of these particular constructions 
in order to derive a tree with strong properties?

In~\cite{paper22}, which forms the starting point of this research project,
we set out to develop new foundations that enable uniform construction of $\kappa$-Souslin trees;
we introduced a single (parameterized) \emph{proxy principle} from which $\kappa$-Souslin trees 
with various additional features can be constructed, regardless of the identity of $\kappa$.
In that paper, we also built the bridge between the old and new foundations,
establishing, among other things, that all known $\diamondsuit$-based constructions of $\kappa$-Souslin trees
may be redirected through this new proxy principle. 
There was one scenario that was not covered by that paper,
namely, Jensen's construction from ${\square(E)}+{\diamondsuit(E)}$ \cite[Theorem~6.2]{jensen1972fine},
and in Subsection~\ref{section:captureJensen} of the present paper, we cover it.
This means that any $\kappa$-Souslin tree with additional features that will be shown to follow from the proxy principle
will automatically be known to hold in many unrelated models.

But the parameterized proxy principle gives us more:

$\br$ It suggests a way of calibrating the fineness of a particular class of Souslin trees,
by pinpointing the weakest vector of parameters sufficient for the proxy principle to enable construction of a member of this class.
This leads, for instance, to the understanding that uniformly coherent $>$ free $>$ specializable $>$ plain.
This is explained in Section~\ref{constructions-section} below.

$\br$ It allows comparison and amplification of previous results.

In \cite{just2001like},\cite{MR2320769}, and \cite{rinot09}, new weak forms of $\diamondsuit$ at the successor of a regular cardinal $\lambda$
were proposed and shown to entail the existence of $\lambda^+$-Souslin trees.
In this project, we put all of these principles under a single umbrella by computing the corresponding vector of parameters for which the proxy principle holds in each of the previously studied configurations.
From this and the constructions we presented in \cite{paper32}, it follows, for example, that the Gregory configuration \cite{MR485361} suffices for the construction of a specializable $\lambda^+$-Souslin tree,
and the K\"onig--Larson--Yoshinobu configuration \cite{MR2320769} suffices for the construction of a free $\lambda^+$-Souslin tree.

Moreover, pump-up lemmas such as \cite[Lemma~4.9]{paper29} and \cite[Lemma~3.8]{paper32} establish that strong instances of the proxy principle may be derived from apparently weaker ones,
leading, for example, to the existence of $\kappa$-Souslin trees with a maximal degree of completeness in unexpected scenarios.

$\br$ It allows the construction of various types of trees at a broader class of cardinals.
To give two examples:

$\br\br$ A combinatorial construction of a free $\kappa$-Souslin tree for $\kappa=\aleph_1$ may be found in \cite[Theorem~V.1]{devlin1974souslin}, \cite[Theorem~6.6]{todorcevic1984trees} and \cite[\S2.1]{AbSh:403}.
In \cite[\S6]{rinot20} and in \cite[\S4.3]{paper32}, we gave new combinatorial constructions of free $\kappa$-Souslin trees, both using the proxy principle, and therefore
they automatically apply to all regular uncountable cardinals $\kappa$, including successors of singular cardinals.

$\br\br$ A combinatorial construction of a uniformly coherent $\kappa$-Souslin tree for a successor of a regular cardinal $\kappa$ may be found in \cite[Theorem~IV.1]{devlin1974souslin}, \cite{MR1683897}, and \cite{MR830071}.
In \cite[Theorem~2.5]{paper22}, we gave a proxy-based construction of a uniformly coherent $\kappa$-Souslin tree, and therefore
it automatically applies to all regular uncountable cardinals $\kappa$, including inaccessible cardinals.

$\br$ It allows obtaining completely new types of Souslin trees.

Once we have suitable foundations, the construction of Souslin trees becomes simple, 
and it is then easier to carry out considerably more complex constructions.
For example, in \cite[Theorem~1.1]{rinot20},
we gave the first example of a Souslin tree whose reduced powers behave independently of each other;
starting from a combinatorial hypothesis that follows from ``$V=L$'', 
we constructed an ultrafilter $\mathcal U_0$ over $\aleph_0$ and
an ultrafilter $\mathcal U_1$ over $\aleph_1$
such that, for every $(i,j)\in 2\times 2$,
there exists an $\aleph_3$-Souslin tree $T$ for which
$T^{\aleph_0}/\mathcal U_0$ is $\aleph_3$-Aronszajn iff $i=1$ and 
$T^{\aleph_1}/\mathcal U_1$ is $\aleph_3$-Aronszajn iff $j=1$.

$\br$ It paves the way to finding completely new scenarios in which Souslin trees must exist,
by finding new configurations in which an instance of the proxy principle holds.
To give several examples:

$\br\br$ In \cite[Corollary~1.20]{paper22}, we constructed a model of Martin's Maximum 
in which, for every regular cardinal $\kappa>\aleph_2$, a strong instance of the proxy principle at $\kappa$
(strong enough to yield a free $\kappa$-Souslin tree) holds.

$\br\br$ In \cite[Theorem~6.3]{paper22}, we proved that the sufficient condition of Gregory for the existence of a Souslin tree
at the successor of a regular uncountable cardinal \cite{MR485361}  yields an instance of the proxy principle,
and then, in \cite[Corollary~3.4]{paper32}, this was generalized to include successors of singulars, as well.

$\br\br$  In \cite[Corollary~4.14]{paper24}, the second author proved that for every uncountable cardinal $\lambda$, 
${\gch}+{\square(\lambda^+)}$ entails an instance of the proxy principle 
sufficient for the construction of a $\cf(\lambda)$-complete $\lambda^+$-Souslin tree.
It follows that if $\gch$ holds and there are no $\aleph_2$-Souslin trees, 
then $\aleph_2$ is a weakly compact cardinal in $L$,
thus improving the lower bound obtained by Gregory 40 years earlier \cite{MR485361}.

$\br\br$ 
In \cite{Sh:176}, Shelah proved that adding a single Cohen real indirectly adds an $\aleph_1$-Souslin tree.
In the same spirit, in \cite{paper26}, 
we identified a large class of notions of forcing that, assuming a $\gch$-type hypothesis, add a very strong instance of the proxy principle at the level of $\lambda^+$.
This class includes (but is not limited to) notions of forcing for changing the cofinality of an inaccessible cardinal $\lambda$, such as Prikry, Magidor and Radin forcing.

$\br$ It gives rise to combinatorial constructions of $\kappa$-Souslin trees even in the absence of $\diamondsuit$,
which is something we did not anticipate, but is established in Sections \ref{xboxdiamond} and \ref{constructions-section} below.

$\br$ It even gives rise to results in other topics,
such as special and non-special Aronszajn trees \cite{paper29}, infinite graph theory \cite{paper28} and Ramsey theory \cite{paper45}.

\subsection{Two results of particular interest}
In the 1980s (see \cite{MR0732661}), Baumgartner proved that ${\gch}+{\square_{\aleph_1}}$ 
entails the existence of an $\aleph_2$-Souslin tree with an $\omega$-ascent path (see Definition~\ref{defascentpath} below). 
A special case of Corollary~\ref{cor713} reads as follows.

\begin{thma} ${\gch}+{\square(\aleph_2)}$ entails the existence of an $\aleph_2$-Souslin tree with an $\omega$-ascent path.
\end{thma}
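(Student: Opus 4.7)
The plan is to follow the two-stage methodology established throughout this series. First, I would derive a strong instance of the parameterized proxy principle $\p(\aleph_2,\ldots)$ from $\gch + \square(\aleph_2)$. Second, I would feed that instance into a microscopic construction that builds an $\aleph_2$-Souslin tree while simultaneously threading an $\omega$-ascent path through its levels.

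For the derivation, I would adapt the argument of \cite[Corollary~4.14]{paper24}. Since $\aleph_2$ is the successor of the regular cardinal $\aleph_1$, $\gch$ yields $\diamondsuit(\aleph_2)$ by Shelah's theorem ($2^{\aleph_1}=\aleph_2 \Rightarrow \diamondsuit_{\aleph_2}$). Taking in addition a non-threadable coherent $C$-sequence $\vec C=\langle C_\alpha : \alpha<\aleph_2\rangle$ witnessing $\square(\aleph_2)$, a standard amalgamation feeds the diamond-data into $\vec C$ to produce an instance of the proxy principle whose vector of parameters is strong enough both to force the Souslin property and, crucially, to accommodate $\omega$-many parallel branch selections per coherence class -- the latter being what the ascent path will require.

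For the construction itself, I would recursively define $T\subseteq {}^{<\aleph_2}\aleph_2$ level by level, simultaneously specifying functions $f_\alpha:\omega\to T_\alpha$ destined to form the ascent path. At a successor stage $\alpha+1$, every node of $T_\alpha$ receives two immediate successors, and $f_{\alpha+1}(n)$ is declared to be $f_\alpha(n)\frown\langle 0\rangle$. At a limit $\alpha$, the scaffold $C_\alpha$ guides the selection of $\omega$-many cofinal branches through $T\restriction\alpha$, one for each $n<\omega$, whose upper bounds become the nodes $f_\alpha(n)$. The coherence $C_\beta=C_\alpha\cap\beta$ on $\acc(C_\alpha)$ ensures that the branches chosen at stage $\alpha$ are compatible, above a finite initial segment of $\omega$, with those selected at earlier accumulation points. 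At the stationarily many $\alpha$ flagged by the $\diamondsuit$-data, the usual antichain-sealing step is overlaid to guarantee the Souslin property.

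The principal obstacle is precisely the non-threadability of $\vec C$: a single cofinal branch through $T$ would translate into a thread through $\vec C$, which $\square(\aleph_2)$ forbids. The point of an \emph{$\omega$-ascent} path is exactly that each individual strand $f_\bullet(n)$ need match its predecessors only up to a finite tail error in $\omega$, so no actual thread is ever demanded and the basic obstruction evaporates. The delicate verification is to check that the antichain-sealing prescriptions made at points of $\nacc(C_\alpha)$ remain compatible with the finite-error ascent-path coherence dictated at points of $\acc(C_\alpha)$; this compatibility is supplied by the ``width'' parameters of the derived proxy principle, together with the freedom to adjust each $f_\alpha(n)$ on a finite initial segment of its $\omega$-coordinate without disrupting the ascent property.
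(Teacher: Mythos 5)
Your two-stage plan (derive a proxy instance, then run a microscopic construction) matches the paper's high-level strategy, and you've correctly identified the central obstacle: a single canonical branch selection following a $\square(\aleph_2)$-sequence would thread it. However, there is a genuine gap in the mechanism you propose for generating the ``finite tail error.'' You write that the coherence $C_\beta = C_\alpha\cap\beta$ on $\acc(C_\alpha)$ ``ensures that the branches chosen at stage $\alpha$ are compatible, above a finite initial segment of $\omega$, with those selected at earlier accumulation points,'' but full coherence together with any uniform selection rule would force agreement for \emph{all} $n<\omega$, not just a tail; the strand $\langle f_\alpha(0)\mid\alpha<\aleph_2\rangle$ would then be a cofinal branch whose level structure threads $\vec C$, a contradiction. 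Nothing in your sketch explains where the tail error is allowed to enter, and the freedom to adjust each $f_\alpha(n)$ on a finite initial segment of its $\omega$-coordinate does not help, since the threat comes from agreement on a \emph{tail}, not from the first few coordinates.

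The missing ingredient is the \emph{indexed} square principle $\square^{\ind}(\kappa,\mu)$ and its proxy analogue $\p^\bullet(\kappa,\mu^{\ind},{\sq},\ldots)$. The paper goes through Fact~\ref{indexingit} (the Lambie-Hanson--L\"ucke theorem that $\square(\kappa)$ implies $\square^{\ind}(\kappa,\mu)$ for every regular $\mu<\kappa$), then Theorem~\ref{mainindex} (deriving $\p^-(\kappa,\mu^{\ind},{\sq},1,\{E^\kappa_\chi\},\mu^+)$ under $\kappa=\lambda^+=2^\lambda$ and $\lambda^\chi=\lambda$), and then Theorem~\ref{souslinwithascentpath}. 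The indexed structure supplies at each limit $\alpha$ a \emph{matrix} $\langle C_{\alpha,i}\mid i(\alpha)\le i<\mu\rangle$ with the property that $C_{\bar\alpha,i}\sq C_{\alpha,i}$ only when $\bar\alpha\in\acc(C_{\alpha,i})$, and the sets $\acc(C_{\alpha,i})$ increase to $\acc(\alpha)$ as $i\to\mu$. One defines $f_\alpha(i):=\mathbf{b}^{C_{\alpha',\max\{i,i(\alpha')\}}}_\emptyset\restriction\alpha$ for $\alpha':=\min(\Gamma\setminus\alpha)$; then for $\alpha<\beta$ with $\alpha'<\beta'$, the ordinal $\alpha'$ lies in $\acc(C_{\beta',i})$ only for $i$ past some threshold $j$, which is exactly where the finite error enters the ascent path. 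A vanilla $\p(\aleph_2,\ldots)$-sequence as suggested by adapting~\cite[Corollary~4.14]{paper24}, with a single club per level, gives no such escape.
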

\begin{remark} The significance of this improvement is that the consistency strength of the failure of $\square_{\aleph_1}$ is a Mahlo cardinal,
whereas the consistency strength of the failure of $\square(\aleph_2)$ is a weakly compact cardinal.
\end{remark}

As alluded to earlier, Jensen's construction of $\kappa$-Souslin trees in $L$ \cite[Theorem~6.2]{jensen1972fine} 
goes through the hypothesis that there exists a stationary subset $E\subset\kappa$ for which $\square(E)$ and $\diamondsuit(E)$ both hold.
Here, we obtain the same conclusion from weaker hypotheses,
which is best seen for $\kappa$ inaccessible and $E\s E^\kappa_{>\omega}$:
\begin{thmb} Suppose that $\kappa$ is a strongly inaccessible cardinal,
and there exists a sequence $\langle A_\alpha\mid \alpha\in E\rangle$ such that:
\begin{itemize}
\item $E$ is a nonreflecting stationary subset of $E^\kappa_{>\omega}$;
\item For every $\alpha\in E$, $A_\alpha$ is a cofinal subset of $\alpha$;
\item For every cofinal $B\s\kappa$, there exists $\alpha\in E$
for which $$\{\delta<\alpha\mid \min(A_\alpha\setminus(\delta+1))\in B\}$$ is stationary in $\alpha$.
\end{itemize}
Then there exists a $\kappa$-Souslin tree.
\end{thmb}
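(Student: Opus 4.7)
The overall plan is to derive from the stated hypotheses an instance of the parameterized proxy principle developed in Part~I and invoke the corresponding microscopic Souslin-tree construction. Since $\diamondsuit$ is not assumed, the proxy instance must come from the $\diamondsuit$-free wing of the theory developed in the later sections on $\xbox$-style principles and on constructions, rather than from the classical $\diamondsuit(E)$-route.

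The first step is to upgrade $\langle A_\alpha\mid \alpha\in E\rangle$ to a $C$-sequence $\langle C_\alpha\mid\alpha<\kappa\rangle$ on all of $\kappa$. For $\alpha\in E$, put $C_\alpha:=A_\alpha$; for $\alpha\notin E$, choose any cofinal $C_\alpha\s\alpha$ of order type $\cf(\alpha)$, which is available because $\kappa$ is strongly inaccessible. The nonreflection of $E$ in $E^\kappa_{>\omega}$ lets one tune these choices so that $\acc(C_\alpha)\cap E=\emptyset$ for every $\alpha<\kappa$, producing the degree of coherence required to run an inductive tree construction indexed by the $C_\alpha$'s.

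The second step is to reinterpret the third bullet as the diamond-like sealing input that the proxy principle demands. Given a cofinal $B\s\kappa$, the hypothesis produces $\alpha\in E$ and a stationary $S\s\alpha$ with $\min(A_\alpha\setminus(\delta+1))\in B$ for every $\delta\in S$. In the microscopic framework, the $\alpha$-th level of the tree $T$ is assembled by extending branches through $T_\delta$ at the ``next'' point of $A_\alpha$ above $\delta$, so the successor function $\delta\mapsto\min(A_\alpha\setminus(\delta+1))$ serves as a guessing oracle at level $\alpha$. Any cofinal set $B$, and in particular the underlying set of any candidate maximal antichain enumerated in the course of the construction, is visited stationarily often and can therefore be sealed before level $\alpha$ is completed.

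The main obstacle is the absence of $\diamondsuit$: one cannot guess the antichain as a set, but only read off its elements one at a time through the successor map $\delta\mapsto\min(A_\alpha\setminus(\delta+1))$. The construction at level $\alpha$ must accordingly be arranged so that the branch attached above each $\delta<\alpha$ depends only on the tree built below $\min(A_\alpha\setminus(\delta+1))$, with no appeal to any guess of a bounded subset of $\alpha$ as a whole. This is exactly the flexibility built into the microscopic action framework, and once it is carried out, the inductive Souslin-tree construction developed later in the paper yields a $\kappa$-Souslin tree, with the guessing hypothesis ensuring that every maximal antichain is sealed at some level of $E$.
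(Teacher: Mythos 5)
Your overall strategy --- produce an instance of the proxy principle from the hypotheses and then invoke one of the microscopic constructions --- is the right one and matches the paper's route (Theorem~\ref{cor430}, followed by Theorem~\ref{mainpbullet} and Theorem~\ref{basicthm}). However, there are two genuine gaps in your sketch.

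First, setting $C_\alpha := A_\alpha$ for $\alpha\in E$ does not work. You need $\acc(C_\alpha)\cap E = \emptyset$ for the $\sqleftup{E}$-coherence of the resulting ladder system, but $A_\alpha$ is merely an arbitrary cofinal subset of $\alpha$ and may have accumulation points in $E$; nonreflection lets you choose a club $B_\alpha$ of $\alpha$ with $\acc(B_\alpha)\cap E = \emptyset$, but does not let you ``tune'' $A_\alpha$ itself. The paper's Theorem~\ref{cor430} instead takes $C_\alpha$ to be the closure in $\alpha$ of $\{\min(A_\alpha\setminus(\delta+1)) \mid \delta\in B_\alpha\}$; this sits the accumulation points inside $\acc(B_\alpha)$, hence off $E$, while still guaranteeing $\min(C_\alpha\setminus(\delta+1)) = \min(A_\alpha\setminus(\delta+1))$ for $\delta\in B_\alpha$, which is exactly what preserves the hitting hypothesis. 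Your sketch is silent on both the modification and the verification that the third bullet survives it, and it also does not connect the ``stationarily many $\delta$ with $\min(A_\alpha\setminus(\delta+1))\in B$'' form of hitting to the $\sup\{\gamma\in C\mid\suc_\sigma(C\setminus\gamma)\subseteq B\}=\alpha$ form the proxy principle asks for (this is the content of clause~(iv) of Theorem~\ref{thm416}).

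Second, your explanation of how the absence of $\diamondsuit$ is overcome is not what actually happens. You assert that one must arrange the construction so that ``no appeal to any guess of a bounded subset of $\alpha$ as a whole'' is made, with antichain elements read off one at a time. But the microscopic construction \emph{does} consult a guessed bounded set: at non-accumulation points $\beta$ of $\dm{C}$ with $\beta\in B_0$, the step calls $\sealantichain(\cdot,\cdot,C(\beta))$ where $C(\beta)$ is a guessed initial segment of the antichain. The real mechanism is Theorem~\ref{mainpbullet}: since $\kappa$ is strongly inaccessible, $\kappa^{<\kappa}=\kappa$ holds, and this hypothesis (which is much weaker than $\diamondsuit(\kappa)$) suffices to upgrade a $\p^-(\kappa,\dots)$-sequence to a $\p^\bullet(\kappa,\dots)$-sequence whose ladders carry the necessary diamond-like predictions. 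Once you have $\p^\bullet(\kappa,\kappa,{\sqleftup{E}},1,\{E\},2)$, Theorem~\ref{basicthm} (via monotonicity) finishes the job. So the heavy lifting in the $\diamondsuit$-free setting is done by the $\p^-$-to-$\p^\bullet$ upgrade, not by restructuring the tree construction to avoid guesses.
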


\subsection{Conventions}\label{conventionsforpaper} Throughout the paper,
$\kappa$ stands for an arbitrary regular uncountable cardinal;
$\theta,\lambda,\mu,\nu,\chi$ are (possibly finite) cardinals $\le\kappa$;
and $\xi,\sigma$ are ordinals $\le\kappa$.

\subsection{Notation}
We let $H_\kappa$ denote the collection of all sets of hereditary cardinality less than $\kappa$ (cf.~\cite[IV, $\S6$]{MR597342}).
We let $\reg(\lambda)$ denote the set of all infinite regular cardinals below $\lambda$.
We say that $\kappa$ is $({<}\chi)$-closed iff $\lambda^{<\chi}<\kappa$ for every $\lambda<\kappa$.
Denote $E^\lambda_\theta:=\{\alpha<\lambda\mid \cf(\alpha)=\theta\}$, and define $E^\lambda_{\neq\theta}$, $E^\lambda_{<\theta}$, $E^\lambda_{>\theta}$, and $E^\lambda_{\ge\theta}$ in a similar fashion.
Write $[\lambda]^\theta$ for the collection of all subsets of $\lambda$ of cardinality $\theta$, and define $[\lambda]^{<\theta}$ similarly.
Write  $\ch_\lambda$ for the assertion that $2^\lambda=\lambda^+$.

Suppose that $C$ is a set of ordinals.
Write $\acc(C):=\{\alpha\in C\mid \sup (C\cap\alpha) = \alpha>0 \}$, $\nacc(C) := C \setminus \acc(C)$,
$\acc^+(C) := \{\alpha<\sup(C)\mid \sup (C\cap\alpha) = \alpha>0 \}$.
In particular, $\acc(\kappa)$ is the set of all nonzero limit ordinals below $\kappa$.
For any  $j < \otp(C)$, denote by $C(j)$ the unique element $\delta\in C$ for which $\otp(C\cap\delta)=j$.
Write $\suc_\sigma(C) := \{ C(j+1)\mid j<\sigma\ \&\allowbreak\ j+1<\otp(C)\}$.
In particular, for all $\gamma\in C$ such that $\sup(\otp(C\setminus\gamma))\ge\sigma$,
$\suc_\sigma(C\setminus\gamma)$ consists of the next $\sigma$ many successor elements of $C$ above $\gamma$.

The class of ordinals is denoted by $\on$. 
For all $\alpha<\kappa$, $t:\alpha\rightarrow\kappa$, and $i<\kappa$, we denote by 
$\conc{t}{i}$ the unique function $t'$ extending $t$ satisfying $\dom(t')=\alpha+1$ and $t'(\alpha)=i$.

\section{How to construct a Souslin tree the right way}\label{rightway}
This section is accessible to novices with just basic background in Set Theory.
\subsection{Trees}
\label{subsection:trees}
A \emph{tree} is a partially ordered set $(T,{<_T})$ with the property that, for every $t\in T$,
the downward cone $t_\downarrow:=\{ s\in T\mid s<_T t\}$ is well-ordered by $<_T$. The \emph{height} of $t\in T$, denoted $\height(t)$,
is the order-type of $(t_\downarrow,{<_T})$. Then, for any ordinal $\alpha$, the $\alpha^{\text{th}}$ level of $(T,{<_T})$ is the set $T_\alpha:=\{t\in T\mid \height(t)=\alpha\}$;
the \emph{height of the tree} $(T,{<_T})$ is the smallest ordinal $\alpha$ such that $T_\alpha = \emptyset$.
For $X\s\on$, we write $T \restriction X := \{t \in T \mid \height(t) \in X \} = \bigcup_{\alpha\in X} T_\alpha$;
in particular, if $\alpha$ is any ordinal, then the tree $T\restriction\alpha$ has height $\leq\alpha$.
For any $s,t \in T$, we say that $s$ and $t$ are \emph{comparable} if $s <_T t$ or $t <_T s$ or $s=t$;
otherwise they are \emph{incomparable}.

There are several natural properties that the trees we construct will always satisfy.
In particular,
a tree $(T,{<_T})$ is said to be:
\begin{itemize}
\item \emph{Hausdorff} if for any limit ordinal $\alpha$ and $s,t\in T_\alpha$,
$(s_\downarrow=t_\downarrow)\implies (s=t)$;\footnote{As 0 is a limit ordinal, 
any (nonempty) Hausdorff tree is, in particular, \emph{rooted} --- that is, the level $T_0$ is a singleton, whose unique element is called the \emph{root}.}
\item \emph{normal} if for any pair of ordinals $\alpha < \beta$
and every $s \in T_\alpha$, if $T_\beta\neq\emptyset$ then there exists some $t \in T_\beta$ such that $s <_T t$;
\item \emph{ever-branching} if, for every node $s \in T$,
the upward cone $\cone{s} := \{ t \in T \mid s <_T t \}$ is \emph{not} linearly ordered by $<_T$.
\end{itemize}
\subsection{Souslin trees} 
Suppose that $(T,{<_T})$ is a tree. For any ordinal $\alpha$,
we say that a subset $B\s T$ is an \emph{$\alpha$-branch} if $(B,{<_T})$ is linearly ordered and
$\{ \height(t)\mid t\in B\}= \alpha$.
We say that $B\s T$ is a \emph{cofinal branch} if it is a $\kappa$-branch,
where $\kappa$ is the height of $(T,{<_T})$.
We say that $A\s T$ is an \emph{antichain} if any two distinct $s,t\in A$ are incomparable.

A tree $(T,{<_T})$ is a \emph{$\kappa$-tree} whenever its height is $\kappa$ 
and $|T_\alpha|<\kappa$ for all $\alpha<\kappa$.\footnote{Recall that $\kappa$ denotes a regular uncountable cardinal.}
A \emph{$\kappa$-Aronszajn tree} is a $\kappa$-tree with no cofinal branches.
A \emph{$\kappa$-Souslin tree} is a $\kappa$-Aronszajn tree that has no antichains of size $\kappa$.

A significant focus of this paper is the construction of $\kappa$-Souslin trees.
The most natural way to do this is to construct, recursively, a sequence $\langle T_\alpha\mid \alpha<\kappa\rangle$ of levels
whose union will ultimately be the desired $\kappa$-Souslin tree.
However, in order to ensure that the outcome tree will have
neither any cofinal branches nor any antichains of size $\kappa$,
we must find ways to anticipate these ``global properties'' of the tree 
when constructing each level.
The following well-known lemma (cf.~{\cite[Lemma~II.7.4]{MR597342}}) shows that if we ensure throughout the construction that our $\kappa$-tree is ever-branching
(a ``local property'', which can be ensured level by level throughout the construction),
then we can avoid the necessity of verifying that it has no cofinal branches.

\begin{lemma}\label{enough-no-antichains} 
For any ever-branching $\kappa$-tree $\mathcal T$, the following are equivalent:
\begin{itemize}
\item $\mathcal T$ is $\kappa$-Souslin;
\item $\mathcal T$ has no antichains of size $\kappa$.
\end{itemize}
\end{lemma}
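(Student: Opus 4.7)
The direction from $\mathcal T$ being $\kappa$-Souslin to $\mathcal T$ having no antichain of size $\kappa$ is immediate from the definition, so the work reduces to the converse. I plan to establish it by contrapositive: assuming the ever-branching $\kappa$-tree $\mathcal T = (T, <_T)$ admits a cofinal branch $B$, I will produce an antichain of size $\kappa$. The key preliminary observation is that, enumerating $B = \{b_\alpha \mid \alpha < \kappa\}$ with $\height(b_\alpha) = \alpha$, any $x \in T$ satisfying $x <_T b_\beta$ must equal $b_{\height(x)}$, since $(b_\beta)_\downarrow$ is linearly ordered by the tree axiom and contains $b_\gamma$ at each level $\gamma<\beta$. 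In particular, every $x \notin B$ is incomparable with $b_\beta$ for all $\beta > \height(x)$.

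I would then build the antichain $A := \bigcup_{\alpha < \kappa} A_\alpha$ by recursion on $\alpha < \kappa$, maintaining the invariants that $A_\alpha$ is an antichain of cardinality ${<}\kappa$ and $A_\alpha \cap B = \emptyset$. At a successor stage, set $\gamma_\alpha := \sup\{\height(s) + 1 \mid s \in A_\alpha\}$, which lies below $\kappa$ by regularity. Using that $\cone{b_{\gamma_\alpha}}$ is not linearly ordered, pick two incomparable nodes in it; since $B$ is linearly ordered, they cannot both lie in $B$, so at least one of them, call it $z$, lies off $B$. Set $A_{\alpha+1} := A_\alpha \cup \{z\}$, and take unions at limits. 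To verify that $A_{\alpha+1}$ remains an antichain: for any $a \in A_\alpha$, one has $\height(a) < \gamma_\alpha < \height(z)$, so the only possible comparability is $a <_T z$; combined with $b_{\gamma_\alpha} <_T z$, this would place both $a$ and $b_{\gamma_\alpha}$ in the linearly ordered cone $z_\downarrow$, forcing $a <_T b_{\gamma_\alpha}$, hence $a = b_{\height(a)} \in B$, contradicting the invariant. Since $|A_\alpha|\leq |\alpha|+\aleph_0$ throughout, the construction does not stall, and the final union $A$ has cardinality $\kappa$.

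The argument is conceptually transparent; the only real subtlety is coordinating two requirements at each successor step: the new element must sit strictly above the height ceiling $\gamma_\alpha$ of $A_\alpha$ (so that the preliminary observation kicks in and delivers incomparability with every previous $a$), while also lying off $B$ (so that it is not absorbed into the branch). The first is packaged into the choice of $\gamma_\alpha$; the second is the exact content of ever-branching combined with the linear-orderedness of $B$. No deeper obstacle arises.
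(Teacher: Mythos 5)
Your proof is correct and follows essentially the same strategy as the paper's: assume a cofinal branch $B$ exists, use ever-branching together with the linear orderedness of $B$ to step off the branch at nodes chosen high enough above all previously selected heights, and observe that the resulting off-branch nodes are pairwise incomparable because any common bound of two of them would force the lower one into $B$. The paper packages this as ``assign $t'\in\cone{t}\setminus B$ to every $t\in B$, then thin $B$ to $B^\bullet$ so the heights do not interfere,'' whereas you build the antichain directly by a height-controlled recursion; the underlying combinatorics is the same.
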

\begin{proof} The forward implication is obvious. 
Next, assume that $\mathcal T=(T,{<_T})$ is an ever-branching $\kappa$-tree,
having no antichains of size $\kappa$.
Towards a contradiction, suppose that $\mathcal T$ admits a cofinal branch, say, $B$.
As $\mathcal T$ is ever-branching, for every $t\in B$, $\cone{t}$ is not linearly ordered, so that
we may fix $t'\in \cone{t}\setminus B$.
Recursively construct $B^\bullet \in [B]^\kappa$ such that,
for any two $s<_T t$ both in $B^\bullet$, $\height(s') < \height(t)$.
As $\height(t)<\height(t')$ for every $t \in B$, 
it follows that $\{ t'\mid t\in B^\bullet \}$ forms an antichain of size $\kappa$,
contradicting our hypothesis and thereby completing the proof.
\end{proof}

It follows that if we construct an ever-branching $\kappa$-tree,
our main worry is to ensure the non-existence of large antichains. 
Furthermore, the following well-known fact (cf.~{\cite[Lemma~2.4]{rinot20}}) shows that we do not
lose any opportunities by insisting that the trees we construct are normal and ever-branching. 
\begin{fact} Suppose $(T,{<_T})$ is a $\kappa$-Souslin tree.
Then there is a normal and ever-branching subtree which is again $\kappa$-Souslin.
In fact, there is a club $C\s\kappa$ such that $({T\restriction C},{<_T})$ is normal and ever-branching.
\end{fact}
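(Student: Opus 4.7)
The plan is to find a threshold $\alpha_0 < \kappa$ above which $T$ is already well-behaved, and then take $C := \{\alpha < \kappa \mid \alpha \geq \alpha_0\}$, which will be a club. I will call a node $s \in T$ \emph{live} if for every $\beta < \kappa$ there exists $t \in T_\beta$ with $s \leq_T t$, and \emph{dead} otherwise. A preliminary observation will be that liveness is equivalent to having a descendant at \emph{every} level above $\height(s)$: given a descendant $t$ of $s$ at some level $\beta' > \beta > \height(s)$, the unique element of the well-ordered set $t_\downarrow$ of height $\beta$ is automatically a descendant of $s$.

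The heart of the argument is the inequality $|D| < \kappa$, where $D$ denotes the set of dead nodes. First, I would observe that $D$ is closed upward in $(T, <_T)$, since any descendant of a dead node inherits the bound on its reach. Hence the set $D_{\min}$ of $<_T$-minimal elements of $D$ is an antichain in $T$, and so $|D_{\min}| < \kappa$ by the $\kappa$-Souslin hypothesis. Next, for each $s \in D_{\min}$, setting $r(s) := \sup\{\height(t) \mid s \leq_T t\} < \kappa$, the set $\{t \mid s \leq_T t\}$ lies inside $T \restriction (r(s)+1)$, which has cardinality $<\kappa$ by regularity of $\kappa$, as it is a union of $<\kappa$ levels each of size $<\kappa$. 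A second application of regularity then yields $|D| \leq |D_{\min}| \cdot \sup_{s \in D_{\min}}|\{t \mid s \leq_T t\}| < \kappa$.

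Once $|D| < \kappa$ is established, setting $\alpha_0 := \sup\{\height(s) + 1 \mid s \in D\}$ gives $\alpha_0 < \kappa$ by regularity, and $C := [\alpha_0, \kappa)$ is a club in $\kappa$. For any $\alpha \in C$, every node in $T_\alpha$ is live, and so has descendants at every higher level, which directly yields normality of $T\restriction C$. For ever-branching, the key point is that if some $s \in T\restriction C$ had $\cone{s}$ linearly ordered in $T$, then the chain $s_\downarrow \cup \{s\} \cup \cone{s}$, which has a node at every height $<\kappa$ since $s$ is live, would form a cofinal branch of $T$, contradicting $\kappa$-Aronszajn-ness; and since all descendants of $s$ have height exceeding $\alpha_0$, the cone of $s$ inside the subtree coincides with $\cone{s}$. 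Finally, $T\restriction C$ remains $\kappa$-Souslin because any antichain of the subtree is an antichain of $T$.

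The principal obstacle is the bound $|D| < \kappa$: the upward-closedness of $D$ is the conceptual trick that lets the Souslin hypothesis apply through the antichain of minimal dead nodes, and the regularity of $\kappa$ must then be invoked twice to pass from the bound on each individual cone to the total. Once this is in hand, identifying the right club and verifying the three desired properties is routine.
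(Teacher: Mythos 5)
Your proof is correct and follows the standard pruning argument (the paper cites this fact to [BR17b, Lemma~2.4] without reproving it): one bounds the set $D$ of dead nodes by covering it with the cones above the antichain $D_{\min}$, each cone being small by regularity, and then chops off the tree below $\alpha_0 = \sup\{\height(s)+1 \mid s\in D\}$. All the verifications go through — in particular your observation that for $C = [\alpha_0,\kappa)$ the cone of $s$ inside $T\restriction C$ coincides with its cone in $T$, which is the point needed to turn ``$\cone{s}$ linearly ordered'' into a genuine cofinal branch of $T$.
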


Thus, the existence of a $\kappa$-Souslin tree is equivalent to the existence of a normal ever-branching one.
In fact, the same is true for $\kappa$-Aronszajn trees
(cf.~\cite[Lemmas~II.5.11--12]{MR597342}).\footnote{In a sense,
normality is exactly the portion of K\H{o}nig's Lemma that can be salvaged at the height of an arbitrary regular cardinal,
and this is what makes the problem of constructing $\kappa$-Aronszajn and $\kappa$-Souslin trees challenging.}

\subsection{Streamlined trees}
What will our trees $(T, {<_T})$ look like? What are the elements of a tree, anyway?

Formally, of course, elements of a tree can be anything we choose.
However, for all of the trees that we construct here, elements of the tree will be (transfinite) sequences of ordinals,
and the tree-order $<_T$ will be the initial-sequence ordering 
(which is the same as ordinary proper inclusion ($\subsetneq$), if we view a sequence as a function).
To ensure that the height of an element in the tree corresponds to the element's length as a sequence,
we must ensure that our collection of sequences is closed under initial segments
(``downward-closed'').

In order to formalize this intuition while retaining some flexibility, we introduce the following definition (recall that for an ordinal $\alpha$ and any set $X$, $^\alpha X$ denotes the set of functions from $\alpha$ to $X$,
and $^{<\alpha}X := \bigcup_{\beta<\alpha} {}^{\beta} X$):

\begin{definition}
A set $T$ is a \emph{streamlined tree} iff there exists some cardinal $\kappa$ such that $T\s{}^{<\kappa}H_\kappa$
and, for all $t\in T$ and $\beta<\dom(t)$, 
$t\restriction\beta\in T$.\footnote{All of the $\kappa$-trees that we actually construct
will be subsets of $^{<\kappa}\kappa$,
but we shall also consider the broader $T\s{}^{<\kappa}H_\kappa$ when analyzing \emph{derived trees}
in Subsection~\ref{subsection:free}.}
\end{definition}

We shall freely use the following basic properties, whose verification is left to the reader.

\begin{lemma}\label{streamlined-basics}
For every streamlined tree $T\s{}^{<\kappa}H_\kappa$ and every ordinal $\alpha$:
\begin{itemize}
\item $(T,{\stree})$ is a Hausdorff tree in the abstract sense of Subsection~\ref{subsection:trees}.
\item Assuming $T$ is nonempty, its root is the empty sequence, $\emptyset$.
\item For every $t \in T$, $\height(t) = \dom(t)$
and $\pred{t} = \{ t \restriction \beta \mid \beta < \dom(t) \}$.
\item $T_\alpha=T\cap{}^{\alpha}H_\kappa$. In particular, $T\restriction\alpha = T\cap{}^{<\alpha}H_\kappa$.
\item For every $t \in T$, if $\alpha<\dom(t)$, then
$t\restriction\alpha$ is the unique element of $\pred{t}\cap T_\alpha$.
\item Any $\alpha$-branch $B \s T$ can be written as
$B = \{ t \restriction \beta \mid \beta < \alpha \}$ for some 
function $t : \alpha \rightarrow H_\kappa$. 
\item For all $s,t \in T$, $s$ and $t$ are comparable iff $s \cup t \in T$. \qed
\end{itemize}
\end{lemma}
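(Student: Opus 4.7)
The plan is to derive every clause from the single observation that on ${}^{<\kappa}H_\kappa$, proper inclusion $\stree$ coincides with the initial-segment ordering: $s\stree t$ iff there is $\beta<\dom(t)$ with $s=t\restriction\beta$. Once this is in hand, the downward-closure axiom of a streamlined tree identifies each $\pred{t}$ with a canonical set of restrictions, and every remaining item reduces to routine bookkeeping.

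First I would prove that for every $t\in T$,
\[
\pred{t}=\{t\restriction\beta\mid \beta<\dom(t)\},
\]
with $\supseteq$ by the closure axiom, and $\subseteq$ because any $s\stree t$ is a proper subfunction of $t$, whose domain is an ordinal, and so equals $t\restriction\beta$ for some $\beta<\dom(t)$. The map $\beta\mapsto t\restriction\beta$ is then an $\in$-to-$\stree$ order-isomorphism from $\dom(t)$ onto $\pred{t}$, which simultaneously yields that $(T,\stree)$ is a tree in the abstract sense of Subsection~\ref{subsection:trees}, that $\height(t)=\dom(t)$, and hence that $T_\alpha=T\cap{}^\alpha H_\kappa$ and $T\restriction\alpha=T\cap{}^{<\alpha}H_\kappa$. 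Assuming $T\neq\emptyset$, picking any $t\in T$ places $\emptyset=t\restriction 0$ in $T_0$, and since no other sequence has height $0$, the empty sequence is the unique root.

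Next I would knock off the remaining items. For Hausdorffness, if $s,t\in T_\alpha$ share the same downward cone then for each $\beta<\alpha$ the unique level-$\beta$ element of that cone equals both $s\restriction\beta$ and $t\restriction\beta$, forcing $s=t$. Uniqueness of $t\restriction\alpha$ in $\pred{t}\cap T_\alpha$ reads off the order-isomorphism above. For the $\alpha$-branch clause, an $\alpha$-branch $B$ contains exactly one element $t_\beta$ at each height $\beta<\alpha$, and pairwise comparability forces $t_\beta=t_\gamma\restriction\beta$ whenever $\beta\leq\gamma$, so $t:=\bigcup_{\beta<\alpha}t_\beta$ is a function $\alpha\to H_\kappa$ and $B=\{t\restriction\beta\mid \beta<\alpha\}$. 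For the comparability clause, if $s\cup t\in T$ then $s\cup t$ is a function whose domain is the ordinal $\max(\dom(s),\dom(t))$, and single-valuedness forces one of $s,t$ to be an initial segment of the other; the converse direction is immediate from $s\stree t\vee s=t\vee t\stree s$.

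There is no genuine obstacle here: the lemma is a compilation of bookkeeping facts that collapse once the identification of $\pred{t}$ with its set of restrictions is secured. The only mild subtlety is the converse of the comparability clause, where one quietly uses that $\dom(s)$ and $\dom(t)$ are themselves ordinals --- so one contains the other --- in order to convert ``$s\cup t$ is a function'' into ``one of $s,t$ extends the other''.
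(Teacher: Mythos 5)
Your plan is exactly the right one: everything reduces to the observation that on ${}^{<\kappa}H_\kappa$, proper inclusion coincides with initial-segment ordering, and all seven clauses are then bookkeeping once $\pred{t}$ is identified with $\{t\restriction\beta\mid\beta<\dom(t)\}$. The paper offers no proof (it explicitly leaves the verification to the reader), so there is nothing to compare against; six of your seven clauses are airtight, and the handling of Hausdorffness and the comparability biconditional is precisely what one wants, including the quiet use of comparability of ordinal domains in the final clause.

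One small slip in the $\alpha$-branch clause: you assert that $t:=\bigcup_{\beta<\alpha}t_\beta$ is a function $\alpha\to H_\kappa$, but this is only true when $\alpha$ is a limit. If $\alpha=\gamma+1$, then $\dom(t)=\sup_{\beta\leq\gamma}\dom(t_\beta)=\gamma<\alpha$, so $t$ is really just $t_\gamma$ and has the wrong domain. The conclusion of the lemma still holds --- one just needs to extend $t_\gamma$ arbitrarily by one more value to obtain a genuine function on $\alpha$; the identity $B=\{t\restriction\beta\mid\beta<\alpha\}$ is unaffected since $t\restriction\gamma=t_\gamma$ and $t$ itself (of height $\alpha$) is not required to lie in $B$. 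It is a one-line fix, but as written the claimed domain of your $t$ is wrong in the successor case.
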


The main advantage of streamlined trees is the identification of 
a limit of an increasing sequence of nodes.
Indeed, for any  $\stree$-increasing sequence $\eta$ of nodes, say,
$\eta=\langle t_\gamma \mid \gamma < \beta\rangle$,
the unique limit of this sequence, which may or may not be a member of the tree,
is nothing but $\bigcup_{\gamma<\beta}t_\gamma$, that is, $\bigcup\im(\eta)$.

It follows that when constructing a streamlined tree,
for any limit nonzero ordinal $\alpha$ such that all of the previous levels $\langle T_\beta\mid\beta<\alpha\rangle$ have already been determined, the definition of $T_\alpha$
amounts to deciding which $\alpha$-branches will have their limits inserted into the tree.  
Equivalently, for any cofinal subset $C$ of $\alpha$,
we shall have $T_\alpha\s \{ t\in{}^\alpha H_\kappa\mid \forall\beta\in C(t\restriction\beta\in T_\beta)\}$.

But are there any disadvantages here?
It turns out that we lose no generality by insisting on constructing only streamlined trees:

\begin{lemma}\label{sllemma} Suppose that $(X,{<_X})$ is a $\kappa$-tree.  Then:
\begin{enumerate}
\item If $(X,{<_X})$ is Hausdorff, then there exists a streamlined tree $T \s {}^{<\kappa}\kappa$ 
such that $(X,{<_X})$ is order-isomorphic to $(T,{\stree})$.
\item Regardless of whether or not $(X,{<_X})$ is Hausdorff, 
there exists a streamlined tree $S \s {}^{<\kappa}\kappa$ 
such that $(X,{<_X})$ is order-isomorphic to a cofinal subset of $(S,{\stree})$ via a level-preserving map.
\end{enumerate}
\end{lemma}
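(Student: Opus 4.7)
For part (1), since $|X_\alpha| < \kappa$ for every $\alpha < \kappa$, I would first fix, for each $\alpha < \kappa$, an injection $\pi_\alpha : X_\alpha \hookrightarrow \kappa$. Writing $x^{(\gamma)}$ for the unique predecessor of $x$ at level $\gamma \leq \height(x)$ (with the convention $x^{(\height(x))} := x$), I would encode each $x \in X$ by its sequence of successor-level ancestors,
\[
\varphi(x) := \langle \pi_{\beta+1}(x^{(\beta+1)}) \mid \beta < \height(x) \rangle,
\]
and set $T := \{\varphi(x) \mid x \in X\}$. The key computation is that for every $\beta < \height(x)$ one has $\varphi(x) \restriction \beta = \varphi(x^{(\beta)})$, which simultaneously shows that $T$ is closed under initial segments (hence a streamlined tree) and that $\varphi$ is order-preserving. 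For injectivity, if $\varphi(x) = \varphi(y)$ with common height $\alpha$, then at a successor $\alpha = \gamma+1$ the terminal entry forces $\pi_\alpha(x) = \pi_\alpha(y)$ and hence $x = y$, while at a limit $\alpha$ all successor-level ancestors of $x$ and $y$ agree, so $x_\downarrow = y_\downarrow$ and the Hausdorff hypothesis delivers $x = y$. Order-reflection then follows from injectivity together with order-preservation.

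For part (2), the Hausdorffness of every streamlined tree (Lemma~\ref{streamlined-basics}) means that two distinct nodes of $X$ sharing a downward cone cannot be separated into distinct sequences of the same length, so the encoding of part (1) cannot be injective in general. My plan is to introduce an explicit identity marker as a new terminal coordinate, defining
\[
\iota(x) := \langle \pi_\beta(x^{(\beta)}) \mid \beta \leq \height(x) \rangle,
\]
so that the entry $\iota(x)(\height(x)) = \pi_{\height(x)}(x)$ distinguishes $x$ from every other node at its level (and simultaneously accommodates the possibility of multiple roots in $X$). As in part (1), the identity $\iota(x) \restriction (\gamma+1) = \iota(x^{(\gamma)})$ for $\gamma < \height(x)$ yields order-preservation, and injectivity is now immediate from the terminal entry. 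I would then take $S$ to be the closure of $\iota(X)$ under initial segments inside ${}^{<\kappa}\kappa$; by construction $S$ is a streamlined tree, $\iota(X)$ is cofinal in $S$ since every $s \in S$ is an initial segment of some $\iota(x) \in \iota(X)$, and the map $\iota$ sends $X_\alpha$ into the single level $S_{\alpha+1}$, which is the (mildly shifted) sense in which $\iota$ preserves levels.

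The hard part is part (2): the combination of the automatic Hausdorffness of every streamlined tree and the demand for a level-preserving order-embedding of a non-Hausdorff $X$ rules out a strict equality of heights on the nose, so some form of uniform shift is unavoidable; the terminal-entry construction above is, in my view, the cleanest realization of that shift.
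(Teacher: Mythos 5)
Your proof is correct and follows essentially the same strategy as the paper's: encode each node by a sequence recording its ancestors, close under initial segments, use Hausdorffness to get injectivity at limit levels in part~(1), and in part~(2) append a coordinate for the node itself so that the map becomes injective without any Hausdorff assumption (at the cost of a one-level shift). The one genuine simplification in your version is that by tagging the $\beta$-th coordinate with the per-level injection $\pi_\beta$ applied to the level-$\beta$ ancestor, you avoid the paper's need to glue the $\pi_\alpha$'s into a single map $\pi$ that is monotone across levels (the paper requires $\sup(\im(\pi_\alpha))<\min(\im(\pi_\beta))$ for $\alpha<\beta$, so that $\{\pi(y)\mid y<_X x\}$ has order-type $\height(x)$); your identity $\varphi(x)\restriction\beta=\varphi(x^{(\beta)})$ does the same work more directly. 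One minor imprecision: order-reflection does not follow from injectivity and order-preservation alone in general — you also need the fact that $\varphi(x)\restriction\beta$ lies in $\im(\varphi)$ and equals $\varphi$ of the level-$\beta$ ancestor — but you had already established that identity, so the argument stands.
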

\begin{proof}
As $|X_\alpha|<\kappa$ for all $\alpha<\kappa$,
we may recursively find a sequence of injections $\langle \pi_\alpha:X_\alpha\rightarrow\kappa\mid \alpha<\kappa\rangle$
such that for all $\alpha<\beta<\kappa$,
$\sup(\im(\pi_\alpha))<\min(\im(\pi_\beta))$.
Let $\pi:=\bigcup_{\alpha<\kappa}\pi_\alpha$.
Note that if $y,z\in X$ satisfy $y<_X z$, then $\pi(y)<\pi(z)$.
\begin{enumerate}
\item Suppose $(X,{<_X})$ is Hausdorff.  For all $\delta<\kappa$ and $x\in X_\delta$, the set of ordinals $$\left\lceil x \right\rceil:=\{ \pi(y)\mid y\in X, y<_X x\}$$ has order-type $\delta$,
so we may let $t_x:\delta\rightarrow \left\lceil x \right\rceil$ denote the order-preserving isomorphism.
Evidently, $T:=\{ t_x\mid x\in X\}$ is a streamlined tree,
and $x\mapsto t_x$ forms an isomorphism between $(X,{<_X})$ and $(T,{\stree})$,
where injectivity is due to the fact that $(X,{<_X})$ is Hausdorff.
\item  For all $\delta<\kappa$ and $x\in X_\delta$, the set of ordinals $$[x]=\{ \pi(y)\mid y\in X, (y<_X x\text{ or }y=x)\}$$ has order-type $\delta+1$,
so we may let $s_x:\delta+1\rightarrow [x]$ denote the order-preserving isomorphism.
Evidently, $S:=\{ s_x\restriction\beta\mid x\in X,\beta<\kappa\}$ is a streamlined tree,
and $x\mapsto s_x$ forms an isomorphism between $(X,{<_X})$ and a cofinal subset of $(S,{\stree})$
sending level $\delta$ to level $\delta+1$,
where this time injectivity is due to the fact that we have included $\pi(x)$ in $[x]$.\qedhere
\end{enumerate}
\end{proof}

Thus, the existence of a $\kappa$-Aronszajn is equivalent to the existence of a streamlined one,
and the same is true for $\kappa$-Souslin trees.

\begin{conv} We shall say that $T$ is a \emph{streamlined $\kappa$-tree} if
$T \s {}^{<\kappa}H_\kappa$ is a streamlined tree and $(T,{\stree})$ is a $\kappa$-tree.
Furthermore, whenever we say that a streamlined tree $T$ is \emph{normal}, \emph{ever-branching}, \emph{Souslin}, etc.,
we mean to refer to the tree $(T,{\stree})$.
\end{conv}
\begin{remark}\label{streamlined-kappa-tree} 
Notice that any streamlined $\kappa$-tree $T$ is a subset of $H_\kappa$ and also has cardinality $\kappa$;
thus $T$ and all of its subsets are elements of $H_{\kappa^+}$.
\end{remark}

\subsection{Completing canonical branches and sealing antichains}\label{subsection:completing+sealing}
What does it take to build a $\kappa$-Souslin tree? 
Based on our previous discussion,
we shall want to build, level by level, a normal, ever-branching, streamlined $\kappa$-tree.

When constructing the level $T_\alpha$,
ensuring that the tree remains normal amounts to ensuring that
for every $s \in T\restriction\alpha$ we insert some $t$ into $T_\alpha$ satisfying $s \stree t$.
On the other hand, as all levels must be kept of size $<\kappa$ and as we must prevent the birth of large antichains,
there will be a stationary subset $\Gamma\s\kappa$ on which,
for every $\alpha\in\Gamma$,
$T_\alpha$ necessarily
must be some proper subset of $\{ t\in{}^\alpha H_\kappa\mid \forall\beta<\alpha (t\restriction\beta\in T_\beta)\}$.

But let us point out the challenge now arising in securing normality at the level $T_\alpha$,
where $\alpha<\kappa$ is some nonzero limit ordinal.
Suppose $x\in T\restriction\alpha$; we must include a node $\mathbf{b}^\alpha_x$ in $T_\alpha$
extending $x$ (the limit of the ``canonical $\alpha$-branch for $x$''). The natural way to do so is 
to pick a club $C_\alpha$ in $\alpha$ (``a ladder climbing up to $\alpha$''),
and then recursively identify an increasing and continuous sequence $\langle x_\beta \mid \beta\in C_\alpha\rangle$ 
of nodes of $T\restriction\alpha$
comparable with $x$ and satisfying $x_\beta\in T_\beta$ for all $\beta\in C_\alpha$. 
Normality up to level $\alpha$ makes the successor step of this recursion possible;
however, when we reach a limit step $\beta$ (that is, $\beta\in\acc(C_\alpha)$), this ordinal $\beta$ may be an element of $\Gamma$,
meaning that the unique limit of our partial sequence might have been excluded from $T_\beta$.
If we are extremely unlucky, we may have constructed $T\restriction\alpha$ to be an Aronszajn tree, with no $\alpha$-branches at all!
Thus, we have to define $T_\beta$ for $\beta\in\Gamma$
in an educated way so as to avoid such unfortunate scenarios. 
In the special case where $\kappa=\lambda^+$
for a (regular) cardinal $\lambda=\lambda^{<\lambda}$,\footnote{Including, for example, the simple case $\kappa=\aleph_1$.}
one can avoid this problem by simply taking $\Gamma$ to be $E^{\lambda^+}_\lambda$
and letting each ladder have order-type $\le\lambda$.
However, in the general case, there is a need for some coherent ladder system,
as we shall see in Definition~\ref{xboxdefinitionfrom22} below.

Recalling Lemma~\ref{enough-no-antichains}, we must also ensure that the resulting tree $T$
will not have any antichains of size $\kappa$.
The number of candidates for antichains of size $\kappa$ is $|{}^{<\kappa}H_\kappa|^\kappa$,
which is bigger than $\kappa$, the length of our recursive construction.
Put differently, there are not enough stages to take care of all of the candidates for large antichains, 
if we need to deal with them one at a time! 
In contrast, assuming $\kappa^{<\kappa}=\kappa$,
the number of candidates for proper initial segments of antichains is merely $\kappa$.
The upcoming lemma reduces the problem of eliminating antichains of size $\kappa$ to a problem of addressing their proper initial segments.

\begin{definition}Suppose $T$ is a streamlined $\kappa$-tree.
An antichain $A\s T$ is said to be \emph{sealed at level $\alpha$}
iff every element of $T_\alpha$ extends some element of $A$.
\end{definition}

\begin{lemma}\label{equiv-no-antichains} Suppose $T$ is a streamlined $\kappa$-tree.
Then the following are equivalent:
\begin{enumerate}
\item $T$ has no antichains of size $\kappa$;
\item For every antichain $A \s T$, there is some ordinal $\alpha<\kappa$ such that $A \s T \restriction\alpha$;
\item For every maximal antichain $A \s T$,
there is some ordinal $\alpha<\kappa$ such that $A \cap (T\restriction\alpha)$
is sealed at level $\alpha$.
\end{enumerate}
\end{lemma}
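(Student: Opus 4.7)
The plan is to prove the cyclic implications $(1)\Rightarrow(2)\Rightarrow(3)\Rightarrow(1)$, relying mainly on the regularity of $\kappa$ and on Zorn's lemma to extend arbitrary antichains to maximal ones.

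For $(1)\Rightarrow(2)$, I would argue by contrapositive on the set of heights $H:=\{\height(t)\mid t\in A\}\s\kappa$. If $H$ is unbounded in $\kappa$, then since $\kappa$ is regular we must have $|H|=\kappa$, whence $|A|\ge\kappa$, contradicting (1). So $H$ is bounded, say by some $\alpha<\kappa$, and then $A\s T\restriction(\alpha+1)$. The reverse direction $(2)\Rightarrow(1)$ is immediate, since $A\s T\restriction\alpha=\bigcup_{\beta<\alpha}T_\beta$ is a union of fewer than $\kappa$ sets of size less than $\kappa$, which by regularity has cardinality $<\kappa$.

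For $(2)\Rightarrow(3)$, given a maximal antichain $A$, apply (2) to obtain $\alpha<\kappa$ with $A\s T\restriction\alpha$, so $A\cap(T\restriction\alpha)=A$. To see that $A$ is sealed at level $\alpha$, let $t\in T_\alpha$ be arbitrary. By maximality, some $a\in A$ is comparable with $t$; but $\height(a)<\alpha=\height(t)$ rules out $t\stree a$ and $t=a$, leaving $a\stree t$, as required.

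For $(3)\Rightarrow(1)$, I again use the contrapositive. Suppose $A\s T$ is an antichain with $|A|=\kappa$. By Zorn's lemma, extend $A$ to a maximal antichain $A'$. By (3), there is $\alpha<\kappa$ such that $A'\cap(T\restriction\alpha)$ is sealed at level $\alpha$. The key step is to conclude $A'\s T\restriction\alpha$: indeed, if some $t\in A'$ had $\height(t)\ge\alpha$, then $t\restriction\alpha$ lies in $T_\alpha$ and hence extends some $a\in A'\cap(T\restriction\alpha)$, whence $a\stree t\restriction\alpha\sq t$ with $a\ne t$, contradicting that $A'$ is an antichain. Therefore $|A'|\le|T\restriction\alpha|<\kappa$, contradicting $|A'|\ge|A|=\kappa$. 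The only subtle point in the whole argument is this last step, where one must also handle the boundary case $\height(t)=\alpha$, in which $t\restriction\alpha=t$ itself; here the same computation still produces $a\stree t$ with $a\in A'$, yielding the desired contradiction.
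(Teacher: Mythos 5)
Your proof is correct and follows essentially the same approach as the paper's: regularity of $\kappa$ for $(1)\Rightarrow(2)$, the domain comparison for $(2)\Rightarrow(3)$, and Zorn's lemma plus the observation $A'\subseteq T\restriction\alpha$ for $(3)\Rightarrow(1)$. The only differences are stylistic (you phrase two implications as contradiction/contrapositive where the paper argues directly), and one minor notational slip: you write $t\restriction\alpha\sq t$, but $\sq$ in this paper denotes end-extension of sets of ordinals, not initial-segment for tree nodes, for which the paper uses $\subseteq$ or $\stree$.
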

\begin{proof}
\begin{description}
\item[(1) $\implies$ (2)]
Let $A\subseteq T$ be any given antichain.
By (1), $|A|<\kappa$, so that by regularity of $\kappa$,
we obtain $\sup \{ \dom(x) \mid x \in A\} < \kappa$, as sought.

\item[(2) $\implies$ (3)]
Given any maximal antichain $A \s T$, fix $\alpha$ as in Clause~(2),
so that $A\subseteq T\restriction\alpha$.
Let $t \in T_\alpha$ be given.
As $A$ is a maximal antichain, there must be some $s \in A$ comparable with $t$.
But $\dom(s) < \alpha = \dom(t)$,
so it follows that $t$ extends $s$.

\item[(3) $\implies$ (1)]
Using Zorn's lemma, it is easy to see that every antichain is included in a maximal antichain.
Thus, it suffices to verify that $T$ has no \emph{maximal} antichains of size $\kappa$.

Given any maximal antichain $A \s T$, fix $\alpha$ as in Clause~(3).
As $T$ is a $\kappa$-tree, $|T_\beta|<\kappa$ for every $\beta$,
so that by regularity of $\kappa$ it follows that $|T \restriction\alpha| = \sum_{\beta<\alpha}|T_\beta| <\kappa$.
Thus, it suffices to prove that $A\s(T\restriction\alpha)$.

Consider any $u \in T \restriction [\alpha,\kappa)$; we shall show that $u \notin A$.
Let $t := u\restriction\alpha$, which is an element of $T_\alpha$. 
By our choice of $\alpha$ we can fix $s \in A$ with $s \stree t$.
Altogether, $s \stree u$.
As $s$ is an element of the antichain $A$ and $u$ properly extends $s$,
we infer that $u\notin A$.\qedhere
\end{description}
\end{proof}

In our discussion of the normality requirement, 
we already agreed that at limit levels $\alpha$,
$T_\alpha$ will consist of elements $\mathbf{b}^\alpha_x$ 
extending nodes $x\in T\restriction\alpha$.
In order to accomplish Clause~(3) of the preceding,
we now need to ensure that given an antichain $A$, 
each $\mathbf{b}^\alpha_x$ extends some element of $A \cap (T\restriction\alpha)$.
For this to be possible, every $x \in T \restriction\alpha$ must be comparable with some element of $A \cap (T\restriction\alpha)$,
meaning that $A \cap (T\restriction\alpha)$ must be a \emph{maximal} antichain 
in $T\restriction\alpha$. 
How do we locate ordinals at which properties of a given structure are replicated?
This question prompts the introduction of elementary submodels.

\subsection{Elementary submodels and diamonds}

Recall that for every regular uncountable cardinal $\varkappa$,
$(H_{\varkappa}, {\in})$ models all axioms of \zfc\ except possibly for the power-set axiom.
We shall be working extensively with elementary submodels $(\mathcal M, {\in})$ of $(H_{\varkappa}, {\in})$, though, by a slight abuse of notation, we will identify these structures with their underlying sets $\mathcal M$ and $H_{\varkappa}$, omitting the mention of the $\in$-relation.

A comprehensive exposition of elementary submodels of $H_\varkappa$ may be found in \cite[Chapter~24]{just1997discovering} and \cite[Chapter~4]{holz2010introduction}. For now, we shall only need to be aware
of the following corollary of the downward L\"owenheim--Skolem theorem.

\begin{fact}\label{cofinally-many-esms} For every parameter $p\in H_{\kappa^+}$, the following set is cofinal in $\kappa$:
$$B(p):=\{ \beta<\kappa\mid \exists \mathcal M\prec H_{\kappa^+}( p\in\mathcal M\ \&\ \mathcal M\cap\kappa=\beta)\}.$$
\end{fact}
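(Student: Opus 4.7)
The plan is to apply the downward L\"owenheim--Skolem theorem in an iterative fashion, using the regularity of $\kappa$ to ensure that the intersection of the resulting model with $\kappa$ is an ordinal. Fix $p \in H_{\kappa^+}$ and an arbitrary $\alpha < \kappa$; it suffices to produce some $\beta \in B(p)$ with $\beta > \alpha$.

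First, I would use L\"owenheim--Skolem to pick $\mathcal{M}_0 \prec H_{\kappa^+}$ of size $<\kappa$ with $\{p\} \cup (\alpha+1) \subseteq \mathcal{M}_0$; this is possible since $|\{p\} \cup (\alpha+1)| \le |\alpha|+\aleph_0 < \kappa$. Then recursively, given $\mathcal{M}_n \prec H_{\kappa^+}$ of cardinality $<\kappa$, set $\beta_n := \sup(\mathcal{M}_n \cap \kappa)$, which is strictly below $\kappa$ by the regularity of $\kappa$, and pick $\mathcal{M}_{n+1} \prec H_{\kappa^+}$ of size $<\kappa$ satisfying $\mathcal{M}_n \cup (\beta_n+1) \subseteq \mathcal{M}_{n+1}$.

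Let $\mathcal{M} := \bigcup_{n<\omega} \mathcal{M}_n$ and $\beta := \sup_{n<\omega}\beta_n$. By the Tarski--Vaught criterion applied to an $\in$-chain of elementary submodels, $\mathcal{M} \prec H_{\kappa^+}$, and evidently $p \in \mathcal{M}$. Since $\alpha+1 \subseteq \mathcal{M}_0$ we have $\beta_0 > \alpha$, so $\beta > \alpha$; and $\beta < \kappa$ since $\kappa$ is regular uncountable and we are taking a countable supremum of ordinals below $\kappa$. To check $\mathcal{M} \cap \kappa = \beta$: the inclusion $\beta \subseteq \mathcal{M}$ holds because every ordinal below $\beta_n$ lies in $\mathcal{M}_{n+1} \subseteq \mathcal{M}$, while any $\gamma \in \mathcal{M} \cap \kappa$ lies in some $\mathcal{M}_n$ and hence satisfies $\gamma < \beta_n \le \beta$. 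Thus $\beta \in B(p)$, establishing cofinality.

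There is no real obstacle here: the argument is a routine ``closing off'' construction, and the only point that requires any care is ensuring that the intersection with $\kappa$ is a genuine ordinal rather than merely a bounded subset of $\kappa$ — which is exactly what the alternating requirement that each $\mathcal{M}_{n+1}$ absorbs $\beta_n + 1$ as a subset accomplishes.
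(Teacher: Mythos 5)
Your proof is correct, and it is the standard ``closing off'' argument via an $\omega$-chain of elementary submodels, which is exactly what the paper has in mind — the paper states this as a Fact without proof, presenting it as a routine corollary of downward L\"owenheim--Skolem, so there is no competing argument to compare against. Two small points worth tightening: the chain $\langle \mathcal{M}_n \rangle$ is a $\subseteq$-chain (not an $\in$-chain), but since $\mathcal{M}_m \subseteq \mathcal{M}_n \prec H_{\kappa^+}$ with $\mathcal{M}_m \prec H_{\kappa^+}$ gives $\mathcal{M}_m \prec \mathcal{M}_n$, it is still an elementary chain and Tarski's union theorem applies; and the step ``$\gamma \in \mathcal{M}_n$ hence $\gamma < \beta_n$'' should read $\gamma \le \beta_n < \beta$, where $\beta_n < \beta$ follows because the $\beta_n$ are strictly increasing (if $\beta_n \in \mathcal{M}_{n+1}$ and $\kappa \in \mathcal{M}_{n+1}$, elementarity yields some $\xi \in \mathcal{M}_{n+1}\cap\kappa$ with $\xi > \beta_n$, so $\beta_{n+1} > \beta_n$). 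With that patch, $\mathcal{M}\cap\kappa = \beta$ holds as claimed.
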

\begin{remark} It is not hard to verify that the set $B(p)$ is, in fact, a club in $\kappa$. But we shall not need that.
\end{remark}

Given a well-founded poset $\mathbb P=(P,{\lhd})$ which is a subset of $H_\kappa$ and $\beta\in B(\mathbb P)$,
for any 
$\mathcal M \prec H_{\kappa^+}$ witnessing that $\beta\in B(\mathbb P)$,
the intersection $P\cap\mathcal M$ is a subset of $P$ that we can think of as being an \emph{initial segment} of $\mathbb P$.
The following proposition shows that this is precisely the case when $\mathbb P=(T,{\stree})$
and $T$ is a streamlined $\kappa$-tree, in which case the initial segment of $\mathbb P$ determined by $\mathcal M$ is nothing but $T\restriction\beta$.
Furthermore, global properties of $(T,{\stree})$ and its derivatives are reflected down to~$\beta$:

\begin{prop}\label{motivate} Suppose that $T$ is a streamlined $\kappa$-tree,
and $\beta\in B(T)$ as witnessed by $\mathcal M\prec H_{\kappa^+}$.
Then:
\begin{enumerate}
\item $T \cap \mathcal M = T\restriction\beta$;
\item If $ A\s T$ is a maximal antichain and $ A\in\mathcal M$, then $A\cap\mathcal M = A\cap(T\restriction\beta)$ is a maximal antichain in $T\restriction\beta$;
\item If $f:T\rightarrow T$ is a nontrivial automorphism, and $f\in\mathcal M$, then $f\cap\mathcal M=f\restriction(T\restriction \beta)$ is a nontrivial automorphism of $T\restriction\beta$.
\end{enumerate}
\end{prop}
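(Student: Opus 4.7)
The plan is to establish (1) first, since both (2) and (3) reduce to routine elementarity arguments once we know that $T \cap \mathcal M$ coincides with the initial segment $T \restriction \beta$. The crucial structural fact being exploited is that each level $T_\alpha$ of our $\kappa$-tree has cardinality strictly less than $\kappa$, which is what allows the elementary submodel $\mathcal M$ (despite perhaps not being closed under $\kappa$-sequences) to contain every element of $T_\alpha$ whenever $\alpha < \beta$.

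For (1), the inclusion $T \cap \mathcal M \subseteq T \restriction \beta$ is essentially automatic: if $t \in T \cap \mathcal M$ then $\dom(t) \in \mathcal M \cap \kappa = \beta$. The substantive direction is the reverse. Given $t \in T \restriction \beta$ of length $\alpha < \beta$, I would first note $T_\alpha \in \mathcal M$ (since $T_\alpha$ is definable from $T, \alpha \in \mathcal M$), then invoke elementarity to extract a bijection $g \colon \mu \to T_\alpha$ inside $\mathcal M$, where $\mu = |T_\alpha| < \kappa$. Since $\mu$ itself lies in $\mathcal M \cap \kappa = \beta$, every index $i < \mu$ belongs to $\mathcal M$, so every $g(i) \in \mathcal M$. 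Thus $T_\alpha \subseteq \mathcal M$, and in particular $t \in \mathcal M$.

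Part (2) then follows quickly: the equality $A \cap \mathcal M = A \cap (T \restriction \beta)$ is immediate from (1) since $A \subseteq T$, and maximality in $T \restriction \beta$ comes from taking any $t \in T \restriction \beta$, noting $t \in \mathcal M$ by (1), and invoking elementarity applied to the assertion ``there exists $s \in A$ comparable with $t$'' (true in $H_{\kappa^+}$ by maximality of $A$) to extract a witness $s \in A \cap \mathcal M$. For part (3), any automorphism preserves levels, so $f$ restricts to a bijection of $T \restriction \beta$ with itself; the equality $f \cap \mathcal M = f \restriction (T \restriction \beta)$ follows from (1) together with the fact that an ordered pair lies in $\mathcal M$ iff both its coordinates do. Nontriviality descends from $H_{\kappa^+} \models \exists s \in T\, (f(s) \neq s)$ via elementarity, yielding some $s \in T \cap \mathcal M = T \restriction \beta$ with $f(s) \neq s$.

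The main obstacle is the reverse inclusion in part (1): one must actually place every node of $T \restriction \beta$ into $\mathcal M$, rather than merely observing that $T \restriction \beta$ is definable from parameters of $\mathcal M$. This is where the $\kappa$-tree hypothesis is used substantively, to force $|T_\alpha|$ itself to be an ordinal below $\beta$. Once this inclusion is in place, the rest of the proposition unfolds almost formally.
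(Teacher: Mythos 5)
Your proof is correct and takes essentially the same approach as the paper: part (1) is established by observing that $\mathcal M$ sees each $T_\alpha$ (for $\alpha<\beta$) as a set of size $<\kappa$ and therefore contains it elementwise, and then (2) and (3) follow by elementarity. The only cosmetic difference is in part (2), where the paper transfers the single statement ``$A$ is a maximal antichain in $T$'' into $\mathcal M$ and concludes directly that $A\cap\mathcal M$ is maximal in $T\cap\mathcal M$, whereas you argue pointwise; and for part (3), which the paper leaves to the reader, your argument is a correct rendering.
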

\begin{proof}  
(1) For all $\alpha<\beta$, by $\alpha,T\in \mathcal M$, we obtain $T_\alpha\in \mathcal M$,
and by $\mathcal M\models |T_\alpha|<\kappa$,
we infer that $T_\alpha\s \mathcal M$. So $T\restriction\beta\s \mathcal M$.

As $\dom(z)\in \mathcal M\cap\kappa$ for all $z \in T\cap \mathcal M$,
we conclude that $T\cap \mathcal M=T\restriction\beta$.

(2) Suppose $A \in \mathcal M$ is a maximal antichain in $T$.
Since $H_{\kappa^{+}}\models A$ is a maximal antichain in $T$,
it follows by elementarity that
$$\mathcal M \models A\text{ is a maximal antichain in } T,$$
so that in fact $A\cap\mathcal M$ is a maximal antichain in $T\cap\mathcal M$.
But $T \cap \mathcal M = T\restriction\beta$ by Clause~(1),
so that also $A\cap\mathcal M = A \cap (T\restriction\beta)$.
Altogether, we infer that
$A\cap\mathcal M = A\cap(T\restriction\beta)$ is a maximal antichain in $T\restriction\beta$,
as sought.

(3) Left to the reader.
\end{proof}

It thus follows from Fact~\ref{cofinally-many-esms} and Proposition~\ref{motivate}(2) that for any maximal antichain $A\s T$,
we can find cofinally many ordinals $\beta<\kappa$
such that $A\cap(T\restriction\beta)$ is a maximal antichain in $T\restriction\beta$.
Coming back to our previous discussion,
we see that as we build our tree, we will be able to seal maximal antichains of the form $A\cap(T\restriction\beta)$, so that the challenge boils down to predicting $A\cap(T\restriction\beta)$
for each and every maximal antichain $A$ of the eventual tree $T$.
This leads us to discussing diamonds.

The combinatorial principle $\diamondsuit(\kappa)$ was coined by Jensen in \cite[p.~293]{jensen1972fine}.
Rather than giving its original definition,
we focus here on an equivalent formulation that is motivated by Fact~\ref{cofinally-many-esms}.

\begin{fact}[{\cite[Lemma~2.2]{paper22}}]\label{def_Diamond_H_kappa}
$\diamondsuit(\kappa)$ is equivalent to the existence of a sequence $\vec{A} = \langle  A_\beta \mid \beta < \kappa \rangle$
of elements of $H_\kappa$
such that, for every parameter $p\in H_{\kappa^{+}}$  and every subset $\Omega\subseteq H_\kappa$, the following set is cofinal in $\kappa$:
$$B(\Omega,p):=\{ \beta<\kappa\mid \exists \mathcal M\prec H_{\kappa^+}(\mathcal M\cap\Omega= A_\beta\ \&\ p\in\mathcal M\ \&\ \mathcal M\cap\kappa=\beta)\}.$$
\end{fact}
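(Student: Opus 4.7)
My plan is to prove the equivalence by translating back and forth between this new formulation and the classical $\diamondsuit(\kappa)$-sequence (a sequence $\langle S_\beta\mid\beta<\kappa\rangle$ with $S_\beta\s\beta$ such that $\{\beta<\kappa\mid S_\beta=X\cap\beta\}$ is stationary for every $X\s\kappa$). The bridge in both directions will be a fixed bijection $\pi\colon\kappa\to H_\kappa$, which exists because $\diamondsuit(\kappa)$ (or the formulation being equivalent to it) forces $|H_\kappa|=\kappa$. Note in particular that $\pi\in H_{\kappa^+}$, so it can be absorbed into any parameter.

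For the forward direction, assume classical $\diamondsuit(\kappa)$ with sequence $\langle S_\beta\mid\beta<\kappa\rangle$, fix $\pi$, and define $A_\beta:=\pi[S_\beta]$. Given $\Omega\s H_\kappa$ and $p\in H_{\kappa^+}$, set $X:=\pi^{-1}[\Omega]\s\kappa$ and apply Fact~\ref{cofinally-many-esms} with parameter $\langle p,\pi\rangle$ to obtain a club-many set of $\beta$ admitting $\mathcal M\prec H_{\kappa^+}$ with $p,\pi\in\mathcal M$ and $\mathcal M\cap\kappa=\beta$. Intersecting with the stationary set $\{\beta\mid S_\beta=X\cap\beta\}$ gives cofinally many candidate $\beta$. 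For any such $\beta$ and $\mathcal M$, elementarity together with $\pi\in\mathcal M$ and $\mathcal M\cap\kappa=\beta$ forces $\mathcal M\cap H_\kappa=\pi[\beta]$ (one direction is by closing under the function $\pi$; the converse uses $\pi^{-1}\in\mathcal M$ to send each element of $\mathcal M\cap H_\kappa$ back to an ordinal in $\mathcal M\cap\kappa=\beta$). Hence $\mathcal M\cap\Omega=\pi[\beta]\cap\Omega=\pi[\beta\cap X]=\pi[S_\beta]=A_\beta$, as required.

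For the backward direction, suppose we have $\vec A$ as in the statement, and define $A'_\beta:=A_\beta$ when $A_\beta\s\beta$ and $A'_\beta:=\emptyset$ otherwise. To check $\langle A'_\beta\mid\beta<\kappa\rangle$ is a classical $\diamondsuit(\kappa)$-sequence, let $X\s\kappa$ and a club $D\s\kappa$ be given; I need some $\beta\in D$ with $A'_\beta=X\cap\beta$. Apply the hypothesis with $\Omega:=X$ and $p:=D$, obtaining cofinally many $\beta$ carrying $\mathcal M\prec H_{\kappa^+}$ with $D\in\mathcal M$, $\mathcal M\cap\kappa=\beta$, and $\mathcal M\cap X=A_\beta$. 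Since $D\in\mathcal M$ is a club in $\kappa$, elementarity yields $\sup(D\cap\beta)=\beta$, so $\beta\in D$ by closure; and since $X\s\kappa$, a direct set-theoretic computation gives $\mathcal M\cap X=X\cap(\mathcal M\cap\kappa)=X\cap\beta$. Thus $A_\beta=X\cap\beta\s\beta$, so $A'_\beta=A_\beta=X\cap\beta$ and $\beta\in D$, showing that $\{\beta\mid A'_\beta=X\cap\beta\}$ meets every club, i.e., is stationary.

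I expect the main (small) technical point to be the identification $\mathcal M\cap H_\kappa=\pi[\beta]$ in the forward direction, which is where absorbing $\pi$ into the parameter is essential. The other mild subtlety is that the statement asks only for cofinality of $B(\Omega,p)$, yet we end up recovering full stationarity of the classical diamond guessing set — this works precisely because any test club $D$ can be absorbed into the parameter $p$, after which cofinality plus elementarity suffices.
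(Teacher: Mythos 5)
Your argument is correct. Note, however, that the paper states this result as a \emph{Fact}, citing \cite[Lemma~2.2]{paper22} (Part~I of the series) rather than reproducing the proof, so there is no in-text proof to compare against. Your approach --- translating through a fixed bijection $\pi:\kappa\leftrightarrow H_\kappa$ between the classical guessing sequence on $\kappa$ and the present formulation on $H_\kappa$, absorbing $\pi$ (and, in the backward direction, the test club) into the parameter --- is the standard and expected one. Two small points worth making explicit in a write-up: the verification that each $A_\beta=\pi[S_\beta]$ lies in $H_\kappa$ uses that $|S_\beta|<\kappa$ together with regularity of $\kappa$ (so $H_\kappa$ is closed under subsets of size ${<}\kappa$); and the implicit fact that $\beta=\mathcal M\cap\kappa$ is a limit ordinal (so that $\sup(D\cap\beta)=\beta$ actually yields $\beta\in D$) deserves a word, though it follows immediately from elementarity. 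Your final remark is exactly the right structural observation: the statement only requires cofinality of $B(\Omega,p)$, yet full stationarity of the classical diamond set is recovered because a test club may always be folded into $p$; this is the same remark the paper records separately in Remark~\ref{rmk_Diamond_H_kappa}.
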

\begin{remark}\label{rmk_Diamond_H_kappa} It is not hard to verify that the cofinal set $B(\Omega,p)$ is, in fact, stationary in $\kappa$.
Also note that a sequence $\vec{A}$ as above must form an enumeration (with repetition) of all elements of $H_\kappa$, thus witnessing the fact that $\diamondsuit(\kappa)$ implies
$|H_\kappa| = \kappa^{<\kappa} = \kappa$.
\end{remark}

It follows from Fact~\ref{def_Diamond_H_kappa} that $\diamondsuit(\kappa)$ provides us a way to anticipate instances of Clause~(2) of Proposition~\ref{motivate}.

\begin{lemma}[cf.~{\cite[Claim~2.3.2]{paper22}}]\label{c232}
Suppose $\diamondsuit(\kappa)$ holds, as witnessed by a sequence $\vec{A} = \langle  A_\beta \mid \beta < \kappa \rangle$ as in Fact~\ref{def_Diamond_H_kappa}.

If $A$ is a maximal antichain in a given streamlined $\kappa$-tree $T$,
then the following set is cofinal in $\kappa$:
$$B := \{ \beta <\kappa \mid  A\cap(T\restriction\beta)= A_\beta\text{ is a maximal antichain in }T\restriction\beta \}.$$
\end{lemma}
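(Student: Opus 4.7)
The plan is to reduce the lemma directly to Fact~\ref{def_Diamond_H_kappa} together with Proposition~\ref{motivate}, by choosing the parameter and the ``target set'' $\Omega$ so that the conclusion of the diamond formulation instantly reads the desired equality.

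First, I would set $\Omega := A$ and $p := \{A, T\}$. Since $A \s T \s H_\kappa$, we have $\Omega \s H_\kappa$ as required; since $|T| = \kappa$ and $|A| \le \kappa$ (Remark~\ref{streamlined-kappa-tree}), we have $p \in H_{\kappa^+}$. Applying Fact~\ref{def_Diamond_H_kappa}, the set $B(\Omega, p)$ is cofinal (indeed stationary, by Remark~\ref{rmk_Diamond_H_kappa}) in~$\kappa$. I claim that $B(\Omega, p) \s B$, which will suffice.

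To see this, fix any $\beta \in B(\Omega, p)$, and let $\mathcal M \prec H_{\kappa^+}$ witness this, so that $\mathcal M \cap \kappa = \beta$, $\{A, T\} \in \mathcal M$, and $\mathcal M \cap A = A_\beta$. In particular $T \in \mathcal M$, so $\beta \in B(T)$ as witnessed by $\mathcal M$, and Proposition~\ref{motivate}(1) gives $T \cap \mathcal M = T \restriction \beta$. Since $A \s T$, it follows that
\[
A_\beta \;=\; \mathcal M \cap A \;=\; \mathcal M \cap T \cap A \;=\; (T \restriction \beta) \cap A \;=\; A \cap (T \restriction \beta),
\]
which gives the first half of the defining property of $B$.

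For the second half, note that $A \in \mathcal M$ and $A$ is a maximal antichain in $T$ by hypothesis, so Proposition~\ref{motivate}(2) applies to $\mathcal M$ and yields that $A \cap \mathcal M = A \cap (T \restriction \beta)$ is a maximal antichain in $T \restriction \beta$. Combined with the displayed equation, this shows $\beta \in B$. Since $B(\Omega, p)$ is cofinal in $\kappa$, so is $B$, completing the proof.

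I don't anticipate any real obstacle; the only point requiring a moment's care is ensuring both $A$ and $T$ end up in $\mathcal M$ (hence the pair $p = \{A, T\}$), and then exploiting $A \s T$ to rewrite $\mathcal M \cap A$ as $A \cap (T \restriction \beta)$ using Proposition~\ref{motivate}(1).
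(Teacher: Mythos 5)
Your proof is correct and follows essentially the same route as the paper: set $\Omega := A$, $p := \{T,A\}$, invoke Fact~\ref{def_Diamond_H_kappa} to obtain a cofinal $B(\Omega,p)$, and then use Proposition~\ref{motivate}(2) to verify $B(\Omega,p) \s B$. The only cosmetic difference is that you invoke Proposition~\ref{motivate}(1) separately to rewrite $\mathcal M \cap A$ as $A \cap (T\restriction\beta)$, whereas the paper reads this equality directly off of Proposition~\ref{motivate}(2), which already packages it.
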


\begin{proof}
Let $p := \{T, A\}$ and $\Omega:=A$.
Recalling Remark~\ref{streamlined-kappa-tree},
we infer that $p \in H_{\kappa^+}$ and $\Omega \s H_\kappa$,
so that by our choice of $\vec{A}$,
the corresponding set $B(\Omega,p)$ of Fact~\ref{def_Diamond_H_kappa} is cofinal in $\kappa$.
To see that $B(\Omega,p) \s B$,
consider any given $\beta \in B(\Omega,p)$, as witnessed by some $\mathcal M \prec H_{\kappa^+}$.
Since $p\in\mathcal M$, by elementarity we infer that $T, A \in \mathcal M$.
By Proposition~\ref{motivate}(2),
we then deduce that $A\cap\mathcal M = A\cap(T\restriction\beta)$ is a maximal antichain in $T\restriction\beta$.
But $\mathcal M\cap A=\mathcal M\cap\Omega= A_\beta$ by our choice of $\mathcal M$, 
and it follows that $\beta \in B$, as sought.
\end{proof}

Thus, when building the tree at the outset using a fixed diamond sequence $\vec{A}$,
we take advantage of the fact that, for many ordinals $\beta$,
$A_\beta$ will be a maximal antichain in $T\restriction\beta$.

\subsection{Coherent ladder systems}\label{subsection:coherent}

We now return to a point we alluded to earlier, in Subsection~\ref{subsection:completing+sealing}.
Suppose we are building a limit level $T_\alpha$.
For $x \in T\restriction\alpha$, in order to construct $\mathbf{b}^\alpha_x$, 
the limit of the ``canonical $\alpha$-branch for $x$'', 
we want to identify an increasing and continuous sequence $\langle x_\beta \mid \beta\in C_\alpha\rangle$ 
of nodes of $T\restriction\alpha$
comparable with $x$ and satisfying $x_\beta\in T_\beta$ for all $\beta\in C_\alpha$. 
In order to continue this recursion through a limit step $\beta \in \acc(C_\alpha)$,
we need to ensure that the limit of the partial sequence so-far identified was not excluded from $T_\beta$.
We do this by insisting on a uniform method for constructing $\mathbf{b}^\alpha_x$, 
so that the limit of the partial sequence $\langle x_\beta \mid \beta \in C_\alpha\cap\beta \rangle$
is exactly $\mathbf{b}^\beta_x$, the limit of the canonical $\beta$-branch for $x$, 
which we would have inserted into $T_\beta$ when constructing that level.
This insistence suggests several requirements whenever $\beta\in\acc(C_\alpha)$:
\begin{enumerate}
\item Coherence of the ladder system:  $C_\beta= C_\alpha\cap\beta$;
\item Microscopic perspective: the identification of the node $x_{\beta'}$, for $\beta'\in C_\beta$,
must not depend on whether we are heading towards $\mathbf{b}^\beta_x$ or $\mathbf{b}^\alpha_x$;
\item Smoothness: we must never exclude any $\mathbf{b}^\beta_x$ when constructing the level $T_\beta$.
\end{enumerate}
It should be clear that if we can comply with requirements (1)--(3) above, 
then we can construct a normal ever-branching $\kappa$-tree.
But we must not forget the task of sealing antichains,
and requirement (3) appears to conflict with the need to comply with Lemma~\ref{equiv-no-antichains}(3).
How can this be resolved?\footnote{A brief comparison of the classic non-smooth approach (requiring nonreflecting stationary sets)
and the modern approach may be found on \cite[p.~1965]{paper22}. The smoothness of our approach is witnessed by Fact~\ref{chrisreflectionthm} below.}

The answer lies in the subtlety of how we seal the antichains,
more precisely, in how we decide \emph{which} maximal antichain to seal at level $T_\alpha$.
Constructing the level $T_\alpha$ will not involve consulting the set $A_\alpha$ 
given by Fact~\ref{def_Diamond_H_kappa}.
Rather, when constructing $T_\alpha$, we will seal antichains that are predicted by $A_\beta$,
for ordinals $\beta\in\nacc(C_\alpha)$.
This approach respects requirements (2) and~(3) above,
but raises the concern of whether every maximal antichain will be predicted by $A_\beta$
for enough ordinals $\beta$ with $\beta\in\nacc(C_\alpha)$.
This concern, together with requirement~(1) above, motivates the following principle:

\begin{definition}[{\cite[Definition~1.3]{paper22}}]\label{xboxdefinitionfrom22} $\boxtimes^-(\kappa)$ asserts the existence of a sequence $\vec C=\langle C_\alpha\mid\alpha<\kappa\rangle$ such that:
\begin{itemize}
\item for all $\alpha<\kappa$, $C_\alpha$ is a closed subset of $\alpha$ with $\sup(C_\alpha)=\sup(\alpha)$;
\item for all $\alpha<\kappa$ and $\beta\in\acc(C_\alpha)$, $C_\beta=C_\alpha\cap\beta$;
\item for every cofinal $B\s\kappa$, there exists an infinite ordinal $\alpha<\kappa$   such that $\sup(\nacc(C_\alpha)\cap B)=\alpha$.
\end{itemize}
\end{definition}
\begin{remarks}\label{remarks-about-xbox}
\begin{enumerate}
\item The first bullet of Definition~\ref{xboxdefinitionfrom22} implies that
$C_0 = \emptyset$, that $\max(C_{\alpha+1}) = \alpha$ for every $\alpha<\kappa$,
and that $C_\alpha$ is a club in $\alpha$ for every $\alpha\in\acc(\kappa)$.
In particular, any $\alpha$ whose existence is asserted in the last bullet
must be a nonzero limit ordinal.
\item If we omit the last bullet, or even weaken it by removing ``$\nacc$'',
then we can trivially build a witnessing sequence by setting $C_\alpha := \alpha$ for every $\alpha<\kappa$.
As we have alluded to in this section, and will see in detail in the proof of Proposition~\ref{prop23},
it is the action of $\boxtimes^-(\kappa)$ at the non-accumulation points that enables the construction of a 
$\kappa$-Souslin tree by appropriately sealing the antichains without ruining the smoothness of the 
identification of canonical $\alpha$-branches throughout the construction.
\item The last bullet implies that the sequence $\vec C$ is \emph{unthreadable},
that is, there is no club $D\subseteq\kappa$ such that $D\cap\alpha = C_\alpha$ for all $\alpha\in\acc(D)$
(see \cite[Lemma~3.2]{paper22}). This bullet should be understood as a \emph{genericity} feature of the coherent sequence (cf.~\cite[Lemma~3.20]{paper28}).
\item Notice that in our primary application of the last bullet of $\boxtimes^-(\kappa)$,
as exemplified by the upcoming proof of Proposition~\ref{prop23},
the set $B$ will be the set of ordinals where a maximal antichain is \emph{predicted},
while the ordinal $\alpha$ will give us a level $T_\alpha$ at which such antichains are \emph{sealed}.
As we shall see, separating the set of ordinals where we predict a maximal antichain from the set of ordinals where we seal 
the predicted antichain will provide a great deal of flexibility.
\item We encourage the reader who is already familiar with the diamond and club principles
to verify that $\diamondsuit(\omega_1)\implies\clubsuit(\omega_1)\implies\boxtimes^-(\omega_1)$.\footnote{See Section~\ref{sectiononclub}.}
We also mention that, by \cite[Corollary~1.10(5)]{paper22}, if $V=L$, then $\boxtimes^-(\kappa)$ holds for every (regular uncountable cardinal) $\kappa$ that is not weakly compact.
In contrast, by \cite[Example~1.26]{paper22}, after L\'evy collapsing a weakly compact cardinal to $\omega_2$,
$\diamondsuit(\omega_2)$ holds, but $\boxtimes^-(\omega_2)$ fails.
\item The minus sign in the notation $\boxtimes^-(\kappa)$ is there to distinguish the latter from the stronger principle $\boxtimes(\kappa)$ of \cite[Definition~1.4]{paper22}.
\end{enumerate}
\end{remarks}

\subsection{A simple construction}

Having developed the machinery in the preceding subsections, we are now ready to prove the following proposition.

\begin{prop}[{\cite[Proposition~2.3]{paper22}}]\label{prop23} Suppose that $\boxtimes^-(\kappa)+\diamondsuit(\kappa)$ holds. Then there exists a  $\kappa$-Souslin tree.
\end{prop}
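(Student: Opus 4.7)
The plan is to construct, by recursion on $\alpha < \kappa$, a normal, ever-branching, streamlined $\kappa$-tree $T \subseteq {}^{<\kappa}\kappa$ level by level, using a $\boxtimes^-(\kappa)$-sequence $\vec C = \langle C_\alpha \mid \alpha < \kappa\rangle$ together with a $\diamondsuit(\kappa)$-sequence $\vec A = \langle A_\beta \mid \beta < \kappa\rangle$ as in Fact~\ref{def_Diamond_H_kappa}. Start with $T_0 := \{\emptyset\}$, and at successors put $T_{\alpha+1} := \{\conc{t}{i} \mid t \in T_\alpha,\ i < 2\}$; this makes the tree ever-branching and inductively keeps $|T_{\alpha+1}| < \kappa$ (recalling from Remark~\ref{rmk_Diamond_H_kappa} that $\diamondsuit(\kappa)$ implies $\kappa^{<\kappa} = \kappa$). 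At a nonzero limit ordinal $\alpha$, I define $T_\alpha := \{\mathbf{b}^\alpha_x \mid x \in T\restriction\alpha\}$, where $\mathbf{b}^\alpha_x$ is a canonical $\alpha$-branch extending $x$, built microscopically along $C_\alpha$; by regularity of $\kappa$ this yields $|T_\alpha| \le |T\restriction\alpha| < \kappa$.

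To define $\mathbf{b}^\alpha_x$, I recursively build a $\sqsubseteq$-increasing continuous sequence of nodes $x_\beta \in T_\beta$ above $x$, indexed by $\beta \in C_\alpha \setminus \dom(x)$, with the convention $x_{\dom(x)} := x$. At a successor step $\beta \in \nacc(C_\alpha)$ with preceding node $x_\gamma$ (where $\gamma$ is the immediate predecessor of $\beta$ in $C_\alpha \cup \{\dom(x)\}$), apply the following microscopic rule: if $A_\beta$ happens to be a maximal antichain of the already-built $T\restriction\beta$, let $a_\beta$ be the unique element of $A_\beta$ comparable with $x_\gamma$ (which exists by maximality, since $x_\gamma \in T\restriction\beta$) and choose $x_\beta \in T_\beta$ to be some extension of $a_\beta$ (which exists by normality of the tree constructed so far); otherwise, let $x_\beta$ be an arbitrary extension of $x_\gamma$ in $T_\beta$. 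At a limit step $\beta \in \acc(C_\alpha)$, set $x_\beta := \bigcup\{x_\gamma \mid \gamma \in C_\alpha \cap \beta,\ \gamma \ge \dom(x)\}$, and finally put $\mathbf{b}^\alpha_x := \bigcup\{x_\beta \mid \beta \in C_\alpha,\ \beta \ge \dom(x)\}$.

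The main obstacle is verifying that at each limit step $\beta \in \acc(C_\alpha)$, the union $x_\beta$ in fact lies in the already-built level $T_\beta$. Here the coherence bullet $C_\beta = C_\alpha \cap \beta$ of $\boxtimes^-(\kappa)$ is decisive: because the microscopic rule at a successor stage depends only on the local data $(\beta, x_\gamma, A_\beta)$ and not on the ambient $\alpha$, a routine induction on $\beta$ shows that the partial sequence just built coincides with the one that was used at the earlier stage of the construction to produce $\mathbf{b}^\beta_x$; hence $x_\beta = \mathbf{b}^\beta_x \in T_\beta$. This is precisely the \emph{smoothness} feature discussed in Subsection~\ref{subsection:coherent}, and it secures that $T$ is a normal, ever-branching, streamlined $\kappa$-tree.

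It remains, by Lemmas~\ref{enough-no-antichains} and~\ref{equiv-no-antichains}, to check that every maximal antichain $A \subseteq T$ is sealed at some level. By Lemma~\ref{c232}, the set $B := \{\beta < \kappa \mid A_\beta = A \cap (T\restriction\beta) \text{ is a maximal antichain in } T\restriction\beta\}$ is cofinal in $\kappa$. Invoking the last bullet of $\boxtimes^-(\kappa)$ with this $B$ yields a nonzero limit ordinal $\alpha < \kappa$ with $\sup(\nacc(C_\alpha) \cap B) = \alpha$. For any $x \in T\restriction\alpha$ pick $\beta \in \nacc(C_\alpha) \cap B$ with $\beta > \dom(x)$; the sealing rule guarantees that $x_\beta$ extends the predicted element $a_\beta \in A_\beta \subseteq A$, whence so does $\mathbf{b}^\alpha_x$. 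Thus every element of $T_\alpha$ extends a member of $A \cap (T\restriction\alpha)$, sealing $A$ at level $\alpha$ and completing the argument.
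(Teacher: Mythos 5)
Your proposal follows the paper's own construction closely---same levels, same use of $\vec{C}$ and $\vec{A}$, same coherence argument for the canonical branches, same sealing argument---but there is a load-bearing imprecision in the microscopic rule that undermines the smoothness step. Your coherence argument hinges on the assertion that ``the microscopic rule at a successor stage depends only on the local data $(\beta, x_\gamma, A_\beta)$,'' so that the partial branch built toward $\mathbf{b}^\alpha_x$ agrees with the one built earlier toward $\mathbf{b}^\beta_x$. But the rule as you state it is nondeterministic: ``choose $x_\beta \in T_\beta$ to be \emph{some} extension of $a_\beta$,'' ``let $x_\beta$ be \emph{an arbitrary} extension.'' Without making those choices canonical, the inductive claim $x_\beta = \mathbf{b}^\beta_x$ has no justification, since the two recursions might have selected different extensions. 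The paper fixes a well-ordering $\lhd$ of ${}^{<\kappa}2$ at the outset and always picks the $\lhd$-least candidate, precisely so that Dependencies~\ref{depends2171} holds in the functional sense Claim~\ref{coherence} requires; you need to do the same.

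Two further slips in your successor step, both repaired at once by the paper's formulation. First, ``the unique element of $A_\beta$ comparable with $x_\gamma$''---uniqueness fails whenever several members of $A_\beta$ extend $x_\gamma$ (if any member lies strictly below or equals $x_\gamma$ it is indeed unique, but otherwise there can be many comparable members above). Second, if the chosen $a_\beta$ satisfies $a_\beta \stree x_\gamma$ rather than $x_\gamma \stree a_\beta$, then ``some extension of $a_\beta$'' in $T_\beta$ need not extend $x_\gamma$, breaking the $\sqsubseteq$-increasingness of the branch. The paper avoids both issues by considering $Q^{\alpha}_{x,\beta} := \{ t\in T_\beta\mid \exists s\in A_{\beta}\ (s\cup b^\alpha_x(\beta^-))\stree t\}$---the union $s\cup b^\alpha_x(\beta^-)$ forces $t$ to extend the previous node no matter which of $s,b^\alpha_x(\beta^-)$ is longer---and then taking the $\lhd$-least member of $Q^{\alpha}_{x,\beta}$ if it is nonempty, and otherwise the $\lhd$-least extension of $b^\alpha_x(\beta^-)$ in $T_\beta$. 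With these repairs your proof is exactly the paper's.
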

\begin{proof} Let $\vec{C} = \langle C_\alpha\mid \alpha<\kappa\rangle$ be a witness to $\boxtimes^-(\kappa)$.
Let $\vec{A} = \langle A_\beta\mid\beta<\kappa\rangle$ be given by Fact~\ref{def_Diamond_H_kappa}.
In addition, let $\lhd$ be some well-ordering of ${}^{<\kappa}2$.

As outlined earlier, we shall recursively construct a sequence $\langle T_\alpha\mid \alpha<\kappa\rangle$ of levels
such that $T:=\bigcup_{\alpha<\kappa}T_\alpha$ will form a normal, ever-branching, streamlined $\kappa$-Souslin tree. 
Furthermore, in this construction we shall ensure that for all $\alpha<\kappa$,
$T_\alpha$ will be a subset of ${}^\alpha2$ of size $\le\max\{\aleph_0,|\alpha|\}$.\footnote{This means that $T$ will be \emph{slim}, see Definition~\ref{def63}.}

\medskip

$\br$ Of course, we begin by letting $T_0:=\{\emptyset\}$.

$\br$ Successor levels are where we will ensure that the tree is ever-branching.
The simplest way to do that is to assign two immediate successors to every node from the previous level.
That is, for every $\alpha<\kappa$, we 
let 
$$T_{\alpha+1}:=\{ \conc{t}{0}, \conc{t}{1}\mid t\in T_\alpha\}.$$

$\br$ Suppose that $\alpha\in\Gamma$, where $\Gamma:=\acc(\kappa)$, and that $\langle T_\beta\mid \beta<\alpha\rangle$ has already been defined.
Recall that $T\restriction\alpha =\bigcup_{\beta<\alpha}T_\beta$,
and that
constructing the level $T_\alpha$ involves deciding which $\alpha$-branches
through $T \restriction \alpha$ will have their limits placed into the tree.
As discussed in Subsection~\ref{subsection:completing+sealing},
we must balance the normality requirement with the need to bound the size of $T_\alpha$ 
and to seal antichains.

Normality requires that for every $x \in T\restriction\alpha$ we include in $T_\alpha$ some node extending~$x$.
As $\alpha$ is a nonzero limit ordinal, our choice of the sequence $\vec{C}$ implies that
$C_\alpha$ is a club  in $\alpha$.
Thus, relying on the fact that the tree $T\restriction\alpha$ was constructed to be normal
(in particular, it is normal at each level $T_\beta$ for $\beta\in C_\alpha$),
and recalling that $T\restriction C_\alpha =\bigcup_{\beta\in C_\alpha}T_\beta$,
the idea for ensuring normality at level $T_\alpha$ is 
to attach to each node $x \in T\restriction C_\alpha$ 
some node $\mathbf{b}^\alpha_x \in {}^\alpha2$  above it,
and then let 
\[
T_\alpha := \{ \mathbf{b}^\alpha_x \mid x \in T \restriction C_\alpha\}.\tag*{$(*)_\alpha$}
\]

Let $x\in T\restriction C_\alpha$ be arbitrary.
We want $\mathbf{b}^\alpha_x$ to be the limit of some canonical $\alpha$-branch for $x$,
that is, some $\alpha$-branch through $T\restriction\alpha$ that contains $x$.
As $\sup(C_\alpha)=\alpha$,
it makes sense to describe $\mathbf{b}^\alpha_x$ as the limit $\bigcup\im(b^\alpha_x)$ of a sequence $b^\alpha_x\in\prod_{\beta\in C_\alpha\setminus\dom(x)}T_\beta$ such that:
\begin{itemize}
\item $b^\alpha_x(\dom(x))=x$;
\item $b_x^\alpha(\beta') \stree b_x^\alpha(\beta)$ for any pair $\beta'<\beta$ of ordinals from $C_\alpha\setminus\dom(x)$;
\item $b^\alpha_x(\beta)=\bigcup\im(b^\alpha_x\restriction\beta)$ for all $\beta\in\acc(C_\alpha\setminus\dom(x))$.
\end{itemize}
We build the sequence $b^\alpha_x$ by recursion:

Let $b^\alpha_x(\dom(x)):=x$.
Next, suppose $\beta^-<\beta$ are successive points of $C_\alpha\setminus\dom(x)$, and $b^\alpha_x(\beta^-)$ has already been defined.
In order to decide $b^\alpha_x(\beta)$, we consult the following set:
$$Q^{\alpha}_{x,\beta} := \{ t\in T_\beta\mid \exists s\in A_{\beta}\ (s\cup b^\alpha_x(\beta^-))\stree t\}.$$
Now, there are the two possibilities:
\begin{itemize}
\item If $Q^{\alpha}_{x,\beta} \neq \emptyset$, then let $b^\alpha_x(\beta)$ be its $\lhd$-least element.
\item Otherwise, let $b^\alpha_x(\beta)$ be the $\lhd$-least element of $T_\beta$ that extends $b^\alpha_x(\beta^-)$.
Such an element must exist, as the level $T_\beta$ was constructed so as to preserve normality.
\end{itemize}

The following is obvious, and is aligned with the microscopic perspective described in requirement (2) of Subsection~\ref{subsection:coherent}.

\begin{dependencies}\label{depends2171}
For any two consecutive points $\beta^-<\beta$ of $\dom(b^\alpha_x)$,
the value of $b^\alpha_x(\beta)$ is completely determined by
$b^\alpha_x(\beta^-)$, $A_\beta$, and $T_\beta$.
\end{dependencies}

Finally, suppose $\beta \in \acc(C_\alpha\setminus\dom(x))$ and $b^\alpha_x\restriction\beta$ has already been defined.
As promised, we let $b^\alpha_x(\beta):=\bigcup\im(b^\alpha_x\restriction\beta)$.
It is clear that $b^\alpha_x(\beta) \in {}^\beta 2$,
but we need more than that:

\begin{claim}\label{coherence}
$b^\alpha_x (\beta) \in T_\beta$.
\end{claim}
\begin{proof}
First, note that since $\beta \in \acc(C_\alpha)$ and  $\vec C$ is a $\boxtimes^-(\kappa$)-sequence,
$C_\alpha\cap\beta=C_\beta$, so that $x \in T\restriction C_\beta$.
So, by the induction hypothesis $(*)_\beta$, we infer that $\mathbf{b}^\beta_x$ is  in $T_\beta$.
As $\mathbf{b}^\beta_x=\bigcup\im(b^\beta_x)$ and $b^\alpha_x(\beta)=\bigcup\im(b^\alpha_x\restriction\beta)$,
it thus suffices to prove that $b^\beta_x=b^\alpha_x \restriction \beta$.

From $C_\alpha\cap\beta=C_\beta$, 
we obtain $\dom(b^\beta_x) = C_\beta \setminus \dom(x) =C_\alpha \cap \beta \setminus \dom(x) = \dom(b^\alpha_x) \cap \beta$. Call the latter by $d$.
Now, we prove that, for every $\delta \in d$, $b^\beta_x(\delta)=b^\alpha_x(\delta)$. By induction:
\begin{itemize}
\item Clearly, $b^\beta_x(\min(d)) = x = b^\alpha_x(\min(d))$.
\item Suppose $\delta^-<\delta$ are successive points of $d$,
and $b^\beta_x (\delta^-) = b^\alpha_x(\delta^-)$.
Then by Dependencies~\ref{depends2171}, also $b^\beta_x(\delta) = b^\alpha_x(\delta)$.

\item For $\delta \in \acc(d)$:
If the sequences are identical up to $\delta$, then their limits must be identical. \qedhere
\end{itemize}
\end{proof}

This completes the definition of the sequence $b^\alpha_x$, and thus of its limit $\mathbf{b}^\alpha_x$,
for each $x\in T\restriction C_\alpha$.
Consequently, the level $T_\alpha$ is defined as promised in $(*)_\alpha$.

Having constructed all levels of the tree, we then let
$T := \bigcup_{\alpha < \kappa} T_\alpha$.
It is clear from the construction that $T$ is a normal, ever-branching, streamlined $\kappa$-tree.
By Lemma~\ref{enough-no-antichains}, to prove that  $T$ is $\kappa$-Souslin,
it suffices to show that it has no $\kappa$-sized antichains.
By Lemma~\ref{equiv-no-antichains}, we thus fix an arbitrary maximal antichain $A \s T$,
and argue that there is some ordinal $\alpha<\kappa$ such that $A \cap (T\restriction\alpha)$ is sealed at level $\alpha$.

To find the sought-after ordinal $\alpha$, let
$$B := \{ \beta <\kappa \mid  A\cap(T\restriction\beta)= A_\beta\text{ is a maximal antichain in }T\restriction\beta \}.$$
By Lemma~\ref{c232}, $B$ is cofinal in $\kappa$.
Thus, by our choice of the sequence $\vec{C}$, let us fix an infinite ordinal $\alpha < \kappa$ for which $\sup (\nacc(C_\alpha) \cap B) = \alpha$.
Note that by Remark~\ref{remarks-about-xbox}(1), $\alpha\in\Gamma$.

\begin{claim} Every node of $T_\alpha$ extends some element of $A \cap (T\restriction\alpha)$.
\end{claim}
\begin{proof} Let $t \in T_\alpha$ be arbitrary. As $\alpha\in\Gamma$, the construction of $T_\alpha$ entails that $t = \mathbf{b}^\alpha_x$ for some node $x \in T \restriction C_\alpha$.
Fix such an $x$. By our choice of $\alpha$, fix $\beta \in \nacc(C_\alpha) \cap B$ above $\dom(x)$.
Denote $\beta^-:=\sup(C_\alpha\cap\beta)$.
Since $\beta \in B$, we know that $A_\beta = A \cap (T \restriction \beta)$ is a maximal antichain in $T \restriction \beta$,
and hence there is some $s \in A_\beta$ comparable with $b^\alpha_x(\beta^-)$,
so that by normality of the tree, $Q^{\alpha}_{x,\beta} \neq \emptyset$.
It follows that we chose $b^\alpha_x(\beta)$ to extend some $s \in A_\beta$.
Altogether, 
\[s \stree b^\alpha_x(\beta) \stree  \bigcup\nolimits_{ \beta \in C_\alpha \setminus \dom(x) } b^\alpha_x (\beta)=\mathbf{b}^\alpha_x = t.\qedhere\]
\end{proof}
This completes the proof.
\end{proof}

Now that we have built a $\kappa$-Souslin tree from $\boxtimes^-(\kappa) + \diamondsuit(\kappa)$,
we mention various scenarios in which these hypotheses are known to be valid:

\begin{fact}\label{thm218}
$\boxtimes^-(\kappa)+\diamondsuit(\kappa)$ holds, assuming any of the following:
\begin{enumerate}
\item $\kappa$ is a regular uncountable cardinal that is not weakly compact, and $V=L$ \cite[Corollary~1.10(5)]{paper22};
\item $\kappa = \aleph_1$ and $\diamondsuit(\aleph_1)$ holds \cite[Theorem~3.6]{paper22};
\item $\kappa = \lambda^+$ for $\lambda$ uncountable, and $\square(\lambda^+)+\gch$ holds
\cite[Corollary~4.5]{paper24};
\item $\kappa = \lambda^+$ for $\lambda$ uncountable, and $\square_\lambda+\ch_\lambda$ holds
\cite[Corollary~3.9]{paper22};
\item $\kappa = \lambda^+$ for $\lambda \geq \beth_\omega$,
and $\square(\lambda^+)+\ch_\lambda$ holds \cite[Corollary~4.7]{paper24}.
\end{enumerate}
\end{fact}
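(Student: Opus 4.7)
The plan is to address each of the five items separately, since they rely on different hypotheses, but in every case the strategy factors through producing a $\boxtimes^-(\kappa)$-sequence $\vec C$ and, independently, a $\diamondsuit(\kappa)$-sequence $\vec A$. First I would dispose of (2) using the remark that $\diamondsuit(\aleph_1)\Rightarrow\clubsuit(\aleph_1)\Rightarrow\boxtimes^-(\aleph_1)$: the first implication is classical, and the second is obtained by assigning to each $\alpha\in\acc(\omega_1)$ a ladder $C_\alpha$ of order type $\omega$ whose non-accumulation points run through the $\clubsuit$-set guessed at $\alpha$; coherence of the $C_\alpha$'s at accumulation points is automatic since $\acc(C_\alpha)=\emptyset$.

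For (1), I would work in $L$ and use Jensen's fine-structural machinery: at every regular uncountable $\kappa$ that is not weakly compact, fine structure yields both $\diamondsuit(\kappa)$ and a coherent sequence $\langle C_\alpha\mid\alpha<\kappa\rangle$ whose unthreadability is precisely what the failure of weak compactness at $\kappa$ provides. The genericity clause (third bullet of Definition~\ref{xboxdefinitionfrom22}) is then extracted by diagonalizing against potential threads using the diamond. For (3)--(5), the common pattern is that the $\square$-type hypothesis yields $\vec C$ and the cardinal-arithmetic hypothesis yields $\vec A$: under $\square_\lambda+\ch_\lambda$ both come from the classical Jensen-type construction; under $\square(\lambda^+)+\gch$ one produces $\vec A$ via the argument of \cite{paper24}; and under $\square(\lambda^+)+\ch_\lambda$ with $\lambda\ge\beth_\omega$ one invokes Shelah's theorem that $\ch_\lambda$ alone implies $\diamondsuit(\lambda^+)$ in this range. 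In each subcase, the coherent sequence produced by the $\square$-hypothesis must then be massaged so that non-accumulation points hit arbitrary cofinal sets; this is the content of the respective cited corollary.

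The main obstacle across all items, and the feature that distinguishes $\boxtimes^-$ from a naive coherent sequence, is the third bullet of Definition~\ref{xboxdefinitionfrom22}: for every cofinal $B\s\kappa$ one must find $\alpha<\kappa$ with $\sup(\nacc(C_\alpha)\cap B)=\alpha$. For a trivial coherent sequence such as $C_\alpha:=\alpha$ this fails catastrophically, so a genuine unthreadability / genericity argument is needed, and it is here that the hypotheses are really used. Once $\vec C$ is built, the diamond $\vec A$ is produced by the standard construction appropriate to each scenario, and the two sequences are simply juxtaposed: no interaction between $\vec C$ and $\vec A$ is required by the statement of Fact~\ref{thm218} itself. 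Therefore my proposal is to collect, for each clause, the corresponding construction from the cited source, verify that it genuinely produces the non-accumulation genericity feature, and observe that the separate diamond construction may be appended without interference.
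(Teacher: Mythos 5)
The paper gives no proof of this Fact beyond the list of citations, so the question is whether your elaboration of those citations is faithful. Your self-contained argument for clause~(2) is correct and is exactly what Remark~\ref{remarks-about-xbox}(5) invites the reader to check: taking $C_\alpha:=X_\alpha$ for a $\clubsuit(\aleph_1)$-sequence works because a cofinal set of order type $\omega$ is automatically closed in $\alpha$, $\acc(C_\alpha)=\emptyset$ makes the coherence bullet vacuous, and $\clubsuit$-guessing delivers the nacc-hitting clause. For the remaining clauses you correctly defer to the cited corollaries, which is all the paper itself does.

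The one genuine slip is in clause~(5). You write that under $\square(\lambda^+)+\ch_\lambda$ one ``invokes Shelah's theorem that $\ch_\lambda$ alone implies $\diamondsuit(\lambda^+)$ in this range,'' as if $\lambda\ge\beth_\omega$ were the hypothesis of that theorem. It isn't: Shelah's diamond theorem yields $\diamondsuit(\lambda^+)$ from $\ch_\lambda$ for \emph{every} uncountable $\lambda$. The role of $\lambda\ge\beth_\omega$ is entirely on the $\boxtimes^-(\lambda^+)$ side. In \cite[Corollary~4.7]{paper24}, one cannot lean on $\gch$ (unlike clause~(3)) to massage the $\square(\lambda^+)$-sequence into one with the nacc-guessing feature; instead one invokes Shelah's Revised GCH Theorem, which is available only above $\beth_\omega$, to recover the needed cardinal arithmetic. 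So your closing picture — square hypothesis gives $\vec C$, cardinal arithmetic gives $\vec A$, and the two are simply juxtaposed — is accurate for the final statement but hides the fact that the arithmetic is used a second time, and essentially, in making $\vec C$ itself; it is precisely that second use which separates the hypotheses of clauses (3) and (5).
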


It follows from Clause~(3) of the preceding that in the Harrington--Shelah
model \cite[Theorem~A]{MR783595}, $\boxtimes^-(\kappa)+\diamondsuit(\kappa)$ holds for $\kappa=\aleph_2$,
and, in addition, every stationary subset of $E^{\aleph_2}_{\aleph_0}$ reflects,
meaning we are far away from the Gregory scenario  \cite{MR485361}.
Furthermore, $\boxtimes^-(\kappa)+\diamondsuit(\kappa)$ is compatible with the reflection of \emph{all} 
stationary subsets of $\kappa$:

\begin{fact}[{\cite[Theorem~1.12]{lambie2017aronszajn}}]\label{chrisreflectionthm}
Modulo a large cardinal hypothesis,
there is a model of $\zfc+\gch$
in which $\boxtimes^-(\kappa)+\diamondsuit(\kappa)$ holds,
and 	every stationary subset of $\kappa$ reflects, where $\kappa$ can be taken to be $\aleph_{\omega+1}$,
or the first inaccessible cardinal.
\end{fact}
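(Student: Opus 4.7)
The plan is to combine a forcing that adds a $\boxtimes^-(\kappa)$-sequence and a $\diamondsuit(\kappa)$-sequence with a mechanism that produces full stationary reflection at $\kappa$, starting from appropriate large cardinals. The fundamental tension is that $\boxtimes^-(\kappa)$ is an ``anti-threading'' principle whose third clause (genericity at non-accumulation points) is historically associated with failures of reflection, while we insist that every stationary subset of $\kappa$ nevertheless reflect. The two ingredients must therefore be calibrated against each other.

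For the first inaccessible case, I would begin with a model of $\gch$ containing a weakly compact cardinal above $\kappa$, with $\kappa$ itself inaccessible. First, force with $\mathbb{P}_\boxtimes$, whose conditions are bounded sequences $\langle C_\alpha \mid \alpha \le \beta\rangle$ satisfying the first two clauses of Definition~\ref{xboxdefinitionfrom22}, ordered by end-extension. This forcing is ${<}\kappa$-strategically closed and, under $\gch$, has the $\kappa^+$-chain condition, so it preserves cardinals, cofinalities, and any existing instance of $\diamondsuit(\kappa)$; density arguments exploiting freedom at non-accumulation points give the third clause of $\boxtimes^-(\kappa)$ in the extension. A subsequent L\'evy-style collapse of the weakly compact to $\kappa^+$ then produces full stationary reflection at $\kappa$ via a generic elementary embedding lifted from weak compactness, while the preparation must be arranged so that the $\boxtimes^-$-sequence is preserved and $\diamondsuit(\kappa)$ is not destroyed.

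For the $\aleph_{\omega+1}$ case, reflection at the successor of a singular of countable cofinality is known (by Magidor) to have the consistency strength of a supercompact, so I would start with a supercompact $\lambda$, Laver-prepare it, and then perform a Prikry-style forcing singularising $\lambda$ to cofinality $\omega$, so that $\lambda = \aleph_\omega$ and $\kappa = \lambda^+ = \aleph_{\omega+1}$ in the final model. The $\boxtimes^-(\kappa)$-sequence and the $\diamondsuit(\kappa)$-sequence would be added during the preparation, and a lifting argument from the supercompactness embedding, applied to a master condition aligned with the $\boxtimes^-$-sequence, would guarantee stationary reflection at $\kappa$ in the extension.

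The main obstacle is verifying that the $\boxtimes^-(\kappa)$-sequence survives every subsequent stage of the iteration without being threaded. Even though $\mathbb{P}_\boxtimes$ produces a genuine $\boxtimes^-$-sequence over its ground model, a later collapse or Prikry-style forcing could in principle add a club threading the sequence or add a stationary set that avoids all non-accumulation predictions. The resolution is to arrange the whole iteration so that the quotient past the $\boxtimes^-$-forcing is sufficiently $\kappa$-distributive and so that the master condition in each lifting step respects the coherent sequence; this balance between distributivity on one side and the generic embeddings producing reflection on the other is the technical heart of the argument, and is exactly what pins the consistency strength to the stated large cardinal hypotheses.
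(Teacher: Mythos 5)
The key structural error is in the inaccessible case. You place the weakly compact cardinal \emph{above} $\kappa$ and then collapse it to $\kappa^+$, hoping to inherit reflection at $\kappa$ from the generic embedding. But a $\kappa$-closed collapse of something above $\kappa$ changes neither the stationary subsets of $\kappa$ nor the stationary subsets of any ordinal $\alpha<\kappa$, so the reflection status of $\kappa$ in the final model is exactly what it was in the intermediate model, where $\kappa$ is merely inaccessible. Reflection at an inaccessible $\kappa$ does not come for free, and a collapse above $\kappa$ provides no mechanism to supply it. The correct starting point (and the one used in~\cite{lambie2017aronszajn}) is to take $\kappa$ \emph{itself} weakly compact and force a $\boxtimes^-(\kappa)$-sequence with a $\kappa$-strategically closed poset $\mathbb S$. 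The crucial positive fact is that $\mathbb S$ followed by its threading quotient $\mathbb T$ has a dense $\kappa$-closed subset, so a weakly compact embedding for $\kappa$ lifts through $\mathbb S*\mathbb T$; since $\mathbb T$ is ${<}\kappa$-distributive over $V^{\mathbb S}$, any stationary $S\subseteq\kappa$ in $V^{\mathbb S}$ remains stationary after adding the thread, reflects there via the lifted embedding, and the reflecting ordinal and its club subset live in $V^{\mathbb S}$. Your final paragraph shows you are aware of the threading issue, but you present it purely as an obstacle (a later forcing might thread your $\boxtimes^-$-sequence), when in fact the threading quotient is not a threat to be managed but the very engine of the reflection argument.

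For the $\aleph_{\omega+1}$ case, the plan of a supercompact with Laver preparation followed by a Prikry-style singularization is at least pointed in the right direction, and identifies the correct consistency-strength regime, but it leaves the two genuinely hard steps unaddressed: showing that the $\boxtimes^-(\aleph_{\omega+1})$-sequence survives the Prikry-type forcing (which is not $\kappa$-distributive, so the ``distributive quotient'' heuristic does not apply there), and arranging the master condition so that the lifted supercompactness embedding actually yields reflection relative to the coherent sequence. Without these, the argument is a plausible sketch rather than a proof.
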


By Proposition~\ref{prop23}, we get a $\kappa$-Souslin tree uniformly in all of these scenarios!

After developing some more machinery in the next few sections,
we shall return in Section~\ref{constructions-section} to construct a $\kappa$-Souslin tree from hypotheses considerably weaker than the ones here.

\section{Interlude: The \texorpdfstring{$\clubsuit$}{club} principle}\label{sectiononclub}

The following principle was introduced by Ostaszewski \cite{ostaszewski1976countably} for the special case $S = \kappa=\aleph_1$.
\begin{definition}\label{def-club}
For a stationary set $S\subseteq\kappa$,
the principle $\clubsuit(S)$ asserts the existence of a sequence
$\langle X_\delta \mid \delta \in S \rangle$ such that:
\begin{enumerate}
\item for every $\delta\in S\cap\acc(\kappa)$, $X_\delta$ is a cofinal subset of $\delta$ with order-type $\cf(\delta)$;
\item for every cofinal subset $X\s \kappa$, the following set is stationary:
$$\{ \delta \in S \mid X_\delta\subseteq X \}.$$
\end{enumerate}
\end{definition}

As $\clubsuit(\omega_1)$ entails $\boxtimes^-(\omega_1)$, it is worth spending some time to present some of the techniques involved in manipulating and improving the former.

\begin{definition} For any two sets of ordinals $A$ and $B$, we say that $A$ is \emph{$B$-separated}
iff for every pair $\alpha<\alpha'$ of ordinals from $A$, there exists $\beta\in B$ with $\alpha<\beta<\alpha'$.
\end{definition}

\begin{lemma}\label{thinning-out} For any two cofinal subsets $A,B$ of some limit nonzero ordinal $\delta$,
there exists a cofinal subset $A'\s A$ such that $A'$ is $B$-separated.
\end{lemma}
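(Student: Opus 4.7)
The plan is a straightforward transfinite recursion of length $\cf(\delta)$, interleaving elements of $A$ and $B$.

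Let $\lambda := \cf(\delta)$ and fix, at the outset, a strictly increasing cofinal sequence $\langle \gamma_i \mid i < \lambda \rangle$ in $\delta$. I will recursively define a strictly increasing sequence $\langle \alpha_i \mid i < \lambda \rangle$ of elements of $A$ together with an auxiliary sequence $\langle \beta_i \mid i < \lambda \rangle$ of elements of $B$, in such a way that $\alpha_i < \beta_i < \alpha_{i+1}$ and $\alpha_{i+1} > \gamma_i$ hold for every $i < \lambda$. Concretely: set $\alpha_0 := \min(A)$; given $\alpha_i$, use cofinality of $B$ to pick $\beta_i := \min(B \setminus (\alpha_i + 1))$, and then use cofinality of $A$ to pick $\alpha_{i+1} := \min(A \setminus (\max\{\beta_i,\gamma_i\}+1))$; at a limit $j < \lambda$, note that $\sup_{i<j}\alpha_i < \delta$ because $j < \cf(\delta)$ and each $\alpha_i < \delta$, and set $\alpha_j := \min(A \setminus (\sup_{i<j}\alpha_i + 1))$.

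Then let $A' := \{ \alpha_i \mid i < \lambda\}$, which is plainly a subset of $A$. Since $\alpha_{i+1} > \gamma_i$ for all $i < \lambda$ and the $\gamma_i$'s are cofinal in $\delta$, the set $A'$ is cofinal in $\delta$. To see that $A'$ is $B$-separated, let $\alpha < \alpha'$ be any two elements of $A'$; write $\alpha = \alpha_i$ and $\alpha' = \alpha_{i'}$ with $i < i'$, so that $i + 1 \le i'$, and observe that $\beta_i \in B$ satisfies $\alpha_i < \beta_i < \alpha_{i+1} \le \alpha_{i'}$, as required.

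There is essentially no obstacle in this proof; the only point meriting attention is that the recursion runs to length exactly $\cf(\delta)$ (not further), so that at every limit stage $j < \lambda$ the partial supremum $\sup_{i<j}\alpha_i$ remains strictly below $\delta$ and the recursion never stalls. If a reader prefers, one could also present the construction as a single non-inductive definition by transfinite recursion on $\lambda$ defining the pair $(\alpha_i,\beta_i)$ simultaneously, but the step-by-step form above is cleaner.
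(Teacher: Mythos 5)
Your proof is correct and takes essentially the same approach as the paper: a transfinite recursion of length $\cf(\delta)$ along a fixed cofinal sequence, interleaving elements of $A$ and $B$ so that between each consecutive pair $\alpha_i<\alpha_{i+1}$ there is a designated $\beta_i\in B$. The paper states the construction more tersely as picking a sequence of pairs $(\alpha_i,\beta_i)$ satisfying $\delta_i<\alpha_i<\beta_i<\alpha_{i+1}$, whereas you spell out the explicit choices (taking minima) and flag the limit-stage bound $\sup_{i<j}\alpha_i<\delta$; this is a matter of exposition rather than a different argument.
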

\begin{proof} Let $\langle \delta_i\mid i<\cf(\delta)\rangle$ be a strictly increasing sequence of ordinals converging to $\delta$.
Recursively construct a sequence $\langle (\alpha_i,\beta_i)\mid i<\cf(\delta)\rangle$ such that,
for all $i<j<\cf(\delta)$:
\begin{itemize}
\item $\alpha_i\in A$,
\item $\beta_i\in B$, and
\item $\delta_i<\alpha_i<\beta_i<\alpha_{i+1}\le\alpha_j$.
\end{itemize}
Evidently, $A':=\{\alpha_i\mid i<\cf(\delta)\}$ is as sought.
\end{proof}

\begin{cor}\label{equivofclub} Suppose $S\s\kappa$ is stationary.
Then $\clubsuit(S)$
is equivalent to the existence of a sequence
$\langle A_\delta \mid \delta \in S \rangle$ such that, for every cofinal subset $A\s \kappa$, there exists a nonzero $\delta\in S$ such that $A_\delta\subseteq A\cap\delta$ and $\sup(A_\delta)=\delta$.
\end{cor}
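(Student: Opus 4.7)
The plan is to prove the two implications separately, with the backward direction doing essentially all the work. For the forward direction, I will just set $A_\delta := X_\delta$. Given any cofinal $A \s \kappa$, Clause~(2) of Definition~\ref{def-club} produces a stationary set of $\delta \in S$ with $X_\delta \s A$; intersecting with the club $\acc(\kappa)$ yields a nonzero $\delta \in S \cap \acc(\kappa)$ for which, by Clause~(1), $A_\delta = X_\delta$ is a cofinal subset of $\delta$ contained in $A$. In particular $A_\delta \s A \cap \delta$ and $\sup(A_\delta) = \delta$, as required.

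For the backward direction, given $\langle A_\delta \mid \delta \in S\rangle$ as in the hypothesis, I will construct a $\clubsuit(S)$-sequence $\langle X_\delta \mid \delta \in S\rangle$ as follows. For each $\delta \in S \cap \acc(\kappa)$ whose $A_\delta$ happens to be a cofinal subset of $\delta$, let $X_\delta$ be some cofinal subset of $A_\delta$ of order type $\cf(\delta)$; for all remaining $\delta \in S \cap \acc(\kappa)$, let $X_\delta$ be an arbitrary cofinal subset of $\delta$ of order type $\cf(\delta)$; and for $\delta \in S \setminus \acc(\kappa)$, let $X_\delta := \emptyset$. Clause~(1) of Definition~\ref{def-club} then holds by construction.

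The main obstacle is verifying Clause~(2), because the corollary's hypothesis yields only a single nonzero $\delta$, whereas $\clubsuit(S)$ demands stationarily many. I will overcome this by encoding the club into the cofinal set before invoking the hypothesis. Concretely, given a cofinal $X \s \kappa$ and a club $C \s \kappa$, I will apply Lemma~\ref{thinning-out} with $\kappa, X, C$ in the roles of $\delta, A, B$ to get a cofinal $X' \s X$ which is $C$-separated. Feeding $X'$ into the hypothesis produces a nonzero $\delta \in S$ with $A_\delta \s X'$ and $\sup(A_\delta) = \delta$; since $A_\delta \s \delta$ and $\sup(A_\delta) = \delta > 0$, it follows that $\delta \in \acc(\kappa)$ and $A_\delta$ is cofinal in $\delta$, whence by construction $X_\delta \s A_\delta \s X$. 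Finally, $C$-separation places an element of $C$ strictly between any two elements of $A_\delta \s X'$, so cofinality of $A_\delta$ in $\delta$ makes $C \cap \delta$ cofinal in $\delta$; closedness of $C$ then yields $\delta \in C$, furnishing the required $\delta \in S \cap C$ and completing the proof.
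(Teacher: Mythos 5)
Your proof is correct and takes essentially the same approach as the paper's: the paper omits the forward direction as trivial, and its backward argument constructs $X_\delta$ from $A_\delta$ exactly as you do, then uses Lemma~\ref{thinning-out} to thin out $X$ into a club-separated cofinal set so that the single $\delta$ produced by the hypothesis is forced into the club, exactly matching your argument.
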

\begin{proof} We focus on the nontrivial (that is, backward) implication.
Let $\vec A=\langle A_\delta \mid \delta \in S \rangle$ be as above.
For every $\delta\in S\cap\acc(\kappa)$, if $A_\delta$ happens to be a cofinal subset of $\delta$,
then let $X_\delta$ be a cofinal subset of $A_\delta$ of order-type $\cf(\delta)$;
otherwise, let $X_\delta$ be an arbitrary cofinal subset of $\delta$ of order-type $\cf(\delta)$.
For every $\delta\in S\setminus\acc(\kappa)$, just let $X_\delta := \emptyset$.

To see that $\langle X_\delta\mid\delta\in S\rangle$ is a $\clubsuit(S)$-sequence,
fix an arbitrary cofinal subset $X\s\kappa$ and a club $B\s\kappa$;
we must find $\delta\in S\cap B$ with $X_\delta\s X$. 

By Lemma~\ref{thinning-out}, let $A$ be a cofinal subset of $X$ that is $B$-separated.
By the choice of $\vec A$, let us fix a nonzero ordinal $\delta\in S$ such that $A_\delta\subseteq A\cap\delta$ and $\sup(A_\delta)=\delta$.
In particular $\sup(A\cap\delta) = \delta$, and so by $B$-separation, also $\sup(B\cap\delta) = \delta$.
But $B$ is closed, so that $\delta\in B$. In addition, $X_\delta\s A_\delta\s A\s X$, as sought.
\end{proof}

\begin{lemma}\label{infinite-club-matrix} Suppose that $\kappa^\theta=\kappa$, $S\s\kappa$, and $\clubsuit(S)$ holds.
Then there exists a matrix $\langle X_\delta^\tau\mid \delta\in S,\ \tau\le\theta\rangle$ such that, 
for every sequence $\langle X^\tau\mid \tau\le\theta\rangle$ of cofinal subsets of $\kappa$, 
there exist stationarily many $\delta\in S$, such that, for all $\tau\le\theta$,
$X^\tau_\delta\s X^\tau\cap\delta$ and $\sup(X^\tau_\delta)=\delta$.
\end{lemma}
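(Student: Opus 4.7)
Let $\vec Y = \langle Y_\delta \mid \delta \in S\rangle$ witness $\clubsuit(S)$.  My plan is to construct the matrix via a coding bijection supplied by the cardinal arithmetic hypothesis.  Since $\kappa^\theta = \kappa$ implies $|{}^{\theta+1}\kappa| = \kappa^{\theta+1} = \kappa$, I fix a bijection $\pi : \kappa \to {}^{\theta+1}\kappa$; a standard closure argument shows that $C := \{\delta < \kappa \mid \pi[\delta] = {}^{\theta+1}\delta\}$ is a club in $\kappa$.  Writing $\pi^\tau(\alpha) := \pi(\alpha)(\tau)$ for the coordinate projections, the matrix I will define is
\[
X_\delta^\tau := \pi^\tau[Y_\delta] \qquad (\delta \in S,\ \tau \le \theta),
\]
so that for $\delta \in S \cap C$, each $X_\delta^\tau$ is a subset of $\delta$.

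To verify the prediction property, given any sequence $\langle X^\tau \mid \tau \le \theta\rangle$ of cofinal subsets of $\kappa$, I form
\[
Y := \pi^{-1}\bigl[\textstyle\prod_{\tau \le \theta} X^\tau\bigr] = \{\alpha < \kappa \mid \forall \tau \le \theta,\ \pi^\tau(\alpha) \in X^\tau\}.
\]
Since $|Y| = \kappa^{\theta+1} = \kappa$, the set $Y$ is cofinal in $\kappa$.  By $\clubsuit(S)$, the set $T := \{\delta \in S \mid Y_\delta \subseteq Y\}$ is stationary; after intersecting with $C$ (still stationary), every $\delta \in T$ satisfies $X_\delta^\tau = \pi^\tau[Y_\delta] \subseteq X^\tau \cap \delta$ for every $\tau \le \theta$.

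The step I expect to be hardest is verifying that $\sup(X_\delta^\tau) = \delta$ for each $\tau$ at stationarily many $\delta \in T$.  Since the projections $\pi^\tau$ are typically $\kappa$-to-one, the image of a cofinal subset of $\delta$ need not be cofinal in $\delta$ without additional care.  I plan to address this by choosing $\pi$ so that $\pi(\alpha) = (\alpha, \ldots, \alpha)$ on some club $D \subseteq \kappa$ (feasible because the diagonal of ${}^{\theta+1}\kappa$ has cardinality $\kappa$), together with refining the application of $\clubsuit(S)$: instead of applying it to $Y$, apply it to a cofinal subset $Y^\star \subseteq Y$ constructed so that any cofinal subset of $Y^\star$ below $\delta$ meets the cofinal sets $\{\alpha < \kappa \mid \pi^\tau(\alpha) > \gamma\}$ for all $\gamma < \delta$ and $\tau \le \theta$.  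The construction of such a $Y^\star$, using Lemma \ref{thinning-out} iteratively while preserving cofinality in $\kappa$, is the technical heart of the argument; once in place, $\clubsuit(S)$ applied to $Y^\star$ yields stationarily many $\delta$ for which every $\pi^\tau[Y_\delta]$ is cofinal in $\delta$, completing the verification.
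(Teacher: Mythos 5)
Your overall architecture matches the paper's proof exactly: fix a bijection (or enumeration) of ${}^{\theta+1}\kappa$, let the matrix entries be the coordinate-projections of the $\clubsuit$-guesses, take preimages of cofinal sets under the coding, apply $\clubsuit$, and restrict to a club where the coding is ``bounded''.  You also correctly flag that the delicate point is getting $\sup(X^\tau_\delta)=\delta$, since the projections are $\kappa$-to-one.  However, the fix you sketch does not actually close this gap.

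The diagonal idea is a red herring: if $\pi(\alpha)=(\alpha,\dots,\alpha)$ for $\alpha$ in a club $D$, then $\alpha\in Y\cap D$ forces $\alpha\in\bigcap_{\tau\le\theta}X^\tau$, which may be empty, so the diagonal and $Y$ need not interact at all.  More seriously, Lemma~\ref{thinning-out} gives $B$-separation, which is the wrong inequality in this context.  If $Y^\star$ is $B$-separated for some club $B$ on which $\pi$ is bounded, then for $\alpha<\alpha'$ in $Y^\star$ one gets $\pi^\tau(\alpha)<\alpha'$; what is actually needed is the reversed relation $\alpha<\pi^\tau(\alpha')$, so that the projections of any cofinal piece of $Y^\star$ are forced upward.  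Iterating Lemma~\ref{thinning-out} over $\tau\le\theta$ (possibly infinitely many times) and over $\gamma<\kappa$ would also raise cofinality-preservation problems that you do not address.

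The paper achieves the needed domination directly: it defines an injection $g:\kappa\to\kappa$ sending $\iota$ to the least code of the tuple $\langle X^\tau(\iota)\mid\tau\le\theta\rangle$, so that $f_{g(\iota)}(\tau)=X^\tau(\iota)\ge\iota$, and then recursively builds a cofinal $A\subseteq\im(g)$ so that for any $\alpha<\alpha'$ in $A$ one has $\alpha<\min_{\tau\le\theta}f_{\alpha'}(\tau)$.  It is this tailored property — every later code's coordinates dominate every earlier element of $A$ — that yields $\sup(X^\tau_\delta)=\delta$ once $\clubsuit$ gives $A_\delta\subseteq A$ with $\sup(A_\delta)=\delta$.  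Your $Y$ would also admit such a thinning (one picks, at stage $j$, a code of a tuple $\langle x^\tau\rangle$ with each $x^\tau\in X^\tau$ above $\sup_{i<j}\alpha_i$, and above the previously chosen codes), but the construction is a bespoke recursion, not an iterated application of Lemma~\ref{thinning-out}; as written, your proof has a gap precisely at the step you identified as the technical heart.
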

\begin{proof} Let $\langle A_\delta\mid\delta\in S\rangle$ be a $\clubsuit(S)$-sequence.
Fix an enumeration $\langle f_\alpha\mid\alpha<\kappa\rangle$ of ${}^{\theta+1}\kappa$.
Fix a club $D$ such that for all $\delta\in D$ and $\alpha<\delta$, $\sup(\im(f_\alpha))<\delta$.
For all $\delta\in S$ and $\tau\le\theta$, let $X^\tau_\delta:=\{ f_\alpha(\tau)\mid \alpha\in A_\delta\}$.
To see that $\langle X_\delta^\tau\mid \delta\in S, \tau\le\theta\rangle$ is as sought,
fix an arbitrary sequence $\langle X^\tau\mid \tau\le\theta\rangle$ of cofinal subsets of $\kappa$.
For every $\iota<\kappa$, let $X^\tau(\iota)$ denote the unique element $\gamma\in X^\tau$ such that $\otp(X^\tau\cap\gamma)=\iota$.
Define $g:\kappa\rightarrow\kappa$ by stipulating:
$$g(\iota):=\min\{\alpha<\kappa\mid \forall \tau\le\theta(f_\alpha(\tau)=X^\tau(\iota))\}.$$
Notice that $g$ is injective, so that $\im(g)$ is a cofinal subset of $\kappa$.
Fix a cofinal subset $A$ of $\im(g)$ such that, for any pair $\alpha<\alpha'$ of ordinals from $A$, we have $\alpha<\min_{\tau\le\theta}f_{\alpha'}(\tau)$.
Consider the stationary set:
$$S':=\{ \delta\in S\cap D \cap\acc(\kappa) \mid A_\delta\s A\}.$$
Let $\delta\in S'$ and $\tau\le\theta$. We claim that $X^\tau_\delta\s X^\tau\cap\delta$ and $\sup(X^\tau_\delta)=\delta$. To see this, let $\alpha\in A_\delta$ be arbitrary.
As $A_\delta\s A\s\im(g)$ and $\alpha\in\delta\in D$, this means that $f_\alpha(\tau)\in X^\tau\cap\delta$.
Finally, as $\sup(A_\delta)=\delta$ and $\alpha<f_{\alpha'}(\tau)$ for any pair $\alpha<\alpha'$ of ordinals from $A$, 
we infer that $\sup(X^\tau_\delta)=\delta$.
\end{proof}

It follows from Remark~\ref{rmk_Diamond_H_kappa} and Corollary~\ref{equivofclub} that $\diamondsuit(\kappa) \implies \clubsuit(\kappa)$.
More generally, we have the following.

\begin{fact}[Devlin, {\cite[p.~507]{ostaszewski1976countably}}]\label{clubvsdiamond} For every stationary $S\s\kappa$, the following are equivalent:
\begin{itemize}
\item $\diamondsuit(S)$;
\item $\clubsuit(S)$ and $\kappa^{<\kappa}=\kappa$.
\end{itemize}
\end{fact}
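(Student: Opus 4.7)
The forward direction is routine. Given a $\diamondsuit(S)$-sequence $\langle A_\delta\mid\delta\in S\rangle$, Corollary~\ref{equivofclub} immediately yields $\clubsuit(S)$: for every cofinal $A\s\kappa$, diamond produces some $\delta\in S$ with $A_\delta=A\cap\delta$, and insisting that $\delta$ be a limit point of $A$ makes $A_\delta$ cofinal in $\delta$. For $\kappa^{<\kappa}=\kappa$, I would note that for every $\lambda<\kappa$ and every $Y\s\lambda$, viewing $Y$ as a subset of $\kappa$ and applying $\diamondsuit(S)$ gives some $\delta\in S$ with $\delta>\lambda$ and $A_\delta=Y\cap\delta=Y$, so $\mathcal P(\lambda)\s\{A_\delta\mid\delta\in S\}$ has cardinality $\le\kappa$; hence $2^\lambda\le\kappa$ for every $\lambda<\kappa$, and $\kappa^{<\kappa}=\kappa$ follows.

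For the backward direction, the plan is to convert $\clubsuit(S)$ into $\diamondsuit(S)$ by coding bounded subsets of $\kappa$ via G\"odel pairing. Fix a $\clubsuit(S)$-sequence $\langle X_\delta\mid\delta\in S\rangle$. Using $\kappa^{<\kappa}=\kappa$ (which also gives $2^{<\kappa}=\kappa$), fix a bijection $\pi:\kappa\to\kappa\times\kappa$ and a club $E_0\s\kappa$ with $\pi[\gamma]=\gamma\times\gamma$ for every $\gamma\in E_0$, and write $\pi(\alpha)=(\pi_0(\alpha),\pi_1(\alpha))$. For every $\beta<\kappa$ fix an enumeration $\langle Y^\beta_\gamma\mid\gamma<2^{|\beta|}\rangle$ of $\mathcal P(\beta)$. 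Let $Z_\alpha:=Y^{\pi_0(\alpha)}_{\pi_1(\alpha)}$ whenever the index is in range, and $Z_\alpha:=\emptyset$ otherwise, so that $Z_\alpha\s\pi_0(\alpha)$ and $\pi_0(\alpha)<\alpha$ on $E_0$. Declare
\[A_\delta:=\bigcup\{Z_\alpha\mid\alpha\in X_\delta\cap E_0\}\]
for $\delta\in S\cap\acc(\kappa)$, and $A_\delta:=\emptyset$ otherwise.

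To verify that $\langle A_\delta\rangle$ is a $\diamondsuit(S)$-sequence, fix an arbitrary $X\s\kappa$ and define $h_X:\kappa\to\kappa$ by
\[h_X(\beta):=\pi^{-1}(\beta,\mathrm{idx}^\beta(X\cap\beta)),\]
where $\mathrm{idx}^\beta$ is the index of a subset within the enumeration of $\mathcal P(\beta)$. By design, $\pi_0(h_X(\beta))=\beta$ and $Z_{h_X(\beta)}=X\cap\beta$ for every $\beta<\kappa$, so $C_X:=\im h_X$ is a cofinal subset of $\kappa$ encoding all initial segments of $X$. The G\"odel properties of $\pi$ further ensure that on a suitable club $E_1\s E_0$ (of ordinals $\delta$ closed under $\mu\mapsto 2^{|\mu|}$), $h_X\restriction\delta$ is a strictly increasing bijection onto a cofinal subset of $\delta$. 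Now apply $\clubsuit(S)$ to $C_X$ and pick $\delta\in S\cap E_1$ with $X_\delta\s C_X$. Since $X_\delta\s h_X[\delta]$ is cofinal in $\delta$, its preimage $\pi_0[X_\delta]$ under the increasing map $h_X\restriction\delta$ is cofinal in $\delta$ as well, and so
\[A_\delta=\bigcup_{\alpha\in X_\delta}(X\cap\pi_0(\alpha))=X\cap\sup\pi_0[X_\delta]=X\cap\delta.\]

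The step I expect to be the main obstacle is the bookkeeping needed to guarantee that $h_X\restriction\delta$ really is a strictly increasing map with image cofinal in $\delta$ on a club of $\delta$'s. Concretely, this forces a choice of G\"odel pairing $\pi$ in which $\pi^{-1}(\beta,\gamma)$ behaves monotonically on inputs $(\beta,\gamma)$ with $\gamma<2^{|\beta|}$, combined with a uniform choice of the enumerations $\langle Y^\beta_\gamma\rangle$; this is the combinatorial core of the argument and is in the same spirit as the proof of Lemma~\ref{infinite-club-matrix}. Once it is in place, the verification of $\diamondsuit(S)$ reduces to the one-line computation above.
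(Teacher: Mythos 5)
The paper does not prove this Fact; it is quoted from Devlin (via Ostaszewski), so the comparison can only be with the standard argument. Your outline follows that standard route --- code the initial segments of a candidate $X$ into a single cofinal set $C_X\s\kappa$ via a pairing, hit $C_X$ with the $\clubsuit$-sequence, and decode. The forward direction is fine: $\diamondsuit(S)\Rightarrow\clubsuit(S)$ exactly as you say (via Corollary~\ref{equivofclub}, restricting to $\delta\in\acc^+(A)$), and the cardinal-arithmetic computation $2^\lambda\le\kappa$ for all $\lambda<\kappa$ does give $\kappa^{<\kappa}=\kappa$ by regularity of $\kappa$.

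In the backward direction, you have correctly flagged the soft spot. For a generic bijection $\pi:\kappa\leftrightarrow\kappa\times\kappa$, the map $h_X(\beta)=\pi^{-1}(\beta,\mathrm{idx}^\beta(X\cap\beta))$ is \emph{not} strictly increasing, even on the club where $\delta$ is closed under $\mu\mapsto 2^{|\mu|}$: the index $\mathrm{idx}^\beta(X\cap\beta)$ ranges up to $2^{|\beta|}$, which can dominate $\beta'$ for $\beta<\beta'<2^{|\beta|}$, and the G\"odel order then reverses $h_X(\beta)$ and $h_X(\beta')$. For the same reason the side claim ``$\pi_0(\alpha)<\alpha$ on $E_0$'' is not right as stated (for $\alpha\in E_0$ one has $\max\{\pi_0(\alpha),\pi_1(\alpha)\}\ge\alpha$, not the reverse), and restricting the union defining $A_\delta$ to $X_\delta\cap E_0$ introduces a gap, since $\clubsuit(S)$ gives $X_\delta\s C_X$, not $X_\delta\s C_X\cap E_0$, and nothing guarantees $\im(h_X)\s E_0$.

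All of this is cured at once by replacing $\pi$ with the \emph{graded} bijection that your last paragraph gestures at: let
\[
e:\{(\beta,\gamma)\mid \beta<\kappa,\ \gamma<2^{|\beta|}\}\leftrightarrow\kappa
\]
be the order-isomorphism for the lexicographic order by $\beta$ first, then $\gamma$ (this is a bijection onto $\kappa$ precisely because $\kappa^{<\kappa}=\kappa$). Writing $e^{-1}(\alpha)=(\pi_0(\alpha),\pi_1(\alpha))$ and $Z_\alpha:=Y^{\pi_0(\alpha)}_{\pi_1(\alpha)}$, and letting $f(\mu):=\sum_{\mu'\le\mu}2^{|\mu'|}$, one gets $\beta\le e(\beta,\gamma)<f(\beta)$ for all relevant $\gamma$. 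Consequently $h_X$ \emph{is} strictly increasing, and on the club $E_1$ of ordinals closed under $f$ it maps $\delta$ into $\delta$ with cofinal image. You can then set $A_\delta:=\bigcup\{Z_\alpha\mid\alpha\in X_\delta\}$ with no $E_0$-restriction at all. Given $X\s\kappa$, the set $\{\delta\in S\cap E_1\mid X_\delta\s C_X\}$ is stationary, and for each such $\delta$ the two-sided bound $\pi_0(\alpha)\le\alpha<f(\pi_0(\alpha))$ forces $\sup\pi_0[X_\delta]=\delta$, so that $A_\delta=\bigcup_{\alpha\in X_\delta}(X\cap\pi_0(\alpha))=X\cap\delta$. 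This yields $\diamondsuit(S)$ in full (stationarily many hits, not just one). So: your strategy is the right one, and the missing ``combinatorial core'' is exactly the switch from a square bijection $\kappa\leftrightarrow\kappa\times\kappa$ to the graded bijection $e$, together with closing the club under $f$ rather than merely under $\mu\mapsto 2^{|\mu|}$.
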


In \cite{MR0491194}, Devlin proved that if $\diamondsuit(S)$ holds for a stationary subset $S\s\kappa$,
then there exists a partition $\langle S_\iota \mid \iota<\kappa \rangle$ of $S$
such that $\diamondsuit(S_\iota)$ holds for every $\iota<\kappa$. 
His proof makes essential use of the consequence $\kappa^{<\kappa}=\kappa$ of $\diamondsuit(S)$.
We now generalize Devlin's theorem and show that its analogue is valid for the weaker principle $\clubsuit$
(even in the absence of $\kappa^{<\kappa}=\kappa$),
along the way, giving a proof that applies to other variants of $\clubsuit$ and $\diamondsuit$.
\begin{thm}\label{partition-clubsuit}
Suppose that $\clubsuit(S)$ holds for a stationary subset $S\s\kappa$. 
Then there exists a partition $\langle S_\iota \mid \iota<\kappa \rangle$ of $S$
such that $\clubsuit(S_\iota)$ holds for every $\iota<\kappa$.
\end{thm}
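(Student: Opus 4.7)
The plan is to produce a function $c : S \to \kappa$ whose fibers $S_\iota := c^{-1}(\iota)$ partition $S$, together with, for each $\iota < \kappa$, a sequence $\vec A^\iota = \langle A^\iota_\delta \mid \delta \in S_\iota\rangle$ witnessing the equivalent form of $\clubsuit(S_\iota)$ provided by Corollary~\ref{equivofclub}. Granting this, the pieces $\{S_\iota\}_{\iota<\kappa}$ yield the desired partition.

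Let $\vec X = \langle X_\delta \mid \delta \in S\rangle$ be a $\clubsuit(S)$-sequence. The first step is to upgrade $\vec X$ to a matrix-style $\clubsuit$. Fix a bijection $\pi : \kappa \to \kappa \times \kappa$ arising from G\"odel pairing, writing $\pi(\alpha) = (\pi_0(\alpha),\pi_1(\alpha))$. A standard feature of such pairings is that $\pi^{-1}[Y^0 \times Y^1]$ is cofinal in $\kappa$ whenever $Y^0, Y^1 \s \kappa$ are cofinal, so applying $\clubsuit(S)$ to this preimage yields: for every pair of cofinal $Y^0, Y^1 \s \kappa$, the set $\{\delta \in S \mid \pi_0[X_\delta] \s Y^0 \ \&\ \pi_1[X_\delta] \s Y^1\}$ is stationary in $\kappa$. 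This mirrors Lemma~\ref{infinite-club-matrix} but sidesteps its cardinal-arithmetic hypothesis.

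Next, invoke Solovay's theorem to fix a partition $\kappa = \bigsqcup_{\iota < \kappa} K_\iota$ into $\kappa$-many pairwise disjoint stationary (hence cofinal) subsets of $\kappa$. Define $c(\delta)$ to be the unique $\iota$ such that $\min \pi_0[X_\delta] \in K_\iota$, so that $\{S_\iota\}_{\iota<\kappa}$ is a partition of $S$, and take $A^\iota_\delta$ to be a cofinal-in-$\delta$ set built from $\pi_1[X_\delta]$ together with a suitable tail of $X_\delta$ (the latter to secure $\sup A^\iota_\delta = \delta$). The verification then proceeds by fixing a cofinal $A \s \kappa$ and applying the matrix $\clubsuit$ with $(Y^0, Y^1) := (K_\iota, A)$: this produces stationarily many $\delta \in S$ with $\pi_0[X_\delta] \s K_\iota$ (in particular, $\min \pi_0[X_\delta] \in K_\iota$, placing $\delta$ in $S_\iota$) and $\pi_1[X_\delta] \s A$, giving the criterion of Corollary~\ref{equivofclub} for $S_\iota$.

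The main obstacle is reconciling the above with the cofinality requirement $\sup A^\iota_\delta = \delta$, since the projection $\pi_1[X_\delta]$ need not be cofinal in $\delta$ even when $X_\delta$ is. I plan to overcome this via Lemma~\ref{thinning-out}, replacing $A$ by a cofinal subsequence $A' \s A$ that is $B$-separated for any prescribed club $B$ (so that any $\delta$ at which $A^\iota_\delta$ is cofinal in $\delta$ is automatically trapped inside $B$), and augmenting $A^\iota_\delta$ by a sufficiently late tail of $X_\delta$ itself; the most delicate task will be choosing this tail so that the inclusion $A^\iota_\delta \s A$ is maintained throughout.
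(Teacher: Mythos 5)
Your overall skeleton (project through a pairing $\pi$, use Corollary~\ref{equivofclub}, use $B$-separation from Lemma~\ref{thinning-out} to control closure) matches the paper's, but your choice of what to pin and what to extract introduces a gap that the proposed fix does not close. In the paper's proof, the \emph{second} coordinate of $\pi[X_\delta]$ is pinned to a single ordinal $\iota=h(\delta)$, and $A_\delta$ is the set of \emph{first} coordinates on that slice. The cofinality argument then works because, taking $\beta\in B$ (closure point of $\pi$) between consecutive points $\alpha<\alpha'$ of $X_\delta$, one has $\pi(\alpha')=(\gamma,\iota)$ with $\iota\le\alpha<\beta\le\alpha'$, so $\pi(\alpha')\notin\beta\times\beta$ forces $\gamma\ge\beta>\alpha$; the pinned coordinate is eventually below the closure point, so the extracted coordinate is pushed up. In your version, the first coordinate is only constrained to lie in a \emph{stationary} set $K_\iota$, not pinned to a single ordinal. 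Since $K_\iota$ is unbounded, $\pi_0(\alpha)$ for $\alpha\in X_\delta$ can itself be the coordinate that grows, and then $\pi_1[X_\delta]$ has no reason to be cofinal in $\delta$. Gödel pairing only guarantees $\max(\pi_0(\alpha),\pi_1(\alpha))$ is large, not each coordinate separately, so the stationary-partition design breaks the very mechanism that makes the cofinality claim work.

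Your proposed repair --- augmenting $A^\iota_\delta$ by a late tail of $X_\delta$ --- makes matters worse, not better: after the matrix hit you have $X_\delta\s\pi^{-1}[K_\iota\times A']$, a set that in general is disjoint from $A$, so any ordinals from $X_\delta$ you throw in will violate the inclusion $A^\iota_\delta\s A$ required by Corollary~\ref{equivofclub}. The $B$-separation trick as you invoke it (applied to $A$) controls where $\delta$ lands but does not help $\pi_1[X_\delta]$ reach $\delta$; the paper instead applies $B$-separation to the preimage set $X=\pi^{-1}[A\times\{\iota\}]$ and exploits the pinned small coordinate. To fix your argument you would essentially have to replace ``$\min\pi_0[X_\delta]\in K_\iota$'' by ``$\pi[X_\delta]$ lies on a single column $\{\iota\}\times\kappa$ or row $\kappa\times\{\iota\}$,'' i.e.\ drop Solovay's theorem and adopt the paper's $h(\delta):=\min\{\iota\mid(\kappa\times\{\iota\})\cap\pi[X_\delta]\ne\emptyset\}$, after which your route collapses to the paper's. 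As written, the cofinality step is a genuine gap, not merely an omitted detail.
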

\begin{proof} Fix a sequence $\vec{X} = \langle X_\delta\mid \delta\in S\rangle$ witnessing  $\clubsuit(S)$.
Fix a bijection $\pi:\kappa\leftrightarrow\kappa\times\kappa$.
Define $h:S\rightarrow\kappa$ by letting $h(\delta):=0$ for $\delta\in S\setminus\acc(\kappa)$, and, for every $\delta\in S\cap\acc(\kappa)$, 
$$h(\delta):=\min\{\iota<\kappa\mid  (\kappa\times\{\iota\})\cap \pi[X_\delta]\neq\emptyset\}.$$
For every $\iota<\kappa$, let $S_\iota := \{ \delta\in S\mid h(\delta) = \iota \}$,
so that $\langle S_\iota \mid \iota<\kappa \rangle$ is a partition of $S$.

Next, for every $\delta\in S\cap\acc(\kappa)$, let $$A_\delta:=\{ \gamma<\delta\mid (\gamma,h(\delta))\in\pi[ X_\delta]\},$$
while for $\delta\in S\setminus\acc(\kappa)$, just let $A_\delta := \emptyset$.

\begin{claim} Let $A\s\kappa$ be cofinal. For every $\iota<\kappa$, 
there exists a nonzero $\delta\in S_\iota$ such that $A_\delta\s A$ and $\sup(A_\delta)=\delta$.
\end{claim}
\begin{proof} Let $\iota<\kappa$ be arbitrary. Put $X:=\pi^{-1}[A\times\{\iota\}]$.
As $\pi$ is bijective, $|X|=|A|=\kappa$, so that $X$ is cofinal in $\kappa$.
Consider the club $B:=\{\beta<\kappa\mid \pi[\beta] = \beta\times\beta\}$.
By Lemma~\ref{thinning-out}, fix a cofinal $X'\s X$ that is $B$-separated.
Now, by the choice of $\vec X$, the following set is stationary:
$$G(X'):=\{ \delta\in S\mid X_\delta\s X'\}.$$

Fix $\delta\in G(X')\cap\acc(\kappa)$ above $\iota$.
As $X_\delta\s X'$, we infer $\pi[X_\delta]\s\pi[X']\s \pi[X]=A\times\{\iota\}$, so that $h(\delta)=\iota$.
It thus follows that $\delta\in S_\iota$ and $A_\delta\s A$.
Finally, to see that $\sup(A_\delta)=\delta$, let $\alpha<\delta$ be arbitrary,
and we shall find $\gamma\in A_\delta$ above $\alpha$.
As $\sup(X_\delta)=\delta$, we may assume that $\alpha\in X_\delta\setminus\iota$.
Let $\alpha':=\min(X_\delta\setminus(\alpha+1))$.
As $X_\delta\s X'$ and the latter is $B$-separated, let us also fix $\beta\in B$ with $\alpha<\beta<\alpha'$.
Since $\pi[X_\delta]\s A\times\{\iota\}$, we can fix $\gamma$ such that $\pi(\alpha')=(\gamma,\iota)$.
Since $\pi[\beta]=\beta\times\beta$ while $\iota\leq\alpha<\beta<\alpha'$, it follows that $\gamma\ge\beta$.
As $\sup(X_\delta)=\delta$ and $X_\delta\s X'$, we infer that $\sup(X'\cap\delta)=\delta$, 
so that by $B$-separation of $X'$, we also obtain $\sup(B\cap\delta)=\delta$.
But $B$ is closed, so that $\delta\in B$, meaning that $\pi[\delta]=\delta\times\delta$.
As $\alpha'<\delta$, it follows that $\gamma<\delta$.
Altogether,
$\gamma$ is an element of $A_\delta$ above $\alpha$.
\end{proof}

As $A_\delta\s A$ for all $\delta \in S$,
it now follows from Corollary~\ref{equivofclub}
that $\clubsuit(S_\iota)$ holds for all $\iota<\kappa$.
\end{proof}

When replacing sets of ordinals by sets of sets of ordinals,
it is natural to replace the $\sup$ measure by a $\sup$-over-$\min$  measure which we call $\mup$:

\begin{definition}\label{defmup} For every $\mathcal A\s\mathcal P(\kappa)$,
let $\mup(\mathcal A):=\sup\{ \min(a)\mid a\in\mathcal A, a\neq\emptyset\}$.
\end{definition}

A minor variation of the proof of Theorem~\ref{partition-clubsuit} establishes yet 
another equivalence, which will be utilized in deriving Theorem~\ref{get-sigma-finite} below.

\begin{lemma}\label{club-equivalents}
For every stationary $S\subseteq\kappa$, $\clubsuit(S)$ holds iff
there exists a sequence $\langle \mathcal X_\delta \mid \delta \in S \rangle$ such that:
\begin{enumerate}
\item for every $\delta\in S\cap\acc(\kappa)$,  $\mathcal X_\delta\s [\delta]^{<\omega}$ with $\mup(\mathcal X_\delta)=\delta$;
\item for every $\mathcal X\s[\kappa]^{<\omega}$ with $\mup(\mathcal X)=\kappa$, the following set is stationary:
$$\{ \delta \in S \mid \mathcal X_\delta\subseteq \mathcal X \}.$$
\end{enumerate}
\end{lemma}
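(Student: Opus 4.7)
The plan is to prove both directions separately, with the backward direction being routine and the forward direction requiring a bijection together with a careful thinning, mimicking the style of the proof of Theorem~\ref{partition-clubsuit}.

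For the \emph{backward} implication, given a sequence $\langle \mathcal{X}_\delta \mid \delta \in S\rangle$ as in the statement, I set $A_\delta := \{\min(a) \mid a \in \mathcal{X}_\delta,\ a \neq \emptyset\}$ for each $\delta \in S$. For $\delta \in S \cap \acc(\kappa)$ this gives $A_\delta \subseteq \delta$ with $\sup(A_\delta) = \mup(\mathcal{X}_\delta) = \delta$. Given an arbitrary cofinal $A \subseteq \kappa$, apply the hypothesis to $\mathcal{X} := \{\{\alpha\} \mid \alpha \in A\}$, which satisfies $\mup(\mathcal{X}) = \sup(A) = \kappa$. This yields a stationary set of $\delta \in S$ with $\mathcal{X}_\delta \subseteq \mathcal{X}$; for any such $\delta$, every $a \in \mathcal{X}_\delta$ is a singleton $\{\alpha\}$ with $\alpha\in A$, so $A_\delta \subseteq A \cap \delta$ and $\sup(A_\delta) = \delta$. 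Corollary~\ref{equivofclub} then delivers $\clubsuit(S)$.

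For the \emph{forward} implication, fix a $\clubsuit(S)$-sequence $\langle X_\delta \mid \delta \in S\rangle$ and a bijection $\pi\colon\kappa \to [\kappa]^{<\omega}$. A standard closure argument shows that $B := \{\beta < \kappa \mid \pi[\beta] = [\beta]^{<\omega}\}$ is a club in $\kappa$. I define $\mathcal{X}_\delta := \pi[X_\delta]$ whenever this is a subset of $[\delta]^{<\omega}$ with $\mup$ equal to $\delta$, and otherwise set $\mathcal{X}_\delta := \{\{\gamma\} \mid \gamma \in E_\delta\}$ for some fixed cofinal $E_\delta \subseteq \delta$ of order-type $\cf(\delta)$ (for $\delta \in S \cap \acc(\kappa)$). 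The default value is harmless and ensures the two static clauses of the target principle.

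To verify the prediction clause, fix $\mathcal{X} \subseteq [\kappa]^{<\omega}$ with $\mup(\mathcal{X}) = \kappa$. Let $M := \{\min(a) \mid a \in \mathcal{X},\ a \neq \emptyset\}$ (cofinal in $\kappa$), choose for each $m \in M$ some $a_m \in \mathcal{X}$ with $\min(a_m) = m$, and let $A := \{\pi^{-1}(a_m) \mid m \in M\}$; since $|A| = |M| = \kappa$, $A$ is cofinal in $\kappa$. Now recursively build a sequence $\langle \alpha_i \mid i < \kappa\rangle \in {}^\kappa A$ satisfying both $\alpha_i > \sup_{j<i}\alpha_j$ and $\min(\pi(\alpha_i)) > \sup_{j<i}\alpha_j$; this is possible since at stage $i$ only $<\kappa$ elements of $A$ fail each requirement (using that $M$ is cofinal). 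Set $A^* := \{\alpha_i \mid i < \kappa\}$ and $C := \{\beta < \kappa \mid \forall \alpha \in A^* \cap \beta\,(\min(\pi(\alpha)) < \beta)\}$, the latter being a club by a standard closure argument.

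By $\clubsuit(S)$, the set $\{\delta \in S \mid X_\delta \subseteq A^*\}$ is stationary; intersecting with the club $B \cap C \cap \acc(\kappa)$ keeps it stationary. For $\delta$ in this stationary set: membership in $B$ forces $\pi[X_\delta] \subseteq [\delta]^{<\omega}$; membership in $C$ forces $\min(\pi(\alpha)) < \delta$ for every $\alpha \in X_\delta$; and the construction of $A^*$ combined with the unboundedness of $X_\delta$ in $\delta$ gives, for each $\gamma<\delta$, some $\alpha_i \in X_\delta$ with $\alpha_i>\gamma$ and then some $\alpha_j\in X_\delta$ with $j>i$, whence $\min(\pi(\alpha_j)) > \alpha_i > \gamma$. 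Therefore $\mup(\pi[X_\delta]) = \delta$, so our definition yields $\mathcal{X}_\delta = \pi[X_\delta] \subseteq \mathcal{X}$.

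The main obstacle is precisely the joint requirement that the selected $\delta$'s deliver $\pi[X_\delta] \subseteq \mathcal{X}$ \emph{and} $\mup(\pi[X_\delta]) = \delta$; a naive $A := \pi^{-1}[\mathcal{X}]$ suffices for the first but not the second, and the simultaneous recursive growth of $\alpha_i$ and $\min(\pi(\alpha_i))$ in the construction of $A^*$ is what reconciles them.
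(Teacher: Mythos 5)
Your proof is correct and follows essentially the same overall strategy as the paper's: transport the $\clubsuit(S)$-sequence via a bijection $\pi\colon\kappa\leftrightarrow[\kappa]^{<\omega}$, and given $\mathcal X$, pass to a cofinal subset of $\kappa$ whose elements are spread out enough that intersecting the resulting $\clubsuit$-stationary set with suitable clubs forces $\pi[X_\delta]\s[\delta]^{<\omega}$ and $\mup(\pi[X_\delta])=\delta$. The one place you diverge is the spreading mechanism: the paper applies Lemma~\ref{thinning-out} to produce a $B$-separated $A'\s\pi^{-1}[\mathcal X]$ (where $B$ is a single club encoding both $\pi$-closure and the relation between $\min(x)$ and $\max(x)$ for $x\in\mathcal X$), whereas you first select a transversal of $\mathcal X$ by $\min$ and then build $A^*$ by an explicit recursion that forces $\min(\pi(\alpha_i))$ to grow along with the $\alpha_i$'s, subsequently using two clubs $B$ and $C$. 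Both realize the same idea; the paper's version is more modular because $B$-separation recurs throughout Section~\ref{sectiononclub} (e.g.\ in Theorem~\ref{partition-clubsuit}), while yours is more self-contained. You also spell out the easy converse that the paper omits, reducing it correctly to Corollary~\ref{equivofclub}; just be sure to intersect the stationary set with $\acc(\kappa)$ before invoking $\mup(\mathcal X_\delta)=\delta$, which the statement only guarantees there.
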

\begin{proof} We focus on the nontrivial (that is, forward) implication.
Fix a sequence $\vec{A} = \langle A_\delta\mid \delta\in S\rangle$ witnessing  $\clubsuit(S)$.
Fix a bijection $\pi:\kappa\leftrightarrow[\kappa]^{<\omega}$.
For every $\delta\in S$, let 
$$\mathcal X_\delta:=\begin{cases}
\pi[A_\delta],&\text{if } \delta\in\acc(\kappa)\ \&\ \pi[A_\delta]\s[\delta]^{<\omega}\ \&\ \mup(\pi[A_\delta])=\delta;\\
[\delta]^{<\omega},&\text{otherwise}.
\end{cases}$$
To see that $\langle \mathcal X_\delta\mid \delta\in S\rangle$ satisfies Clause~(2) of the lemma,
fix an arbitrary $\mathcal X\s[\kappa]^{<\omega}$ with $\mup(\mathcal X)=\kappa$.
By thinning $\mathcal X$ out, we may assume that $\emptyset\notin\mathcal X$ and that $x\mapsto\min(x)$ is injective over $\mathcal X$.
Now, let $A:=\pi^{-1}[\mathcal X]$, so that $|A|=\kappa$.
Consider the following set:
$$B:=\{\beta<\kappa\mid \pi[\beta]=[\beta]^{<\omega}\ \&\ \forall x\in\mathcal X(\min(x)<\beta\implies\max(x)<\beta)\}.$$
It is not hard to see that for $p:=\{\pi,\mathcal X\}$,
our set $B$ covers the set $B(p)$ of Fact~\ref{cofinally-many-esms}, so that $B$ is cofinal in $\kappa$.
By Lemma~\ref{thinning-out}, fix a cofinal $A'\s A$ that is $B$-separated.
Now, by the choice of $\vec A$, 
there are stationarily many $\delta\in S\cap\acc(\kappa)$ for which $A_\delta\s A'$.
Fix such a $\delta$, and we shall show that $\mathcal X_\delta\s\mathcal X$.

As $A_\delta\s A'$, we infer $\pi[A_\delta]\s\pi[A']\s \pi[A]=\mathcal X$.
By definition of $\mathcal X_\delta$, then, it suffices to prove that $\pi[A_\delta]\s[\delta]^{<\omega}$ and $\mup(\pi[A_\delta])=\delta$.
To prove the former:
As $\sup(A_\delta)=\delta$ and $A_\delta\s A'$, we infer that $\sup(A'\cap\delta)=\delta$, 
so that by $B$-separation of $A'$, we also obtain $\sup(B\cap\delta)=\delta$.
But $B$ is closed, so that $\delta\in B$, 
and as $A_\delta\s\delta$ we infer that $\pi[A_\delta]\s\pi[\delta]=[\delta]^{<\omega}$.
To see that $\mup(\pi[A_\delta])=\delta$,
let $\alpha<\delta$ be arbitrary,
and we shall find $\alpha'\in A_\delta$ with $\min(\pi(\alpha'))>\alpha$.
As $\sup(A_\delta)=\delta$, we may assume that $\alpha\in A_\delta$.
Let $\alpha':=\min(A_\delta\setminus(\alpha+1))$.
As $A_\delta\s A'$ and the latter is $B$-separated, let us also fix $\beta\in B$ with $\alpha<\beta<\alpha'$.
Since $\pi[\beta]=[\beta]^{<\omega}$, we know that $\max(\pi(\alpha'))\ge\beta$.
Since $\beta\in B$ and $\pi(\alpha') \in \pi[A_\delta] \s\mathcal{X}$,
we infer that 
also $\min(\pi(\alpha'))\geq\beta>\alpha$, 
as sought.
\end{proof}

\section{A generalization of \texorpdfstring{$\boxtimes^-(\kappa)$}{xbox}}
In this section and the next one, 
we shall present generalizations of the concepts that arose in Section~\ref{rightway}.
Here, we present a principle 
$\p_\xi^-(\kappa, \mu, \mathcal R, \theta, \mathcal S,  \nu,\sigma)$ 
that generalizes the ladder system principle $\boxtimes^-(\kappa)$.
Then, in the next section, we shall present $\p_\xi^\bullet(\kappa, \mu, \mathcal R, \theta, \mathcal S,  \nu)$, 
which serves as a generalization and weakening of the conjunction  ${\boxtimes^-(\kappa)}+{\diamondsuit(\kappa)}$.

\subsection{Ladder systems}
We assume the reader is comfortable with Conventions~\ref{conventionsforpaper} from Page~\pageref{conventionsforpaper}.
\begin{definition}[\cite{paper29}] \label{def-K-kappa} Let $\mathcal K(\kappa):=\{ x\in\mathcal P(\kappa)\mid x\neq\emptyset\ \&\ \acc^+(x)\s x\ \&\allowbreak\ \sup(x) \notin x\}$
denote the collection of all nonempty $x \subseteq \kappa$ such that $x$ is a club subset of $\sup(x)$.

For each $C\in\mathcal K(\kappa)$, denote  $\alpha_C:=\sup(C)$
\end{definition}

For a binary relation $\mathcal R$  over $\mathcal K(\kappa)$, and a nonempty collection $\mathcal S$ of stationary subsets of $\kappa$,
we shall define a principle $\p^-_\xi(\kappa, \mu, \mathcal R, \theta, \mathcal S,  \nu,\sigma)$  in two stages.
In the first stage, we focus on the first four parameters.

\begin{defn}\label{defn-p-sequence-first-stage} 
We say that $\langle\mathcal C_\alpha\mid\alpha<\kappa\rangle$ is a $\p^-_\xi(\kappa, \mu, \mathcal R, \ldots)$-sequence iff, for every $\alpha\in\acc(\kappa)$, all of the following hold:
\begin{itemize}
\item $\mathcal C_\alpha\s\{ C\in \mathcal K(\kappa)\mid \otp(C)\le\xi\ \&\ \alpha_C=\alpha\}$;
\item $0<|\mathcal C_\alpha|<\mu$;
\item for all $C \in \mathcal C_\alpha$ and $\bar\alpha \in \acc(C)$, there exists $D \in \mathcal C_{\bar\alpha}$ with $D \mathrel{\mathcal R} C$.
\end{itemize}
\end{defn}
\begin{conv}\label{conventionxi}
If we omit the subscript $\xi$, then we mean that $\xi:=\kappa$.
\end{conv}
\begin{conv}\label{default-outside-acc} 
We shall always assume
that $\mathcal C_0:=\{\emptyset\}$ and $\mathcal C_{\alpha+1}:=\{\{\alpha\}\}$ for all $\alpha<\kappa$.
Likewise, whenever we construct a  $\p^-_\xi(\kappa, \mu, \mathcal R, \ldots)$-sequence $\langle \mathcal D_\alpha\mid\alpha<\kappa\rangle$,
we shall never bother to define $\mathcal D_0$ and $\mathcal D_{\alpha+1}$ for $\alpha<\kappa$.
\end{conv}

\begin{example}\label{example53}
The binary relations over $\mathcal K(\kappa)$ that fit as the parameter $\mathcal R$
should be understood as \emph{coherence} relations.
The basic example is the \emph{end-extension} relation, $\sq$,
where, for $C,D\in\mathcal K(\kappa)$, we define $C \sqsubseteq D$ iff $C = D \cap \alpha_C$.
More nuanced binary relations over $\mathcal K(\kappa)$ are obtained by modifying the $\sq$ relation as follows:
\begin{itemize}
\item We define $C \sq^* D$ iff there exists $\gamma < \alpha_C$ such that ${C\setminus\gamma}\sq{D\setminus\gamma}$;
\item For $\mathcal R \in \{ {\sq}, {\sq^*} \}$,
we define
$C \mathrel{_{\chi}{\mathcal R}}D$ iff ((${C}\mathrel{\mathcal{R}}{D}$) or ($\cf(\alpha_C)<\chi$));
\item For $\mathcal{R} \in \{ {\sq}, {\sq^*} \}$,
we define $C \mathrel{\mathcal{R}_\chi} D$ iff ((${C} \mathrel{\mathcal{R}} {D}$) or ($\otp(D)<\chi$ and $\nacc(D)$ consists only of successor ordinals));
\item For any binary relation $\mathcal R$ over $\mathcal K(\kappa)$ and any class $\Omega\s\on$, we define
$C\mathrel{^{\Omega}{\mathcal R}}D$ iff ((${C}\mathrel{\mathcal{R}}{D}$) and ($\alpha_C\notin\Omega$)).
\end{itemize}
\end{example}

$\p^-(\kappa,2,{\sq},\dots)$-
and $\p^-_\lambda(\lambda^+,2,{\sqleft{\lambda}},\dots)$-sequences may be constructed in $\zfc$,
but there are stronger variations.
For instance, Jensen's axiom $\square_\lambda$ (resp.~$\square_\lambda^*$) is equivalent to the existence of a $\p^-_\lambda(\lambda^+,2,{\sq},\ldots)$-sequence
(resp.~$\p^-_\lambda(\lambda^+,\lambda^+,{\sq},\ldots)$-sequence).
More examples in this spirit may be found in \cite{paper22}.

\begin{conv}\label{convention-mu-infty}
We may put ``$\infty$'' in place of $\mu$ in Definition~\ref{defn-p-sequence-first-stage},
in which case we mean that $|\mathcal C_\alpha|\le|\alpha|$ for every nonzero $\alpha<\kappa$.
\end{conv}

\begin{remark}\label{remark-only-two-relations}
The relation $\sq$ coincides with $\sqleftboth{\Omega}{\chi}$ for $(\Omega,\chi):=(\emptyset,0)$,
as well as  $(\Omega,\chi):=(\emptyset,\omega)$.
Note that if $\kappa=\lambda^+$ is a successor cardinal,
then for every relation $\mathcal R \in \{ {\sqleftboth{\Omega}{\chi}}, {\sqstarleftboth{\Omega}{\chi}} \}$
with $\Omega\cap\lambda=\emptyset$,
any $\p_\xi^-(\kappa,\kappa, \mathcal R, \ldots)$-sequence
may be improved into a $\p_\xi^-(\kappa, \infty, \mathcal R, \ldots)$-sequence
while preserving its further crucial features. The simple proof may be found in the construction before Claim~3.4.6 of~\cite{paper26}.
\end{remark}

Let us stress that a study of coherence relations weaker than $\sq$ is necessary. 
For instance, unlike coherent square sequences that are refuted by large cardinals,
$\sq_\chi$-coherent square sequences provide an effective means to obtain optimal incompactness results above large cardinals (cf.~\cite{paper28}), 
as well as $\kappa$-Souslin trees in a model in which \emph{all $\kappa$-Aronszajn trees are nonspecial} (cf.~\cite[Corollary~1.20, Examples 1.26 and~1.27]{paper22}).

\medskip

In \cite{paper29}, we introduced the following definition as a tool for manipulating and improving ladder systems.

\begin{defn}[{\cite[Definition~1.8]{paper29}}]\label{def-pp}
A function $\Phi:\mathcal K(\kappa)\rightarrow\mathcal K(\kappa)$ is a \emph{postprocessing function}
iff for every $C\in\mathcal K(\kappa)$:
\begin{itemize}
\item $\sup(\Phi(C)) = \sup(C)$;
\item $\acc(\Phi(C)) \s \acc(C)$;
\item $\Phi(C)\cap\bar\alpha=\Phi(C\cap\bar\alpha)$ for every $\bar\alpha\in\acc(\Phi(C))$.
\end{itemize}

If, in addition, $\min(\Phi(C))=\min(C)$ (resp.~$\acc(\Phi(C))=\acc(C)$) for every $C\in\mathcal K(\kappa)$,
then $\Phi$ is said to be \emph{$\min$-preserving} (resp.~\emph{$\acc$-preserving}).
\end{defn}

The point is that whenever $\langle\mathcal C_\alpha\mid\alpha<\kappa\rangle$ is a $\p^-_\xi(\kappa, \mu, {\sqleftboth{\Omega}{\chi}}, \ldots)$-sequence, 
then for every postprocessing function $\Phi$, by setting $\mathcal D_\alpha:=\{ \Phi(C)\mid C\in\mathcal C_\alpha\}$,
we get that $\langle\mathcal D_\alpha\mid\alpha<\kappa\rangle$ is yet again a $\p^-_\xi(\kappa, \mu, {{\sqleftboth{\Omega}{\chi}}}, \ldots)$-sequence.\footnote{In fact,
postprocessing functions can be viewed as actions on square-like sequences, see Notation~2.15, Lemma~2.16 and Lemma~4.17 of \cite{paper29}. Also, please keep Convention~\ref{default-outside-acc} in mind.}
We now present a sufficient condition for preserving all binary relations from Example~\ref{example53}.

\begin{lemma}\label{pp-from-micro}
Suppose that $\mathfrak x=\langle x_{\gamma,\beta}\mid \gamma<\beta<\kappa\rangle$ is a triangular array of nonempty finite sets such that, for all $\gamma<\beta<\kappa$,
$x_{\gamma,\beta}\s(\gamma,\beta]$, and  if $\beta$ is a successor ordinal, then $x_{\gamma,\beta}=\{\beta\}$.
Define a corresponding function $\Phi_{\mathfrak x}:\mathcal K(\kappa)\rightarrow\mathcal K(\kappa)$ via:
$$\Phi_{\mathfrak x}(C):=\{\min(C)\}\cup\bigcup\bigl\{ x_{\gamma,\beta}\bigm| \gamma\in C, \beta=\min(C\setminus(\gamma+1))\bigr\}\cup\acc(C).$$

Then $\Phi_{\mathfrak x}$ is a $\min$-preserving, $\acc$-preserving postprocessing function. Furthermore,
for every $\p_\xi^-(\kappa, \mu, \mathcal R, \ldots)$-sequence with $\mathcal R$ taken from Example~\ref{example53},
if we set $\mathcal D_\alpha:=\{ \Phi_{\mathfrak x}(C)\mid C\in\mathcal C_\alpha\}$,
then $\langle\mathcal D_\alpha\mid\alpha<\kappa\rangle$ is yet again a $\p^-_\xi(\kappa, \mu, \mathcal R, \ldots)$-sequence.
\end{lemma}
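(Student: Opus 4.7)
The plan is two-fold, matching the two assertions. First I would verify directly that $\Phi_{\mathfrak x}$ is a $\min$- and $\acc$-preserving postprocessing function; second, for each relation $\mathcal R$ from Example~\ref{example53}, I would show that applying $\Phi_{\mathfrak x}$ pointwise to a $\p^-_\xi$-sequence preserves $\mathcal R$-coherence. Throughout, the key working identity is the decomposition
\[
\Phi_{\mathfrak x}(C) = \{\min(C)\} \cup \acc(C) \cup \bigcup\{x_{\gamma,\beta} \mid (\gamma,\beta) \text{ a consecutive pair of } C\},
\]
in which each $x_{\gamma,\beta}$ is a nonempty finite subset of $(\gamma,\beta]$.

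From this decomposition, $\min(\Phi_{\mathfrak x}(C)) = \min(C)$ and $\sup(\Phi_{\mathfrak x}(C)) = \sup(C)$ are immediate, as is $\Phi_{\mathfrak x}(C) \in \mathcal K(\kappa)$. For $\acc(\Phi_{\mathfrak x}(C)) = \acc(C)$: the reverse inclusion uses that a single finite block cannot witness an accumulation point, so any $\bar\alpha \in \acc(\Phi_{\mathfrak x}(C))$ is approached by upper endpoints of distinct pairs of $C$, whence $\bar\alpha \in \acc(C)$; the forward inclusion is symmetric, since each $\bar\alpha \in \acc(C)$ lies in $\Phi_{\mathfrak x}(C)$ and is approximated cofinally by the blocks $x_{\gamma,\beta}$ with $\beta < \bar\alpha$. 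For the coherence identity $\Phi_{\mathfrak x}(C) \cap \bar\alpha = \Phi_{\mathfrak x}(C \cap \bar\alpha)$ at any $\bar\alpha \in \acc(C)$, one checks that the consecutive pairs of $C$ with both endpoints below $\bar\alpha$ coincide with those of $C \cap \bar\alpha$, so contribute the same blocks.

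Turning to the second assertion, membership in $\mathcal K(\kappa)$, $\sup$-values, and size bounds on $\mathcal D_\alpha$ are inherited from $\vec{\mathcal C}$. For $\otp(\Phi_{\mathfrak x}(C)) \leq \xi$, the decomposition exhibits $\otp(\Phi_{\mathfrak x}(C))$ as an ordinal sum $\sum_{c \in C} n_c$ of positive finite values indexed by $C$ in its native order; since $\otp(C)$ is a limit ordinal (hence a multiple of $\omega$), a routine calculation gives $\sum_{c \in C} n_c = \otp(C) \leq \xi$. Preserving $\mathcal R$ is the substantive step. Given $C \in \mathcal C_\alpha$, $\bar\alpha \in \acc(C)$, and $C' \in \mathcal C_{\bar\alpha}$ with $C' \mathrel{\mathcal R} C$: for $\mathcal R = {\sq}$, $C' = C \cap \bar\alpha$, so the coherence identity immediately yields $\Phi_{\mathfrak x}(C') = \Phi_{\mathfrak x}(C) \cap \bar\alpha$, hence $\Phi_{\mathfrak x}(C') \sq \Phi_{\mathfrak x}(C)$. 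For $\mathcal R = {\sq^*}$ with witness $\gamma < \bar\alpha$, let $\gamma'$ be the second element of the overlap $C \cap [\gamma,\bar\alpha) = C' \cap [\gamma,\bar\alpha)$; the only consecutive pair of $C$ that can straddle $\gamma$ has upper endpoint equal to the first element of the overlap, which is strictly below $\gamma'$, so its contribution lies below $\gamma'$, while all other pairs of $C$ whose contributions reach $[\gamma',\bar\alpha)$ lie within the overlap and agree with the corresponding pairs of $C'$, yielding $\Phi_{\mathfrak x}(C) \cap [\gamma',\bar\alpha) = \Phi_{\mathfrak x}(C') \cap [\gamma',\bar\alpha)$. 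The modifiers $\sqleft{\chi}$ and ${}^{\Omega}\mathcal R$ then follow from $\cf(\alpha_{\Phi_{\mathfrak x}(C')}) = \cf(\alpha_{C'})$ and $\alpha_{\Phi_{\mathfrak x}(C')} = \alpha_{C'}$; the $\mathcal R_\chi$ alternative forces every consecutive pair of $C$ to have a successor ordinal as its upper endpoint, whence the hypothesis on $\mathfrak x$ yields $x_{\gamma,\beta} = \{\beta\}$ throughout and $\Phi_{\mathfrak x}(C) = C$, preserving the alternative outright.

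The main obstacle is the $\sq^*$ case, which requires the precise choice of $\gamma'$ to quarantine the potentially discrepant straddling pair below $\gamma'$; once this choice is in place, the remainder reduces to bookkeeping with the coherence identity.
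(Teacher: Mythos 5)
The paper leaves this lemma to the reader (citing~\cite[Lemma~2.8]{paper29}), so there is no in-paper proof to compare against; your argument correctly supplies the missing details and takes the natural route. You rightly identified the one genuinely delicate point: in the $\sq^*$ case, taking $\gamma'$ to be the \emph{second} element of the overlap is exactly what quarantines the one potentially discrepant block. (Indeed, taking $\gamma'$ to be the first element of the overlap, say $\delta_0$, does \emph{not} work: the pair of $D$ ending at $\delta_0$ and the pair of $C$ ending at $\delta_0$ generally have different lower endpoints, so $x_{\cdot,\delta_0}$ can contribute differently to the two images at $\delta_0$ itself. Once $\gamma'$ is the second element, the pair ending at $\gamma'$ has $\delta_0$ as predecessor in \emph{both} $C$ and $D$, so all pairs with upper endpoint $\geq\gamma'$ and $<\bar\alpha$ coincide.) The order-type computation is correct but rests on the ordinal-arithmetic fact that $\sum_{j<\lambda}m_j = \lambda$ whenever $\lambda$ is a limit ordinal and each $m_j$ is a positive finite ordinal; this is indeed routine (prove $f(\nu)<\lambda$ for $\nu<\lambda$ by noting $f(\nu)=\mu+(\text{finite})<\mu+\omega\leq\lambda$, where $\mu$ is the largest limit ordinal $\leq\nu$), but is worth stating rather than just asserting, since it is precisely what keeps $\otp(\Phi_{\mathfrak x}(C))$ from exceeding $\xi$ when $\xi<\kappa$. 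One tiny omission: the claim $\Phi_{\mathfrak x}(C)\in\mathcal K(\kappa)$ requires verifying closedness, which you could discharge by the same argument as $\acc(\Phi_{\mathfrak x}(C))\s\acc(C)$ (a limit point of $\Phi_{\mathfrak x}(C)$ below its supremum cannot sit inside a single finite block, so it lies in $\acc^+(C)\s C$ and hence in $\acc(C)\s\Phi_{\mathfrak x}(C)$).
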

\begin{proof} Left to the reader (cf.~\cite[Lemma~2.8]{paper29}).
\end{proof}

\subsection{Ladder systems with hitting features}
We now arrive at the second stage of the definition of the proxy principle.

\begin{definition}[Proxy principle]\label{pminus}
$\p_\xi^-(\kappa, \mu, \mathcal R, \theta, \mathcal S,  \nu,\sigma)$
asserts the existence 
of a $\p_\xi^-(\kappa, \mu, \mathcal R, \ldots)$-sequence $\cvec{C} = \langle \mathcal C_\alpha \mid \alpha < \kappa \rangle$
satisfying the following \emph{hitting} feature.

For every sequence $\langle B_i \mid i < \theta \rangle$ of cofinal subsets of $\kappa$,
and every $S \in \mathcal S$, there exist stationarily many $\alpha \in S$ such that: 
\begin{enumerate}
\item $| \mathcal C_\alpha| < \nu$; and
\item for all $C \in \mathcal C_\alpha$ and $i < \min\{\alpha, \theta\}$,
\begin{equation}\label{hittingeqn}\tag{$\star$}
\sup\{ \gamma\in C \mid \suc_\sigma (C \setminus \gamma) \subseteq B_i \} = \alpha.
\end{equation}
\end{enumerate}
\end{definition}

\begin{remark}\label{monotonicity}
The reader can verify that the proxy principle satisfies monotonicity properties with respect to most of its parameters:
Any sequence witnessing
$\p_\xi^-(\kappa, \mu, \mathcal R, \theta, \mathcal S,  \nu,\sigma)$
remains a witness to the principle if any of $\xi$, $\mu$, or $\nu$ are increased;
if $\theta$ or $\sigma$ are decreased; if $\mathcal R$ is weakened;
if $\mathcal S$ is shrunk;
or if any element of $\mathcal S$ is expanded.
Furthermore, increasing $\chi$ or shrinking $\Omega$
both weaken the relations $\sqleftboth{\Omega}{\chi}$, $\sqstarleftboth{\Omega}{\chi}$, $\sqleftup{\Omega}_{\chi}$,
and $^{\Omega}{\sq}^*_\chi$.
Note also that 
$\p_\xi^-(\kappa,\mu, {^{\Omega}\mathcal{R}_{\chi}},\ldots)$
entails 
$\p_\xi^-(\kappa,\mu, {^\Omega_\chi{\mathcal{R}}},\ldots)$.
These monotonicity properties will be used freely without explanation as the need arises.
\end{remark}
\begin{remark}
In the special case $\sigma=1$, Equation~\eqref{hittingeqn} above is 
equivalent to the assertion that 
$\sup(\nacc(C) \cap B_i) = \alpha$.
Thus, applying an argument just as in the proof of Corollary~\ref{equivofclub}, 
we infer that the principle $\boxtimes^-(\kappa)$ of Definition~\ref{xboxdefinitionfrom22}
is equivalent to the instance $\p^-(\kappa,2,{\sq},1,\{\kappa\},2,1)$.
\end{remark}

\begin{conv}\label{conv<theta}
In Definition~\ref{pminus},
by putting ``${<}\theta$'' in place of $\theta$, we mean that $\cvec{C}$ 
simultaneously witnesses $\p_\xi^-(\kappa, \mu, \mathcal R, \theta', \mathcal S,  \nu,\sigma)$
for all $\theta'<\theta$.
\end{conv}

\begin{conv}\label{convention-sigma<infty}\label{conv<sigma}
In Definition~\ref{pminus}, by putting ``${<}\sigma$'' in place of $\sigma$, we mean that $\cvec{C}$ 
simultaneously witnesses $\p_\xi^-(\kappa, \mu, \mathcal R, \theta, \mathcal S,  \nu,\sigma')$
for all $\sigma'<\sigma$.

We may also put ``${<}\infty$'' in place of $\sigma$,
in which case we mean to replace the assertion of Equation~\eqref{hittingeqn}  of Definition~\ref{pminus} by:
$$\forall\sigma<\otp(C)~\sup\{ \gamma \in C \mid \suc_\sigma (C \setminus \gamma) \subseteq B_i \} = \alpha.$$
\end{conv}

\begin{thm}\label{thm416} Suppose  $\mathcal R$ is taken from Example~\ref{example53}. 
Then all of the following are equivalent:
\begin{itemize}
\item[(i)] $\p_\xi^-(\kappa,\mu,\mathcal R, \theta, \mathcal S, \nu, {<}\omega)$;
\item[(ii)] $\p_\xi^-(\kappa,\mu,\mathcal R, \theta, \mathcal S, \nu, 2)$;
\item[(iii)] There exists a $\p_\xi^-(\kappa,\mu,\mathcal R, \ldots)$-sequence $\langle \mathcal C_\alpha\mid\alpha<\kappa\rangle$
satisfying the following.
For every $S \in \mathcal S$ 
and 
every sequence $\langle B_i \mid i < \theta \rangle$ of cofinal subsets of $\kappa$,
there exist stationarily many $\alpha \in S$ with $| \mathcal C_\alpha| < \nu$ 
such that, for all $C \in \mathcal C_\alpha$ and $i < \min\{\alpha, \theta\}$,
$$\sup\{\delta\in B_i\cap\alpha\mid \min(C\setminus(\delta+1))\in B_i\}=\alpha;$$
\item[(iv)] There exists a $\p_\xi^-(\kappa, \mu, \mathcal R, \ldots)$-sequence $\langle \mathcal C_\alpha \mid \alpha < \kappa \rangle$
satisfying the following.
For every $S \in \mathcal S$, 
every sequence $\langle B_i \mid i < \theta \rangle$ of cofinal subsets of $\kappa$,
and every every club $D\s\kappa$, 
there exists $\alpha \in S\cap\acc(\kappa)$ with $| \mathcal C_\alpha| < \nu$
such that, for all $C \in \mathcal C_\alpha$ and $i < \min\{\alpha, \theta\}$,
$$\sup\{\delta\in D\cap\alpha\mid \min(C\setminus(\delta+1))\in B_i\}=\alpha;$$
\item[(v)] There exists a $\p_\xi^-(\kappa,\mu,\mathcal R, \ldots)$-sequence $\langle \mathcal C_\alpha\mid\alpha<\kappa\rangle$
satisfying the following.
For every $S \in \mathcal S$ and
every sequence $\langle \mathcal B_i \mid i < \theta \rangle$ with $\mathcal B_i\s[\kappa]^{<\omega}$ and $\mup(\mathcal B_i)=\kappa$ for all $i<\theta$,\footnote{For the definition of $\mup$, see Definition~\ref{defmup}.}
there exist stationarily many $\alpha \in S$ with $| \mathcal C_\alpha| < \nu$
such that, for all $C \in \mathcal C_\alpha$, $i < \min\{\alpha, \theta\}$ and $\epsilon<\alpha$,
there exist $\gamma,\beta$ with $\epsilon\le\gamma<\beta<\alpha$
for which $C\cap(\gamma,\beta)$ is in $\mathcal B_i$;
\item[(vi)] There exists a $\p_\xi^-(\kappa,\mu,\mathcal R, \ldots)$-sequence $\langle \mathcal C_\alpha\mid\alpha<\kappa\rangle$
satisfying the following.
For every $S \in \mathcal S$, every sequence $\langle B_i \mid i < \theta \rangle$ of cofinal subsets of $\kappa$, and every $n<\omega$, there exist stationarily many $\alpha \in S$ 
with $| \mathcal C_\alpha| < \nu$
such that, for all $C \in \mathcal C_\alpha$ and $i < \min\{\alpha, \theta\}$,
$$\sup\{ \gamma\in C \mid \suc_n (C \setminus \gamma)= \suc_n (B_i \setminus \gamma) \} = \alpha.$$
\end{itemize}
\end{thm}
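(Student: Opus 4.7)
The plan is to establish the equivalence via the cycle (i) $\Rightarrow$ (ii) $\Rightarrow$ (iii) $\Rightarrow$ (iv) $\Rightarrow$ (v) $\Rightarrow$ (vi) $\Rightarrow$ (i), preserving the underlying witnessing sequence at each step (possibly with postprocessing via Lemma~\ref{pp-from-micro} as needed). Two of these implications are immediate: (i) $\Rightarrow$ (ii) follows from the monotonicity of the proxy principle in $\sigma$ noted in Remark~\ref{monotonicity}, while (vi) $\Rightarrow$ (i) uses that $\suc_n(C\setminus\gamma) = \suc_n(B_i\setminus\gamma)$ forces $\suc_n(C\setminus\gamma)\s B_i$.

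For (ii) $\Rightarrow$ (iii), the witnessing sequence is preserved: if $\gamma \in C$ satisfies $\suc_2(C\setminus\gamma) = \{c_1,c_2\} \s B_i$, then $\delta := c_1 = \min(C\setminus(\gamma+1))$ lies in $B_i\cap\alpha$ and $\min(C\setminus(\delta+1)) = c_2 \in B_i$; cofinality of such $\gamma$ in $\alpha$ yields cofinality of the resulting $\delta$. For (iii) $\Rightarrow$ (iv), given a club $D$, apply (iii) to $\langle B_i\rangle$, then invoke Lemma~\ref{thinning-out} to thin the $B_i$-witnesses to a $D$-separated cofinal subset of $\alpha$; the interleaved $D$-points inherit the required property by closedness of $D$. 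For (v) $\Rightarrow$ (vi), given $\langle B_i\rangle$ and $n<\omega$, apply (v) with $\mathcal B_i := \{\suc_n(B_i\setminus\gamma) \mid \gamma<\kappa,\ \otp(B_i\setminus\gamma)>n\}$ (the family of $n$-windows of consecutive $B_i$-elements, whose $\mup$ equals $\sup B_i = \kappa$). Setting $\gamma'$ to be the largest $C$-element not exceeding the delivered $\gamma$, which lies in $C$ by closure of ladders in $\mathcal K(\kappa)$, converts the interval equality $C\cap(\gamma,\beta) = x \in \mathcal B_i$ into the required $\suc_n(C\setminus\gamma') = \suc_n(B_i\setminus\gamma')$.

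The crux of the argument is the step (iv) $\Rightarrow$ (v), modeled on the finite-set reformulation of $\clubsuit$ in Lemma~\ref{club-equivalents}. The plan is to fix a bijection $\pi:\kappa\leftrightarrow[\kappa]^{<\omega}$, translate each $\mathcal B_i$ to a cofinal set $B'_i := \pi^{-1}[\mathcal B_i] \s \kappa$, and apply (iv) to $\langle B'_i\rangle$ with the club
\[
D := \{\beta<\kappa \mid \pi[\beta] = [\beta]^{<\omega}\text{ and }\forall i<\theta\,\forall x\in\mathcal B_i(\min(x)<\beta \Rightarrow \max(x)<\beta)\}.
\]
The delivered $\delta \in D$ with $\min(C\setminus(\delta+1)) \in B'_i$ decodes via $\pi$ to some $x \in \mathcal B_i$ with $\max(x) < \beta := \min(C\setminus(\delta+1))$; realizing $x$ as $C\cap(\gamma,\beta)$ for an appropriate $\gamma$ will require further massaging, likely via a second application of Lemma~\ref{thinning-out} and a refinement of the witnessing sequence through Lemma~\ref{pp-from-micro} to align $\pi$-decoded sets with $C$-intervals.

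The main obstacle is precisely this interval-versus-subset issue in (iv) $\Rightarrow$ (v): the conclusion of (iv) describes only the next $C$-element after $\delta$, whereas (v) demands that an entire $C$-interval realize a prescribed finite pattern in $\mathcal B_i$. Bridging this gap is where the real technical work lies; the remaining steps in the cycle are largely bookkeeping exploiting the flexibility of the $\sigma=2$ hitting together with the postprocessing machinery.
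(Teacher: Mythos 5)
Your cycle matches the paper's, and you correctly identify (iv)$\Rightarrow$(v) as the heart of the matter with the right high-level idea (encode finite sets as ordinals and use the postprocessing machinery of Lemma~\ref{pp-from-micro}). The implications (i)$\Rightarrow$(ii), (ii)$\Rightarrow$(iii), and (vi)$\Rightarrow$(i) are fine. However, there are concrete gaps in the remaining three steps.

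For (iii)$\Rightarrow$(iv): thinning \emph{after} applying (iii) does not work. If you first apply (iii) to $\langle B_i\rangle$ and obtain cofinally many $\delta\in B_i\cap\alpha$ with $\beta_\delta := \min(C\setminus(\delta+1))\in B_i$, then thinning this set of $\delta$'s to a $D$-separated subset places $D$-points between \emph{consecutive surviving witnesses} --- but what you actually need is a $D$-point in the interval $(\delta,\beta_\delta)$, so that $\min(C\setminus(d+1))=\beta_\delta\in B_i$. Nothing guarantees this: the interleaving $D$-point could land beyond $\beta_\delta$. The paper does the thinning \emph{before} applying (iii): it replaces each $B_i$ by a $D$-separated cofinal $B'_i\subseteq B_i$ via Lemma~\ref{thinning-out}, and applies (iii) to $\langle B'_i\rangle$. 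Then for a witness $\delta\in B'_i$ with $\beta_\delta\in B'_i$, the $D$-separation of $B'_i$ immediately places a $D$-point in $(\delta,\beta_\delta)$, which does the job.

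For (v)$\Rightarrow$(vi): your $\mathcal B_i := \{\suc_n(B_i\setminus\gamma) : \gamma<\kappa\}$ together with $\gamma' :=\max(C\cap(\gamma+1))$ has an off-by-one misalignment. Recall that $\suc_n(\cdot)$ skips the zeroth element. So $\suc_n(C\setminus\gamma')$ starts at position $1$ of $C\setminus\gamma'$ (namely $\min(x)$, since $\gamma'\in C$ occupies position $0$), while $\suc_n(B_i\setminus\gamma')$ starts at position $1$ of $B_i\setminus\gamma'$; unless $\gamma'$ happens to sit in $B_i$ with no $B_i$-elements in $(\gamma',\min(x))$, these will not agree. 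Concretely, with $B_i=\{10,20,30,40,50,\dots\}$, $n=2$, $x=\{30,40\}$, and $\gamma'=22\notin B_i$, you get $\suc_2(B_i\setminus 22)=\{40,50\}\ne\{30,40\}=\suc_2(C\setminus 22)$. The paper's fix is to use windows of length $n+1$, setting $\mathcal B_i := \{\suc_{n+1}(B_i\setminus\epsilon) : \epsilon<\kappa\}$, and to take $\gamma' := \min(C\cap(\gamma,\beta))=\min(x)$, which lies in $B_i\cap C$ and thus anchors both $\suc_n$ expressions at the same point.

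For (iv)$\Rightarrow$(v), your plan differs from the paper's in ways that would need substantial repair. With a \emph{bijection} $\pi:\kappa\leftrightarrow[\kappa]^{<\omega}$ and $B'_i := \pi^{-1}[\mathcal B_i]$, your club condition $\pi[\beta]=[\beta]^{<\omega}$ controls $\pi$ on ordinals below $\beta$, but the ordinal you actually decode is $\beta_0=\min(C\setminus(\delta+1))$, which is not itself a $D$-point; so nothing forces $\sup\pi(\beta_0)<\beta_0$. Your club condition ``$\forall i<\theta$'' is also problematic when $\theta=\kappa$: it fails to yield a club. The paper instead uses an \emph{injection} $\psi:[\kappa]^{<\omega}\to\acc(\kappa)$ with $\psi(x)>\sup(x)$ (so the partial inverse automatically satisfies $\sup\pi(\beta)<\beta$ everywhere), applies (iv) not to $\pi^{-1}[\mathcal B_i]$ but to the purpose-built cofinal sets $B_i:=\{\psi(\{\gamma\}\cup b^\gamma_i):\gamma\in D'\}$ where each $b^\gamma_i\in\mathcal B_i$ has $\min(b^\gamma_i)>\gamma$, sets $x_{\gamma,\beta}=(\pi(\beta)\setminus(\gamma+1))\cup\{\beta\}$ in the triangular array, and passes the club $D'$ with the diagonal restriction $i<\min\{\theta,\delta\}$. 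You correctly anticipate that the real work lies in aligning decoded sets with $C$-intervals; this specific encoding is precisely how that alignment is achieved, and it is not recovered from your sketch.
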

\begin{proof} It is clear that any sequence witnessing (i) will witness (ii),
any sequence witnessing (ii) will witness (iii), and any sequence witnessing (vi) will witness (i).
Next, to see that any sequence witnessing (iii)
will witness (iv), suppose that we are given a club $D$ and a sequence $\langle B_i\mid i<\theta\rangle$ of cofinal subsets of $\kappa$.
For each $i<\theta$, by Lemma~\ref{thinning-out}, fix a cofinal subset $B'_i$ of $B_i$ that is $D$-separated.
Then, for every $\alpha\in\acc(\kappa)$, every club $C$ in $\alpha$, and every $i<\theta$, if 
$\sup\{\delta\in B_i'\cap\alpha\mid \min(C\setminus(\delta+1))\in B_i'\}=\alpha$, then
$\sup\{\delta\in D\cap\alpha\mid \min(C\setminus(\delta+1))\in B_i\}=\alpha$.

(iv) $\implies$ (v): Fix an injection $\psi:[\kappa]^{<\omega}\rightarrow\acc(\kappa)$ such that,
for all $x\in[\kappa]^{<\omega}$, $\psi(x)>\sup(x)$. Denote $\Omega:=\im(\psi)$,
and let $\pi:\Omega\rightarrow[\kappa]^{<\omega}$ denote the inverse of $\psi$.

Fix a triangular array $\mathfrak x=\langle x_{\gamma,\beta}\mid \gamma<\beta<\kappa\rangle$,
such that for all $\gamma<\beta<\kappa$:
\begin{itemize}
\item if $\beta\in\Omega$, then $x_{\gamma,\beta}=(\pi(\beta)\setminus(\gamma+1))\cup\{\beta\}$;
\item if $\beta\notin\Omega$, then $x_{\gamma,\beta}=\{\beta\}$.
\end{itemize} 

Evidently, $x_{\gamma,\beta}\s(\gamma,\beta]$ for all $\gamma<\beta<\kappa$.
Thus, consider the corresponding postprocessing function $\Phi_{\mathfrak x}$ from  Lemma~\ref{pp-from-micro}.
Let $\cvec C=\langle \mathcal C_\alpha\mid \alpha<\kappa\rangle$ be a sequence as in (iv).
For every $\alpha\in\acc(\kappa)$, let $\mathcal C^\bullet_\alpha:=\{\Phi_{\mathfrak x}(C)\mid C\in\mathcal C_\alpha\}$,
so that $\cvec{C}^\bullet := \langle \mathcal C^\bullet_\alpha\mid\alpha<\kappa\rangle$ is a $\p_\xi^-(\kappa,\mu,\mathcal R, \ldots)$-sequence.
The following claim shows that $\cvec{C}^\bullet$ witnesses~(v):

\begin{claim}\label{claim4161} Suppose $S\in\mathcal S$, $\langle \mathcal B_i \mid i < \theta \rangle$ is a sequence with $\mathcal B_i\s[\kappa]^{<\omega}$ and $\mup(\mathcal B_i)=\kappa$ for all $i<\theta$,
and $D\s\kappa$ is a club.
Then there exists $\alpha \in S \cap D$ such that:
\begin{enumerate}
\item $| \mathcal C^\bullet_\alpha| < \nu$; and
\item for all $C^\bullet \in \mathcal C^\bullet_\alpha$, $i < \min\{\alpha, \theta\}$ and $\epsilon<\alpha$,
there exist $\gamma,\beta$ with $\epsilon\le\gamma<\beta<\alpha$
for which $C^\bullet\cap(\gamma,\beta)$ is in $\mathcal B_i$.
\end{enumerate}
\end{claim}
\begin{proof} 
For all $i<\theta$ and $\gamma<\kappa$, fix $b_i^\gamma\in\mathcal B_i$ with $\min(b_i^\gamma)>\gamma$.
Fix a club $D'\s D$ such that, for all $\delta\in D'$:
\begin{itemize}
\item $\psi``[\delta]^{<\omega}\s\delta$;
\item for all $\gamma<\delta$ and $i<\min\{\theta,\delta\}$, $b_i^\gamma\s\delta$.
\end{itemize}
For each $i<\theta$, let $$B_i:=\{\psi(\{\gamma\}\cup b_i^\gamma)\mid  \gamma\in D'\}.$$

Now, by the choice of $\cvec C$,
fix $\alpha\in S\cap\acc(\kappa)$ such that:
\begin{itemize}
\item $|\mathcal C_\alpha|<\nu$, and 
\item for all $i < \min\{\alpha, \theta\}$ and $C\in\mathcal C_\alpha$,
$$\sup\{\delta\in D'\cap\alpha\mid \min(C\setminus(\delta+1))\in B_i\}=\alpha.$$
\end{itemize}
In particular, $\alpha\in\acc^+(D') \s D' \s D$.
Furthermore, $|\mathcal C^\bullet_\alpha|\le|\mathcal C_\alpha|<\nu$.
Finally, let $i < \min\{\alpha, \theta\}$ and $C^\bullet\in\mathcal C^\bullet_\alpha$ be arbitrary. Fix $C\in\mathcal C_\alpha$ such that $C^\bullet=\Phi_{\mathfrak x}(C)$.
Let 
$$\Delta_i := \{\delta\in D'\cap\alpha \mid \min(C\setminus(\delta+1))\in B_i\ \&\ \max\{i,\min(C)\}<\delta\}.$$
By the choice of $\alpha$, $\Delta_i$ is cofinal in $\alpha$.
Let $\delta\in\Delta_i$ be arbitrary. Set $\beta:=\min(C\setminus(\delta+1))$ and $\beta^-:=\sup(C\cap\beta)$.
As $\beta\in B_i$, let us fix $\gamma\in D'$ such that $\pi(\beta)=\{\gamma\}\cup b_i^\gamma$.
Evidently, $x_{\gamma,\beta}=b_i^\gamma\cup\{\beta\}$.
As $\psi``[\delta]^{<\omega}\s\delta$ and $\beta>\delta$, we infer that $\pi(\beta)\notin[\delta]^{<\omega}$, so that $b_i^\gamma\nsubseteq\delta$.
As $\delta\in D'$ and $i<\min\{\theta,\delta\}$, this means that $\gamma\ge\delta$.
So, $\min(\pi(\beta))=\gamma\ge\delta\ge\beta^-$, and hence $x_{\beta^-,\beta}\cap(\gamma,\beta)=b^\gamma_i$.
It thus follows from the definition of $\Phi_{\mathfrak x}$ that
$C^\bullet\cap(\gamma,\beta) = (x_{\beta^-,\beta}\cap(\gamma,\beta)) \in\mathcal B_i$, as sought.
\end{proof}

Finally, to see that any sequence witnessing (v) will witness (vi), 
let $\langle B_i\mid i<\theta\rangle$ be an arbitrary sequence of cofinal subsets of $\kappa$, 
and let $n<\omega$ be arbitrary.
For each $i<\theta$, set $\mathcal B_i:=\{ \suc_{n+1}(B_i\setminus\epsilon)\mid \epsilon<\kappa\}$. 
Then, for every $\alpha\in\acc(\kappa)$, every club $C$ in $\alpha$, every $i<\theta$, 
and every $\gamma,\beta$ such that $C\cap(\gamma,\beta)$ is in $\mathcal B_i$,
setting $\gamma':=\min(C\cap(\gamma,\beta))$, we obtain $\suc_n (C \setminus \gamma')=\suc_n (B_i \setminus \gamma')$.
\end{proof}

It is clear that a sequence witnessing any one of the clauses of Theorem~\ref{thm416}
will also witness $\p_\xi^-(\kappa,\mu,\mathcal R, \theta, \mathcal S, \nu, 1)$.
Clause~(1) of the following Theorem shows that in the presence of $\clubsuit(\kappa)$,
the principles are equivalent.

\begin{thm}\label{get-sigma-finite}
Suppose $\clubsuit(\kappa)$ and $\p_\xi^-(\kappa,\mu,\mathcal R, \theta, \mathcal S, \nu, 1)$ both hold,
with  $\mathcal R$ taken from Example~\ref{example53}.
Then:
\begin{enumerate}
\item $\p_\xi^-(\kappa,\mu,\mathcal R, \theta, \mathcal S, \nu, {<}\omega)$ holds;
\item If $0<\theta<\omega$, then $\p_\xi^-(\kappa,\mu,\mathcal R, {<}\omega, \mathcal S, \nu, {<}\omega)$ holds.
\end{enumerate}
\end{thm}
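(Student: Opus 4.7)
The plan is to construct, using $\clubsuit(\kappa)$, a single postprocessing function that, when applied to the given $\p_\xi^-(\kappa,\mu,\mathcal R,\theta,\mathcal S,\nu,1)$-sequence $\cvec C$, yields a sequence $\cvec D$ witnessing both parts of the theorem. By Lemma~\ref{club-equivalents}, fix a sequence $\langle \mathcal X_\delta\mid\delta\in\acc(\kappa)\rangle$ with $\mathcal X_\delta\s[\delta]^{<\omega}$, $\mup(\mathcal X_\delta)=\delta$, and such that, for every $\mathcal X\s[\kappa]^{<\omega}$ with $\mup(\mathcal X)=\kappa$, the set $\{\delta\in\acc(\kappa)\mid\mathcal X_\delta\s\mathcal X\}$ is stationary. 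Fix a well-ordering $\lhd$ of $[\kappa]^{<\omega}$, and define a triangular array $\mathfrak x=\langle x_{\gamma,\beta}\mid\gamma<\beta<\kappa\rangle$ by setting $x_{\gamma,\beta}:=\{\beta\}$ when $\beta$ is a successor, and $x_{\gamma,\beta}:=y\cup\{\beta\}$ when $\beta\in\acc(\kappa)$, where $y$ is the $\lhd$-least element of $\mathcal X_\beta$ with $\min(y)>\gamma$ (existing since $\mup(\mathcal X_\beta)=\beta$). By Lemma~\ref{pp-from-micro}, the resulting $\Phi_{\mathfrak x}$ is a postprocessing function, and $\cvec D:=\langle\{\Phi_{\mathfrak x}(C)\mid C\in\mathcal C_\alpha\}\mid\alpha<\kappa\rangle$ is again a $\p_\xi^-(\kappa,\mu,\mathcal R,\ldots)$-sequence.

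For part~(1), by Theorem~\ref{thm416} it suffices to verify clause~(v) for $\cvec D$ with the same $\theta$. Given $S\in\mathcal S$ and $\langle\mathcal B_i\mid i<\theta\rangle$ with $\mup(\mathcal B_i)=\kappa$, set $B_i^*:=\{\delta\in\acc(\kappa)\mid\mathcal X_\delta\s\mathcal B_i\}$; each is stationary. Applying the $\sigma=1$ hitting of $\cvec C$ to $\langle B_i^*\mid i<\theta\rangle$ yields stationarily many $\alpha\in S$ satisfying $|\mathcal C_\alpha|<\nu$ and, for each $C\in\mathcal C_\alpha$ and $i<\min\{\alpha,\theta\}$, cofinally-many $\gamma\in C$ with $\min(C\setminus(\gamma+1))\in B_i^*$. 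For such $\alpha$, $D=\Phi_{\mathfrak x}(C)\in\mathcal D_\alpha$, $i<\min\{\alpha,\theta\}$, and $\epsilon<\alpha$, pick $\gamma\in C$ with $\gamma>\epsilon$ and $\beta:=\min(C\setminus(\gamma+1))\in B_i^*$. As $\mathcal X_\beta\s\mathcal B_i$, the $y$ selected in $x_{\gamma,\beta}$ lies in $\mathcal B_i$; since $\gamma,\beta$ are consecutive in $C$, a direct inspection of $\Phi_{\mathfrak x}$ gives $D\cap(\gamma,\beta)=y\in\mathcal B_i$, as required.

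For part~(2), one fixes $n<\omega$ and verifies clause~(v) of Theorem~\ref{thm416} with $n$ in place of $\theta$. The main obstacle is that $n$ may exceed $\theta$, ruling out a direct application of the hitting with $n$ slots. The fix is to pack all $n$ queries into a single $\mathcal X^*\s[\kappa]^{<\omega}$: given $\langle\mathcal B_i\mid i<n\rangle$ with $\mup(\mathcal B_i)=\kappa$, let $\mathcal X^*$ consist of those $y$ admitting an order-preserving partition $y=y_0\cup\ldots\cup y_{n-1}$ with $\max(y_i)<\min(y_{i+1})$ and $y_i\in\mathcal B_i$ for each $i<n$. A greedy construction using $\mup(\mathcal B_i)=\kappa$ shows $\mup(\mathcal X^*)=\kappa$, so $B^*:=\{\delta\in\acc(\kappa)\mid\mathcal X_\delta\s\mathcal X^*\}$ is stationary. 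Applying the $\sigma=1$ hitting of $\cvec C$ with the single cofinal set $B^*$ (valid by monotonicity, since $\theta\ge1$) gives stationarily many $\alpha\in S$ with the hitting. For such $\alpha$ and $D=\Phi_{\mathfrak x}(C)$, $i<n$, $\epsilon<\alpha$, pick $\gamma\in C$ with $\gamma>\epsilon$ and $\beta:=\min(C\setminus(\gamma+1))\in B^*$, fix the partition of $y\in\mathcal X_\beta\s\mathcal X^*$, and set $a:=\gamma$ if $i=0$ and $a:=\max(y_{i-1})$ otherwise, and $b:=\beta$ if $i=n-1$ and $b:=\min(y_{i+1})$ otherwise; then $\epsilon\le a<b<\alpha$ and $D\cap(a,b)=y_i\in\mathcal B_i$, as required.
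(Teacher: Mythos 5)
Your proof of Part~(1) is correct and takes essentially the same route as the paper: use Lemma~\ref{club-equivalents} to extract $\langle\mathcal X_\delta\rangle$ from $\clubsuit(\kappa)$, build a triangular array $\mathfrak x$ with entries drawn from the $\mathcal X_\delta$'s, pass $\cvec C$ through $\Phi_{\mathfrak x}$ (Lemma~\ref{pp-from-micro}), and verify Clause~(v) of Theorem~\ref{thm416} by applying the $\sigma=1$ hitting of $\cvec C$ to the stationary sets $\{\delta\in\acc(\kappa)\mid\mathcal X_\delta\s\mathcal B_i\}$; your inclusion of $\beta$ in $x_{\gamma,\beta}$ and use of a $\lhd$-least $y$ are cosmetic deviations from the paper's choice $x_{\gamma,\beta}\in\mathcal X_\beta$ with $\min(x_{\gamma,\beta})>\gamma$.

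For Part~(2), the paper only gives a hint, leaving to the reader the verification that a sequence satisfying Clause~(v) with $\theta=1$ already witnesses $\p_\xi^-(\kappa,\mu,\mathcal R,{<}\omega,\mathcal S,\nu,{<}\omega)$; the intended argument packs $n$ families $\langle\mathcal B_i\mid i<n\rangle$ into a single family of ordered concatenations at the level of Clause~(v) itself, so that Part~(2) becomes a purely abstract corollary of Part~(1). You instead re-run the $\clubsuit$-packing directly on $\cvec C$, forming $\mathcal X^*$ out of ordered concatenations $y_0\cup\cdots\cup y_{n-1}$ and obtaining a single stationary $B^*$ to feed into the $\sigma=1$ hitting of $\cvec C$. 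The underlying idea is identical, and your unpacking (choosing $a,b$ so that $D\cap(a,b)=y_i$) correctly recovers Clause~(v) of Theorem~\ref{thm416} with $\theta=n$ for the same $\cvec D$, yielding $\p_\xi^-(\kappa,\mu,\mathcal R,n,\mathcal S,\nu,{<}\omega)$ for every $n<\omega$ simultaneously. The only trade-off is that your version is tied to the specific $\clubsuit$-derived $\cvec D$, whereas the paper's hint isolates a reusable abstract fact about Clause~(v); both are sound.
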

\begin{proof}
(1) Denote $\Omega:=\acc(\kappa)$. 
By $\clubsuit(\kappa)$ and Lemma~\ref{club-equivalents}, we may fix a sequence $\cvec{X} = \langle \mathcal X_\beta \mid \beta\in\Omega\rangle$ such that:
\begin{enumerate}[(a)]
\item for every $\beta\in \Omega$,  $\mathcal X_\beta\s [\beta]^{<\omega}$ with $\mup(\mathcal X_\beta)=\beta$;
\item for every $\mathcal X\s[\kappa]^{<\omega}$ with $\mup(\mathcal X)=\kappa$, 
there are stationarily many $\beta\in\Omega$ for which $\mathcal X_\beta\subseteq \mathcal X$.
\end{enumerate}

Fix a triangular array $\mathfrak x=\langle x_{\gamma,\beta}\mid \gamma<\beta<\kappa\rangle$ 
such that for all $\gamma<\beta<\kappa$:
\begin{itemize}
\item if $\beta\in\Omega$, then $x_{\gamma,\beta}\in\mathcal X_\beta\setminus\{\emptyset\}$ with $\min(x_{\gamma,\beta})>\gamma$;
\item if $\beta\notin\Omega$, then $x_{\gamma,\beta}=\{\beta\}$.
\end{itemize} 

Now, consider the corresponding postprocessing function $\Phi_{\mathfrak x}$ from  Lemma~\ref{pp-from-micro}.
Let $\cvec{C} = \langle \mathcal C_\alpha\mid \alpha<\kappa\rangle$ be a sequence witnessing $\p_\xi^-(\kappa,\mu,\mathcal R, \theta, \mathcal S, \nu, 1)$.
For every $\alpha\in\acc(\kappa)$, let $\mathcal D_\alpha:=\{\Phi_{\mathfrak x}(C)\mid C\in\mathcal C_\alpha\}$,
so that $\cvec{D} := \langle \mathcal D_\alpha\mid\alpha<\kappa\rangle$ is a $\p_\xi^-(\kappa,\mu,\mathcal R, \ldots)$-sequence.
To see that $\p_\xi^-(\kappa,\mu,\mathcal R, \theta, \mathcal S, \nu, {<}\omega)$ holds,
we shall verify that $\cvec{D}$ witnesses Clause~(v) of Theorem~\ref{thm416}:

\begin{claim}\label{claim41411} Suppose $S\in\mathcal S$ and $\langle \mathcal B_i \mid i < \theta \rangle$ is a sequence with $\mathcal B_i\s[\kappa]^{<\omega}$ and $\mup(\mathcal B_i)=\kappa$ for all $i<\theta$.
Then there exist stationarily many $\alpha \in S$ such that:
\begin{enumerate}
\item $| \mathcal D_\alpha| < \nu$; and
\item for all $D \in \mathcal D_\alpha$, $i < \min\{\alpha, \theta\}$ and $\epsilon<\alpha$,
there exist $\gamma,\beta$ with $\epsilon\le\gamma<\beta<\alpha$
for which $D\cap(\gamma,\beta)$ is in $\mathcal B_i$.
\end{enumerate}
\end{claim}
\begin{proof} 
For every $i<\theta$, we know from our choice of $\cvec{X}$ that $B_i:=\{\beta\in\Omega\mid \mathcal X_\beta\subseteq \mathcal B_i\}$ is stationary.
Thus, by our choice of $\cvec{C}$, the following set is stationary:
\[
S' := \{ \alpha\in S \mid
|\mathcal C_\alpha|<\nu \ \&\ \forall i < \min\{\alpha, \theta\} \forall C\in\mathcal C_\alpha
[\sup(\nacc(C)\cap B_i) = \alpha] \}.
\]
Now, let $\alpha\in S'$ be arbitrary. We have  $|\mathcal D_\alpha|\le|\mathcal C_\alpha|<\nu$.
Finally, let $i < \min\{\alpha, \theta\}$ and $D\in\mathcal D_\alpha$ be arbitrary.
Pick $C\in\mathcal C_\alpha$ such that $D=\Phi_{\mathfrak x}(C)$.
For all $\beta\in\nacc(C)\cap B_i$ above $\min(C)$, we have $\mathcal X_\beta\s\mathcal B_i$ so that for $\gamma:=\sup(C\cap\beta)$,
we obtain $x_{\gamma,\beta}\in\mathcal B_i$ and $x_{\gamma,\beta}\s(\gamma,\beta)$,
and we infer from the definition of $\Phi_{\mathfrak{x}}$ that 
$D\cap(\gamma,\beta)=x_{\gamma,\beta}\in\mathcal B_i$.
\end{proof}

Thus, $\p_\xi^-(\kappa,\mu,\mathcal R, \theta, \mathcal S, \nu, {<}\omega)$ follows, by the implication $(v)\implies(i)$ of Theorem~\ref{thm416}.

(2) Left to the reader to show that a sequence as in Clause~(v) of Theorem~\ref{thm416} with $\theta=1$
also witnesses $\p_\xi^-(\kappa,\mu,\mathcal R, {<}\omega, \mathcal S, \nu, {<}\omega)$.
\end{proof}

\begin{remark}
Clause~(1) of the preceding is used in the proofs of Theorems 3.11 and 3.12 of \cite{paper28},
and also, implicitly, in the proof of Corollary~3.15(2) of the same paper.
\end{remark}

In \cite[p.~1953]{paper22}, we wrote that if we omit $\sigma$, then we mean that $\sigma=1$,
but given Theorems \ref{thm416} and~\ref{get-sigma-finite}, we now upgrade it as follows.

\begin{conv}\label{omitsigma} Whenever we omit $\sigma$, we mean that we put ``${<}\omega$'' in place of $\sigma$.
\end{conv}

\subsection{Jensen's classical result and nonreflecting sets}\label{section:captureJensen}

In~\cite[Theorem~6.2]{jensen1972fine}, Jensen showed that, assuming $V=L$,
there is a $\kappa$-Souslin tree for every regular uncountable cardinal $\kappa$ that is not weakly compact.
The proof of that theorem actually derives a $\kappa$-Souslin tree from the 
existence of a stationary set $E\subset\kappa$ for which
$\square(E)+\diamondsuit(E)$ holds,\footnote{See \cite[Theorem~IV.2.4]{MR750828} and \cite[Lemma~11.68]{MR3243739}.}
where $\square(E)$ is the principle asserting the existence of a $\p^-(\kappa, 2, {\sqleftup{E}}, \dots)$-sequence,
and should not be confused with the weaker principle $\square(\kappa)$.
Here, we show how to redirect this result through our framework:\footnote{By Theorem~\ref{basicthm} together with Proposition~\ref{P->Pbullet}, the instances obtained in Corollary~\ref{corollaryalajensen} suffice for the construction of a $\kappa$-Souslin tree}
\begin{cor}\label{corollaryalajensen} Suppose $\square(E)+\diamondsuit(E)$ holds for some stationary $E\subset\kappa$. Then:
\begin{enumerate}
\item $\p^-(\kappa, 2, {\sq^*}, 1,\{E\},2)$;
\item If $\kappa>\aleph_1$, then $\p^-(\kappa,2,{\sq^*},1,\{S\},2)$ holds for every stationary $S\s\kappa$.
\end{enumerate}
\end{cor}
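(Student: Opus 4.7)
The plan is to construct, from a witness $\vec C$ to $\square(E)$ and a witness $\vec A$ to $\diamondsuit(E)$, a singleton-valued $\sq^*$-coherent sequence $\vec D=\langle D_\alpha\mid\alpha\in\acc(\kappa)\rangle$ satisfying the hitting feature. The decisive property is that $\square(E)$ forces $\acc(C_\alpha)\cap E=\emptyset$, so any accumulation point of a modification $D_\alpha$ that lies in $\acc(C_\alpha)$ avoids $E$ and inherits its $D$-value from the $\square(E)$-coherence $C_{\bar\alpha}=C_\alpha\cap\bar\alpha$, providing a canonical tail with which to align.

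For Part~(1), I first aim at the $\sigma=1$ instance; the omitted-$\sigma$ (i.e., ${<}\omega$) form then follows from Theorem~\ref{get-sigma-finite}(1), invoking $\diamondsuit(E)\Rightarrow\clubsuit(E)\Rightarrow\clubsuit(\kappa)$. Set $D_\alpha:=C_\alpha$ when $\alpha\notin E$ or when $A_\alpha$ is not cofinal in $\alpha$. For $\alpha\in E$ with $A_\alpha$ cofinal, if $\cf(\alpha)=\omega$ take $D_\alpha$ to be any $\omega$-sequence in $A_\alpha$ converging to $\alpha$, so $\acc(D_\alpha)=\emptyset$ and coherence is vacuous. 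Otherwise, recursively pick an increasing continuous $\langle\bar\alpha_k\mid k<\cf(\alpha)\rangle\s\acc(C_\alpha)$ cofinal in $\alpha$, together with an insertion point $a_k\in A_\alpha\cap(\bar\alpha_{k-1},\bar\alpha_k)$ whenever that intersection is nonempty, and let $D_\alpha$ consist of the $\bar\alpha_k$'s, the $a_k$'s, and the $C_\alpha$-tail $C_\alpha\cap(a_k,\bar\alpha_k)$ below each $\bar\alpha_k$. This arranges $\acc(D_\alpha)\s\acc(C_\alpha)\s\kappa\setminus E$, so for each $\bar\alpha_k\in\acc(D_\alpha)$ the witness $\gamma:=a_k+1$ yields $D_\alpha\cap(\gamma,\bar\alpha_k)=C_\alpha\cap(\gamma,\bar\alpha_k)=D_{\bar\alpha_k}\setminus\gamma$, confirming $D_{\bar\alpha_k}\sq^* D_\alpha$. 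For the hitting, given cofinal $B\s\kappa$, combining $\diamondsuit(E)$ with the club $\acc^+(B)$ yields stationarily many $\alpha\in E$ for which $A_\alpha=B\cap\alpha$ is cofinal in $\alpha$; at each such $\alpha$ the $a_k$'s lie in $B$ cofinally in $\alpha$.

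For Part~(2), observe that $\diamondsuit(E)$ also entails $\kappa^{<\kappa}=\kappa$, so Fact~\ref{clubvsdiamond} upgrades $\clubsuit(\kappa)$ to $\diamondsuit(\kappa)$, and the modern formulation (Fact~\ref{def_Diamond_H_kappa}) supplies, via the elementary-submodel trick with $S$ placed in the parameter, stationarily many $\alpha\in S$ at which the guess equals $B\cap\alpha$, for every stationary $S\s\kappa$ and every cofinal $B\s\kappa$. For each fixed $S$, I would re-run the Part~(1) construction using a $\diamondsuit(S)$-sequence in place of $\vec A$ and arrange the $\bar\alpha_k$'s to avoid both $E$ and $S$ on a stationary subset of $S$; this selection is available thanks to $\kappa>\aleph_1$ (which gives enough room in the club $\acc(C_\alpha)$ to skirt any reflection-style obstruction from $S$), and then the coherence and hitting proofs of Part~(1) transfer with minor cosmetic changes. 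The main obstacle throughout is the high-cofinality case: no $\acc$-preserving modification that inserts cofinally many new $\nacc$-elements can maintain strict initial-segment agreement with $C_\alpha$ below every accumulation point of $C_\alpha$, which is precisely why the flexibility of $\sq^*$ (agreement only on a tail) is essential.
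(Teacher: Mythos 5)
Your construction for Part~(1) has a genuine gap at the coherence step in the high-cofinality case, and it is precisely the gap that forces the paper to invoke the machinery of Lemma~\ref{from-KS93} (a Kojman--Shelah style argument) rather than a direct modification of $\vec C$.

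Concretely: when $\cf(\alpha)>\omega$ and $A_\alpha$ is cofinal, your $D_\alpha$ interleaves the inserted points $a_j$ with tails of $C_\alpha$ and the continuous ladder $\langle\bar\alpha_k\rangle$. Since $D_\alpha$ is a club, for every \emph{limit} ordinal $k<\cf(\alpha)$ the point $\bar\alpha_k=\sup_{j<k}\bar\alpha_j$ lies in $\acc(D_\alpha)$. Below such a $\bar\alpha_k$, the set $D_\alpha$ contains cofinally many inserted points $a_j$ (one in each successor block), and these are elements of $A_\alpha$, not of $C_\alpha$. On the other hand, $\bar\alpha_k\in\acc(C_\alpha)\subseteq\kappa\setminus E$, so your definition sets $D_{\bar\alpha_k}:=C_{\bar\alpha_k}=C_\alpha\cap\bar\alpha_k$. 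Consequently, for every $\gamma<\bar\alpha_k$, the set $D_\alpha\cap[\gamma,\bar\alpha_k)$ contains some $a_j\notin C_\alpha$, and therefore $D_\alpha\cap[\gamma,\bar\alpha_k)\neq D_{\bar\alpha_k}\setminus\gamma$. The $\sq^*$-coherence fails at every $\bar\alpha_k$ with $k$ a limit, and this cannot be avoided when $\cf(\alpha)>\omega$: the issue is not merely that agreement must only be on a tail (which $\sq^*$ grants), but that \emph{no} tail of $D_\alpha$ below $\bar\alpha_k$ can be free of inserted points. Your closing remark identifies the obstacle correctly but misattributes the rescue to $\sq^*$; the flexibility of $\sq^*$ is exhausted already at the first limit $k$.

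The $\cf(\alpha)=\omega$ case you describe does work, and this is indeed the first case in the paper's proof of Lemma~\ref{from-KS93}: if the $\omega$-cofinality points of $E$ already guess every cofinal set stationarily often (after preprocessing $A_\alpha$ into an $\omega$-sequence), one simply sets $\Omega:=E\cap E^\kappa_\omega$ and is done. But the paper must also handle the complementary case, where some cofinal $X^\omega$ witnesses that the $\omega$-cofinality points fail to guess; this is where the intricate recursive construction of $\langle(\Omega^{n+1},\vec{\mathcal C}^{n+1},X^n)\rangle$ and the thread-producing contradiction at the end of Lemma~\ref{from-KS93} come in. A single pass of ``insert points from $A_\alpha$ into $C_\alpha$'' cannot replace that argument: one must iterate, shrinking the target set $\Omega$ and fattening the clubs, and prove termination. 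For Part~(2), your plan inherits the same defect, and the vague appeal to ``$\kappa>\aleph_1$ giving enough room to skirt reflection-style obstructions'' does not identify a mechanism; the paper instead routes Part~(2) through Part~(1), a citation to \cite[Theorem~4.13]{paper24}, and Theorem~\ref{get-sigma-finite}.
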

\begin{proof} 
(1) By Corollary~\ref{cor-KS93} below,
there exists a stationary $\Omega\s E$ such that $\p^-(\kappa, 2, \allowbreak{\sqstarleftboth{\Omega}{\chi}}, 1,\{\Omega\},2)$ holds for $\chi:=\omega$.
In particular, $\p^-(\kappa, 2, {\sq^*},1,\{E\},2)$ holds.

(2) This follows from Clause~(1), \cite[Theorem~4.13]{paper24}, and Theorem~\ref{get-sigma-finite}.
\end{proof}

We now present the main technical lemma of this subsection.
In order to enable applications of this lemma to scenarios beyond the scope of this paper
(see~\cite{paper45}),
we temporarily suspend our usual convention requiring $\mu\leq\kappa$.
Thus, for the rest of this subsection only,
a $\p^-(\kappa, \kappa^+, \mathcal{R}, \dots)$-sequence 
is a sequence just as in Definition~\ref{defn-p-sequence-first-stage},
with the second bullet replaced by $0<|\mathcal C_\alpha|\leq\kappa$.

\begin{lemma}\label{from-KS93}
Suppose $\kappa^{\aleph_0}=\kappa$, $S\s\kappa$ is stationary, $\clubsuit(S)$ holds, 
and $\cvec{C}^0 = \langle \mathcal C^0_\alpha \mid \alpha<\kappa \rangle$ is a $\p^-(\kappa, \kappa^+, {\sqstarleftboth{S}{\chi}},\ldots)$-sequence.
Then there exist:
\begin{enumerate}
\item a subset $\Omega\subseteq S\cap\acc(\kappa)$;
\item a sequence $\langle X_\beta \mid \beta \in \Omega \rangle$, where each $X_\beta$ is a cofinal subset of $\beta$;
\item a $\p^-(\kappa, \kappa^+, {\sqstarleftboth{\Omega}{\chi}},\ldots)$-sequence, $\langle \mathcal C_\alpha \mid \alpha<\kappa \rangle$, with $|\mathcal C_\alpha|\le|\mathcal C_\alpha^0|$ for all $\alpha<\kappa$,
\setcounter{condition}{\value{enumi}}
\end{enumerate}
together satisfying the following hitting feature:
\begin{enumerate}
\setcounter{enumi}{\value{condition}}
\item
for every cofinal $X\s\kappa$, there exists some $\alpha\in\Omega$
such that $\mathcal C_\alpha$ is a singleton, say, $\mathcal C_\alpha = \{C_\alpha\}$, and
\[
\sup \{\beta\in\nacc(C_\alpha)\cap \Omega\mid X_\beta\s X\} =\alpha.
\]
\end{enumerate}
\end{lemma}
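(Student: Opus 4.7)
The plan is to extract $\Omega$ and $\langle X_\beta\mid\beta\in\Omega\rangle$ from the $\clubsuit(S)$-sequence, and to obtain $\cvec{C}$ from $\cvec{C}^0$ by combining a postprocessing function (that forces elements of $\Omega$ into the $\nacc$ of every club) with a coherent shrinking to singletons at a reserved stationary set of hitting witnesses.

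First, I would fix a $\clubsuit(S)$-sequence $\langle Y_\delta\mid\delta\in S\rangle$, set $\Omega:=S\cap\acc(\kappa)$, and define $X_\beta:=Y_\beta$ for every $\beta\in\Omega$. Each $X_\beta$ is then a cofinal subset of $\beta$, and for every cofinal $X\s\kappa$ the set $T_X:=\{\beta\in\Omega\mid X_\beta\s X\}$ is stationary in $\kappa$. Using Theorem~\ref{partition-clubsuit} together with Lemma~\ref{club-equivalents}, I would partition $\Omega$ into $\kappa$-many stationary pieces each bearing its own $\clubsuit$, and designate one piece $\Omega'\subseteq\Omega$ as the carrier of hitting witnesses, thereby concentrating $T_X\cap\Omega'$ on a stationary set for every cofinal $X$.

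Next, using $\kappa^{\aleph_0}=\kappa$, I would fix an injection $\psi:[\kappa]^{<\omega}\to\Omega$ with $\psi(x)>\sup(x)$, set $\Omega^*:=\im(\psi)$, and define a triangular array $\mathfrak x=\langle x_{\gamma,\beta}\mid\gamma<\beta<\kappa\rangle$ by $x_{\gamma,\beta}:=(\psi^{-1}(\beta)\setminus(\gamma+1))\cup\{\beta\}$ for $\beta\in\Omega^*$ (with $\gamma<\min(\psi^{-1}(\beta))$), and $x_{\gamma,\beta}:=\{\beta\}$ otherwise. Let $\Phi_{\mathfrak x}$ be the corresponding postprocessing function from Lemma~\ref{pp-from-micro}, and put $\mathcal C^1_\alpha:=\{\Phi_{\mathfrak x}(C)\mid C\in\mathcal C^0_\alpha\}$. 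By Lemma~\ref{pp-from-micro} and Remark~\ref{monotonicity} (since $\Omega\s S$ weakens $\sqstarleftboth{S}{\chi}$ to $\sqstarleftboth{\Omega}{\chi}$), this is a $\p^-(\kappa,\kappa^+,\sqstarleftboth{\Omega}{\chi},\dots)$-sequence with $|\mathcal C^1_\alpha|\le|\mathcal C^0_\alpha|$; moreover, every $\beta\in\nacc(C)\cap\Omega^*$ yields $\beta\in\nacc(\Phi_{\mathfrak x}(C))\cap\Omega$. For $\alpha\in\Omega'$ I would further shrink $\mathcal C^1_\alpha$ to a singleton $\{C_\alpha\}$ via a coherent selection, and for $\alpha\notin\Omega'$ I would set $\mathcal C_\alpha:=\mathcal C^1_\alpha$.

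To verify the hitting feature, given a cofinal $X\s\kappa$, I would apply the $\mathcal X$-form of $\clubsuit(\Omega')$ (via Lemma~\ref{club-equivalents}) to $\mathcal X_X:=\{\{x\}\mid x\in X,\,x\neq\emptyset\}$ (which has $\mup(\mathcal X_X)=\kappa$) to obtain stationarily many $\delta\in\Omega'$ with $X_\delta\s X$; combining this with the cofinality of $\nacc(\Phi_{\mathfrak x}(C))\cap\Omega^*$ in $\alpha_C$ produces some $\alpha\in\Omega'$ at which $\sup\{\beta\in\nacc(C_\alpha)\cap\Omega\mid X_\beta\s X\}=\alpha$. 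The hard part will be the coherent singleton selection on $\Omega'$: shrinking $\mathcal C_\alpha$ to $\{C_\alpha\}$ at $\alpha\in\Omega'$ forces every higher club $C'\in\mathcal C_{\alpha'}$ with $\alpha\in\acc(C')$ to admit this single $C_\alpha$ as its $\sqstarleftboth{\Omega}{\chi}$-coherence witness, a condition that does not follow automatically from the local $\sqstarleftboth{S}{\chi}$-coherence of $\cvec C^0$. I expect to resolve this by exploiting the flexibility of $\sq^*$-coherence (which requires agreement only on a tail) and by choosing $\Omega'$ to lie within a sparse residue class of $\Omega$ that accumulates only rarely inside the clubs of $\cvec C^0$, so that the singleton choice can be propagated inductively in a $\sq^*$-consistent manner through the tree structure of $\cvec C^0$.
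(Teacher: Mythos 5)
The step you flag as ``the hard part'' --- coherent singleton selection at $\Omega'$ --- is in fact routine: by the very definition of $\sqstarleftboth{S}{\chi}$-coherence, no point of $S$ can lie in $\acc(C')$ for any $C'$ in the ladder system (since a coherence witness $D$ at such a point would need $\alpha_D\notin S$, an impossibility), so the $\mathcal C^0_\alpha$ for $\alpha\in S$ are entirely unconstrained and may be shrunk, or even replaced wholesale, at no cost. The paper exploits exactly this observation in its opening reduction. The actual difficulty lies elsewhere, and your proposal does not address it.

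The genuine gap is in the hitting verification, and it is fatal. Your $\Phi_{\mathfrak x}$ is fixed once and for all before $X$ is given; the points it injects into $\nacc(C)$ are the codes $\psi^{-1}(\beta)$ for $\beta$ already in $\nacc(C)\cap\Omega^*$, and there is no mechanism to make those injected points land in $T_X:=\{\beta\in\Omega\mid X_\beta\s X\}$ for an arbitrary, later-chosen $X$. What you would actually need is the double-$\clubsuit$ device: apply the guessing sequence first to $X$ to obtain a stationary $B_X$, and then to $B_X$ to find an $\alpha$ whose chosen ladder $C_\alpha$ is itself a subset of $B_X$; to make $\nacc(C_\alpha)=C_\alpha$, one further restricts to $\alpha\in S\cap E^\kappa_\omega$ (your choice $\Omega:=S\cap\acc(\kappa)$ introduces accumulation points that break this). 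But even this only succeeds under the additional hypothesis that $B_X$ is stationary for every cofinal $X$ --- a hypothesis that can fail. When it fails, the paper enters a second, much more delicate case: a Kojman--Shelah-style $\omega$-length recursion that repeatedly \emph{fills in} the ladders $C^n_{\alpha,\iota}$ at non-accumulation points outside a shrinking tower $\Omega^n$, terminating either with a valid witness or in a contradiction derived from the $(\omega{+}1)$-parameter $\clubsuit$-matrix and the stabilization of $\min(C^n_{\alpha,0}\setminus\beta)$. Your proposal has no counterpart to this case, and it is where the real work of the lemma happens.
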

\begin{proof}
Following Convention~\ref{default-outside-acc}, we assume that $\mathcal C^0_{\alpha+1} = \{\{\alpha\}\}$ for every $\alpha<\kappa$.
Furthermore, by $\sqstarleftboth{S}{\chi}$-coherence of the sequence $\cvec{C}^0$,
we may assume that $\left|\mathcal C^0_\alpha\right| = 1$ for every $\alpha \in S$.

Let $\langle X_\beta^n\mid \beta\in S, n\le\omega\rangle$ be given by Lemma~\ref{infinite-club-matrix} using $\theta:=\omega$.
Without loss of generality, for all $\beta\in S\cap\acc(\kappa)$ and $n\le\omega$, $X_\beta^n$ is a cofinal subset of $\beta$.

Suppose first that, for every cofinal $X\s\kappa$, the set
$$B_X:=\{ \beta\in S\cap E^\kappa_\omega\mid X_\beta^\omega\s X\}$$ is stationary.
In this case, let $\Omega:=S\cap E^\kappa_\omega$,
define $\langle X_\beta\mid \beta\in\Omega\rangle$ via $X_\beta:=X_\beta^\omega$,
and define a sequence $\cvec{C} = \langle \mathcal C_\alpha \mid \alpha<\kappa \rangle$ 
by letting $\mathcal C_\alpha := \mathcal C^0_\alpha$ for every $\alpha\notin\Omega$,
and $\mathcal C_\alpha :=  \{X^\omega_\alpha\}$ for every $\alpha\in\Omega$.
It is clear that $\cvec{C}$ is a $\p^-(\kappa, \kappa^+, {\sqstarleftboth{S}{\chi}},\ldots)$-sequence,
satisfying $|\mathcal C_\alpha|\le|\mathcal C_\alpha^0|$ for all $\alpha<\kappa$.
In particular, it is a $\p^-(\kappa, \kappa^+, {\sqstarleftboth{\Omega}{\chi}},\ldots)$-sequence.
To see that Clause~(4) is satisfied, let $X\s\kappa$ be an arbitrary cofinal set.
As $B_X$ is stationary, it is in particular cofinal, so that $B_{B_X}$ is stationary as well.
Pick $\alpha\in B_{B_X}$. Then $\alpha\in\Omega$,
$\mathcal C_\alpha=\{ X_\alpha^\omega\}$, and $X_\alpha^\omega\s B_X$. 
So, if $C_\alpha$ denotes the unique element of $\mathcal C_\alpha$, then every $\beta\in\nacc(C_\alpha)$
is an element of $B_X$, so that $\beta\in S\cap E^\kappa_\omega=\Omega$ and $X_\beta=X_\beta^\omega\s X$, as sought.

From now on, suppose that there exists a cofinal $X^\omega\s\kappa$ for which $B_{X^\omega}$ is nonstationary.
Denote $\Omega^0 := S \cap E^\kappa_{>\omega}$.
 It follows that, for every sequence $\langle X^n\mid n<\omega\rangle$ of cofinal subsets of $\kappa$,
there exist stationarily many $\beta\in \Omega^0$, such that, for all $n<\omega$,
$X^n_\beta\s X^n$. In particular, $\kappa\ge\aleph_2$.
The proof in this case is based on the arguments of \cite[Theorem~3]{KjSh:449} and \cite[Claim~3.8.1]{paper32}.

We have already defined $\Omega^0$ and $\cvec{C}^0$.
We now attempt to construct a sequence 
$\langle (\Omega^{n+1}, \cvec{C}^{n+1}, X^n) \mid n < \omega \rangle$
satisfying the following properties for all $n < \omega$:  
\begin{enumerate}[(a)]
\item\label{clausea} $\Omega^{n+1} \subseteq \Omega^n$; 
\item\label{clauseb} 
$\cvec{C}^n = \langle \mathcal C^n_\alpha \mid \alpha<\kappa \rangle = \langle \{C^n_{\alpha,\iota} \mid \iota<\kappa\} \mid \alpha<\kappa \rangle$ is a $\p^-(\kappa, \kappa^+, {\sqstarleftboth{\Omega^n}{\chi}},\ldots)$-sequence,\footnote{The enumeration $\{C^n_{\alpha,\iota} \mid \iota<\kappa\}$ of $\mathcal C^n_\alpha$ need not be injective.}
such that $\mathcal{C}^n_{\alpha+1} = \{\{\alpha\}\}$ for every $\alpha<\kappa$,
and $\left|\mathcal{C}^n_\alpha\right| = 1$ for every $\alpha\in S$;
\item\label{clausec} 
$C^{n}_{\alpha,\iota} \s C^{n+1}_{\alpha,\iota}$ for every $\alpha<\kappa$ and every $\iota<\kappa$;
\item\label{anothernewclause}
for every $\alpha<\kappa$ and all $\iota,\iota'<\kappa$,
if $C^n_{\alpha,\iota} = C^n_{\alpha,\iota'}$, then $C^{n+1}_{\alpha,\iota} = C^{n+1}_{\alpha,\iota'}$;
\item\label{claused} $X^n$ is a cofinal subset of $\kappa$;
\item\label{clausee} $\sup(C^n_{\alpha,0} \cap \Omega^{n+1}) < \alpha$ for every $\alpha \in \Omega^n$;
\item\label{clausef} 
for every $\alpha\in\acc(\kappa)$, all $\iota,\iota'<\kappa$, every $\bar\alpha\in\acc(C^n_{\alpha,\iota'})$,
and every $\eta\in C^n_{\bar\alpha,\iota}\cap C^n_{\alpha,\iota'}$, 
if $C^n_{\bar\alpha,\iota}\setminus\eta=C^n_{\alpha,\iota'}\cap[\eta,\bar\alpha)$
then $C^{n+1}_{\bar\alpha,\iota}\setminus\eta= C^{n+1}_{\alpha,\iota'}\cap[\eta,{\bar\alpha})$;
\item\label{clauseg} 
$\nacc(C^n_{\alpha,\iota}) \cap \Omega^{n+1} \subseteq \nacc(C^{n+1}_{\alpha,\iota})$ for all $\alpha,\iota<\kappa$;
\item\label{clauseh} 
for all $\alpha,\iota<\kappa$ and any two consecutive points $\beta^-<\beta$ of $C^n_{\alpha,\iota}$,
either $C^{n+1}_{\alpha,\iota} \cap (\beta^-,\beta) = \emptyset$ or
$C^{n+1}_{\alpha,\iota} \cap [\beta^-,\beta) = C^{n+1}_{\beta,0} \setminus\beta^-$.\footnote{We do not assume here that $\beta$ is a limit ordinal.}
\end{enumerate}

We proceed by recursion.
Fix $n<\omega$ for which $\Omega^n$ and $\cvec{C}^n$
have already been successfully constructed.
By \ref{clausea} and~\ref{clauseb}, $\langle X^n_\beta \mid\beta \in \Omega^n \rangle$ and $\cvec{C}^n$
satisfy properties (1)--(3) of the conclusion of the lemma.
If they satisfy property~(4) as well, then the lemma is proven and we abandon the recursive construction at this point.

Otherwise, let us pick a cofinal subset $X^n\s\kappa$ witnessing the failure of~(4).
That is,
$\sup(\nacc(C^n_{\alpha,0})\cap \Omega^{n+1})<\alpha$ for every $\alpha\in \Omega^n$, 
where we define:
$$\Omega^{n+1}:=\{\beta\in \Omega^n \mid X^n_\beta\s X^n\}.$$

Define $\cvec{C}^{n+1} = \langle \mathcal C^{n+1}_\alpha\mid\alpha<\kappa\rangle$ 
by recursion over $\alpha<\kappa$, as follows. Let $\mathcal C^{n+1}_0:=\{\emptyset\}$, and $\mathcal C^{n+1}_{\alpha+1}:=\{\{\alpha\}\}$ for every $\alpha<\kappa$.
Now, if $\alpha\in\acc(\kappa)$ and $\langle \mathcal C^{n+1}_{\bar\alpha}\mid {\bar\alpha}<\alpha\rangle$ has already been defined, then, for every $\iota<\kappa$, let:
\[
C^{n+1}_{\alpha,\iota}:=C^n_{\alpha,\iota}\cup \bigcup \bigl\{C^{n+1}_{\beta,0}\setminus\sup(C^n_{\alpha,\iota}\cap\beta)\bigm| \beta\in\nacc(C^n_{\alpha,\iota})\setminus \Omega^{n+1} \bigr\}.\tag{$*$}
\]

\begin{claim}\label{c251}
$\Omega^{n+1}$, $\cvec{C}^{n+1}$, and $X^n$
satisfy properties \ref{clausea}--\ref{clauseh} of the recursion.
\end{claim}
\begin{proof}
\ref{clausea}, \ref{clausec}, \ref{anothernewclause}, \ref{claused}, \ref{clausef}, \ref{clauseg}, and \ref{clauseh}
are easily verified from the construction.

\ref{clausee} 
Consider any given $\alpha \in \Omega^n$.
By $\sqstarleftboth{\Omega^n}{\chi}$-coherence of $\cvec{C}^n$,
we obtain $\acc(C^n_{\alpha,0}) \cap \Omega^n = \emptyset$,
so that certainly $\acc(C^n_{\alpha,0}) \cap \Omega^{n+1} = \emptyset$.
Thus $C^n_{\alpha,0} \cap \Omega^{n+1} = \nacc(C^n_{\alpha,0}) \cap \Omega^{n+1}$,
and it follows from our choice of $X^n$ and $D^n$ that
$$\sup(C^n_{\alpha,0} \cap \Omega^{n+1}) = \sup(\nacc(C^n_{\alpha,0}) \cap \Omega^{n+1}) < \alpha.$$

\ref{clauseb}
It is clear from the construction that $C^{n+1}_{\alpha,\iota}$ is a club subset of $\alpha$ for every limit ordinal $\alpha<\kappa$,
that $\mathcal{C}^{n+1}_{\alpha+1} = \{\{\alpha\}\}$ for every $\alpha<\kappa$,
and (using~\ref{anothernewclause}) that 
$0<\left|\mathcal{C}^{n+1}_\alpha\right|\le \left|\mathcal{C}^{n}_\alpha\right|\le \left|\mathcal{C}^{0}_\alpha\right|$ for every $\alpha<\kappa$ and
$\left|\mathcal{C}^{n+1}_\alpha\right| = 1$ for every $\alpha\in S$.

To verify $\sqstarleftboth{\Omega^{n+1}}{\chi}$-coherence of the sequence $\cvec{C}^{n+1}$,
consider any given $\alpha \in \acc(\kappa)$, $\iota<\kappa$, and
$\bar\alpha\in\acc(C^{n+1}_{\alpha,\iota})$;
we shall show that $\bar\alpha\notin \Omega^{n+1}$, and also find some $\iota'<\kappa$ such that $C^{n+1}_{\bar\alpha,\iota'} \sqleft{\chi}^* C^{n+1}_{\alpha,\iota}$,
by considering three cases.
We may assume, as an induction hypothesis, that the restriction of $\cvec{C}^{n+1}$ up to $\alpha$, that is,
$\langle \mathcal C^{n+1}_\beta \mid \beta<\alpha \rangle$,
is already known to be $\sqstarleftboth{\Omega^{n+1}}{\chi}$-coherent.

\begin{itemize}
\item[$\br$]
Suppose $\bar\alpha \in \acc(C^n_{\alpha,\iota})$.
Then by $\sqstarleftboth{\Omega^n}{\chi}$-coherence of $\cvec{C}^n$,
it follows that $\bar\alpha\notin\Omega^n$ and also we can choose some $\iota'<\kappa$ such that $C^n_{\bar\alpha,\iota'} \sqleft{\chi}^* C^n_{\alpha,\iota}$.
As $\Omega^{n+1}\s \Omega^n$, we obtain $\bar\alpha\notin\Omega^{n+1}$.
If $\cf(\bar\alpha)<\chi$, then automatically $C^{n+1}_{\bar\alpha,0} \sqleft{\chi}^* C^{n+1}_{\alpha,\iota}$.
Otherwise, we have $C^n_{\bar\alpha,\iota'} \sq^* C^n_{\alpha,\iota}$, 
and we fix $\eta \in C^n_{\bar\alpha,\iota'} \cap C^n_{\alpha,\iota}$ such that 
$C^n_{\bar\alpha,\iota'} \setminus \eta = C^n_{\alpha,\iota} \cap [\eta,\bar\alpha)$.
Then, by Clause~\ref{clausef}, we obtain
$C^{n+1}_{\bar\alpha,\iota'} \setminus \eta = C^{n+1}_{\alpha,\iota} \cap [\eta,\bar\alpha)$,
so that $C^{n+1}_{\bar\alpha,\iota'} \sq^* C^{n+1}_{\alpha,\iota}$.

\item[$\br$]
Suppose $\bar\alpha \in \nacc(C^n_{\alpha,\iota})$.
As we have assumed that $\bar\alpha \in \acc(C^{n+1}_{\alpha,\iota})$,
we obtain from Clause~\ref{clauseg} that $\bar\alpha\notin \Omega^{n+1}$.
Furthermore, it follows from Clause~\ref{clauseh} 
(with $\beta := \bar\alpha$ and $\beta^- := \sup(C^n_{\alpha,\iota}\cap\beta)$)
that $C^{n+1}_{\bar\alpha,0} \sq^* C^{n+1}_{\alpha,\iota}$.

\item[$\br$]
The remaining case is $\bar\alpha \notin C^n_{\alpha,\iota}$.
Let $\beta^-<\beta$ be the two consecutive points of $C^n_{\alpha,\iota}$ such that $\beta^-<\bar\alpha<\beta$.
In particular, $\beta \in \nacc(C^n_{\alpha,\iota})$.
Since $\bar\alpha \in \acc(C^{n+1}_{\alpha,\iota}) \cap (\beta^-,\beta)$,
it follows from Clause~\ref{clauseh} that
$C^{n+1}_{\beta,0}\setminus\beta^- = C^{n+1}_{\alpha,\iota} \cap [\beta^-,\beta)$,
so that $\bar\alpha \in \acc(C^{n+1}_{\beta,0} \setminus \beta^-)$.
As $\beta<\alpha$ and $\bar\alpha \in \acc(C^{n+1}_{\beta,0})$,
by induction hypothesis we can choose some $\iota'<\kappa$ such that
$C^{n+1}_{\bar\alpha,\iota'} \sqstarleftboth{\Omega^{n+1}}{\chi} C^{n+1}_{\beta,0}$.
Thus, in particular, $\bar\alpha\notin \Omega^{n+1}$ and $C^{n+1}_{\bar\alpha,\iota'} \sqleft{\chi}^* C^{n+1}_{\beta,0}$.
If $\cf(\bar\alpha)<\chi$, then automatically $C^{n+1}_{\bar\alpha,0} \sqleft{\chi}^* C^{n+1}_{\alpha,\iota}$.
Otherwise, we have $C^{n+1}_{\bar\alpha,\iota'} \sq^* C^{n+1}_{\beta,0}$, 
and as $\beta^-<\bar\alpha$, it follows that $C^{n+1}_{\bar\alpha,\iota'} \sq^* C^{n+1}_{\alpha,\iota}$,
as sought.
\qedhere
\end{itemize}
\end{proof}

If we reach the end of the above recursive process, then we are altogether equipped with a sequence
$\langle (\cvec{C}^n,\Omega^n,X^n)\mid n<\omega\rangle$,
from which we shall derive a contradiction.
By the choice of $X^\omega$, the following set must be stationary:
$$\Omega^\bullet :=\bigcap_{n<\omega} \Omega^n=\bigl\{ \beta\in \Omega^0\bigm| \forall n<\omega (X^n_\beta\s X^n)\bigr\}.$$
Thus, $\acc^+(\Omega^\bullet)$ is a club in $\kappa$, and we may
pick $\alpha\in \Omega^\bullet\cap\acc^+(\Omega^\bullet)$.
For every $n<\omega$, put $\alpha_n:=\sup(C^n_{\alpha,0}\cap \Omega^{n+1})$,
which by Clause~\ref{clausee} is $<\alpha$.
As $\alpha \in \Omega^\bullet \subseteq \Omega^0 \subseteq E^\kappa_{>\omega}$, 
we infer that $\alpha^*:=\sup_{n<\omega}\alpha_n$ is smaller than $\alpha$.
Clearly,
\[
C^n_{\alpha,0} \cap \Omega^{n+1} \cap (\alpha^*, \alpha) =  \emptyset \text{ for every } n<\omega.
\tag{$**$}
\]

Since $\Omega^\bullet \cap \alpha$ is cofinal in $\alpha$,
let us pick $\beta\in \Omega^\bullet \cap(\alpha^*,\alpha)$.

For all $n<\omega$, let $\beta_n:=\min(C^n_{\alpha,0}\setminus\beta)$.
As $\{ C^n_{\alpha,0}\mid n<\omega\}$ is a $\subseteq$-increasing chain, 
it follows that $\langle \beta_n\mid n<\omega\rangle$ is a $\leq$-decreasing sequence of ordinals, 
and hence stabilizes. Fix $n<\omega$ such that $\beta_n=\beta_{n+1}$.
Since $\beta \in \Omega^\bullet \subseteq \Omega^{n+1}$,
($**$) gives $\beta \notin C^n_{\alpha,0}$, so that $\beta_n > \beta$.
In particular, $\beta_n = \min(C^n_{\alpha,0} \setminus (\beta+1))$,
so that $\beta_n\in\nacc(C^n_{\alpha,0})$.
By ($**$) again and $\alpha > \beta_n > \beta > \alpha^*$,
it follows that $\beta_n\in\nacc(C^n_{\alpha,0})\setminus \Omega^{n+1}$.
Thus we infer from ($*$) that
$$C^{n+1}_{\alpha,0}\cap\bigl[\sup(C^n_{\alpha,0}\cap\beta_n),\beta_n\bigr)=C^{n+1}_{\beta_n,0}\cap\bigl[\sup(C^n_{\alpha,0}\cap\beta_n),\beta_n\bigr),$$
and from $\beta_n=\min(C^n_{\alpha,0}\setminus\beta)$, we obtain $\sup(C^n_{\alpha,0}\cap\beta_n)\le\beta$, and hence
$$C^{n+1}_{\alpha,0}\cap[\beta,\beta_n)=C^{n+1}_{\beta_n,0}\cap[\beta,\beta_n).$$

Since $\beta < \beta_n$, it follows that
$\beta_{n+1} = \min (C^{n+1}_{\alpha,0} \setminus \beta) = \min(C^{n+1}_{\beta_n,0} \setminus \beta) < \beta_n$, contradicting the choice of $n$
and completing the proof.
\end{proof}
\begin{remark}\label{remark420} It follows from Lemma~\ref{thinning-out} that, in Clause~(4) of the preceding, 
the existence of an $\alpha$ is equivalent to the existence of stationarily many such $\alpha$.
\end{remark}

\begin{cor}\label{cor-KS93}
Suppose $\kappa^{\aleph_0}=\kappa$, and $\clubsuit(E)$ holds for a stationary $E\s\kappa$.
If there exists a $\p^-(\kappa, \mu, {\sqstarleftboth{E}{\chi}},\ldots)$-sequence,
then there exists a stationary $\Omega\s E$ such that $\p^-(\kappa, \mu, {\sqstarleftboth{\Omega}{\chi}},1,\{\Omega\},2)$ holds.
\end{cor}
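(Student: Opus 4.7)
The plan is to pass the given $\p^-(\kappa,\mu,\sqstarleftboth{E}{\chi},\ldots)$-sequence through Lemma~\ref{from-KS93}, use a min-preserving postprocessing to translate the club-indexed hitting produced by the lemma into standard $\sigma=1$ hitting, and finally invoke Theorem~\ref{get-sigma-finite}(1) to promote the $\sigma=1$ hitting all the way to $\sigma={<}\omega$.

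First I would regard the hypothesized $\p^-(\kappa,\mu,\sqstarleftboth{E}{\chi},\ldots)$-sequence as a $\p^-(\kappa,\kappa^+,\sqstarleftboth{E}{\chi},\ldots)$-sequence (since $\mu\le\kappa<\kappa^+$) and invoke Lemma~\ref{from-KS93} with $S:=E$, obtaining a set $\Omega\s E\cap\acc(\kappa)$, a family $\langle X_\beta\mid\beta\in\Omega\rangle$ of cofinal subsets, and a $\p^-(\kappa,\mu,\sqstarleftboth{\Omega}{\chi},\ldots)$-sequence $\cvec{C}=\langle\mathcal C_\alpha\mid\alpha<\kappa\rangle$ satisfying the hitting feature in clause~(4) of the lemma (the bound $|\mathcal C_\alpha|\le|\mathcal C_\alpha^0|<\mu$ is automatic). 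By Remark~\ref{remark420}, clause~(4) may be strengthened to produce stationarily many witnessing ordinals; applying it with $X:=\kappa$ in particular shows that $\Omega$ is stationary.

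Next I would construct a $\min$-preserving, $\acc$-preserving postprocessing function via Lemma~\ref{pp-from-micro}, using the triangular array
$$x_{\gamma,\beta}:=\begin{cases}\{\min(X_\beta\setminus(\gamma+1)),\,\beta\},&\text{if }\beta\in\Omega,\\ \{\beta\},&\text{otherwise,}\end{cases}$$
which satisfies $x_{\gamma,\beta}\s(\gamma,\beta]$ because $X_\beta$ is cofinal in $\beta\in\acc(\kappa)$. Setting $\mathcal D_\alpha:=\{\Phi_{\mathfrak x}(C)\mid C\in\mathcal C_\alpha\}$, Lemma~\ref{pp-from-micro} ensures that $\cvec{D}:=\langle\mathcal D_\alpha\mid\alpha<\kappa\rangle$ is once again a $\p^-(\kappa,\mu,\sqstarleftboth{\Omega}{\chi},\ldots)$-sequence with $|\mathcal D_\alpha|\le|\mathcal C_\alpha|$. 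To witness $\p^-(\kappa,\mu,\sqstarleftboth{\Omega}{\chi},1,\{\Omega\},2,1)$, let an arbitrary cofinal $B\s\kappa$ be given; applying clause~(4) of the lemma with $X:=B$ (together with Remark~\ref{remark420}) yields stationarily many $\alpha\in\Omega$ for which $\mathcal C_\alpha=\{C_\alpha\}$ is a singleton and $\sup\{\beta\in\nacc(C_\alpha)\cap\Omega\mid X_\beta\s B\}=\alpha$. For each such $\beta$, letting $\gamma:=\sup(C_\alpha\cap\beta)$, the definition of $\Phi_{\mathfrak x}$ places the element $\min(X_\beta\setminus(\gamma+1))\in B$ into $\Phi_{\mathfrak x}(C_\alpha)$, and the computation of $\Phi_{\mathfrak x}(C_\alpha)\cap(\gamma,\beta]=\{\min(X_\beta\setminus(\gamma+1)),\beta\}$ exhibits $\gamma$ as its immediate predecessor there, so this element lies in $\nacc(\Phi_{\mathfrak x}(C_\alpha))$. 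As such $\beta$'s are cofinal in $\alpha$, so are these $\nacc$-points, giving $\sup(\nacc(\Phi_{\mathfrak x}(C_\alpha))\cap B)=\alpha$ as required.

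Finally, since $\clubsuit(E)$ trivially extends to $\clubsuit(\kappa)$ (extend the $E$-indexed sequence arbitrarily on $\kappa\setminus E$), Theorem~\ref{get-sigma-finite}(1) upgrades the sequence just constructed to witness $\p^-(\kappa,\mu,\sqstarleftboth{\Omega}{\chi},1,\{\Omega\},2,{<}\omega)$, which by Convention~\ref{omitsigma} is precisely the desired $\p^-(\kappa,\mu,\sqstarleftboth{\Omega}{\chi},1,\{\Omega\},2)$. The main delicate step is the middle paragraph: one must design $\mathfrak x$ so that each value $\min(X_\beta\setminus(\gamma+1))$ is both a $\nacc$ point of the postprocessed club and a member of $B$, thereby converting the ``$X_\beta\s B$'' style hitting produced by Lemma~\ref{from-KS93} into the precise $\sigma=1$ hitting demanded by the proxy principle and expected by Theorem~\ref{get-sigma-finite}.
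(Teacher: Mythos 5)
Your proposal is correct and follows essentially the same route as the paper: both pass the hypothesized sequence through Lemma~\ref{from-KS93} with $S:=E$, use the postprocessing machinery of Lemma~\ref{pp-from-micro} to convert the lemma's club-indexed hitting into $\sigma=1$ hitting, and then invoke Theorem~\ref{get-sigma-finite}(1) (using $\clubsuit(E)\Rightarrow\clubsuit(\kappa)$) to promote $\sigma=1$ to $\sigma={<}\omega$. The paper compresses the last two steps into the phrase ``the rest of the proof is identical to that of Theorem~\ref{get-sigma-finite}(1),'' whereas you explicitly exhibit the intermediate $\sigma=1$ witness via the triangular array $x_{\gamma,\beta}=\{\min(X_\beta\setminus(\gamma+1)),\beta\}$ before applying Theorem~\ref{get-sigma-finite}(1) as a black box; this makes the argument somewhat more transparent but is not a different method.
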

\begin{proof}  Let $\Omega\subseteq E\cap\acc(\kappa)$,
$\langle X_\beta \mid \beta \in \Omega \rangle$,
and $\langle \mathcal C_\alpha \mid \alpha<\kappa \rangle$ be given by Lemma~\ref{from-KS93}, using $S:=E$.
For every $\beta\in\Omega$, let $\mathcal X_\beta:=\{\{\delta\}\mid \delta\in X_\beta\}$.

Set $\nu:=2$. By Clause~(4) of Lemma~\ref{from-KS93} together with Remark~\ref{remark420},
for every cofinal $X\s\kappa$, letting $\mathcal X:=\{\{\delta\}\mid \delta\in X\}$
and $B_0:=\{\beta\in\Omega\mid \mathcal X_\beta\s \mathcal X\}$,
the set $S'$ of all $\alpha\in\Omega$ such that:
\begin{itemize}
\item $|\mathcal C_\alpha|<\nu$, and
\item for every $C\in\mathcal C_\alpha$, $\sup(\nacc(C)\cap B_0)=\alpha$
\end{itemize}
is stationary.
The rest of the proof is now identical to that of Theorem~\ref{get-sigma-finite}(1),
using $\mathcal R:={\sqstarleftboth{\Omega}{\chi}}$ and $\theta:=1$.
\end{proof}

Suppose $\kappa>\aleph_1$.
Recall that a subset $\Omega\s\kappa$ is said to be \emph{nonreflecting}
iff $\tr(\Omega):=\{\alpha<\kappa\mid \cf(\alpha)>\omega\ \&\ \Omega\cap\alpha\text{ is stationary in } \alpha \}$ is empty.
In particular, 
any $\Omega\s\kappa$ for which there exists a $\p^-(\kappa, \mu, {}^\Omega\mathcal{R}, \dots)$-sequence (regardless of the value of $\mu$)
must be a nonreflecting set,
so that a set $S$ as in the hypothesis of Lemma~\ref{from-KS93} must be a nonreflecting stationary set.
In \cite{paper32}, more applications of the existence of nonreflecting stationary sets were presented:

\begin{fact}[{\cite[Theorem~A]{paper32}}] Suppose that $\ch_\lambda$ holds for a regular uncountable cardinal $\lambda=2^{<\lambda}$,
and there exists a nonreflecting stationary subset of $E^{\lambda^+}_{\neq\lambda}$. Then $\p^-_\lambda(\lambda^+,\lambda^+,{\sq},{<}\lambda,\allowbreak\{\lambda^+\},2,{<}\lambda)$
and $\p^-(\lambda^+,\lambda^+,{\sq^*},1,\allowbreak\{E^{\lambda^+}_\lambda\},2,1)$ both hold.
\end{fact}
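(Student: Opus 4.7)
The plan is to derive both instances of the proxy principle by first extracting two ``resources'' from the hypotheses and then combining them, level by level, to produce witnessing sequences. The resources are: (i) a square-like coherent skeleton of clubs with bounded order-type, supplied by the nonreflecting set $E\s E^{\lambda^+}_{\neq\lambda}$; and (ii) a $\diamondsuit$-type guessing device, supplied by ${\ch_\lambda}+{\lambda=2^{<\lambda}}$. Since $\lambda$ is regular and $\lambda=\lambda^{<\lambda}$, we have $(\lambda^+)^{<\lambda^+}=\lambda^+$ and in fact $\diamondsuit(\lambda^+)$ (and its restriction to any stationary $S\s\lambda^+$) holds, by Gregory's classical theorem. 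For the skeleton, the nonreflecting assumption lets one build by recursion a $\sq$-coherent sequence $\vec C^0=\langle C^0_\alpha\mid \alpha<\lambda^+\rangle$ with $\otp(C^0_\alpha)\le\lambda$, $\acc(C^0_\alpha)\cap E=\emptyset$, and $E\s\nacc(C^0_{\alpha+1\text{'s club closure}})$---the crucial point being that at any $\alpha$ with $\cf(\alpha)>\omega$, the nonstationarity of $E\cap\alpha$ permits a club choice for $C^0_\alpha$ missing $E$, so coherence never forces an element of $E$ to lie in $\acc$.

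For the first instance $\p^-_\lambda(\lambda^+,\lambda^+,{\sq},{<}\lambda,\{\lambda^+\},2,{<}\lambda)$, I would recursively upgrade $\vec C^0$ to a witness $\vec C$ by using the diamond to anticipate, at a cofinal set of non-accumulation points $\beta\in\nacc(C_\alpha)$, a test sequence $\langle B_i\mid i<\theta\rangle$ of ${<}\lambda$-many cofinal subsets of $\lambda^+$. The idea (exactly as in Proposition~\ref{prop23}) is that $\sq$-coherence is driven by accumulation points, so we may freely replace long stretches $[\sup(C^0_\alpha\cap\beta),\beta)$ by a short block that lands inside $\bigcap_{i<\theta}$-successors of $B_i$; the resulting $C_\alpha$ remains $\sq$-coherent exactly because any new accumulation we introduce has cofinality ${<}\lambda$, still lies outside $E$, and still agrees on initial segments with its chosen $C_{\bar\alpha}$. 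Using a triangular array $\mathfrak x$ as in Lemma~\ref{pp-from-micro} and the tail-coherent adjustments described there, one sees that the modification can be made simultaneously in $\nu=2$ (singleton) form at stationarily many $\alpha$, and applied to all $\sigma'<\lambda$ many successors of $\beta$. Theorem~\ref{thm416}(v) then packages the ${<}\omega$ case of the hitting feature; the same argument, run with $\theta,\sigma<\lambda$ in place of $\theta,\sigma<\omega$, yields the ${<}\lambda$ version under the present stronger cardinal arithmetic.

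For the second instance $\p^-(\lambda^+,\lambda^+,{\sq^*},1,\{E^{\lambda^+}_\lambda\},2,1)$, the task is significantly softer since $\sq^*$ allows disagreement on a bounded initial segment, and the hitting set $E^{\lambda^+}_\lambda$ is disjoint from the given $E\s E^{\lambda^+}_{\neq\lambda}$. At each $\alpha\in E^{\lambda^+}_\lambda$ I would ignore $\vec C^0$ and install a fresh cofinal $C_\alpha\s\alpha$ of order type $\lambda$ chosen from a $\diamondsuit(E^{\lambda^+}_\lambda)$-driven menu so that, for any prescribed cofinal $B\s\lambda^+$, $\sup(\nacc(C_\alpha)\cap B)=\alpha$ at stationarily many $\alpha\in E^{\lambda^+}_\lambda$; at every $\alpha\notin E^{\lambda^+}_\lambda$ set $C_\alpha:=C^0_\alpha$. $\sq^*$-coherence is preserved because whenever $\bar\alpha\in\acc(C_\alpha)$ for such a new $C_\alpha$, $\bar\alpha$ sits outside $E^{\lambda^+}_\lambda$ (as $\cf(\bar\alpha)<\lambda$), and, by choosing $C_\alpha$ to agree with $C^0_\alpha$ from some point on (which we can since $\cf(\alpha)=\lambda$ and $\vec C^0$ already produces a tail-coherent cofinal chain up to $\alpha$), $C_{\bar\alpha}\sq^* C_\alpha$ as required. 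Singleton-valuedness at each $\alpha\in E^{\lambda^+}_\lambda$ is immediate from the construction.

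The main obstacle is the \emph{simultaneous} preservation of coherence and installation of the hitting feature in the first instance, especially in the range $\sigma<\lambda$ rather than $\sigma<\omega$; here Theorem~\ref{get-sigma-finite} (with $\sigma<\omega$) does the transfer only through $\clubsuit$, whereas the desired $\sigma<\lambda$ version requires one to either iterate the diamond guessing $\lambda$-many times within a single level (relying on $\lambda^{<\lambda}=\lambda$ to enumerate all candidate $\sigma'<\lambda$ at once), or equivalently to choose the triangular array $\mathfrak x$ of Lemma~\ref{pp-from-micro} so that each block $x_{\gamma,\beta}$ already absorbs an initial segment of length $\sigma'$ in every $B_i$ simultaneously. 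The cardinal arithmetic $\lambda=2^{<\lambda}$ is exactly what makes this simultaneous absorption possible at a single non-accumulation point.
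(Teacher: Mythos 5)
Note that this statement is quoted as a Fact from \cite[Theorem~A]{paper32}, so the present paper contains no proof to compare against; evaluating your proposal on its own terms, the ``skeleton'' step overreaches. A single-valued, fully $\sq$-coherent sequence $\vec C^0$ of clubs with $\otp(C^0_\alpha)\le\lambda$ is precisely a $\square_\lambda$-sequence, and $\square_\lambda$ does \emph{not} follow from the existence of a nonreflecting stationary subset of $E^{\lambda^+}_{<\lambda}$ --- the latter is strictly weaker. What the nonreflecting set $E$ does give you, by recursively picking clubs that avoid $E$ in their accumulation points, is a ${\sqleftup{E}}$-coherent sequence of singletons; to recover full $\sq$-coherence you must widen $\mathcal C_\alpha$ to include all the initial segments $C\cap\alpha$ imposed from above, which is exactly why the target instance has $\mu=\lambda^+$ rather than $\mu=2$, and exactly what the ``fill-in'' in the proof of Theorem~\ref{get-full-coherence-from-sqleft*} accomplishes. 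Your remark that ``coherence never forces an element of $E$ to lie in $\acc$'' does not address the conflicting demands placed on a single $C^0_{\bar\alpha}$ by different $\alpha$'s above it, which is precisely the obstacle a $\square_\lambda$-sequence is there to resolve.

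Separately, the $\sigma<\lambda$ (and $\theta<\lambda$) hitting cannot be reached by the route you describe. You invoke Lemma~\ref{pp-from-micro} and suggest choosing the triangular array so that ``each block $x_{\gamma,\beta}$ already absorbs an initial segment of length $\sigma'$ in every $B_i$ simultaneously.'' But that lemma requires the blocks $x_{\gamma,\beta}$ to be \emph{finite}, and this restriction is load-bearing: with blocks of size $<\lambda$, $\Phi_{\mathfrak x}(C)$ can acquire new accumulation points inside an interval $(\gamma,\beta)$, wrecking the $\acc$-preservation that underwrites coherence, and the order type of $\Phi_{\mathfrak x}(C)$ can exceed $\lambda$ even when $\otp(C)\le\lambda$, violating the constraint $\xi=\lambda$. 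The arithmetic $\lambda=2^{<\lambda}$ alone does not repair this. Your second-instance coherence issue --- a fresh $\diamondsuit$-guessing club at $\alpha\in E^{\lambda^+}_\lambda$ versus the skeleton at accumulation points below the tail-agreement ordinal --- is of the same kind, and the paper's way around both is the ``shift'' machinery (Lemma~\ref{shiftcofinality} together with Corollary~\ref{cor426}): obtain the hitting first on a subset of $E$ itself, where the clubs are entirely under your control, and only then transfer it to $E^{\lambda^+}_\lambda$, rather than installing the hitting directly at $E^{\lambda^+}_\lambda$ where you do not control the clubs from below.
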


\begin{fact}[{\cite[Theorem~B]{paper32}}]  Suppose that $\square^*_\lambda+\ch_\lambda$ holds for a singular cardinal $\lambda=2^{<\lambda}$,
and either of the following two conditions holds:
\begin{enumerate}
\item there exists a nonreflecting stationary subset of $E^{\lambda^+}_{\neq\cf(\lambda)}$; or
\item there exists a regressive function $f:E^{\lambda^+}_{\cf(\lambda)}\rightarrow\lambda^+$ such that $f^{-1}\{i\}$ is nonreflecting for each $i<\lambda^+$.
\end{enumerate}

Then $\p^-_{\lambda^2}(\lambda^+,\lambda^+,{\sq},\lambda^+,\{\lambda^+\},2,{<}\lambda)$ holds. 
\end{fact}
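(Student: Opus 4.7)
The plan is to produce the required $\p_{\lambda^2}^-(\lambda^+,\lambda^+,{\sq},\lambda^+,\{\lambda^+\},2,{<}\lambda)$-sequence by enriching a $\square^*_\lambda$-sequence with a diamond-driven hitting mechanism, using the nonreflecting structure as the sparse set of levels at which the diamond is permitted to overwrite the coherent skeleton. First I fix a $\square^*_\lambda$-sequence $\langle \mathcal E_\alpha \mid \alpha<\lambda^+\rangle$, which in our language is a $\p^-_\lambda(\lambda^+,\lambda^+,{\sq},\dots)$-sequence whose clubs have order-type $\le\lambda$. From $\ch_\lambda$ combined with the existence of a nonreflecting stationary set, Shelah's theorem on diamonds at successors supplies $\diamondsuit(S)$ in case~(1), while in case~(2) the same applied to each nonreflecting fibre $f^{-1}\{i\}$ produces a simultaneous family $\langle \vec A^i \mid i<\lambda^+\rangle$ of diamond sequences. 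Using $\lambda = 2^{<\lambda}$, I pre-enumerate codes in $H_{\lambda^+}$ for all potential target sequences $\langle B_i \mid i<\lambda^+\rangle$ of cofinal subsets of $\lambda^+$, so that the diamond may guess at each candidate.

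Next, I build $\cvec{C} = \langle \mathcal C_\alpha \mid \alpha<\lambda^+\rangle$ by recursion on $\alpha$. For $\alpha$ outside the relevant nonreflecting set, define $\mathcal C_\alpha$ by stretching each $C\in\mathcal E_\alpha$ into a club of order-type $\le\lambda^2$ in a coherent manner: when $\cf(\alpha)<\lambda$ no stretching is needed; when $\cf(\alpha) = \lambda$, fix an increasing sequence $\langle \alpha_\zeta \mid \zeta<\lambda\rangle$ cofinal in $\alpha$ with each $\alpha_\zeta$ of cofinality $\lambda$, and concatenate $\sq$-coherently chosen clubs $C_\zeta \in \mathcal E_{\alpha_\zeta}$ of order-type $\lambda$, yielding a club of order-type $\lambda^2$. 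The concatenation is driven by a master well-ordering of $H_{\lambda^+}$ so that the restriction to any $\alpha_\zeta$ again lies in $\mathcal E_{\alpha_\zeta}$. At each $\alpha \in S$ (case~(1)) or each $\alpha$ in the pertinent fibre (case~(2)), set $\mathcal C_\alpha := \{C_\alpha\}$ where $C_\alpha$ is built directly from the diamond guess: arrange its $\nacc$ points so that for every $\sigma<\lambda$, every $i<\alpha$, and every $\epsilon<\alpha$, some $\sigma$-block $\suc_\sigma(C_\alpha\setminus\gamma)$ above $\epsilon$ sits inside the $\diamondsuit$-predicted $B_i \cap \alpha$.

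The key observation ensuring the global $\sq$-coherence is that, by nonreflection, no $\alpha$ at which a bespoke $C_\alpha$ was inserted appears as an accumulation point of any club assigned at a higher level, so these insertions carry no upward coherence obligations. A standard pressing-down argument applied to an arbitrary target $\langle B_i \mid i<\lambda^+\rangle$, together with a club of $\alpha<\lambda^+$ closed under the code enumeration, produces stationarily many diamond points at which the guess is correct; in case~(2), an extra pigeonhole over the $\lambda^+$ fibres secures success in some fibre, giving the required stationary set of hitting levels. The singleton requirement ($\nu=2$) at these $\alpha$ is automatic, since we explicitly set $\mathcal C_\alpha$ to a singleton there.

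The main obstacle will be the joint management of three constraints that pull in different directions: strict $\sq$-coherence (rather than the more forgiving $\sq^*$), the order-type ceiling $\lambda^2$, and the singleton requirement at the hitting levels. The hardest subcase is (2), where the absence of a single nonreflecting stationary set forces the diamond to be spread across $\lambda^+$ fibres; one must argue that stationarily many $\alpha$ of the appropriate cofinality land in a fibre on which the guessing succeeds, which requires a delicate pigeonhole coupled with the regressiveness of $f$ via Fodor's lemma. A secondary difficulty is that the concatenation-based stretching at points of cofinality $\lambda$ must be made canonical: if performed arbitrarily at each level, the stretched clubs would fail to cohere with the $\square^*_\lambda$-sequence at accumulation points, so the entire stretching recipe must be derived uniformly from the fixed well-ordering of $H_{\lambda^+}$.
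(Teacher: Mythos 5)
This statement appears in the paper as a cited \emph{Fact} with attribution to \cite[Theorem~B]{paper32}; the present paper does not give a proof, so there is no in-paper argument to compare against. Evaluating your proposal on its own terms, there are several concrete gaps.

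The most visible error is your stretching mechanism at ``points of cofinality $\lambda$.'' Since $\lambda$ is \emph{singular}, $E^{\lambda^+}_\lambda=\emptyset$: no ordinal below $\lambda^+$ has cofinality $\lambda$. Your stretching recipe therefore never fires, which leaves the order-type bound $\lambda^2$ completely unexplained, and also removes the only place in your construction where you claim to produce clubs of order type beyond $\lambda$. The $\lambda^2$ in the conclusion is not cosmetic --- it reflects an actual growth in the order types forced by a pump-up/postprocessing step that your proposal never identifies. A comparison with the regular-cardinal analogue (Theorem~A of the same source), where the order-type bound stays at $\lambda$ and $\theta$ drops to ${<}\lambda$, makes clear that something genuinely different happens in the singular case, and your sketch does not engage with it.

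The diamond step in case~(2) is also unjustified. Shelah's theorem gives $\diamondsuit(S)$ from $\ch_\lambda$ for stationary $S\s E^{\lambda^+}_{\neq\cf(\lambda)}$, which covers case~(1). In case~(2), however, every fibre $f^{-1}\{i\}$ lives in $E^{\lambda^+}_{\cf(\lambda)}$, which is precisely the cofinality Shelah's theorem excludes. ``The same applied to each nonreflecting fibre'' is therefore not a valid step; the whole point of the regressive-function hypothesis is to compensate for the unavailability of diamond there, and a separate argument is required.

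Your coherence analysis is backwards. For strict $\sq$-coherence you need: for every $\alpha$, every $C\in\mathcal C_\alpha$, and every $\bar\alpha\in\acc(C)$, that $C\cap\bar\alpha\in\mathcal C_{\bar\alpha}$. If $\mathcal C_{\bar\alpha}$ has been overwritten to a bespoke singleton $\{C_{\bar\alpha}\}$ at some $\bar\alpha\in S$, then either all higher-level clubs must avoid $S$ in their accumulation points, or $C_{\bar\alpha}$ must be exactly $C\cap\bar\alpha$, neither of which follows from nonreflection alone. Nonreflection tells you that $S\cap\alpha$ is nonstationary in $\alpha$ for $\alpha$ of uncountable cofinality, which \emph{permits} you to choose clubs avoiding $S$; it does not guarantee that the stretched clubs arising from $\mathcal E_\alpha$ do so. The phrase ``carry no upward coherence obligations'' misplaces the obligation: it is the non-$S$ levels that owe coherence downward into $S$, and your construction does not discharge it.

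Finally, obtaining the hitting parameter $\sigma={<}\lambda$ --- blocks of every length $<\lambda$ landing inside each guessed $B_i$ --- is a strong feature. The machinery in this paper (Theorems~\ref{thm416} and~\ref{get-sigma-finite}) upgrades $\sigma=1$ only to $\sigma={<}\omega$ via $\clubsuit(\kappa)$; getting all the way to $\sigma={<}\lambda$ needs more (compare the blowup lemma, Fact~\ref{blowup}, and its uses in Theorem~\ref{thm43}). Your ``arrange its $\nacc$ points so that'' is a placeholder where the actual work has to happen.
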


Our next task is to prove analogous results for strongly inaccessible cardinals.
For this, let us first recall a result from \cite{paper29}.

\begin{fact}[Special case of {\cite[Lemma~4.9]{paper29}}]\label{blowup}  Suppose that:
\begin{enumerate}[(a)]
\item $\diamondsuit(\kappa)$ holds;
\item $\cvec{C}=\langle \mathcal C_\alpha\mid \alpha<\kappa\rangle$ is a $\p^-(\kappa,\kappa,{\sqleftup{\Omega}}, \ldots)$-sequence for some fixed subset $\Omega\s\kappa\setminus\{\omega\}$;
\item $\vec C=\langle C_\alpha \mid \alpha\in\acc(\kappa)\rangle$ is an element of $\prod_{\alpha\in\acc(\kappa)}\mathcal C_\alpha$;
\item\label{hyp-c-nacc} for every cofinal $B\s\kappa$  and every $\Lambda' < \kappa$, the following is stationary in $\kappa$:
\[
\{\alpha\in\kappa\setminus\Omega\mid \exists C \in \mathcal C_\alpha [\min(C)=\min(B), \Lambda' \le\otp(C),\nacc(C)\s B]\}.
\]
\end{enumerate}

Then there exists a $\p^-(\kappa,\kappa,{\sqleftup{\Omega}},\ldots)$-sequence, $\cvec{D}=\langle \mathcal D_\alpha\mid\alpha<\kappa\rangle$, 
with an element $\vec D=\langle D_\alpha\mid\alpha\in\acc(\kappa)\rangle$ of $\prod_{\alpha\in\acc(\kappa)}\mathcal D_\alpha$,
satisfying the following properties:
\begin{enumerate}
\item $|\mathcal D_\alpha| \leq |\mathcal C_\alpha|$ for all $\alpha < \kappa$;
\item For every cofinal $A\s\kappa$, there exists a stationary $B\s\kappa$ for which the set
$$\{\alpha\in\acc(\kappa)\mid \otp(D_\alpha)=\alpha, \nacc(D_\alpha)\s A\}$$
covers the set
$$\{\alpha\in\acc(\kappa)\mid \min(C_\alpha)=\min(B), \otp(C_{\alpha})=\cf(\alpha), \nacc(C_{\alpha})\s B \}.$$
\end{enumerate}
\end{fact}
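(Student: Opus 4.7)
The plan is to combine $\diamondsuit(\kappa)$ with hypothesis~(d) to construct $\cvec D$ by interleaving the sets $C\in\mathcal C_\alpha$ with diamond-predicted cofinal material inside each gap. Since $\otp(C_\alpha)=\cf(\alpha)$ may be much smaller than $\alpha$, the new $D_\alpha$ must strictly enlarge $C_\alpha$ by adding cofinally many ordinals between consecutive elements, so $\Phi$ will not arise as an application of Lemma~\ref{pp-from-micro} (whose triangular arrays consist only of finite fillers).

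The construction proceeds as follows. Apply Fact~\ref{def_Diamond_H_kappa} to obtain a $\diamondsuit(\kappa)$-sequence $\langle Z_\beta\mid\beta<\kappa\rangle$. For each $\beta<\kappa$, attempt to decode $Z_\beta$ as a cofinal subset $X_\beta$ of $\beta$; if the decoding fails, set $X_\beta:=\emptyset$. For $C\in\mathcal K(\kappa)$, let
\[
\Phi(C):= C\cup\bigcup\bigl\{X_\beta\cap(\gamma,\beta)\bigm| \gamma\in C,\ \beta=\min(C\setminus(\gamma+1))\bigr\}.
\]
Define $\mathcal D_\alpha:=\{\Phi(C)\mid C\in\mathcal C_\alpha\}$ and $D_\alpha:=\Phi(C_\alpha)$. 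The bound $|\mathcal D_\alpha|\le|\mathcal C_\alpha|$ of Clause~(1) is immediate.

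For the hitting feature (Clause~(2)), fix a cofinal $A\s\kappa$ and let $\pi:\kappa\to A$ be the order-isomorphism. Viewing $\pi$ as an element of $H_{\kappa^+}$ and applying Fact~\ref{def_Diamond_H_kappa} with parameter $p:=\pi$ and $\Omega:=\pi$, any elementary submodel $\mathcal M\prec H_{\kappa^+}$ with $\pi\in\mathcal M$, $\mathcal M\cap\pi=Z_\beta$, and $\mathcal M\cap\kappa=\beta$ yields $Z_\beta=\pi\restriction\beta$ and $\pi[\beta]\s\beta$. This produces a stationary set $B^*$ of closure points of $\pi$ at which $X_\beta=A\cap\beta$. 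Set $B:=\{\pi(0)\}\cup B^*$, so that $B$ is stationary and $\min(B)=\pi(0)\in A$. For any $\alpha\in\acc(\kappa)$ satisfying $\min(C_\alpha)=\min(B)$, $\otp(C_\alpha)=\cf(\alpha)$, and $\nacc(C_\alpha)\s B$, each pair of consecutive points $\gamma<\beta$ in $C_\alpha$ contributes $A\cap(\gamma,\beta)$ to $D_\alpha$, so that $D_\alpha\supseteq (A\cap\alpha)\setminus\min(C_\alpha)$. Since $\alpha\in\acc^+(B^*)$ forces $\alpha$ itself to be a closure point of $\pi$, we obtain $\otp(A\cap\alpha)=\alpha$, hence $\otp(D_\alpha)=\alpha$. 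Moreover, $\nacc(D_\alpha)\s\{\min(C_\alpha)\}\cup A\s A$, because every other element of $D_\alpha$ either lies in $A$ or is an accumulation point.

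The main obstacle is verifying $\sqleftup{\Omega}$-coherence of $\cvec D$. For $\bar\alpha\in\acc(D_\alpha)\cap\acc(C_\alpha)$, the $\sqleftup{\Omega}$-coherence of $\cvec C$ combined with the locality of $\Phi$ yields $D_\alpha\cap\bar\alpha=\Phi(C_\alpha\cap\bar\alpha)\in\mathcal D_{\bar\alpha}$ and forces $\bar\alpha\notin\Omega$. The delicate case is when $\bar\alpha\in\acc(D_\alpha)\setminus\acc(C_\alpha)$, which arises as an accumulation of some inserted $X_\beta$ within a gap $(\gamma,\beta)$ of $C_\alpha$; one must then arrange that the initial segments $X_\beta\cap\bar\alpha$ are themselves admitted into $\mathcal D_{\bar\alpha}$, with the assumption $\Omega\s\kappa\setminus\{\omega\}$ invoked to keep these auxiliary accumulation points outside $\Omega$. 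Carrying out this coherent book-keeping, while preserving $|\mathcal D_\alpha|\le|\mathcal C_\alpha|$, is the core technical content of Lemma~4.9 of~\cite{paper29}.
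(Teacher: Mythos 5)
The statement is a \emph{Fact} cited from \cite[Lemma~4.9]{paper29}, so the paper supplies no proof to compare against; I am evaluating your proposal on its own terms. Your construction is incomplete precisely where it matters — the $\sqleftup{\Omega}$-coherence of $\cvec{D}$ — and this is not, as you suggest, a matter of ``book-keeping'' that your $\Phi$ almost handles. The map $\Phi$ is not a postprocessing function in the sense of Definition~\ref{def-pp}, since it badly violates $\acc(\Phi(C))\s\acc(C)$, so none of the paper's coherence-preservation machinery (such as Lemma~\ref{pp-from-micro}) applies. Concretely, if $\bar\alpha\in\acc(\Phi(C))\setminus\acc(C)$, so that $\bar\alpha$ lies strictly inside a gap $(\gamma,\beta)$ of $C$ and is a limit of the inserted block, one computes
$\Phi(C)\cap\bar\alpha = \Phi\bigl(C\cap(\gamma+1)\bigr)\cup\bigl(X_\beta\cap(\gamma,\bar\alpha)\bigr)$.
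The only cofinal-in-$\bar\alpha$ part of this set is the tail of the $\diamondsuit$-guessed set $X_\beta$, which bears no relationship whatsoever to $\mathcal C_{\bar\alpha}$; hence there is in general no $C''\in\mathcal C_{\bar\alpha}$ with $\Phi(C'')=\Phi(C)\cap\bar\alpha$, and $\cvec D$ is simply not a $\p^-(\kappa,\kappa,\sqleftup{\Omega},\ldots)$-sequence. (You would also need $\Phi(C)$ to be closed, which forces the decoded $X_\beta$ to be clubs rather than merely cofinal sets, a point your decoding step does not address.)

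Your remark that the hypothesis $\Omega\s\kappa\setminus\{\omega\}$ ``keeps these auxiliary accumulation points outside $\Omega$'' is also incorrect: that assumption excludes only the single ordinal $\omega$ and provides no control over the interior limits $\bar\alpha$ created by the fill-ins. What \emph{is} available is that $\Omega$ is nonreflecting — this follows from the $\sqleftup{\Omega}$-coherence of $\cvec C$ itself — so one could arrange the inserted blocks to accumulate only off $\Omega$, but by itself this says nothing about membership of $\Phi(C)\cap\bar\alpha$ in $\mathcal D_{\bar\alpha}$. A correct argument must replace your independently $\diamondsuit$-guessed fillers $X_\beta$ by a \emph{self-coherent} fill-in (e.g., recursively inserting the already-constructed $D_\beta$'s, or suitable initial segments thereof, into the gaps, so that initial segments of inserted material are themselves inserted material), while simultaneously maintaining the bound $|\mathcal D_\alpha|\le|\mathcal C_\alpha|$ and preserving enough of the prediction for clause~(2) to go through. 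Your proposal correctly identifies the right hitting argument and correctly flags the coherence problem, but it neither closes that gap nor contains the idea needed to close it.
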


Clause~(2) of the next theorem is used in the proof of~\cite[Corollary~4.13]{paper32}.

\begin{thm}\label{thm43} Suppose that $S$ is a nonreflecting stationary subset of a strongly inaccessible cardinal $\kappa$ and $\diamondsuit(S)$ holds.
Then:
\begin{enumerate}
\item $\p^-(\kappa,\kappa,{\sqleftup{S}},1,\{S\},2,\kappa)$ holds;
\item $\p^-(\kappa,\kappa,{\sqleftup{S}},\kappa,\{S\},2,{<}\infty)$ holds.
\end{enumerate}
\end{thm}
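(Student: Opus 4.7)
The plan is to obtain both clauses via the blowup Fact~\ref{blowup}, applied to a sufficiently rich $\p^-(\kappa,\kappa,{\sqleftup{S}},\ldots)$-sequence built from the nonreflection of $S$ and the strong inaccessibility of $\kappa$. The $\diamondsuit(S)$-hypothesis plays two roles: it supplies the ambient $\diamondsuit(\kappa)$ required by the blowup, and it pre-tailors the singleton clubs $C_\alpha$ for $\alpha\in S$ so that the conclusion of the blowup lands in $S$ stationarily.

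First, I would construct $\vec{\mathcal C}=\langle\mathcal C_\alpha\mid\alpha<\kappa\rangle$ by recursion: for $\alpha\in\acc(\kappa)\setminus S$, let $\mathcal C_\alpha$ consist of every club $C\s\alpha$ with $\acc(C)\cap S=\emptyset$ and $C\cap\bar\alpha\in\mathcal C_{\bar\alpha}$ for every $\bar\alpha\in\acc(C)$; strong inaccessibility then gives $|\mathcal C_\alpha|\le 2^{|\alpha|}<\kappa$. For $\alpha\in S$, let $\mathcal C_\alpha=\{C_\alpha\}$ be a singleton club in $\alpha$ disjoint from $S$, which exists by the nonreflection of $S$ at $\alpha$. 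Note that $\acc(\kappa)\setminus S$ is stationary since $S$ contains no club. To verify hypothesis~(d) of Fact~\ref{blowup} with $\Omega:=S$, given a cofinal $B\s\kappa$ and $\Lambda'<\kappa$, I would note that for stationarily many $\alpha\in\acc(\kappa)\setminus S$ with $B\cap\alpha$ cofinal in $\alpha$ and $\cf(\alpha)\ge\Lambda'$, a club $C\s\alpha$ of order-type $\ge\Lambda'$ with $\min(C)=\min(B)$, $\nacc(C)\s B$, and $\acc(C)\cap S=\emptyset$ can be built by interleaving: at successor steps, pick $\beta_{i+1}\in B$ above some $\delta_i\in e_\alpha$ above $\beta_i$, where $e_\alpha\s\alpha$ is a fixed club disjoint from $S$, so that limits land in $\acc(e_\alpha)\s e_\alpha$ and thereby avoid $S$. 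Starting with $\beta_0=\min(B)$, the closure yields $C\in\mathcal C_\alpha$. An analogous interleaving, carried out on $S$-levels with a $\diamondsuit(S)$-predicted cofinal subset $A^\alpha\s\alpha$ in place of $B$, produces singletons $C_\alpha$ with $\min(C_\alpha)=\min(A^\alpha)$ and $\nacc(C_\alpha)\s A^\alpha$, ensuring that for every cofinal $B\s\kappa$ there are stationarily many $\alpha\in S$ with $\min(C_\alpha)=\min(B)$ and $\nacc(C_\alpha)\s B$.

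Applying Fact~\ref{blowup} then yields a sequence $\vec{\mathcal D}$ with $|\mathcal D_\alpha|\le|\mathcal C_\alpha|$; in particular $\mathcal D_\alpha$ is a singleton for $\alpha\in S$, so the cardinality constraint $|\mathcal D_\alpha|<2$ is met. For clause~(1), given any cofinal $A\s\kappa$, Fact~\ref{blowup}(2) supplies a stationary $B$ for which the set $\{\alpha\mid\min(C_\alpha)=\min(B),\ \otp(C_\alpha)=\cf(\alpha),\ \nacc(C_\alpha)\s B\}$ is covered by $\{\alpha\mid\otp(D_\alpha)=\alpha,\ \nacc(D_\alpha)\s A\}$. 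By the $\diamondsuit(S)$-tailoring, the covered set meets $S$ stationarily, hence so does the covering set. For such $\alpha\in S$, any $\gamma\in\nacc(D_\alpha)$ satisfies $\suc_\sigma(D_\alpha\setminus\gamma)\s\{\gamma\}\cup(\nacc(D_\alpha)\setminus(\gamma+1))\s A$ for every $\sigma$, so the required $\sigma=\kappa$ hitting of clause~(1) holds on a stationary subset of $S$.

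For clause~(2), I would upgrade the $\diamondsuit(S)$-tailoring to anticipate $\alpha$-indexed sequences $\langle A^\alpha_i\mid i<\alpha\rangle$ of cofinal-in-$\alpha$ subsets, and build each $C_\alpha$ (for $\alpha\in S$) by diagonalizing over $(i,\sigma)$-pairs: at prescribed stages in the interleaving recursion, insert a block of $\sigma$ consecutive successor points drawn from $A^\alpha_i$, cycling through all $(i,\sigma)\in\alpha\times\cf(\alpha)$ pairs so that for every $i<\alpha$ and every $\sigma<\otp(C_\alpha)$, $\sup\{\gamma\in C_\alpha\mid\suc_\sigma(C_\alpha\setminus\gamma)\s A^\alpha_i\}=\alpha$, while maintaining the interleaving invariant that limits lie in $e_\alpha$ and avoid $S$. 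To reduce the problem of hitting any given $\kappa$-sequence $\langle B_i\mid i<\kappa\rangle$ to a single-cofinal-set hitting, I fix a bijection $\pi:\kappa\times\kappa\to\kappa$ and encode via $A:=\pi[\bigcup_{i<\kappa}\{i\}\times B_i]$; the blowup conclusion applied to $A$, combined with the $\diamondsuit(S)$-prediction of the initial segments $\langle B_i\cap\alpha\mid i<\alpha\rangle$ at stationarily many $\alpha\in S$, lets me decode the hitting individually for each $B_i$. The principal obstacle is the combinatorial book-keeping of $\alpha$ many $(i,\sigma)$-tasks per $\alpha\in S$ while preserving coherence with the rest of $\vec{\mathcal C}$ and compatibility with the blowup's covering mechanism; the $\diamondsuit(S)$-hypothesis is essential for funnelling all the target sequences through a single guessing sequence, and the flexibility of $\otp(D_\alpha)=\alpha$ after blowup is what makes $\alpha$ many simultaneous targets tractable.
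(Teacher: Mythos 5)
Your construction of the base sequence $\cvec{C}$ (singleton ladders on $S$ tailored by a $\diamondsuit(S)$-sequence, plus all $S$-avoiding clubs off $S$, using nonreflection and strong inaccessibility) is essentially the paper's, and your verification of hypothesis (d) of Fact~\ref{blowup} by interleaving a $B$-cofinal sequence with a fixed $S$-avoiding club is sound. For clause~(1), however, routing through Fact~\ref{blowup} is an unnecessary detour: since $\otp(C_\alpha)<\kappa$ for every $\alpha<\kappa$, the set $\suc_\kappa(C_\alpha\setminus\gamma)$ is already the full set of successor points of $C_\alpha$ above $\gamma$, so the property $\nacc(C_\alpha)\s B$ with $\min(C_\alpha)=\min(B)$, which your $\diamondsuit(S)$-tailoring guarantees on a stationary subset of $S$, witnesses $\p^-(\kappa,\kappa,{\sqleftup{S}},1,\{S\},2,\kappa)$ directly from $\cvec{C}$, before any blowup. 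You also never designate the transversal $\vec{C}\in\prod_\alpha\mathcal{C}_\alpha$ on $\acc(\kappa)\setminus S$, which is needed even to state hypotheses (c)--(d) of Fact~\ref{blowup}.

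The genuine gap is in clause~(2). Your plan to diagonalize $(i,\sigma)$-blocks into the pre-blowup ladders $C_\alpha$ cannot survive the application of Fact~\ref{blowup}: the covering conclusion of the blowup only asserts $\nacc(D_\alpha)\s A$ and $\otp(D_\alpha)=\alpha$, and carries no memory of any block structure you imposed on $C_\alpha$. Moreover, the encoding $A:=\pi[\bigcup_{i<\kappa}\{i\}\times B_i]$ with $\pi:\kappa\times\kappa\to\kappa$ a bijection does not decode into the required form: the elements of $D_\alpha$ would be $\pi$-codes, not elements of the individual $B_i$, so $\suc_\sigma(D_\alpha\setminus\gamma)\s B_i$ cannot be read off. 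What is actually needed after the blowup is a further postprocessing of $\cvec{D}$ that, exploiting $\otp(D_\alpha)=\alpha$, simultaneously pumps up the number of sets being hit (from $\theta=1$ to $\theta=\kappa$) and the block length (to $\sigma={<}\infty$); the paper offloads exactly this to \cite[Lemma~3.7]{paper29} and \cite[Lemma~3.14]{paper32}. Those pump-up lemmas do nontrivial combinatorial work of the flavour of Lemma~\ref{pp-from-micro} and Theorem~\ref{thm416}, and your sketch names the bookkeeping obstacle without actually resolving it, so clause~(2) remains unproved as written.
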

\begin{proof} 
Fix a $\diamondsuit(S)$-sequence $\vec A=\langle A_\alpha\mid\alpha\in S\rangle$.
As $S$ is nonreflecting, for every $\alpha\in\acc(\kappa)$, we may pick a closed and cofinal subset $B_\alpha$ of $\alpha$ with $\otp(B_\alpha)=\cf(\alpha)$ and $\acc(B_\alpha)\cap S=\emptyset$.
Now, define a sequence $\vec C=\langle C_\alpha\mid\alpha\in\acc(\kappa)\rangle$, as follows.
\begin{itemize}
\item[$\br$] If $\alpha\in S$ and $A_\alpha$ is a cofinal subset of $\alpha$,
then by Lemma~\ref{thinning-out}, fix a cofinal subset $A'_\alpha$ of $A_\alpha$
that is $B_\alpha$-separated, and then let $C_\alpha$ be the closure of $(A'_\alpha\cup\{\min(A_\alpha)\})$ in $\alpha$;

\item[$\br$] Otherwise, let $C_\alpha:=B_\alpha$.
\end{itemize}

Note that, for every $\alpha\in\acc(\kappa)$, $\acc(C_\alpha)\s\acc(B_\alpha)$, so that $\otp(C_\alpha)=\cf(\alpha)$ and $\acc(C_\alpha)\cap S=\emptyset$.

Next, recalling Convention~\ref{default-outside-acc}, we define a sequence $\cvec{C}=\langle \mathcal C_\alpha\mid\alpha<\kappa\rangle$, as follows.
For $\alpha\in S$, we let $\mathcal C_\alpha:=\{C_\alpha\}$,
and for $\alpha\in\acc(\kappa)\setminus S$, we let $\mathcal C_\alpha$ consist of all closed and cofinal subsets $C$ of $\alpha$ such that $\acc(C)\cap S=\emptyset$.

Evidently, $\vec C$ is an element of $\prod_{\alpha\in\acc(\kappa)}\mathcal C_\alpha$.
In particular, and as $\kappa$ is strongly inaccessible, it follows that $\cvec{C}$ is a $\p^-(\kappa,\kappa,{\sqleftup{\Omega}}, \ldots)$-sequence for $\Omega:=S\setminus\{\omega\}$.

\begin{claim} For every cofinal $B\s\kappa$ and $\Lambda'<\kappa$, the following sets are stationary in $\kappa$:
\begin{enumerate}[(i)]
\item $\{\alpha\in \Omega\mid \min(C_\alpha)=\min(B), \otp(C_{\alpha})=\cf(\alpha), \nacc(C_{\alpha})\s B \}$, and
\item $\{\alpha\in\kappa\setminus\Omega\mid \exists C \in \mathcal C_\alpha [\min(C)=\min(B), \Lambda' \le\otp(C),\nacc(C)\s B]\}$.
\end{enumerate}
\end{claim} 
\begin{proof}  Let $B$ be an arbitrary cofinal set.

(i) Consider the club $D:=\acc^+(B\setminus\omega)$.
By the choice of $\vec A$, $S':=\{\alpha\in S\cap D\mid B\cap\alpha=A_\alpha\}$ is a stationary subset of $\Omega$.
For every $\alpha\in S'$, the definition of $C_\alpha$ implies that $\otp(C_\alpha)=\cf(\alpha)$, $\nacc(C_\alpha)\s A_\alpha\s B$ and $\min(C_\alpha)=\min(A_\alpha)=\min(B)$.

(ii) Let $\Lambda'<\kappa$ be arbitrary. Let $D$ be an arbitrary club. 
By shrinking $D$, we may assume that $D\s\acc^+(B)$.
Set $\theta:=\max\{\Lambda',\aleph_0\}$.
As $\kappa$ is a limit cardinal, we may fix $\delta\in\acc(D)$ with $\cf(\delta)=\theta^{++}$.
Pick $\alpha\in\acc(C_\delta)\cap D$ with $\cf(\alpha)=\theta^+$.
As $\vec C$ is a $\p^-(\kappa,\kappa,{\sqleftup{\Omega}}, \ldots)$-sequence, $\alpha\notin\Omega$.
Finally, since $\alpha\in D\s\acc^+(B)$, we infer that $B\cap\alpha$ is cofinal in $\alpha$.
Let $A'$ be a cofinal subset of $A:=B\cap\alpha$ that is $B_\alpha$-separated.
Let $C$ be the closure of $(A'\cup\{\min(A)\})$ in $\alpha$.
Then $C\in\mathcal C_\alpha$, $\min(C)=\min(A)=\min(B)$, $\Lambda'<\theta^+=\otp(C)$ and $\nacc(C)\s A\s B$.
\end{proof}

By Clause~(i) of the preceding claim, $\cvec{C}$ witnesses $\p^-(\kappa,\kappa,{\sqleftup{\Omega}},1,\{\Omega\},2,\kappa)$.
As $S\symdiff\Omega$ is finite, it follows that $\p^-(\kappa,\kappa,{\sqleftup{S}},1,\{S\},2,\kappa)$ holds, as well.

Next, by appealing to Fact~\ref{blowup} with $\cvec{C}$ and $\vec C$, we obtain a $\p^-(\kappa,\kappa,{\sqleftup{\Omega}},\ldots)$-sequence $\cvec{D}=\langle \mathcal D_\alpha\mid\alpha<\kappa\rangle$, 
and an element $\vec D=\langle D_\alpha\mid\alpha\in\acc(\kappa)\rangle$ of $\prod_{\alpha\in\acc(\kappa)}\mathcal D_\alpha$,
satisfying the following properties:
\begin{enumerate}
\item $|\mathcal D_\alpha| \leq |\mathcal C_\alpha|$ for all $\alpha < \kappa$;
\item For every cofinal $G\s\kappa$, the following set is stationary:
$$\{\alpha\in \Omega\mid \otp(D_\alpha)=\alpha, \nacc(D_\alpha)\s G\}.$$
\end{enumerate}
It thus follows from \cite[Lemma~3.7]{paper29} and \cite[Lemma~3.14]{paper32} that $\p^-(\kappa,\kappa,{\sqleftup{\Omega}},\allowbreak\kappa,\{\Omega\},2,{<}\infty)$ holds.
As $S\symdiff\Omega$ is finite, it follows that  $\p^-(\kappa,\kappa,{\sqleftup{S}},\kappa,\{S\},2,{<}\infty)$ hold, as well.
\end{proof}

\begin{cor}\label{cor29} Suppose that $E$ is a nonreflecting stationary subset of a strongly inaccessible cardinal $\kappa$ and $\diamondsuit(E)$ holds. 
Then, for every nonreflecting stationary $S\s\kappa$, $\p^-(\kappa,\kappa,{\sq^*},1,\{S\},2)$ holds.
\end{cor}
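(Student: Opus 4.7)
The plan is to first extract the strong proxy principle for the given stationary set $E$ via Theorem~\ref{thm43}(1), and then transfer the hitting feature from $\{E\}$ to $\{S\}$ for an arbitrary nonreflecting stationary $S$.

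Applying Theorem~\ref{thm43}(1) to the hypothesis yields $\p^-(\kappa,\kappa,{\sqleftup{E}},1,\{E\},2,\kappa)$. By Remark~\ref{monotonicity}, the coherence relation $\sqleftup{E}$ may be weakened to $\sq^*$ (passing through $\sq$), and the parameter $\sigma$ may be decreased freely. Applying Convention~\ref{omitsigma} to drop $\sigma$, we obtain $\p^-(\kappa,\kappa,{\sq^*},1,\{E\},2)$, resolving the case $S=E$.

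For arbitrary nonreflecting stationary $S\s\kappa$, my plan is to follow the pattern used in the proof of Corollary~\ref{corollaryalajensen}(2) to transfer the hitting feature from $\{E\}$ to $\{S\}$. The critical ingredients are three: the nonreflection of $S$, which permits selection of a coherent ladder system avoiding $S$ in accumulation; $\clubsuit(\kappa)$, which follows from $\diamondsuit(E)$ via Fact~\ref{clubvsdiamond} together with the equality $\kappa^{<\kappa}=\kappa$ (guaranteed by the strong inaccessibility of $\kappa$) and a trivial extension of the $\clubsuit(E)$-sequence beyond $E$; and the $\sq^*$-coherent sequence extracted in the previous paragraph. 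Invoking \cite[Theorem~4.13]{paper24} (exactly as in the proof of Corollary~\ref{corollaryalajensen}(2)), one obtains $\p^-(\kappa,\kappa,{\sq^*},1,\{S\},2,1)$. Finally, appealing to Theorem~\ref{get-sigma-finite}(1) with $\clubsuit(\kappa)$ upgrades $\sigma$ from $1$ to ${<}\omega$, yielding the desired $\p^-(\kappa,\kappa,{\sq^*},1,\{S\},2)$.

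The main obstacle is the transfer step from $\{E\}$ to $\{S\}$, which rests on the machinery of \cite{paper24}. Concretely, one must construct a new $\sq^*$-coherent sequence whose singletons at points $\alpha\in S$ inherit the hitting behavior from the original structure witnessing the principle at $E$, using the $\clubsuit(\kappa)$-based guessing (to anticipate cofinal sets) and the nonreflection of $S$ (to realize the coherence without forbidden accumulation) to make consistent generic choices. In spirit this is a relabeling of where the hitting occurs, but the details must be carefully arranged so that the coherence is preserved and that the stationarity of the good set within $S$ survives the construction.
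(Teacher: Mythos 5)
There is a genuine gap. You apply Theorem~\ref{thm43}(1) to $E$ alone, obtaining a sequence $\cvec{C}$ whose clubs avoid $E$ in accumulation (i.e.\ $\sqleftup{E}$-coherent) and which hits on the set $\{E\}$. You then want to transfer the hitting to $\{S\}$ with $\nu=2$. But each of the transfer devices available here --- \cite[Lemma~3.8]{paper32} (the tool the paper actually uses), Lemma~\ref{shiftcofinality}, or \cite[Theorem~4.13]{paper24} --- shifts the hitting set to some $S$ only on condition that $|\mathcal{C}_\alpha|<\nu$ holds at (stationarily many) $\alpha\in S$. Working from $E$ alone, you have no control over $|\mathcal{C}_\alpha|$ for $\alpha\in S$: the $\sqleftup{E}$-coherence forces the clubs to avoid $E$, not $S$, in accumulation, so at $\alpha\in S$ the collection $\mathcal{C}_\alpha$ may be large and constrained from above, and you cannot simply shrink it to a singleton. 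Your appeal to ``the nonreflection of $S$, which permits selection of a coherent ladder system avoiding $S$ in accumulation'' names the right desideratum but does not supply a mechanism to retrofit a fixed $\sq^*$-sequence so that it avoids $S$ while preserving the hitting at $E$.

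The paper's move that closes this gap is to apply Theorem~\ref{thm43}(1) not to $E$ but to $E\cup S$, which is still nonreflecting and stationary and still carries $\diamondsuit$. The output is $\sqleftup{E\cup S}$-coherent, so $\acc(C)\cap S=\emptyset$ for every $C$ in the sequence; consequently $\mathcal{C}_\alpha$ is entirely unconstrained from above for $\alpha\in S$ and may be freely replaced by a singleton there, making the hypothesis of \cite[Lemma~3.8]{paper32} with $(\mu,\nu)=(\kappa,2)$ available. Note also that the template you invoke, Corollary~\ref{corollaryalajensen}(2), deals with a $\p^-(\kappa,2,\dots)$-sequence, where $|\mathcal{C}_\alpha|=1$ holds automatically at every $\alpha$; that is why \cite[Theorem~4.13]{paper24} applies there for arbitrary stationary $S$, a feature that does not carry over to the present $\mu=\kappa$ setting. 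To repair your proof, replace the first step by applying Theorem~\ref{thm43}(1) to $E\cup S$ and then reduce $\mathcal{C}_\alpha$ to a singleton at each $\alpha\in S$ before invoking the transfer lemma; the final upgrade of $\sigma$ to ${<}\omega$ via Theorem~\ref{get-sigma-finite}(1) (using $\clubsuit(\kappa)$, which does follow from $\diamondsuit(E)$) is fine as you have it.
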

\begin{proof} 
Let $S\s\kappa$ be stationary and nonreflecting.
Then $E \cup S$ is also stationary and nonreflecting and $\diamondsuit(E \cup S)$ holds, so that
by Theorem~\ref{thm43}(1), $\p^-(\kappa,\kappa,\allowbreak{\sqleftup{E\cup S}},1,\{E\cup S\},2,\kappa)$ holds.
In particular, we may fix a sequence $\cvec{C} = \langle \mathcal{C}_\alpha \mid \alpha<\kappa \rangle$
witnessing $\p^-(\kappa,\kappa,{\sqleftup{S}},1,\{\kappa\},2,1)$.
By $\sqleftup{S}$-coherence of $\cvec{C}$,
we may assume that $\left|\mathcal{C}_\alpha\right| = 1$ for every $\alpha \in S$.
So, by \cite[Lemma 3.8]{paper32} with $(\mu,\nu) := (\kappa, 2)$, $\p^-(\kappa,\kappa,{\sq^*},1,\{S\},2,1)$ holds.
Then, by Theorem~\ref{get-sigma-finite}(1), $\p^-(\kappa,\kappa,{\sq^*},1,\{S\},2)$ holds, as well.
\end{proof}

Our next result about strongly inaccessible cardinals forms the core of Theorem~B:
\begin{thm}\label{cor430} Suppose that $\kappa$ is a strongly inaccessible cardinal,
and there exists a sequence $\langle A_\alpha\mid \alpha\in S\rangle$ such that:
\begin{itemize}
\item $S$ is a nonreflecting stationary subset of $E^\kappa_{>\omega}$;
\item For every $\alpha\in S$, $A_\alpha$ is a cofinal subset of $\alpha$;
\item For every cofinal $B\s\kappa$, there exists $\alpha\in S$
for which 
$$\{\delta<\alpha\mid \min(A_\alpha\setminus(\delta+1))\in B\}$$
is stationary in $\alpha$.
\end{itemize}
Then $\p^-(\kappa,\kappa,{\sqleftup{S}}, 1,\{S\},2)$ holds.
\end{thm}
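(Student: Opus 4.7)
The plan is to construct a sequence $\cvec C = \langle \mathcal C_\alpha\mid\alpha<\kappa\rangle$ that witnesses Clause~(iv) of Theorem~\ref{thm416} for the parameters $(\mu, \mathcal R, \theta, \mathcal S, \nu) = (\kappa, {\sqleftup{S}}, 1, \{S\}, 2)$. Since ${\sqleftup{S}}$ is drawn from Example~\ref{example53}, the equivalence (i)$\iff$(iv) of Theorem~\ref{thm416}, combined with Convention~\ref{omitsigma}, will then deliver $\p^-(\kappa, \kappa, {\sqleftup{S}}, 1, \{S\}, 2)$. For each $\alpha\in S$, using that $\alpha\in E^\kappa_{>\omega}$ and that $S$ is nonreflecting, fix a club $E_\alpha\s\alpha$ disjoint from $S$. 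Writing $f_\alpha(\delta):=\min(A_\alpha\setminus(\delta+1))$, set $A'_\alpha:=\{f_\alpha(\delta)\mid\delta\in E_\alpha\}$, take $C_\alpha$ to be the closure of $A'_\alpha$ in $\alpha$, and let $\mathcal C_\alpha:=\{C_\alpha\}$. For $\alpha\in\acc(\kappa)\setminus S$, let $\mathcal C_\alpha$ be the collection of all $C\in\mathcal K(\kappa)$ with $\alpha_C=\alpha$ and $\acc(C)\cap S=\emptyset$; this collection is nonempty (a cofinal $\omega$-sequence works if $\cf(\alpha)=\omega$, and a club in $\alpha$ disjoint from $S$ works otherwise) and of cardinality $<\kappa$ by strong inaccessibility. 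Zero and successor levels are handled by Convention~\ref{default-outside-acc}.

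The crucial verification is that $\acc(C_\alpha)\cap S=\emptyset$ for $\alpha\in S$, which is what will yield ${\sqleftup{S}}$-coherence. Given $\beta\in\acc(A'_\alpha)$, pick a strictly increasing sequence $\langle a_n\mid n<\omega\rangle\s A'_\alpha$ converging to $\beta$ and, for each $n$, some $\delta_n\in E_\alpha$ with $f_\alpha(\delta_n)=a_n$. Since $a_{n-1}\in A_\alpha$ and $A_\alpha\cap(\delta_n,a_n)=\emptyset$, one deduces $a_{n-1}\le\delta_n<a_n$, whence $\delta_n\nearrow\beta$; the closedness of $E_\alpha$ in $\alpha$ then yields $\beta\in E_\alpha$, and therefore $\beta\notin S$. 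This provides the coherence condition: for every $C\in\mathcal C_\alpha$ and every $\bar\alpha\in\acc(C)$, one has $\bar\alpha\notin S$, and $C\cap\bar\alpha$ belongs to $\mathcal C_{\bar\alpha}$.

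For the hitting feature, let $B\s\kappa$ be cofinal and $D\s\kappa$ be a club; the goal is to produce some $\alpha\in S$ with $\sup\{\delta\in D\cap\alpha\mid\min(C_\alpha\setminus(\delta+1))\in B\}=\alpha$. First, Lemma~\ref{thinning-out} supplies a cofinal $B'\s B$ that is $D$-separated. Applying the third bullet of the hypothesis of the theorem to $B'$ yields $\alpha\in S$ such that $T:=\{\delta<\alpha\mid f_\alpha(\delta)\in B'\}$ is stationary in $\alpha$. Since $\{f_\alpha(\delta)\mid\delta\in T\}$ is a cofinal subset of $B'\cap\alpha$, the $D$-separation of $B'$ forces $D\cap\alpha$ to be cofinal in $\alpha$; closedness of $D$ then gives $\alpha\in D$, so $E_\alpha\cap D$ is a club in $\alpha$, and hence $T\cap E_\alpha\cap D$ is stationary (in particular cofinal) in $\alpha$. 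For any $\delta$ in this last set, the same fact $A_\alpha\cap(\delta,f_\alpha(\delta))=\emptyset$ that was used in the coherence argument rules out both elements of $A'_\alpha$ and their accumulation points from the interval $(\delta,f_\alpha(\delta))$, so $\min(C_\alpha\setminus(\delta+1))=f_\alpha(\delta)\in B'\s B$; this verifies Clause~(iv). The main obstacle is the tension between the two roles of $E_\alpha$: it must be disjoint from $S$ so that $\acc(A'_\alpha)$ avoids $S$, yet the image $A'_\alpha=\{f_\alpha(\delta)\mid\delta\in E_\alpha\}$ must retain enough of $A_\alpha$ for the hitting behaviour of $A_\alpha$ to be faithfully transferred to $C_\alpha$ via the identification $\min(C_\alpha\setminus(\delta+1))=f_\alpha(\delta)$.
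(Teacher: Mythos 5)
Your proof is correct and follows essentially the same strategy as the paper's: both fix, for each $\alpha\in S$, a club $E_\alpha$ (resp.\ $B_\alpha$) in $\alpha$ that avoids $S$ (or at least whose accumulation points avoid $S$), define $C_\alpha$ as the closure of $\{\min(A_\alpha\setminus(\delta+1))\mid\delta\in E_\alpha\}$, let $\mathcal C_\alpha$ be the full collection of $S$-avoiding clubs off $S$, and verify Clause~(iv) of Theorem~\ref{thm416} by $D$-separating $B$ and exploiting the identity $\min(C_\alpha\setminus(\delta+1))=\min(A_\alpha\setminus(\delta+1))$ for $\delta\in E_\alpha$. The only cosmetic difference is that you demand $E_\alpha\cap S=\emptyset$ where the paper settles for $\acc(B_\alpha)\cap S=\emptyset$, and you spell out the inclusion $\acc(A'_\alpha)\s E_\alpha$ that the paper leaves implicit.
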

\begin{proof} 
As $S$ is nonreflecting, for every $\alpha\in S$, we may pick a club subset $B_\alpha$ of $\alpha$ with $\acc(B_\alpha)\cap S=\emptyset$.
Now, for every $\alpha\in S$, let $\mathcal C_\alpha:=\{C_\alpha\}$, where $C_\alpha$ denotes the closure in $\alpha$ of $\{\min(A_\alpha\setminus(\delta+1))\mid \delta\in B_\alpha\}$.
For every $\alpha\in\acc(\kappa)\setminus S$, let $\mathcal C_\alpha$ consist of all clubs $C$ in $\alpha$ such that $\acc(C)\cap S=\emptyset$.
As $S$ is nonreflecting, $\kappa$ is strongly inaccessible, and $\acc(C_\alpha)\s\acc(B_\alpha)$ for every $\alpha \in S$,
it follows that $\cvec{C} := \langle \mathcal C_\alpha\mid\alpha<\kappa\rangle$ is a $\p^-(\kappa,\kappa,\sqleftup{S},\dots)$-sequence.
To see that $\cvec{C}$ witnesses
Clause~(iv) of Theorem~\ref{thm416} with $(\xi,\mu,\mathcal R,\theta,\mathcal S,\nu):=(\kappa,\kappa,{\sqleftup{S}},1,\{S\},2)$,
consider any given club $D\s\kappa$ and cofinal $B\s\kappa$.
By Lemma~\ref{thinning-out}, fix a cofinal subset $B'$ of $B$ that is $D$-separated.
By the hypothesis, fix $\alpha \in S$ such that
$\{\delta<\alpha\mid \min(A_\alpha\setminus(\delta+1))\in B'\}$
is stationary in $\alpha$.
In particular, $\sup(B'\cap\alpha) = \alpha$, so that also $\sup(D\cap\alpha) = \alpha$,
and it follows that $D\cap\alpha$ is a club in $\alpha$, and so is $D \cap B_\alpha$.
Thus, the set $\{\delta \in D \cap B_\alpha\mid \min(A_\alpha\setminus(\delta+1))\in B'\}$
is stationary in $\alpha$.
As $\alpha \in S$, we infer that $\mathcal{C}_\alpha = \{C_\alpha\}$
(in particular, $|\mathcal{C}_\alpha| = 1$),
and for every $\delta \in B_\alpha$, $\min(C_\alpha\setminus(\delta+1)) = \min(A_\alpha\setminus(\delta+1))$,
yielding, in particular,
$\sup\{\delta\in D\cap\alpha\mid \min(C_\alpha\setminus(\delta+1))\in B\}=\alpha$.
Thus, by Theorem~\ref{thm416}(i), we obtain
$\p^-(\kappa,\kappa,{\sqleftup{S}},1,\{S\},2, {<}\omega)$, as sought.
\end{proof}

The proof of Lemma~\ref{from-KS93} makes clear that 
the $\diamondsuit(\kappa)$ hypothesis in \cite[Lemma 3.8]{paper32} may be reduced to $\clubsuit(\kappa)$ together with
$\kappa^{\aleph_1}=\kappa$, or together with $\kappa^{\aleph_0}=\kappa$ provided that $S$ concentrates on points of uncountable cofinality.
In addition, the proof of Theorem~\ref{get-sigma-finite}(1) goes through also for $\mu=\kappa^+$. Therefore, we arrive at the following conclusion.
\begin{lemma}\label{shiftcofinality}
Suppose that:
\begin{itemize}
\item $\mu\leq\kappa^+$ and $\nu<\kappa$ are cardinals;
\item $\cvec C=\langle\mathcal C_\alpha\mid\alpha<\kappa\rangle$ witnesses $\p^-(\kappa,\mu, {\sq^*},1,\{\kappa\},\mu,1)$;
\item $S$ is some stationary subset of $\{\alpha\in E^\kappa_{>\omega}\mid |\mathcal C_\alpha|<\nu\}$;
\item $\kappa^{\aleph_0}=\kappa$ and $\clubsuit(\kappa)$ holds.
\end{itemize}
Then $\p^-(\kappa,\mu,{\sq^*},1,\{S\},\nu)$ holds.\qed
\end{lemma}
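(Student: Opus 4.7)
The plan is to obtain the conclusion in two successive steps: first produce an intermediate sequence witnessing $\p^-(\kappa,\mu,{\sq^*},1,\{S\},\nu,1)$, and then upgrade the $\sigma$ parameter from $1$ to the default ${<}\omega$ via an extension of Theorem~\ref{get-sigma-finite}(1).

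For the first step, I would re-run the recursive argument from the proof of Lemma~\ref{from-KS93}, mutatis mutandis, starting from the given $\cvec C$ witnessing $\p^-(\kappa,\mu,{\sq^*},1,\{\kappa\},\mu,1)$. At each stage $n<\omega$, one asks whether the current $\cvec C^n$ already satisfies the desired hitting on $S$ with cardinality bound $<\nu$; if not, one fixes a cofinal $X^n\s\kappa$ witnessing the failure, shrinks $\Omega^n$ to $\Omega^{n+1}:=\{\beta\in\Omega^n\mid X^n_\beta\s X^n\}$, and extends the sequence to $\cvec C^{n+1}$ exactly as in formula~$(*)$ of Lemma~\ref{from-KS93}. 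The use of $\diamondsuit(\kappa)$ in the original enters only via Lemma~\ref{infinite-club-matrix} with $\theta:=\omega$, which requires only $\clubsuit(\kappa)+\kappa^{\aleph_0}=\kappa$, precisely what we have assumed. If the recursion were to run through all of $\omega$, one would derive a contradiction by picking $\alpha\in\Omega^\bullet\cap\acc^+(\Omega^\bullet)$, where $\Omega^\bullet:=\bigcap_{n<\omega}\Omega^n$, and observing that $\alpha^*:=\sup_{n<\omega}\alpha_n$ is strictly below $\alpha$; this is exactly where the restriction $S\subseteq E^\kappa_{>\omega}$ (so that $\cf(\alpha)>\omega$) becomes essential.

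For the second step, I would apply Theorem~\ref{get-sigma-finite}(1) with $\mathcal R:={\sq^*}$, $\theta:=1$, $\mathcal S:=\{S\}$, and $\nu$ as given, to pass from the intermediate sequence to one witnessing $\p^-(\kappa,\mu,{\sq^*},1,\{S\},\nu,{<}\omega)$, invoking $\clubsuit(\kappa)$. Although the theorem was stated under the running conventions (hence $\mu\le\kappa$), its proof merely sets $\mathcal D_\alpha:=\{\Phi_{\mathfrak x}(C)\mid C\in\mathcal C_\alpha\}$ for a postprocessing function $\Phi_{\mathfrak x}$, yielding $|\mathcal D_\alpha|\le|\mathcal C_\alpha|$, so the argument goes through unchanged when $\mu=\kappa^+$. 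By Convention~\ref{omitsigma}, we then obtain the desired $\p^-(\kappa,\mu,{\sq^*},1,\{S\},\nu)$.

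The main obstacle is the first step, and more precisely the $\diamondsuit$-to-$\clubsuit$ reduction underlying it: this amounts to auditing each use of diamond in the proof of \cite[Lemma~3.8]{paper32} (whose structure parallels that of Lemma~\ref{from-KS93}) and confirming that each such use is localized to the invocation of Lemma~\ref{infinite-club-matrix} with $\theta=\omega$, whose conclusion follows from $\clubsuit(\kappa)+\kappa^{\aleph_0}=\kappa$ and hence suffices under our hypothesis that $S$ concentrates on points of uncountable cofinality.
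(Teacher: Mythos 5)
Your proposal reproduces exactly the paper's intended two-step argument: the paper's own ``proof'' is just the remark preceding the lemma (which is why the statement ends with $\qed$), namely that the $\diamondsuit(\kappa)$ hypothesis in \cite[Lemma~3.8]{paper32} may be replaced by $\clubsuit(\kappa)+\kappa^{\aleph_0}=\kappa$ when $S\s E^\kappa_{>\omega}$ --- precisely because the only essential role $\diamondsuit$ plays there is the one isolated in Lemma~\ref{infinite-club-matrix}, and the $S\s E^\kappa_{>\omega}$ restriction handles the $\alpha^*<\alpha$ step exactly as in Lemma~\ref{from-KS93} --- followed by an application of Theorem~\ref{get-sigma-finite}(1), whose proof you correctly observe survives the passage to $\mu=\kappa^+$ since the postprocessing $\Phi_{\mathfrak x}$ never increases $|\mathcal C_\alpha|$. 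You identify both components and why they work; this matches the paper's reasoning.
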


\begin{cor}\label{cor426} Suppose $\kappa^{\aleph_0}=\kappa$, and $\clubsuit(E)$ holds for a nonreflecting stationary $E\s\kappa$.
\begin{enumerate}
\item For every $\mu\le\kappa^+$ and every stationary $S\s E^\kappa_{>\omega}$ such that there is a $\p^-(\kappa, \mu, {\sqstarleftup{E\cup S}},\ldots)$-sequence,
$\p^-(\kappa,\mu,{\sq^*},1,\{S\},2)$ holds;
\item For every nonreflecting stationary $S\s E^\kappa_{>\omega}$, $\p^-(\kappa,\kappa^+,{\sq^*},1,\{S\},2)$ holds.
\end{enumerate}
\end{cor}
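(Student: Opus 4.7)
The plan is to derive Clause~(1) by combining Corollary~\ref{cor-KS93} with Lemma~\ref{shiftcofinality}, and then to derive Clause~(2) from Clause~(1) after building a basic $\sqleftup{E\cup S}$-coherent sequence that uses only the nonreflection of $E\cup S$.

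For Clause~(1), start with the given $\p^-(\kappa,\mu,{\sqstarleftup{E\cup S}},\ldots)$-sequence $\cvec{C}^0 = \langle \mathcal C^0_\alpha\mid \alpha<\kappa\rangle$. Since every $\alpha\in S$ lies in $E\cup S$, the $\sqstarleftup{E\cup S}$-coherence precludes any such $\alpha$ from being an accumulation point of a club in $\mathcal C^0_{\alpha'}$ for $\alpha'>\alpha$, so we may shrink $\mathcal C^0_\alpha$ to a singleton for every $\alpha\in S$ without disturbing coherence. By monotonicity and $E\s E\cup S$, the resulting sequence is also a $\p^-(\kappa,\kappa^+,{\sqstarleftboth{E}{\omega}},\ldots)$-sequence. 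As $\kappa^{\aleph_0}=\kappa$ and $\clubsuit(E)$ hold, Corollary~\ref{cor-KS93} with $\chi:=\omega$ then supplies a stationary $\Omega\s E$ and a sequence $\cvec{C}^1=\langle\mathcal C^1_\alpha\mid\alpha<\kappa\rangle$ witnessing $\p^-(\kappa,\kappa^+,{\sqstarleftboth{\Omega}{\omega}},1,\{\Omega\},2)$, with $|\mathcal C^1_\alpha|\le|\mathcal C^0_\alpha|$ for all $\alpha<\kappa$; in particular $|\mathcal C^1_\alpha|=1$ for every $\alpha\in S$. Weakening the relation to $\sq^*$ and noting that a hitting feature on the stationary set $\Omega\s\kappa$ induces one on $\kappa$, $\cvec{C}^1$ also witnesses the auxiliary principle $\p^-(\kappa,\mu,{\sq^*},1,\{\kappa\},\mu,1)$. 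Since $\clubsuit(E)$ extends trivially to $\clubsuit(\kappa)$ and $S\s E^\kappa_{>\omega}$ sits inside $\{\alpha<\kappa\mid |\mathcal C^1_\alpha|<2\}$, Lemma~\ref{shiftcofinality} applies with $\nu:=2$ to deliver $\p^-(\kappa,\mu,{\sq^*},1,\{S\},2)$.

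For Clause~(2), since $E$ and $S$ are both nonreflecting, their union $\Omega:=E\cup S$ is a nonreflecting stationary subset of $\kappa$. For each $\alpha\in\acc(\kappa)$, using nonreflection when $\cf(\alpha)>\omega$ and any cofinal $\omega$-sequence when $\cf(\alpha)=\omega$, fix a club $B_\alpha$ in $\alpha$ with $\acc(B_\alpha)\cap\Omega=\emptyset$. Now set
\[\mathcal C_\alpha:=\{B_\alpha\}\cup\{B_{\alpha'}\cap\alpha\mid \alpha<\alpha'<\kappa,\ \alpha\in\acc(B_{\alpha'})\}.\]
This yields $|\mathcal C_\alpha|\le\kappa<\kappa^+$, and $\sqleftup{\Omega}$-coherence is automatic: for any $C\in\mathcal C_\alpha$ of the form $B_{\alpha'}\cap\alpha$ (with $\alpha':=\alpha$ when $C=B_\alpha$) and any $\bar\alpha\in\acc(C)\s\acc(B_{\alpha'})$, one has $\bar\alpha\notin\Omega$ by the choice of $B_{\alpha'}$, and $C\cap\bar\alpha=B_{\alpha'}\cap\bar\alpha\in\mathcal C_{\bar\alpha}$. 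Hence $\cvec{C}$ witnesses $\p^-(\kappa,\kappa^+,{\sqstarleftup{\Omega}},\ldots)$, and invoking Clause~(1) with $\mu:=\kappa^+$ delivers $\p^-(\kappa,\kappa^+,{\sq^*},1,\{S\},2)$.

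The main obstacle I anticipate is the bookkeeping in Clause~(1): specifically, confirming that the size bound $|\mathcal C^1_\alpha|\le|\mathcal C^0_\alpha|$ is preserved through both the intermediate invocation of Lemma~\ref{from-KS93} inside the proof of Corollary~\ref{cor-KS93} and the subsequent postprocessing (via a $\Phi_{\mathfrak x}$ as in Lemma~\ref{pp-from-micro}) used to boost $\sigma$ from $1$ to $<\omega$. Once this is verified, the singleton property at $\alpha\in S$ transports verbatim from $\cvec{C}^0$ to $\cvec{C}^1$, and both Lemma~\ref{shiftcofinality} and the construction for Clause~(2) go through routinely.
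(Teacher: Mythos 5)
Your proof is correct and, for Clause~(1), follows essentially the same route as the paper: shrink $\cvec{C}^0$ to singletons over $S$, push the sequence through the argument of Corollary~\ref{cor-KS93} (really Lemma~\ref{from-KS93} followed by the postprocessing of Theorem~\ref{get-sigma-finite}(1)) to get a sequence with the $|\mathcal C^1_\alpha|\le|\mathcal C^0_\alpha|$ bound, relax parameters, and then feed into Lemma~\ref{shiftcofinality}. You correctly flag the one point that needs checking, namely that the size bound survives both Lemma~\ref{from-KS93} and the $\Phi_{\mathfrak x}$-postprocessing; that checks out since postprocessing maps each $\mathcal C_\alpha$ onto a set of the same or smaller cardinality. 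A small stylistic caveat: since the official convention requires $\mu\le\kappa$, you should say ``by the proof of Corollary~\ref{cor-KS93}'' rather than invoking the corollary's statement directly when $\mu=\kappa^+$, as the paper itself does. For Clause~(2), the paper merely asserts it ``easily follows,'' and your account usefully supplies the missing details: you verify that $\Omega := E\cup S$ is nonreflecting (a union of nonreflecting sets is nonreflecting), and you produce the required $\p^-(\kappa,\kappa^+,{\sqleftup{\Omega}},\ldots)$-sequence by the standard ``gather all intersections of the ladders passing through $\alpha$'' construction, which is exactly what is needed to trade nonreflection for $\sqleftup{\Omega}$-coherence at width $\kappa^+$. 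That construction is sound and is the natural choice; it is in the same spirit as the $\mathcal G_\alpha$-construction in Theorem~\ref{get-full-coherence-from-sqleft*}.
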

\begin{proof} We shall only prove Clause~(1), as Clause~(2) easily follows from it.

Suppose that $S\s E^\kappa_{>\omega}$ is stationary, 
and there exists a $\p^-(\kappa, \mu, {\sqstarleftup{E\cup S}},\ldots)$-sequence,
$\langle \mathcal C^0_\alpha \mid \alpha<\kappa \rangle$.
Clearly, we may assume that $|\mathcal C_\alpha^0|=1$ for all $\alpha\in E\cup S$.

By the same proof of Corollary~\ref{cor-KS93},
there exists a stationary $\Omega\s E$ and a $\p^-(\kappa, \mu, {\sqstarleftup{\Omega}},1,\{\Omega\},2,1)$-sequence
$\vec{\mathcal C}=\langle \mathcal C_\alpha \mid \alpha<\kappa \rangle$ with $|\mathcal C_\alpha|\le|\mathcal C_\alpha^0|$ for all $\alpha<\kappa$.
In particular, $\vec{\mathcal C}$ is a $\p^-(\kappa, \mu, {\sq^*},1,\{\kappa\},\mu,1)$-sequence
with $S\s\{\alpha\in E^\kappa_{>\omega}\mid |\mathcal C_\alpha|<2\}$.
Then, by Lemma~\ref{shiftcofinality},  $\p^-(\kappa,\mu,{\sq^*},1,\{S'\},2)$ holds.
\end{proof}

We conclude this subsection, pointing out a few additional connections between instances of $\p^-(\kappa,\ldots)$ and the existence of nonreflecting stationary subsets of $\kappa$.
\begin{prop}\label{nonreflectcons} Assume $\chi\in\reg(\kappa)$, $\min\{\theta,\sigma\}>0$ and $\mathcal R$ as in Example~\ref{example53}.
\begin{enumerate}
\item Let $\xi<\kappa$.
For any $S\s E^\kappa_{\ge\chi}$, $\p^-_\xi(\kappa,\kappa,{\sqx},1,\{S\},2,1)$ entails the existence of a nonreflecting stationary subset of $S$.
In addition, for any $S\s\kappa$, $\p^-_\xi(\kappa,\kappa,{\sq_\chi},1,\{S\},2,1)$ entails the existence of stationary subset $\Omega\s S$
such that, for every $\alpha\in E^\kappa_{\ge\max\{\chi,\omega_1\}}$, $\Omega\cap\alpha$ is nonstationary in $\alpha$.
\item For any $\sigma\in\reg(\kappa)$, $\p^-(\kappa,2,{\sqleft{\sigma}},1,\{E^\kappa_{\ge\sigma}\},2,\sigma)$ 
entails the existence of a nonreflecting stationary subset of $E^\kappa_\sigma$. 
\item $\p^-_\xi(\kappa,\mu,{^{\Omega}{\mathcal R}},\theta,\{\Omega\},\nu,\sigma)$ implies that $\Omega$ is a nonreflecting 
stationary subset of $\kappa$ and that 
$\p^-_\xi(\kappa,\mu,{^{\Omega}{\mathcal R}},\theta,\{\Omega\},\nu',\sigma)$ holds with $\nu'=2$.
\item If $\kappa=\lambda^+$ for a regular cardinal $\lambda$,
then $\p^-_\lambda(\kappa,\mu,\mathcal R,\theta,\mathcal S,\nu,\sigma)$ 
is equivalent to $\p^-(\kappa,\mu,{^{E^\kappa_\lambda}{\mathcal R}},\theta,\mathcal S,\nu,\sigma)$.
\item If $\kappa=\lambda^+$ for a regular cardinal $\lambda$,
then, for every $\mathcal S\s\mathcal P(E^{\kappa}_\lambda)$,
$\p^-_\lambda(\kappa,\mu,\mathcal R,\allowbreak\theta,\mathcal S,\nu,\sigma)$ 
is equivalent to $\p^-_\lambda(\kappa,\mu,\mathcal R,\theta,\allowbreak\mathcal S,2,\sigma)$.
\item If $\kappa=\lambda^+$ for an infinite cardinal $\lambda$,
then, for every $\theta\ge\lambda$,
$\p^-_\lambda(\kappa,\mu,\mathcal R,\theta,\{\kappa\},\allowbreak\nu,\sigma)$ 
is equivalent to $\p^-_\lambda(\kappa,\mu,\mathcal R,\theta,\allowbreak\{E^\kappa_{\cf(\lambda)}\},2,\sigma)$.
\end{enumerate}
\end{prop}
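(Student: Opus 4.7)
The six clauses share a common backbone: identify from the assumed sequence $\cvec{C} = \langle \mathcal C_\alpha \mid \alpha < \kappa \rangle$ a canonical stationary set on which the $\mathcal C_\alpha$'s can be trimmed to singletons, and combine with the coherence relation $\mathcal R$ to produce nonreflection via an order-type comparison. Clauses (3)--(5) are structural equivalences, while (1), (2), and (6) are hitting-driven nonreflection results.

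For clause~(3), the ${^\Omega\mathcal R}$-coherence clause of Definition~\ref{defn-p-sequence-first-stage} immediately forces $\acc(C) \cap \Omega = \emptyset$ for every $C \in \bigcup_\alpha \mathcal C_\alpha$, so $\Omega$ is nonreflecting (and is stationary by $\Omega \in \mathcal S$). To reduce $\nu$ to $2$, for each $\alpha \in \Omega$ one keeps only the $\lhd$-least element of $\mathcal C_\alpha$ (for a fixed well-ordering $\lhd$ of $H_\kappa$), leaving the sequence unchanged outside $\Omega$; coherence is preserved because no club in the trimmed sequence ever accumulates in $\Omega$. For clause~(4), the forward direction uses that $\otp(C) \le \lambda$ with $\lambda$ regular forces $\cf(\bar\alpha) = \cf(\otp(C \cap \bar\alpha)) < \lambda$ for every $\bar\alpha \in \acc(C)$, hence $\bar\alpha \notin E^\kappa_\lambda$; the converse observes that if $\otp(C) > \lambda$ for some $C$, then the unique $\bar\alpha \in \acc(C)$ with $\otp(C \cap \bar\alpha) = \lambda$ lies in $E^\kappa_\lambda$, violating ${^{E^\kappa_\lambda}\mathcal R}$-coherence. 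Clause~(5) then follows by combining (4) with (3), applied with $\Omega := E^\kappa_\lambda$, which is nonreflecting in $\kappa = \lambda^+$.

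For clause~(1), the hitting feature applied to $B_0 := \kappa$ produces a stationary $\Omega \s S$ on which $\mathcal C_\alpha = \{C_\alpha\}$ is a singleton; as $\otp(C_\alpha) \le \xi < \kappa$, Fodor's lemma applied to the regressive function $\alpha \mapsto \otp(C_\alpha)$ yields a further stationary $\Omega' \s \Omega$ on which $\otp(C_\alpha)$ is a constant $\xi^*$. If some $\beta \in \acc(\kappa)$ were to reflect $\Omega'$, then picking $C \in \mathcal C_\beta$ and two elements $\alpha < \alpha'$ of $\acc(C) \cap \Omega'$ (possible since this intersection is stationary in $\beta$), the $\sqx$-coherence downward from $C$ (legal since $\Omega' \s S \s E^\kappa_{\ge\chi}$) forces both $C_\alpha = C \cap \alpha$ and $C_{\alpha'} = C \cap \alpha'$, hence $C_\alpha = C_{\alpha'} \cap \alpha$, yielding $\xi^* = \otp(C_\alpha) < \otp(C_{\alpha'}) = \xi^*$, a contradiction. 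The second statement of (1) proceeds identically with $\sq_\chi$ in place of $\sqx$, using that at any reflection point $\beta \in E^\kappa_{\ge \max\{\chi, \omega_1\}}$ the inequality $\otp(C) \ge \cf(\beta) \ge \chi$ rules out the $\otp(C) < \chi$ disjunct of $\sq_\chi$, so that $\sq_\chi$-coherence downward from $C \in \mathcal C_\beta$ behaves like $\sq$-coherence. Clause~(2) adapts the same template: the $\mu = 2$ hypothesis makes $\mathcal C_\alpha = \{C_\alpha\}$ a singleton throughout, and a suitable choice of the hitting parameter $B_0$ produces a stationary subset $\Omega \s E^\kappa_\sigma$ with $\otp(C_\alpha) = \sigma$ for all $\alpha \in \Omega$, whose nonreflection follows by the same comparison argument ($\otp(C_\beta \cap \alpha) = \sigma$ singles out at most one $\alpha \in \acc(C_\beta)$).

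Clause~(6) is the deepest. The backward direction is immediate from Remark~\ref{monotonicity} (expand the single element of $\mathcal S$ from $E^\kappa_{\cf(\lambda)}$ to $\kappa$ and increase $\nu$). For the forward direction, partition some cofinal subset of $\kappa$ into $\lambda$ pairwise disjoint cofinal pieces $\langle B_i \mid i < \lambda \rangle$, and apply the hitting feature (using $\theta \ge \lambda$) at $\alpha \ge \lambda$: each hit for $B_i$ places certain successor-in-$C$ elements inside $B_i$, and pairwise disjointness of the $B_i$'s makes these successor elements distinct across $i$, contributing $\lambda$ pairwise disjoint cofinal subsets of $C$, each of size $\ge \cf(\alpha)$. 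A cardinal arithmetic comparison with $|C| \le \lambda$ (from $\p^-_\lambda$) forces $\otp(C) = \lambda$ and hence $\cf(\alpha) = \cf(\lambda)$, so the stationary set of hitting points above $\lambda$ concentrates in $E^\kappa_{\cf(\lambda)}$. For $\lambda$ regular, reducing to $\nu = 2$ now follows from clause~(5). The main obstacle is the case of $\lambda$ singular, where $E^\kappa_{\cf(\lambda)}$ is in general reflecting and the trimming of (3) no longer directly applies; here, one must exploit that at each hitting point $\alpha$ every $C \in \mathcal C_\alpha$ has $\otp(C) = \lambda$, and make a coherent selection along the canonical $\sq$-tower of accumulation points of cofinality $\cf(\lambda)$, by a recursive procedure that respects $\mathcal R$-coherence.
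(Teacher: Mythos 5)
Your arguments for clauses (3), (4), and (5) are correct and track the paper's intended route: the ${^\Omega\mathcal R}$-coherence immediately forces $\acc(C)\cap\Omega=\emptyset$, which both yields nonreflection and makes the singleton-trimming safe; the order-type bound $\otp(C)\le\lambda$ with $\lambda$ regular is equivalent to $\acc(C)\cap E^\kappa_\lambda=\emptyset$; and (5) is indeed (3) combined with (4). Your proof of clause (1) --- using $\nu=2$ and Fodor to freeze the order-type $\xi^*<\kappa$, then deriving a contradiction from two accumulation points of a would-be reflection witness --- is also a correct reconstruction of what the paper calls ``the easy argument.''

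Clauses (2) and (6), however, contain genuine gaps. In (2), the entire weight of the argument rests on the sentence ``a suitable choice of the hitting parameter $B_0$ produces a stationary subset $\Omega\s E^\kappa_\sigma$ with $\otp(C_\alpha)=\sigma$,'' which you assert without argument. The hypothesis a priori produces hitting points only in $E^\kappa_{\ge\sigma}$, and there is no Fodor-type shortcut to push them down to $E^\kappa_\sigma$ or to constrain the order type to exactly $\sigma$; this is precisely the nontrivial content of the Lambie-Hanson theorem the paper cites, and your reduction circularly presupposes it. In (6), your cardinality computation at hitting points above $\lambda$ ($\lambda$ pairwise disjoint nonempty subsets of $C$ inside the $B_i$'s, hence $|C|=\lambda$, hence $\otp(C)=\lambda$ and $\cf(\alpha)=\cf(\lambda)$) is correct, and it does shrink $\mathcal S$ from $\{\kappa\}$ to $\{E^\kappa_{\cf(\lambda)}\}$. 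But the reduction to $\nu=2$ is only handled when $\lambda$ is regular (via (5)); for singular $\lambda$ neither (4) nor (5) applies, and the trimming of (3) fails because $E^\kappa_{\cf(\lambda)}$ reflects and does accumulate inside ladders $\acc(C)$. The closing sentence about ``a coherent selection along the canonical $\sq$-tower'' names a wish rather than a procedure: you must still explain how to discard all but one element of $\mathcal C_\alpha$ at such $\alpha$ without destroying the coherence clause for $\beta>\alpha$ with $\alpha\in\acc(C)$ for some $C\in\mathcal C_\beta$. The paper delegates this exact point to an external remark, and your sketch does not replace it.
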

\begin{proof} (1) The easy argument may be extracted from the proof of $(2)\implies(1)$ of \cite[Corollary~3.4]{paper32}.

(2) The proof is a straight-forward generalization of the proof of 
\cite[Theorem~4.1]{lambie2017aronszajn}.

(3) From $\p^-_\xi(\kappa,{\cdots},\theta,\{\Omega\},\cdot,\sigma)$  with $\min\{\theta,\sigma\}>0$, we infer that $\Omega$ is stationary.
Now, for every $\alpha\in E^\kappa_{>\omega}$, we may pick a club $C\in\mathcal C_\alpha$,
and so by ${^{\Omega}{\mathcal R}}$, we know that the club $\acc(C)$ is disjoint from $\Omega$, so that $\Omega\cap\alpha$ is nonstationary in $\alpha$.
In effect, for every $\alpha\in \Omega$, we may replace $\mathcal C_\alpha$ by some singleton subset of itself,
and then see that $\p^-_\xi(\kappa,\mu,{^{\Omega}{\mathcal R}},\theta,\{\Omega\},2,\sigma)$ holds, as well.

(4) This is obvious.

(5) By Clauses (3) and (4).

(6) By Clause~(3) and \cite[Remarks~3.22]{paper32}.
\end{proof}

\subsection{Hitting on a club}
Let $\ns_\kappa$ denote the nonstationary ideal over $\kappa$,
so that
$\ns^+_\kappa$ forms the collection of all stationary subsets of $\kappa$.
For every stationary $T \subseteq \kappa$,
one denotes ${\ns^+_\kappa \restriction T} := \ns^+_\kappa \cap \mathcal P(T)$.

The following result shows that hitting on a club enables increasing the number of sets being hit simultaneously to the maximal possible value.
It is used in the proof of~\cite[Corollary~3.6]{paper26},
and in the justification of~\cite[Remark~ii.\ following Definition~3.3]{paper26}
and~\cite[Theorem~1.10(2)]{lambie2017aronszajn}.

\begin{lemma}\label{lemma916} Suppose that
$\mathcal S= {\ns^+_\kappa \restriction T}$ for some stationary $T\s\kappa$.
Then
$\p_\xi^-(\kappa,\mu,\mathcal R,1,\mathcal S,\nu,\sigma)$ is equivalent to $\p_\xi^-(\kappa,\mu,\mathcal R,\theta,\mathcal S,\nu,\sigma)$ with $\theta=\kappa$.
\end{lemma}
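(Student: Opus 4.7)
The plan is to verify the two directions separately, with all the work concentrated in one of them. The direction from $\theta=\kappa$ to $\theta=1$ is immediate from Remark~\ref{monotonicity}, since the proxy principle is monotone under decreasing $\theta$. So I focus on the forward direction.

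Fix a sequence $\vec{\mathcal C} = \langle \mathcal C_\alpha \mid \alpha < \kappa \rangle$ witnessing $\p^-_\xi(\kappa,\mu,\mathcal R,1,\mathcal S,\nu,\sigma)$. The claim is that the same sequence witnesses $\p^-_\xi(\kappa,\mu,\mathcal R,\kappa,\mathcal S,\nu,\sigma)$. Given a sequence $\langle B_i \mid i < \kappa \rangle$ of cofinal subsets of $\kappa$ and $S\in\mathcal S$, the goal is to produce stationarily many $\alpha\in S$ with $|\mathcal C_\alpha|<\nu$ such that, for every $C\in\mathcal C_\alpha$ and every $i<\alpha$, Equation~\eqref{hittingeqn} of Definition~\ref{pminus} holds with respect to $B_i$. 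Assume, toward a contradiction, that the set of such $\alpha$ is nonstationary, and fix a club $D\s\kappa$ disjoint from it.

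Applying the $\theta=1$ hypothesis to the cofinal set $B_0$ and to $S$, the set $\{\alpha\in S\mid |\mathcal C_\alpha|<\nu\}$ is stationary, so $S':=\{\alpha\in S\cap D\mid |\mathcal C_\alpha|<\nu\}$ is stationary as well. By the choice of $D$, for each $\alpha\in S'$ there must exist some $i<\alpha$ and some $C\in \mathcal C_\alpha$ violating \eqref{hittingeqn} with respect to $B_i$; let $f(\alpha)$ denote the least such $i$. Then $f\colon S'\rightarrow\kappa$ is regressive, and Fodor's lemma yields a stationary $S''\s S'$ and a fixed $i^*<\kappa$ with $f\restriction S''\equiv i^*$.

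Here is the crucial use of the hypothesis $\mathcal S = \ns^+_\kappa\restriction T$: since $S''\s S\s T$ is stationary, $S''\in\mathcal S$, and so the $\theta=1$ principle may be applied to the cofinal set $B_{i^*}$ and to $S''$. This produces some $\alpha\in S''$ with $|\mathcal C_\alpha|<\nu$ such that \eqref{hittingeqn} holds for every $C\in\mathcal C_\alpha$ with respect to $B_{i^*}$, which directly contradicts $f(\alpha)=i^*$ --- the very definition of $f(\alpha)$ requires some $C\in\mathcal C_\alpha$ to violate \eqref{hittingeqn} for $B_{i^*}$. The argument presents no real obstacle beyond invoking Fodor's lemma and noting that it is precisely closure of $\mathcal S$ under taking stationary subsets that lets the diagonalization succeed; this closure is exactly what $\mathcal S = \ns^+_\kappa\restriction T$ provides.
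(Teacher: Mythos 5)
Your proof is correct. The paper argues directly: for each $i<\kappa$, the set $T\setminus G(B_i)$ of ``bad'' points is nonstationary (because $G(B_i)\cap S$ is stationary for every $S\in\ns^+_\kappa\restriction T$), so one may fix clubs $D_i$ with $D_i\cap T\subseteq G(B_i)$ and pass to the diagonal intersection $D:=\acc(\kappa)\cap\bigl(\diagonal_{i<\kappa}D_i\bigr)$, on which the full $\theta=\kappa$ hitting holds at every $\alpha\in D\cap T$. You instead argue by contradiction, defining a regressive function $f$ on a stationary set of failures and invoking Fodor's lemma to extract a single culprit index $i^*$, which a further application of the $\theta=1$ hypothesis on $B_{i^*}$ and the stationary set $S''\subseteq T$ refutes. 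The two routes are dual to one another: Fodor's lemma and diagonal intersection are both incarnations of the normality of $\ns_\kappa$, and both proofs rely on exactly the same structural fact that $\mathcal S=\ns^+_\kappa\restriction T$ is closed under passing to stationary subsets of $T$. Your version is slightly longer because of the contradiction scaffolding, while the paper's is constructive in the sense of exhibiting the witnessing club explicitly, but the mathematical content is the same.
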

\begin{proof} We focus on the forward implication.
Fix a sequence $\cvec{C} = \langle \mathcal C_\alpha \mid \alpha<\kappa \rangle$
witnessing $\p_\xi^-(\kappa,\mu,\mathcal R,1,\mathcal S,\nu,\sigma)$.
In particular, for every cofinal $B\s\kappa$,
if we let $G(B)$ denote the set of all $\alpha<\kappa$ such that
$| \mathcal C_\alpha| < \nu$, and for all $C \in \mathcal C_\alpha$,
$$\sup\{ \gamma\in C \mid \suc_\sigma (C \setminus \gamma) \subseteq B \} = \alpha,$$
then, for every $S\in\mathcal S$, $G(B)\cap S$ is stationary. Recalling that $\mathcal S=\{S\s T\mid T\text{ is stationary}\}$, this means that $T\setminus G(B)$ is nonstationary.

\begin{claim} $\cvec{C}$ witnesses
$\p_\xi^-(\kappa,\mu,\mathcal R,\kappa,\mathcal S,\nu,\sigma)$, as well.
\end{claim}
\begin{proof}
Suppose that $\langle B_i\mid i<\kappa\rangle$ is a given sequence of cofinal subsets of $\kappa$.
For every $i<\kappa$, let us fix a club $D_i\s\kappa$ such that $D_i\cap T\s G(B_i)$.
Consider the club  $D:=\acc(\kappa)\cap(\diagonal_{i<\kappa}D_i)$. Let $\alpha\in D\cap T$ be arbitrary. By $\alpha\in D_0\cap T$, we infer $|\mathcal C_\alpha|<\nu$.
Let $C\in\mathcal C_\alpha$ be arbitrary. Given $i<\alpha$, we have $\alpha\in D_i\cap T$, and hence
$$\sup\{ \gamma \in C \mid \suc_\sigma (C \setminus \gamma) \subseteq B_i \} = \alpha,$$
as required.
\end{proof}
This completes the proof.
\end{proof}

\begin{remark}\label{remark-P*}
Recalling Conventions \ref{convention-mu-infty} and \ref{convention-sigma<infty},
the principle $\p^*(T,\xi)$ of \cite[Definition~3.3]{paper26} is nothing but
$\p^-_\xi(\kappa, \infty, {\sq}, 1, {\ns^+_\kappa \restriction T}, 2, {<}\infty)$.
In particular, $\p^*(T,\kappa)$ entails 
$\p^-(\kappa, \kappa, {\sq}, \kappa, \{\kappa\}, 2, 1)$,
which, by the results of Subsection~\ref{subsection:free}, entails the existence of a free $\kappa$-Souslin tree.
This justifies remark~(ii) after Definition 3.3 of \cite{paper26}.
\end{remark}

Consider a regular infinite cardinal $\lambda$ and some stationary set $S \subseteq E^{\lambda^+}_\lambda$.
Similar to the fact that $\diamondsuit(S)$ implies $\p^-_\lambda(\lambda^+, 2, {\sqleft{\lambda}}, 1, \{S\}, 2, \sigma)$
(cf.~\cite[Theorem~5.1(2)]{paper22}),
it is the case that $\diamondsuit^*(S)$ implies $\p^-_\lambda(\lambda^+, 2, {\sqleft{\lambda}}, 1, {\ns^+_{\lambda^+} \restriction S}, 2, \sigma)$.\footnote{For the definition of $\diamondsuit^*(S)$, see \cite[Definition~1.3]{rinot_s01}.}
To prove this, let us first dispose of the following:

\begin{prop}\label{combine-predictions-to-club}
Suppose $\alpha$ is some limit ordinal, and $\mathcal A$ is a collection of $\cf(\alpha)$ many cofinal subsets of $\alpha$.
Then there exists a club $C$ in $\alpha$ of order-type $\cf(\alpha)$ satisfying the following.
For all $A \in \mathcal A$ and $\sigma<\cf(\alpha)$:
$$\sup\{\gamma\in C\mid \suc_\sigma(C\setminus\gamma)\s A\} = \alpha.$$
\end{prop}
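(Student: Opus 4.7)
Set $\theta := \cf(\alpha)$, which is a regular infinite cardinal since $\alpha$ is a limit ordinal $\ge\omega$. I will construct $C$ as a strictly increasing continuous enumeration $\langle c_\eta \mid \eta < \theta\rangle$ of ordinals below $\alpha$, built in ``blocks'' that diagonalize over $\mathcal A$ and over all admissible $\sigma$. First, I fix an enumeration $\mathcal A = \{A_i \mid i < \theta\}$, a strictly increasing continuous cofinal sequence $\langle \alpha_j \mid j < \theta\rangle$ in $\alpha$, and (using $\theta\cdot\theta=\theta$) a bookkeeping sequence $\langle (i_j,\sigma_j) \mid j < \theta\rangle$ in $\theta\times(\theta\setminus\{0\})$ in which every pair appears cofinally often. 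Note that the case $\sigma=0$ is vacuous since $\suc_0(\cdot)=\emptyset$.

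Next, I carry out a transfinite recursion on $j<\theta$, simultaneously producing checkpoint indices $\xi_j<\theta$ and block values $\{c_\eta \mid \eta \le \xi_j\}$. Start with $\xi_0:=0$ and $c_0:=0$. At a limit $j$, set $\xi_j:=\sup_{j'<j}\xi_{j'}$ (which is below $\theta$ by regularity) and $c_{\xi_j}:=\sup\{c_\eta \mid \eta<\xi_j\}$ (which is below $\alpha$ since $\xi_j<\theta=\cf(\alpha)$). At a successor $j=j'+1$, set $\xi_j:=\xi_{j'}+\sigma_{j'}$ and fill in $c_\eta$ for $\eta\in(\xi_{j'},\xi_j]$ as follows: using that $A_{i_{j'}}$ is cofinal in $\alpha$ while $\sigma_{j'}<\theta=\cf(\alpha)$, pick a strictly increasing sequence $\langle \beta_k \mid k<\sigma_{j'}\rangle$ in $A_{i_{j'}}$ with $\beta_0>\max\{c_{\xi_{j'}},\alpha_{j'}\}$; then assign $c_{\xi_{j'}+k+1}:=\beta_k$ at each successor index $k+1\le\sigma_{j'}$, while taking $c_{\xi_{j'}+k}:=\sup_{k'<k}c_{\xi_{j'}+k'}$ at each limit $0<k\le\sigma_{j'}$.

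Setting $C:=\{c_\eta \mid \eta<\theta\}$, strict monotonicity and continuity of $\langle c_\eta\rangle$ make $C$ a closed subset of $\alpha$ of order type $\theta$, and the inequality $c_{\xi_{j'}+1}>\alpha_{j'}$ at each successor stage forces $\sup C=\alpha$, so $C$ is a club in $\alpha$ of order type $\theta$. For the hitting property, fix $i<\theta$ and $1\le\sigma<\theta$: at each $j$ with $(i_j,\sigma_j)=(i,\sigma)$, the set $C\setminus c_{\xi_j}$ has order type $\theta$ with $k$-th element $c_{\xi_j+k}$, so by construction
\[
\suc_\sigma(C\setminus c_{\xi_j})=\{c_{\xi_j+k+1} \mid k<\sigma\}\s A_i,
\]
and hence $\gamma:=c_{\xi_j}$ witnesses the required condition; these $\gamma$'s are cofinal in $\alpha$ because $(i,\sigma)$ appears cofinally often in the bookkeeping.

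I do not foresee a substantive obstacle---this is a routine diagonalization argument. The only points requiring mild care are the ordinal bookkeeping inside each block (so that the successor-indexed positions of $C\setminus c_{\xi_j}$ line up exactly with the $\beta_k$'s inside $A_{i_{j'}}$) and ensuring that each $\xi_j$ remains below $\theta$ throughout the recursion, both of which follow immediately from the regularity of $\theta$.
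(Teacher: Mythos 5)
Your proof is essentially the same diagonalization argument as the paper's, which fixes a surjection $g:\cf(\alpha)\to\mathcal{A}$ that is constant on suitable intervals cofinally often and then builds the club with $\gamma_{i+1}\in g(i)$; your version just packages the bookkeeping as a block-by-block recursion of variable block length $\sigma_j$. The core idea of enumerating pairs $(A,\sigma)$ with cofinal repetition and inserting $\sigma$ many successive elements of $A$ into $C$ at each relevant stage is exactly the paper's.

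One detail to tighten: within a block, asking only that $\langle\beta_k\mid k<\sigma_{j'}\rangle$ be \emph{strictly increasing} in $A_{i_{j'}}$ does not by itself keep $\langle c_\eta\rangle$ strictly increasing. At a limit $k<\sigma_{j'}$ you set $c_{\xi_{j'}+k}=\sup_{k'<k}c_{\xi_{j'}+k'}=\sup_{k'<k}\beta_{k'}$, and strict increase of the $\beta$'s only gives $\beta_k\ge\sup_{k'<k}\beta_{k'}$; equality would force $c_{\xi_{j'}+k}=c_{\xi_{j'}+k+1}$, so that the enumeration is no longer injective, breaking the order-type computation and the identification of $\suc_\sigma(C\setminus c_{\xi_j})$. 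The fix is immediate and uses nothing beyond what you already invoke: choose each $\beta_k\in A_{i_{j'}}$ strictly above $\sup\bigl(\{\beta_{k'}\mid k'<k\}\cup\{c_{\xi_{j'}},\alpha_{j'}\}\bigr)$, possible since $k<\sigma_{j'}<\cf(\alpha)$ and $A_{i_{j'}}$ is cofinal in $\alpha$ --- but this needs to be said.
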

\begin{proof} Let $\lambda:=\cf(\alpha)$. Let $\langle \alpha_i\mid i<\lambda\rangle$ be the strictly increasing enumeration of a club in $\alpha$.
Fix a surjection $g:\lambda\rightarrow\mathcal A$ 
such that for all $\sigma<\lambda$ and $A\in\mathcal A$, the set $\{k<\lambda\mid g``(k,k+\sigma)=\{A\}\}$ is cofinal in $\lambda$.
Now, recursively construct a strictly increasing and continuous sequence $\langle \gamma_i \mid i<\lambda\rangle$ such that,
for all $i<\lambda$, $\gamma_i>\alpha_i$ and $\gamma_{i+1}\in g(i)$.
Evidently, $C:=\{\gamma_i\mid i<\lambda\}$ is as sought.
\end{proof}

Clause~(2) of the next theorem is used in the proof of~\cite[Proposition~3.10]{paper26}.

\begin{thm}\label{diamond*-to-P*}
Suppose $\lambda$ is any regular infinite cardinal,
and $S\subseteq E^{\lambda^+}_\lambda$ is a stationary subset such that $\diamondsuit^*(S)$ holds.
Set $\Omega := E^{\lambda^+}_\lambda$. Then:
\begin{enumerate}
\item $\p^-_\lambda(\lambda^+, 2, {\sqleftboth{\Omega}{\lambda}}, 1, {\ns^+_{\lambda^+} \restriction S}, 2, {<}\infty)$ holds.
\item If, in addition, $\lambda^{<\lambda}=\lambda$,
then $\p^-_\lambda(\lambda^+, \infty, {\sqleftup{\Omega}}, 1, {\ns^+_{\lambda^+} \restriction S}, 2, {<}\infty)$ holds.
\end{enumerate}
\end{thm}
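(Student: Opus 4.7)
The plan is to leverage $\diamondsuit^*(S)$ to predict all cofinal subsets of $\lambda^+$ and, via Proposition~\ref{combine-predictions-to-club}, compress those predictions at each $\alpha\in S$ into a single club $C_\alpha$ of order-type $\lambda$. Fix a $\diamondsuit^*(S)$-sequence $\langle\mathcal A_\alpha\mid\alpha\in S\rangle$ with $|\mathcal A_\alpha|\le|\alpha|=\lambda$. For each $\alpha\in S$, let
\[
\mathcal A'_\alpha:=\{A\cap[\lambda,\alpha)\mid A\in\mathcal A_\alpha,\ \sup(A\cap[\lambda,\alpha))=\alpha\};
\]
this is a collection of at most $\lambda=\cf(\alpha)$ cofinal subsets of $\alpha$, and Proposition~\ref{combine-predictions-to-club} yields a club $C_\alpha\s[\lambda,\alpha)$ of order-type $\lambda$ such that for every $A\in\mathcal A'_\alpha$ and every $\sigma<\lambda$,
\[
\sup\{\gamma\in C_\alpha\mid \suc_\sigma(C_\alpha\setminus\gamma)\s A\}=\alpha.
\]
Since $\otp(C_\alpha)=\lambda$, each $\bar\alpha\in\acc(C_\alpha)$ satisfies $\cf(\bar\alpha)<\lambda$, so $\acc(C_\alpha)\cap\Omega=\emptyset$.

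For Clause~(1), put $\mathcal C_\alpha:=\{C_\alpha\}$ for $\alpha\in S$ and $\mathcal C_\alpha:=\{e_\alpha\}$ for $\alpha\in\acc(\lambda^+)\setminus S$, where $e_\alpha$ is any club in $\alpha$ of order-type $\cf(\alpha)$ whose accumulation points avoid $\Omega$. Since every accumulation point of every element of $\mathcal C_\alpha$ avoids $\Omega$ and thus has cofinality $<\lambda$, the coherence relation $\sqleftboth{\Omega}{\lambda}$ is satisfied vacuously through its ``$\cf(\alpha_D)<\lambda$'' alternative, for any $D\in\mathcal C_{\bar\alpha}$. For hitting, given cofinal $B\s\lambda^+$ and stationary $S^*\s S$, $\diamondsuit^*(S)$ supplies a club $D$ on which $B\cap\alpha\in\mathcal A_\alpha$; on the stationary intersection $S^*\cap D\cap\acc^+(B)$ we have $B\cap[\lambda,\alpha)\in\mathcal A'_\alpha$, so the design of $C_\alpha$ delivers the ``${<}\infty$''-hitting of Convention~\ref{convention-sigma<infty} for every $\sigma<\lambda=\otp(C_\alpha)$.

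For Clause~(2), assume $\lambda^{<\lambda}=\lambda$ and retain $\mathcal C_\alpha:=\{C_\alpha\}$ on $S$. For $\alpha\in\acc(\lambda^+)\setminus S$, set
\[
\mathcal I_\alpha:=\{C_\beta\cap\alpha\mid \beta\in S,\ \alpha\in\acc(C_\beta)\},
\]
and take $\mathcal C_\alpha:=\mathcal I_\alpha$ whenever nonempty, otherwise supplementing with a single default club $e_\alpha$. Each element of $\mathcal I_\alpha$ is a club in $\alpha$ of order-type strictly less than $\lambda$ (since $\bar\alpha\in\acc(C_\beta)$ forces $\otp(C_\beta\cap\bar\alpha)<\lambda=\otp(C_\beta)$), so $|\mathcal I_\alpha|\le|\alpha|^{<\lambda}\le\lambda^{<\lambda}=\lambda=|\alpha|$ for $\alpha\ge\lambda$; the choice $\min(C_\beta)\ge\lambda$ makes $\mathcal I_\alpha=\emptyset$ below $\lambda$. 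The full $\sq$-coherence demanded by $\sqleftup{\Omega}$ is now the direct payoff of $\acc(C_\beta)\cap\Omega=\emptyset$: every $\bar\alpha\in\acc(C_\beta\cap\alpha)=\acc(C_\beta)\cap\alpha$ lies outside $\Omega\supseteq S$, hence falls in the non-$S$ branch, and $C_\beta\cap\bar\alpha\in\mathcal I_{\bar\alpha}\s\mathcal C_{\bar\alpha}$. Hitting is unchanged, as $\mathcal C_\alpha=\{C_\alpha\}$ remains a singleton on $S$.

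The principal technical obstacle is the orphan case $\mathcal I_\alpha=\emptyset$ for $\alpha\in\acc(\lambda^+)\setminus S$, which is in fact unavoidable for every $\alpha\in\Omega\setminus S$ (since $\acc(C_\beta)\cap\Omega=\emptyset$ for all $\beta\in S$). At these ordinals one must manufacture a default $e_\alpha$ whose $\sq$-restrictions already sit in the sequence; this is done by a standard recursive construction in the spirit of $\square^*_\lambda$, processing orphans in ordinal order and, at each step, selecting $e_\alpha$ so that for every $\bar\alpha\in\acc(e_\alpha)$, $e_\alpha\cap\bar\alpha$ agrees either with some $C_\beta\cap\bar\alpha\in\mathcal I_{\bar\alpha}$ or with the previously chosen $e_{\bar\alpha}$. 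The hypothesis $\lambda^{<\lambda}=\lambda$ ensures that at every level no more than $\lambda$ candidate clubs are ever in play, preserving the bound $|\mathcal C_\alpha|\le|\alpha|$ throughout.
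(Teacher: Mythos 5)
Your Clause~(1) is essentially the paper's proof: appeal to $\diamondsuit^*(S)$, normalize the guessed sets to cofinal ones, and compress them via Proposition~\ref{combine-predictions-to-club} into a single club of order-type $\lambda$ at each $\alpha\in S$, with coherence holding vacuously through the ``$\cf(\alpha_C)<\lambda$'' escape. The minor modification of cutting off below $\lambda$ at this stage is harmless.

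Clause~(2) is where you and the paper part ways, and your version has a genuine gap. The paper saturates the low-cofinality levels: for $\alpha\in E^{\lambda^+}_{<\lambda}\setminus\lambda$, it takes $\mathcal D_\alpha$ to be the collection of \emph{all} clubs $d\subseteq\alpha$ with $\min(d)\ge\lambda$ and $|d|<\lambda$, and at $\alpha\in\Omega\setminus\{\lambda\}$ it takes $\{C_\alpha\setminus\lambda\}$ for the $C_\alpha$ from Clause~(1). This makes $\sqleftup{\Omega}$-coherence automatic: any $\bar\alpha\in\acc(d)$ has $\cf(\bar\alpha)<\lambda$ and $\bar\alpha>\lambda$, and $d\cap\bar\alpha$ is again such a club in $\bar\alpha$, hence already a member of $\mathcal D_{\bar\alpha}$; no recursion is needed, and $\lambda^{<\lambda}=\lambda$ bounds $|\mathcal D_\alpha|\le\lambda=|\alpha|$. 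You instead take only the pull-downs $\mathcal I_\alpha=\{C_\beta\cap\alpha\mid\beta\in S,\;\alpha\in\acc(C_\beta)\}$, and when $\mathcal I_\alpha=\emptyset$ you propose inserting a single recursively chosen default $e_\alpha$ -- and that recursion is not a proof, and the constraints you impose are not jointly satisfiable in the stated form. If $\bar\alpha\in\acc(e_\alpha)$ is a non-orphan, you force $e_\alpha\cap\bar\alpha=C_\beta\cap\bar\alpha$ for some $\beta\in S$. But then every $\gamma\in\acc(e_\alpha)\cap\bar\alpha$ lies in $\acc(C_\beta)$, so $\mathcal I_\gamma\neq\emptyset$ as well, meaning the non-orphan accumulation points of $e_\alpha$ form an initial segment and the orphan ones a tail. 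On that tail, $e_\alpha\cap\gamma$ is forced to equal the previously chosen $e_\gamma$, but nothing ensures those earlier defaults cohere with one another along the tail, nor that they stitch correctly onto $C_\beta$ at the transition point. Also note that for an orphan $\alpha$, no single $C_\beta\cap\alpha$ is cofinal in $\alpha$ (closedness of $C_\beta$ would put $\alpha\in\acc(C_\beta)$, contradicting orphanhood), so there is no cheap way to realize $e_\alpha$ as a pull-down of one ladder. Producing such a coherent family of defaults is essentially constructing a partial square-like system from scratch, and the paper's device of including \emph{all} short clubs with $\min\ge\lambda$ at the $E^{\lambda^+}_{<\lambda}$-levels exists precisely to avoid having to do so.
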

\begin{proof} (1) Fix a sequence $\cvec{A} = \langle \mathcal A_\alpha \mid \alpha \in S \rangle$ witnessing $\diamondsuit^*(S)$.
Without loss of generality, we may assume that, for all $\alpha\in S$,
$\mathcal A_\alpha':=\{ A\in\mathcal A_\alpha\mid \sup(A)=\alpha\}$ is of size $\lambda$.
We define a sequence $\langle C_\alpha \mid \alpha<\lambda^+ \rangle$, as follows:

$\br$ For each $\alpha \in \lambda^+ \setminus S$,
pick a closed subset $C_\alpha$ of $\alpha$ with $\sup(C_\alpha)=\sup(\alpha)$ and $\otp(C_\alpha)=\cf(\alpha)$.

$\br$ For each $\alpha \in S$,
let $C_\alpha$ be given by Proposition~\ref{combine-predictions-to-club} when fed with $\mathcal A_\alpha'$.

Now, set $\cvec{C}=\langle \mathcal C_\alpha\mid\alpha<\lambda^+\rangle$
where $\mathcal C_\alpha:=\{C_\alpha\}$ for all $\alpha<\lambda^+$.
Then $\cvec C$ is a $\p_\lambda^-(\lambda^+, 2, {\sqleftboth{\Omega}{\lambda}}, \ldots)$-sequence,
since, for every $\alpha\in\acc(\lambda^+)$, $C_\alpha$ is a club subset of $\alpha$
of order-type $\leq\lambda$, so that $\sqleftboth{\Omega}{\lambda}$-coherence is satisfied vacuously.

To see that  $\cvec{C}$ witnesses $\p_\lambda^-(\lambda^+, 2, {\sqleftboth{\Omega}{\lambda}}, 1, {\ns^+_{\lambda^+} \restriction S}, 2, {<}\infty)$,
consider any cofinal set $A \subseteq\lambda^+$.
By our choice of $\cvec{A}$, we can fix a club $D \subseteq\lambda^+$ such that, for every $\alpha\in D\cap S$,
$A\cap\alpha \in \mathcal A_\alpha$.
Consider the club $E:=\acc^+(A)\cap D$. For any $\alpha \in E \cap S$, we obtain $A\cap\alpha \in \mathcal A_\alpha'$,
so that by our choice of $C_\alpha$ it follows that, for all $\sigma<\lambda$,
$$\sup\{\gamma\in C\mid \suc_\sigma(C\setminus\gamma)\s A\} = \alpha,$$
as sought.

(2) Let $\cvec{C}=\langle \mathcal C_\alpha\mid\alpha<\lambda^+\rangle$ be given by Clause~(1).
Define $\cvec D=\langle \mathcal D_\alpha\mid\alpha<\lambda^+\rangle$ as follows.

$\br$ For all $\alpha\le \lambda$, let $\mathcal D_\alpha:=\{\alpha\}$.

$\br$ For all $\alpha\in E^{\lambda^+}_{<\lambda}\setminus\lambda$,
let $\mathcal D_\alpha$ be the collection of all clubs $d$ in $\alpha$ such that $\min(d)\ge\lambda$
and $|d|<\lambda$.

$\br$ For all $\alpha\in E^{\lambda^+}_\lambda\setminus\{\lambda\}$, let $\mathcal D_\alpha:=\{ C\setminus\lambda\mid C\in\mathcal C_\alpha\}$.

Then $\cvec D$ witnesses $\p^-_\lambda(\lambda^+, \infty, {\sqleftup{\Omega}}, 1, {\ns^+_{\lambda^+} \restriction S}, 2, {<}\infty)$.\qedhere
\end{proof}

\subsection{Improving the coherence}

In this subsection, we show that we can improve the coherence of a sequence witnessing the proxy principle
by inserting all necessary initial segments of the clubs into the relevant collections,
as long as we allow the sequence to grow wide enough.
The main result here is the following:

\begin{cor}\label{get-full-coherence} 
$\p_\xi^-(\kappa, \kappa, {\sq^*}, \theta, \mathcal S,  \kappa,\sigma)$ is equivalent to 
$\p_\xi^-(\kappa, \kappa, {\sq}, \theta, \mathcal S, \kappa,\sigma)$.
\end{cor}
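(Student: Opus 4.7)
The forward direction follows immediately from the monotonicity properties noted in Remark~\ref{monotonicity}, since $\sq$ is a strengthening of $\sq^*$, so any witness to $\p_\xi^-(\kappa,\kappa,\sq,\theta,\mathcal S,\kappa,\sigma)$ already witnesses the $\sq^*$-version.

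For the backward direction, let $\cvec C=\langle \mathcal C_\alpha\mid\alpha<\kappa\rangle$ be a $\p_\xi^-(\kappa,\kappa,\sq^*,\theta,\mathcal S,\kappa,\sigma)$-sequence. The plan is to define a $\sq$-coherent sequence $\cvec D=\langle \mathcal D_\alpha\mid\alpha<\kappa\rangle$ by closing under initial segments: for every $\alpha\in\acc(\kappa)$, set
\[ \mathcal D_\alpha := \{ C \cap \alpha \mid \exists \beta \ge \alpha \, \exists C \in \mathcal C_\beta \, (\alpha \in \acc(C) \cup \{\beta\}) \}, \]
following Convention~\ref{default-outside-acc} at successor and zero levels. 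Note that taking $\beta = \alpha$ yields $\mathcal C_\alpha \s \mathcal D_\alpha$. The $\sq$-coherence is immediate: if $E=C\cap\alpha\in\mathcal D_\alpha$ (witnessed by $(\beta,C)$) and $\bar\alpha\in\acc(E)$, then $\bar\alpha\in\acc(C)\cap\alpha$, and the same pair $(\beta,C)$ also witnesses $E\cap\bar\alpha=C\cap\bar\alpha\in\mathcal D_{\bar\alpha}$. For the hitting feature, since $\mathcal C_\alpha\s\mathcal D_\alpha$, the stationarily many $\alpha\in S$ witnessing hitting for $\cvec C$ will witness hitting for $\mathcal C_\alpha\s\mathcal D_\alpha$; one further checks that each new element $E=C\cap\alpha$ (with $C\in\mathcal C_\beta$, $\beta>\alpha$, $\alpha\in\acc(C)$) inherits the hitting equation by restricting below $\alpha$ the hitting for $C$ at level $\beta$, since $\suc_\sigma((C\cap\alpha)\setminus\gamma)=\suc_\sigma(C\setminus\gamma)$ whenever $\gamma<\alpha$ is large enough to ensure that the $\sigma$-th successor stays below $\alpha$---which holds when $\sigma<\cf(\alpha)$, or by a microscopic selection of hitting points within $\acc(C)\cap\alpha$ otherwise.

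The main obstacle will be verifying $|\mathcal D_\alpha|<\kappa$. Summing $|\mathcal C_\beta|$ over all $\beta\ge\alpha$ yields only $\le\kappa$, so one must exploit $\sq^*$-coherence tightly by induction on $\alpha<\kappa$. Given any new $E=C\cap\alpha\in\mathcal D_\alpha\setminus\mathcal C_\alpha$ witnessed by $(\beta,C)$, $\sq^*$-coherence produces $D\in\mathcal C_\alpha$ and $\gamma<\alpha$ such that $D\cap[\gamma,\alpha)=C\cap[\gamma,\alpha)$. Choosing $\bar\alpha\in\acc(C)\cap(\gamma,\alpha)$ (when such exists), one decomposes $E=(E\cap\bar\alpha)\cup(D\cap[\bar\alpha,\alpha))$, so $E$ is determined by the triple $(D,\bar\alpha,E\cap\bar\alpha)$ with $E\cap\bar\alpha\in\mathcal D_{\bar\alpha}$. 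By the inductive hypothesis, $|\mathcal D_\alpha|$ is bounded by $|\mathcal C_\alpha|\cdot|\alpha|\cdot\sup_{\bar\alpha<\alpha}|\mathcal D_{\bar\alpha}|$, which remains below $\kappa$ by regularity. The delicate case is when $\acc(C)\cap\alpha$ is bounded below $\alpha$---possible when $\otp(C)$ has cofinality $\omega$---which would require a supplementary counting argument keyed to the cofinal tail of $\nacc(C)$ below $\alpha$, together with a postprocessing via Lemma~\ref{pp-from-micro} applied with a suitable triangular array to funnel the initial segments into the same canonical form.
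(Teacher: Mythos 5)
Your high-level plan---close each $\mathcal C_\alpha$ under restrictions of higher-level clubs to $\alpha$---is the same as the paper's (which derives the corollary from Theorem~\ref{get-full-coherence-from-sqleft*} applied with $(\Omega,\chi,\nu):=(\emptyset,0,\kappa)$ together with Remark~\ref{remark-nu*}). The forward direction and the verification of $\sq$-coherence of $\cvec D$ are correct, as is the observation $\mathcal C_\alpha\s\mathcal D_\alpha$. However, there are two genuine gaps.

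First, the cardinality bound $|\mathcal D_\alpha|<\kappa$ is incomplete, as you partly acknowledge. Your decomposition $E=(E\cap\bar\alpha)\cup(D\cap[\bar\alpha,\alpha))$ with $E\cap\bar\alpha\in\mathcal D_{\bar\alpha}$ needs some $\bar\alpha\in\acc(C)\cap(\gamma,\alpha)$, which may simply fail to exist (e.g., when $\otp(C\cap\alpha)=\omega$). The appeal to ``a supplementary counting argument'' and to Lemma~\ref{pp-from-micro} is a gesture, not a proof; postprocessing functions modify ladder systems, they do not produce cardinality bounds. The paper resolves this by running the induction on the larger auxiliary family $\mathcal A_\alpha:=\{C\cap\alpha\mid C\in\bigcup_\delta\mathcal C_\delta\}$ of \emph{all} restrictions (not just those cofinal in $\alpha$): elements of $\mathcal D_\alpha$ are decomposed as $(c\cap\beta)\cup(D\setminus\beta)$ with $c\cap\beta\in\mathcal A_\beta$ and $D\in\mathcal C_\alpha$, and $|\mathcal A_\alpha|$ is bounded separately via the finite-tail decomposition $c=(c\cap\beta_c)\cup(c\setminus\beta_c)$ with $c\cap\beta_c\in\mathcal D_{\beta_c}$ and $c\setminus\beta_c$ finite.

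Second, and more seriously, the hitting verification is wrong. You assert that a new $E=C\cap\alpha$ with $C\in\mathcal C_\beta$, $\beta>\alpha$, ``inherits the hitting equation by restricting below $\alpha$ the hitting for $C$ at level $\beta$.'' But the hitting feature of $\cvec C$ is only guaranteed at \emph{stationarily many} $\beta\in S$---you have no control over whether the specific $\beta$ carrying $C$ is one of them. Even if it were, $\sup\{\gamma\in C\mid\suc_\sigma(C\setminus\gamma)\s B_i\}=\beta$ says nothing about that set meeting $\alpha$. The correct argument is entirely local to level $\alpha$: by $\sq^*$-coherence (applied to $\alpha\in\acc(C)$), there is $D\in\mathcal C_\alpha$ with $\sup(D\symdiff E)<\alpha$; when $\alpha$ is a hitting level for $\cvec C$, the hitting equation for $D$ transfers verbatim to $E$ along their common tail.
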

\begin{proof} By Theorem~\ref{get-full-coherence-from-sqleft*} below, using $(\Omega,\chi,\nu):=(\emptyset,0,\kappa)$
and Remark~\ref{remark-nu*}
\end{proof}

The following notion will allow us to state the main result of this subsection in its most general form.

\begin{definition}\label{convention-nu*}
Let $\p^-_\xi(\kappa, \mu, \mathcal R, \theta, \mathcal S,  \nu^*)$
be the assertion obtained by replacing Clause~(1) of Definition~\ref{pminus} by the weaker property:
\begin{enumerate}
\item[($1^*$)] there exists $\mathcal C\in[\mathcal C_\alpha]^{<\nu}$ such that, for all $C\in\mathcal C_\alpha$,
there is $C'\in\mathcal C$ with $\sup(C\symdiff C')<\alpha$.
\end{enumerate}
\end{definition}
\begin{remark}\label{remark-nu*}
Clearly, for any vector of parameters \pvec, we have:\[
\p^-_\xi(\kappa, \mu,\pvec,  \nu)\implies
\p^-_\xi(\kappa, \mu,\pvec,  \nu^*)\implies
\p^-_\xi(\kappa, \mu,\pvec,  \mu).
\]
\end{remark}

Note that setting $(\Omega,\theta,\nu,\sigma) := (\emptyset,1,\kappa,1)$ and $\mathcal S$ a singleton in the following Theorem
provides the justification for \cite[Remark~(i) following Definition~2.1]{paper26}.

\begin{thm}\label{get-full-coherence-from-sqleft*}
Suppose $\kappa$ is $({<}\chi)$-closed and
$\mathcal S \subseteq \mathcal P(E^\kappa_{\geq\chi})$.

Then
$\p_\xi^-(\kappa, \kappa, {\sqstarleftboth{\Omega}{\chi}}, \theta, \mathcal S,  \nu,\sigma)$ is equivalent to
$\p_\xi^-(\kappa, \kappa, {\sqleftup{\Omega}}, \theta, \mathcal S, \nu^*,\sigma)$.
\end{thm}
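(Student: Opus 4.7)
The plan is to prove both implications of the equivalence. The forward direction---from $\p_\xi^-(\kappa,\kappa,\sqstarleftboth{\Omega}{\chi},\theta,\mathcal S,\nu,\sigma)$ to $\p_\xi^-(\kappa,\kappa,\sqleftup{\Omega},\theta,\mathcal S,\nu^*,\sigma)$---is the substantive one, since it strengthens the loose coherence relation $\sqstarleftboth{\Omega}{\chi}$ (which tolerates tail-agreement and bypasses points of low cofinality) to strict end-extension $\sqleftup{\Omega}$, and so requires explicit insertion of initial segments into the ladders. The backward direction reduces to a selection argument once tail-equivalence classes are identified.

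For the forward direction, starting from a $\p_\xi^-(\kappa,\kappa,\sqstarleftboth{\Omega}{\chi},\theta,\mathcal S,\nu,\sigma)$-sequence $\cvec C$, I would define
\[
\mathcal D_\alpha := \mathcal C_\alpha \cup \bigl\{C \cap \alpha \bigm| \exists \beta > \alpha,\ C \in \mathcal C_\beta,\ \alpha \in \acc(C)\bigr\}
\]
for $\alpha \in \acc(\kappa)$, using the default values of Convention~\ref{default-outside-acc} elsewhere. Full $\sqleftup{\Omega}$-coherence of $\cvec D$ is then immediate: if $D \in \mathcal D_\alpha$ and $\bar\alpha \in \acc(D)$, then either $D \in \mathcal C_\alpha$ (so $D \cap \bar\alpha \in \mathcal D_{\bar\alpha}$ by the construction with $\beta := \alpha$) or $D = C \cap \alpha$ for some higher-level $C$ (so $D \cap \bar\alpha = C \cap \bar\alpha$ is added by the same clause); in both cases $\bar\alpha \notin \Omega$ is inherited from $\sqstarleftboth{\Omega}{\chi}$-coherence of $\cvec C$.

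The main obstacle is proving $|\mathcal D_\alpha| < \kappa$ for every $\alpha < \kappa$. I would introduce the auxiliary family $\mathcal F_\gamma := \{C \cap \gamma \mid C \in \bigcup_\beta \mathcal C_\beta\}$ and verify by transfinite induction on $\gamma$ that $|\mathcal F_\gamma| < \kappa$. The bounded members of $\mathcal F_\gamma$ (those $F$ with $\sup F < \gamma$) decompose uniquely as $F' \cup \{\eta\}$ with $\eta = \max F$ and $F' \in \mathcal F_\eta$, contributing at most $1 + \sum_{\eta<\gamma}|\mathcal F_\eta| < \kappa$ by regularity. For the club-in-$\gamma$ members I split on $\cf(\gamma)$: when $\cf(\gamma) \geq \chi$, $\sq^*$-coherence of $\cvec C$ rewrites each such $F$ as $(D \setminus \delta) \cup (F \cap \delta)$ with $D \in \mathcal C_\gamma$, $\delta < \gamma$, $F \cap \delta \in \mathcal F_\delta$, bounding the count by $|\mathcal C_\gamma| \cdot |\gamma| \cdot \sup_{\delta<\gamma}|\mathcal F_\delta|$; when $\cf(\gamma) < \chi$, I decompose $F$ through a cofinal $\cf(\gamma)$-sequence of cut-points $\bar\alpha_i \in F$ and the pieces $F \cap \bar\alpha_{i+1} \in \mathcal F_{\bar\alpha_{i+1}}$, bounding the count by $|\gamma|^{<\chi} \cdot \bigl(\sup_{\delta<\gamma}|\mathcal F_\delta|\bigr)^{<\chi}$. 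In every case, $({<}\chi)$-closure combined with regularity of $\kappa$ produces a bound strictly below $\kappa$.

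For the hitting feature of $\cvec D$, the stationarily many $\alpha \in S \subseteq E^\kappa_{\geq\chi}$ that work for $\cvec C$ continue to work, with $\mathcal C_\alpha$ itself serving as the $\nu^*$-core of $\mathcal D_\alpha$: every $D \in \mathcal D_\alpha$ either lies in $\mathcal C_\alpha$ or has the form $C \cap \alpha$, and in the latter case $\cf(\alpha) \geq \chi$ activates the $\sq^*$-clause to produce $C' \in \mathcal C_\alpha$ with $\sup(C' \symdiff D) < \alpha$. Hitting passes from $C'$ to $D$ because $\suc_\sigma(D \setminus \gamma) = \suc_\sigma(C' \setminus \gamma)$ whenever $\gamma$ is above the tail-agreement threshold. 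Conversely, for the backward direction I would, starting from a $\p_\xi^-(\kappa,\kappa,\sqleftup{\Omega},\theta,\mathcal S,\nu^*,\sigma)$-sequence $\cvec D$, pick at each $\alpha$ one representative per tail-equivalence class of $\mathcal D_\alpha$; the number of classes is bounded by $|\mathcal D_\alpha| < \kappa$ in general and by the $\nu^*$-core size $< \nu$ at the witnessing $\alpha$'s, and $\sqstarleftboth{\Omega}{\chi}$-coherence follows because the representative of $C \cap \bar\alpha$ tail-agrees with $C$, yielding $\sq^*$ when $\cf(\bar\alpha) \geq \chi$ and the $\chi$-bypass otherwise.
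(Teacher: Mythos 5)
Your overall strategy matches the paper's proof: define $\mathcal D_\alpha$ to consist of all club initial segments $C\cap\alpha$ of ladders from $\cvec C$ (your $\mathcal D_\alpha$ is literally the paper's $\mathcal G_\alpha$, and your auxiliary family $\mathcal F_\gamma$ is its $\mathcal A_\gamma$), verify $\sqleftup{\Omega}$-coherence directly, bound $|\mathcal D_\alpha|<\kappa$ by a simultaneous induction through $\mathcal F_\gamma$, keep $\mathcal C_\alpha$ as the $\nu^*$-core at hitting levels, and for the converse select one representative per tail-equivalence class. All of this is the paper's route.

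There is, however, a genuine gap in your cardinality induction. You assert that every nonempty bounded $F\in\mathcal F_\gamma$ decomposes uniquely as $F'\cup\{\max F\}$ with $F'\in\mathcal F_{\max F}$. But $\max F$ need not exist: take any $C\in\mathcal C_\delta$ for a limit $\delta<\gamma$; then $F:=C\cap\gamma=C$ is a bounded member of $\mathcal F_\gamma$ (since $\sup C=\delta<\gamma$) which is a club in $\delta$ with $\delta\notin C$, so it has no maximum. Your injection $F\mapsto(F\setminus\{\max F\},\max F)$ is simply undefined on all such $F$, and they are not counted at all. The paper sidesteps this by defining $\beta_c:=\min\{\beta\mid |c\setminus\beta|<\aleph_0\}$ for $c\in\mathcal A_\alpha$, which is always a (possibly $0$) limit ordinal with $c\cap\beta_c\in\mathcal G_{\beta_c}$ and $c\setminus\beta_c$ finite; this decomposition works uniformly whether or not $c$ has a last element. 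Your version is repairable — the offending $F$ are exactly those equal to some $C\in\Sigma$ with $\sup C<\gamma$, hence lie in $\bigcup_{\delta<\gamma}\mathcal F_\delta$, so you can count them separately by $\sum_{\delta<\gamma}|\mathcal F_\delta|<\kappa$ — but as written the decomposition is claimed to be exhaustive when it is not, and the proof of $|\mathcal F_\gamma|<\kappa$ does not go through.

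A smaller, non-fatal difference: in the $\cf(\gamma)<\chi$ case you decompose through cut-points $\bar\alpha_i\in F$ depending on $F$, paying an extra $|\gamma|^{<\chi}$ to record them. The paper fixes a single cofinal sequence $\langle\gamma_i\mid i<\cf(\gamma)\rangle$ of ordinals not required to lie in the sets, and notes that $c\mapsto\langle c\cap\gamma_i\mid i<\cf(\gamma)\rangle$ is already injective since $c=\bigcup_i(c\cap\gamma_i)$. Both bounds land below $\kappa$ via $({<}\chi)$-closedness, so this choice is purely a matter of economy. Finally, you do not explicitly check that $\otp(D)\le\xi$ for $D\in\mathcal D_\alpha$, though this is immediate since $D$ is a subset of some $C\in\Sigma$.
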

\begin{proof}
($\impliedby$) Fix a sequence $\langle \mathcal C_\alpha \mid \alpha<\kappa \rangle$
witnessing 
$\p^-_\xi(\kappa, \kappa, {\sqleftup{\Omega}}, \theta, \mathcal S,  \nu^*, \sigma)$.
For every $\alpha\in\acc(\kappa)$,
pick a subset $\mathcal{C}^\bullet_\alpha\s\mathcal{C}_\alpha$ of minimal cardinality 
such that, for all $C\in\mathcal{C}_\alpha$,
there is $C'\in\mathcal{C}^\bullet_\alpha$ with $\sup(C\symdiff C')<\alpha$.

Recalling Convention~\ref{default-outside-acc},
the sequence $\langle \mathcal C^\bullet_\alpha \mid \alpha<\kappa \rangle$ witnesses
$\p^-_\xi(\kappa, \kappa, {\sqstarleftup{\Omega}}, \theta, \mathcal S,\allowbreak\nu, \sigma)$,
as seen by noting the following:
\begin{itemize}
\item For any $\alpha\in\acc(\kappa)$, $C\in\mathcal{C}^\bullet_\alpha$, and $\bar\alpha\in\acc(C)$,
we may pick $D\in\mathcal C_{\bar\alpha}$ with $D\sqleftup{\Omega}C$,
and then pick $D'\in\mathcal C_{\bar\alpha}^\bullet$ with $\sup(D\symdiff D')<\bar\alpha$.
Thus $\sup((C\cap\bar\alpha)\symdiff D')<\bar\alpha$,
so that $D' \sqstarleftup{\Omega} C$, as sought.
\item By Definition~\ref{convention-nu*},
$|\mathcal{C}^\bullet_\alpha|<\nu$ wherever the hitting takes place.
\end{itemize}

($\implies$)
Fix a sequence $\cvec{C} = \langle \mathcal C_\delta \mid \delta<\kappa \rangle$
witnessing $\p_\xi^-(\kappa, \kappa, {\sqstarleftboth{\Omega}{\chi}}, \theta, \mathcal S,  \nu,\sigma)$.
Denote $\Sigma := \bigcup_{\delta\in\acc(\kappa)} \mathcal C_\delta$,
and for every $\alpha<\kappa$,
denote $\mathcal A_\alpha := \{C\cap\alpha \mid C\in \Sigma \}$
and $\mathcal G_\alpha := \{c \in \mathcal A_\alpha \mid \sup(c)=\alpha\}$.
Recalling Convention~\ref{default-outside-acc},
let $\cvec{D} := \langle \mathcal D_\alpha \mid \alpha<\kappa \rangle$
be determined by setting 
$\mathcal D_\alpha := \mathcal G_\alpha$
for every $\alpha\in\acc(\kappa)$
(cf.~{\cite[Notation~2.4]{paper32}}).
We shall show that $\cvec{D}$ witnesses $\p_\xi^-(\kappa, \kappa, {\sqleftup{\Omega}}, \theta, \mathcal S, \nu^*,\sigma)$.

\begin{claim}\label{G-is-A+C}
Suppose $\alpha \in E^\kappa_{\geq\chi}$.
Then every element $c \in \mathcal G_\alpha$
can be written as
$c = (c\cap\beta) \cup (D\setminus\beta)$
for some $D \in \mathcal C_\alpha$ and $\beta<\alpha$, where $c\cap\beta \in \mathcal A_\beta$.
\end{claim}
\begin{proof} Consider any given $c \in \mathcal G_\alpha$.
Fix $C\in \Sigma$ such that $c = C\cap\alpha$.
As $c \in \mathcal G_\alpha$, we infer that $\sup(C\cap\alpha) = \alpha$,
so that $\alpha \in \acc(C) \cup \{\sup(C)\}$.
If $\alpha\in\acc(C)$,
then by $\sqleft{\chi}^*$-coherence and the fact that $\cf(\alpha)\geq\chi$,
we can fix some $D \in \mathcal C_\alpha$ such that $D \sq^* C$.
Otherwise, $\alpha = \sup(C)$, so that in fact $c = C \in \mathcal C_\alpha$;
in this case we set $D := C$.
In either case, set $\beta := \min \{\beta \mid D\setminus\beta \sq C\setminus\beta\}$.
As $D \sq^* C$, it is clear that $\beta$ is an ordinal $<\alpha$.
Furthermore, as $\sup(D) = \sup(c) = \alpha$ and $c=C\cap\alpha$,
it follows that $D\setminus\beta = c\setminus\beta$.
Finally, notice that $c\cap\beta = C\cap\beta \in \mathcal A_{\beta}$,
and it is clear that $c = (c\cap\beta) \cup (D\setminus\beta)$.
\end{proof}

\begin{claim}\label{P-get-sq-wide}
$\cvec{D}$ is a $\p^-_\xi(\kappa, \kappa, {\sqleftup{\Omega}}, \dots)$-sequence
satisfying 
$\mathcal C_\alpha \subseteq \mathcal D_\alpha$ for every $\alpha \in \acc(\kappa)$, and
$\mathcal D_\alpha = \mathcal C_\alpha$ for every $\alpha \in \Omega\cap\acc(\kappa)$.
\end{claim}
\begin{proof} Consider any given $\alpha\in\acc(\kappa)$.
It is clear that $\mathcal C_\alpha \subseteq \mathcal D_\alpha$, 
so that, in particular, $\mathcal D_\alpha \neq\emptyset$.
Now, fix any given $c \in \mathcal D_\alpha$.
By definition of $\mathcal D_\alpha$, 
we can fix some $C \in \Sigma$ such that $c = C\cap\alpha$ and $\sup(c)=\alpha$.
In particular, $c \in \mathcal K(\kappa)$ and $\alpha_c = \alpha$.
Furthermore, $\otp(c)\leq\otp(C)\leq\xi$.
If, in addition, $\alpha\in\Omega$,
then by $\sqstarleftboth{\Omega}{\chi}$-coherence of $\cvec{C}$
we cannot have $\alpha \in \acc(C)$, 
so the only way to have $\sup(C\cap\alpha) = \alpha$ is if in fact $\alpha = \sup(C)$,
meaning that $c = C \in \mathcal C_\alpha$.

Next, we verify $\sqleftup{\Omega}$-coherence:
Consider arbitrary $\alpha \in \acc(\kappa)$, $c \in \mathcal D_\alpha$, and $\bar\alpha \in \acc(c)$.
Pick $C \in \Sigma$ such that $c = C \cap \alpha$.
Then $\bar\alpha\in\acc(C)$, 
so that by $\sqstarleftboth{\Omega}{\chi}$-coherence of $\cvec{C}$ we obtain $\bar\alpha\notin\Omega$.
Clearly, $c\cap\bar\alpha = C\cap\bar\alpha\in\mathcal A_{\bar\alpha}$.
By $\bar\alpha\in\acc(C)$, we moreover obtain $C\cap\bar\alpha \in \mathcal D_{\bar\alpha}$,
and hence $c\cap\bar\alpha\in \mathcal D_{\bar\alpha}$.

It remains to show that $|\mathcal D_\alpha| <\kappa$ for every $\alpha\in\acc(\kappa)$.
In fact, we shall prove, by induction on $\alpha$,
the stronger result that $|\mathcal A_\alpha|<\kappa$ for every ordinal $\alpha<\kappa$.
Thus, fix a given ordinal $\alpha<\kappa$,
and assume that $|\mathcal A_\gamma|<\kappa$ 
for every ordinal $\gamma<\alpha$;
we shall first show that $|\mathcal G_\alpha|<\kappa$,
and then argue further that $|\mathcal A_\alpha|<\kappa$.

To see that $|\mathcal G_\alpha|<\kappa$, we consider several cases:
\begin{itemize}
\item[$\br$] If $\alpha=0$, then $\mathcal G_\alpha = \{\emptyset\}$ is a singleton.
\item[$\br$] If $\alpha$ is a successor ordinal, then $\mathcal G_\alpha$ is empty.

Thus we may assume that $\alpha\in\acc(\kappa)$.

\item[$\br$] Suppose $\alpha\in E^\kappa_{<\chi}$. Set $\eta := \cf(\alpha)$, and
fix an increasing sequence $\langle \gamma_i \mid i<\eta \rangle$ of ordinals converging to $\alpha$.
Define a function
$\varphi : \mathcal G_\alpha \to \prod_{i<\eta} \mathcal A_{\gamma_i}$
by setting $\varphi(c) := \langle c\cap\gamma_i \mid i<\eta \rangle$ for every $c \in \mathcal G_\alpha$.
For every $i<\eta$, we have $|\mathcal A_{\gamma_i}|<\kappa$ by the induction hypothesis,
since $\gamma_i<\alpha$.
Let $\lambda := \sup_{i<\eta} |\mathcal A_{\gamma_i}|$.
We infer from the regularity of $\kappa$ that $\lambda<\kappa$,
and then since $\eta<\chi$, it follows from $({<}\chi)$-closedness of $\kappa$ that
$\left|\im(\varphi)\right| \leq \lambda^\eta <\kappa$.
As $c = \bigcup_{i<\eta} (c\cap\gamma_i)$ for every $c \in \mathcal G_\alpha$, 
it is clear that $\varphi$ is injective.
Altogether, $|\mathcal G_\alpha|<\kappa$, as sought.
\item[$\br$] Finally, suppose $\alpha\in E^\kappa_{\geq\chi}$.
In this case, define a function
$$\varphi:\mathcal G_\alpha\to\bigcup_{\beta<\alpha}\mathcal A_\beta\times\alpha\times\mathcal C_\alpha$$
by setting $\varphi(c) := (c\cap\beta_c, \beta_c, D_c)$,
where the representation $c = (c\cap\beta_c) \cup (D_c\setminus\beta_c)$
is given by Claim~\ref{G-is-A+C}.
As $|\mathcal A_\beta|<\kappa$ for every ordinal $\beta<\alpha$, 
and $|\mathcal C_\alpha|<\kappa$ by our choice of $\cvec{C}$,
it follows from regularity of $\kappa$ that $\left|\im(\varphi)\right|<\kappa$.
Furthermore, it is clear that $\varphi$ is injective.
Altogether, $|\mathcal G_\alpha|<\kappa$, as sought.
\end{itemize}

In all cases we have shown that $|\mathcal G_\alpha|<\kappa$.
Of course, $\mathcal G_\beta \s \mathcal A_\beta$ for every ordinal $\beta<\kappa$,
so that our induction hypothesis implies that 
$| \mathcal G_\beta| <\kappa$ for every ordinal $\beta<\alpha$.
We use these facts as we continue to show that $|\mathcal A_\alpha|<\kappa$.

Notice that every element $c \in \mathcal A_\alpha$ can be written as
$c = (c\cap\beta_c) \cup (c\setminus\beta_c)$,
where $\beta_c$ is some (possibly 0) limit ordinal $\leq\alpha$,
$c\cap\beta_c \in \mathcal G_{\beta_c}$,
and $c\setminus\beta_c$ is finite.
To see this,
consider any given $c \in \mathcal A_\alpha$.
Define $\beta_c := \min\{\beta \mid \left|c\setminus\beta\right| < \aleph_0 \}$.
As $c\subseteq\alpha$, it is clear that $\beta_c$ is an ordinal $\leq\alpha$.
By minimality of $\beta_c$,
it follows that $\beta_c$ is a limit ordinal (possibly 0) and $\sup(c\cap\beta_c) = \beta_c$.
Fix $C\in \Sigma$ such that $c = C\cap\alpha$.
Then $c\cap\beta_c = C\cap\beta_c$, so that, in particular,
$c\cap\beta_c \in \mathcal G_{\beta_c}$.
It is clear that $c = (c\cap\beta_c) \cup (c\setminus\beta_c)$,
and by our definition of $\beta_c$ it is clear that $c\setminus\beta_c$ is finite.

Thus, to show that $|\mathcal A_\alpha| <\kappa$, define a function 
\[
\pi : \mathcal A_\alpha \to 
\bigcup\bigl\{ \mathcal G_\beta \times [\alpha]^{<\aleph_0} \bigm| {\beta \in \acc(\alpha+1) \cup\{0\}} \bigr\}
\]
by setting $\pi(c) := (c\cap\beta_c, c\setminus\beta_c)$,
where the representation $c = (c\cap\beta_c) \cup (c\setminus\beta_c)$ 
is the one given above. 
Since $|\alpha|<\kappa$ and $\kappa$ is infinite,
we infer that $|[\alpha]^{<\aleph_0}| <\kappa$.
Then, by regularity of $\kappa$ and the fact that 
$| \mathcal G_\beta| <\kappa$ for every ordinal $\beta\leq\alpha$,
it follows that $\left|\im(\pi)\right|<\kappa$.
As $c = (c\cap\beta_c) \cup (c\setminus\beta_c)$ for every $c \in \mathcal A_\alpha$, 
it is clear that $\pi$ is injective.
Altogether, $|\mathcal A_\alpha|<\kappa$, as sought.
\end{proof}

\begin{claim} Suppose $\langle B_i \mid i < \theta \rangle$ is a sequence of cofinal subsets of $\kappa$,
and $S \in \mathcal S$. Then there exist stationarily many $\alpha \in S$ such that:
\begin{enumerate}
\item there exists $\mathcal C\in[\mathcal D_\alpha]^{<\nu}$ such that, for all $D\in\mathcal D_\alpha$,
there is $C\in\mathcal C$ with $\sup(D\symdiff C)<\alpha$;
\item for all $D \in \mathcal D_\alpha$ and $i < \min\{\alpha, \theta\}$,
$\sup\{ \gamma\in D \mid \suc_\sigma (D \setminus \gamma) \subseteq B_i \} = \alpha$.
\end{enumerate}
\end{claim}
\begin{proof}
Let $S^*$ denote the set of all $\alpha\in S\cap\acc(\kappa)$ such that $|\mathcal C_\alpha|<\nu$ and, for all $C\in\mathcal C_\alpha$ and $i<\min\{\alpha,\theta\}$, 
$\sup\{ \gamma\in C \mid \suc_\sigma (C \setminus \gamma) \subseteq B_i \} = \alpha$.
By the choice of  $\cvec{C}$, $S^*$ is a stationary subset of $E^\kappa_{\geq\chi}$.
Consider any given $\alpha\in S^*$.
Then, by Claims \ref{P-get-sq-wide} and \ref{G-is-A+C}, $\mathcal C:=\mathcal C_\alpha$ witnesses Clause~(1).
Now, given $D\in\mathcal D_\alpha$ and $i<\min\{\alpha,\theta\}$, we first pick $C\in\mathcal C_\alpha$
such that $\sup(D\symdiff C)<\alpha$. As $\alpha\in S^*$, we know that 
$\sup\{ \gamma\in C \mid \suc_\sigma (C \setminus \gamma) \subseteq B_i \} = \alpha$.
As $\sup(D\symdiff C)<\alpha$, it is also the case that 
$\sup\{ \gamma\in D \mid \suc_\sigma (D \setminus \gamma) \subseteq B_i \} = \alpha$.
\end{proof}
Thus, $\cvec D$ witnesses $\p_\xi^-(\kappa, \kappa, {\sqleftup{\Omega}}, \theta, \mathcal S, \nu^*,\sigma)$.
\end{proof}

\begin{cor} $\p_\xi^-(\kappa, \kappa, {\sqstarleftup{\Omega}}, \theta, \{\Omega\},  \kappa,\sigma)$ is equivalent to 
$\p_\xi^-(\kappa, \kappa, {\sqleftup{\Omega}}, \theta, \{\Omega\}, 2,\sigma)$.
\end{cor}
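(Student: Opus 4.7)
The plan is to derive the backward direction from Remark \ref{monotonicity} and the forward direction by a direct combination of Theorem \ref{get-full-coherence-from-sqleft*}, Remark \ref{remark-nu*}, and Proposition \ref{nonreflectcons}(3), in the spirit of the short proof of Corollary \ref{get-full-coherence}.

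For the backward direction, observe that ${\sq}$ implies ${\sq^*}$, so ${\sqleftup{\Omega}}$ is stronger than ${\sqstarleftup{\Omega}}$; moreover, $2\leq\kappa$ makes $\nu=2$ a strictly stronger hitting constraint than $\nu=\kappa$. Both facts are covered by the monotonicity properties listed in Remark \ref{monotonicity}, so the implication follows at once.

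For the forward direction, I would apply Theorem \ref{get-full-coherence-from-sqleft*} with the parameters $(\chi,\nu):=(0,\kappa)$. The hypotheses of that theorem are then trivially satisfied: $\kappa$ is $({<}0)$-closed vacuously, and since $E^\kappa_{\geq 0}=\kappa$, the requirement $\{\Omega\}\subseteq\mathcal P(E^\kappa_{\geq 0})$ holds automatically. Unpacking the definitions of Example \ref{example53} shows that ${\sqstarleftboth{\Omega}{0}}$ coincides with ${\sqstarleftup{\Omega}}$, since the disjunct $\cf(\alpha_C)<0$ is never met. The theorem therefore converts our hypothesis into $\p_\xi^-(\kappa,\kappa,{\sqleftup{\Omega}},\theta,\{\Omega\},\kappa^*,\sigma)$. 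Applying Remark \ref{remark-nu*} with $\mu=\kappa$ then collapses $\kappa^*$ down to $\kappa$, giving $\p_\xi^-(\kappa,\kappa,{\sqleftup{\Omega}},\theta,\{\Omega\},\kappa,\sigma)$. Finally, Proposition \ref{nonreflectcons}(3) applied with $\mathcal R:=\sq$ (so that ${}^{\Omega}\mathcal R={\sqleftup{\Omega}}$) upgrades the sequence to one satisfying the hitting with $\nu=2$, producing the desired $\p_\xi^-(\kappa,\kappa,{\sqleftup{\Omega}},\theta,\{\Omega\},2,\sigma)$.

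There is no substantial obstacle here; the proof is purely a bookkeeping exercise invoking three previously established results. The only items worth flagging during the write-up are the admissibility of the choice $\chi:=0$ in Theorem \ref{get-full-coherence-from-sqleft*} and the verification that $\sqstarleftboth{\Omega}{0}$ and $\sqstarleftup{\Omega}$ really are the same relation on $\mathcal K(\kappa)$.
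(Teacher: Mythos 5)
Your proof is correct and takes essentially the same route as the paper, which simply reads ``Apply Theorem~\ref{get-full-coherence-from-sqleft*} followed by Proposition~\ref{nonreflectcons}(3).'' You have spelled out the bookkeeping the paper leaves implicit: the choice $\chi:=0$ (which the paper itself uses in the proof of Corollary~\ref{get-full-coherence}, so you are on firm ground that this degenerate value is admissible), the identification of $\sqstarleftboth{\Omega}{0}$ with $\sqstarleftup{\Omega}$, and the intermediate appeal to Remark~\ref{remark-nu*} to pass from $\kappa^*$ to $\kappa$ before invoking Proposition~\ref{nonreflectcons}(3).
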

\begin{proof} Apply Theorem~\ref{get-full-coherence-from-sqleft*} followed by Proposition~\ref{nonreflectcons}(3).
\end{proof}

\subsection{Indexed ladders}

In this subsection, we consider indexed $\p^-_\xi(\kappa, \mu^+, {\sq}, \dots)$-sequences
for infinite cardinals $\mu<\kappa$.
Our proof of Theorem~A will go through the following concept.

\begin{defn}[{\cite[Definition~6.3]{narrow_systems}}]
$\square^{\ind}(\kappa, \mu)$ asserts the existence of a matrix $\langle C_{\alpha, i} \mid \alpha < \kappa, ~ i(\alpha) \leq i < \mu \rangle$ such that for all $\alpha\in\acc(\kappa)$:
\begin{itemize}
\item $i(\alpha) < \mu$, and, for all $i\in[i(\alpha),\mu)$, $C_{\alpha,i}$ is a club in $\alpha$;
\item for all $i\in[i(\alpha),\mu)$ and $\bar\alpha\in \acc(C_{\alpha, i})$, $i\ge i(\bar\alpha)$ and $C_{\bar\alpha,i}\sq C_{\alpha,i}$;
\item $\langle C_{\alpha, i} \mid i(\alpha) \leq i < \mu \rangle$ is $\subseteq$-increasing with $\acc(\alpha)=\bigcup_{i\in[i(\alpha),\mu)}\acc(C_{\alpha,i})$,
\end{itemize}
and such that, for every club $D$ in $\kappa$, there exists $\alpha \in \acc(D)$ such that $D \cap \alpha \neq C_{\alpha, i}$ for all $i\in[i(\alpha),\mu)$.
\end{defn}

Motivated by the preceding definition, we introduce the following
strengthening of $\p^-_\xi(\kappa, \mu^+, {\sq}, \ldots)$ (compare with Definition~\ref{defn-p-sequence-first-stage}).

\begin{defn}\label{indexedP}
We say that $\langle\mathcal C_\alpha\mid\alpha<\kappa\rangle$ is a $\p^-_\xi(\kappa, \mu^{\ind}, {\sq}, \ldots)$-sequence iff
there exists a sequence $\langle i(\alpha)\mid \alpha<\kappa\rangle$ of ordinals in $\mu$, such that, for every $\alpha\in\acc(\kappa)$, all of the following hold:
\begin{itemize}
\item $\mathcal C_\alpha\s\{ C\in \mathcal K(\kappa)\mid \otp(C)\le\xi\ \&\ \alpha_C=\alpha\}$;
\item there exists a canonical enumeration $\langle C_{\alpha,i}\mid i(\alpha)\le i<\mu\rangle$ of $\mathcal C_\alpha$
(possibly with repetition);
\item for all $i\in[i(\alpha),\mu)$ and $\bar\alpha\in\acc({C}_{\alpha,i})$, $i\ge i(\bar\alpha)$ and $C_{\bar\alpha,i}\sq C_{\alpha,i}$;
\item $\langle \acc({C}_{\alpha,i})\mid i(\alpha)\le i<\mu\rangle$
is $\s$-increasing with $\acc(\alpha) = \bigcup_{i\in[i(\alpha),\mu)}\acc(C_{\alpha,i})$.
\end{itemize}
\end{defn}

We will need an adaptation of our wide-club-guessing lemma \cite[Lemma~2.5]{paper29} to the context of indexed sequences.

\begin{lemma}\label{Addingguessing} Suppose that $\aleph_1\le\mu^+<\kappa$, and $\langle\mathcal C_\alpha\mid\alpha<\kappa\rangle$ 
is a $\p^-_\xi(\kappa, \mu^{\ind}, {\sq}, \ldots)$-sequence satisfying that, for every club $D\s\kappa$, there exists $\alpha\in\acc(D)$ with $D\cap\alpha\notin\mathcal C_\alpha$.
Suppose $S\s\kappa$ is stationary.

Then there is a $\p^-_\xi(\kappa, \mu^{\ind}, {\sq}, \ldots)$-sequence $\langle\mathcal C^\bullet_\alpha\mid\alpha<\kappa\rangle$
with the additional property that for every club $E\s\kappa$, there exist stationarily many $\alpha\in S$
such that, for all $C\in\mathcal C^\bullet_\alpha$, $\sup(\nacc(C)\cap E)=\alpha$.
\end{lemma}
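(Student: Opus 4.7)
My plan is to argue by contradiction, adapting the wide-club-guessing argument of \cite[Lemma~2.5]{paper29} to the indexed setting. Assume for contradiction that no suitable $\vec{\mathcal C}^\bullet$ exists. I will recursively construct, for $j<\mu^+$, an indexed $\p^-_\xi(\kappa, \mu^{\ind}, {\sq}, \ldots)$-sequence $\vec{\mathcal C}^j = \langle\mathcal C^j_\alpha\mid\alpha<\kappa\rangle$ with canonical enumeration $\langle C^j_{\alpha,i}\mid i\in[i(\alpha),\mu)\rangle$, and a club $E_j\s\kappa$ such that for stationarily many $\alpha \in S$ there exists $i \in [i(\alpha),\mu)$ with $\sup(\nacc(C^j_{\alpha,i}) \cap E_j) < \alpha$. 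Starting with $\vec{\mathcal C}^0 := \vec{\mathcal C}$, the existence of $E_j$ at each stage is precisely the negation of what we want, and is supplied by the contradiction hypothesis.

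At a successor stage $j+1$, given $E_j$, I use Lemma~\ref{pp-from-micro}: build a triangular array $\mathfrak{x}^j = \langle x^j_{\gamma,\beta} \mid \gamma<\beta<\kappa \rangle$ by setting $x^j_{\gamma,\beta} := \{\min(E_j\setminus(\gamma+1)),\beta\}$ whenever $\beta$ is a limit and $\gamma < \min(E_j\setminus(\gamma+1)) < \beta$, and $x^j_{\gamma,\beta} := \{\beta\}$ otherwise. The resulting $\Phi_{\mathfrak{x}^j}$ is a $\min$- and $\acc$-preserving postprocessing that splices an element of $E_j$ between each pair of consecutive elements of any club. I then let $C^{j+1}_{\alpha,i} := \Phi_{\mathfrak{x}^j}(C^j_{\alpha,i})$ for all $\alpha$ and $i$. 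Because the same $\Phi_{\mathfrak{x}^j}$ is applied uniformly across all indices $i$, and because $\Phi_{\mathfrak{x}^j}$ is $\acc$-preserving, the full $\sq$-coherence $C^{j+1}_{\bar\alpha,i} = C^{j+1}_{\alpha,i}\cap\bar\alpha$ for $\bar\alpha\in\acc(C^{j+1}_{\alpha,i})$ is inherited from that of $\vec{\mathcal C}^j$, as is the indexing function $i(\alpha)$ and the $\acc$-increasing canonical enumeration. At a limit stage $j\leq\mu^+$, for each fixed $\alpha<\kappa$ and $i\in[i(\alpha),\mu)$, the sequence $\langle C^k_{\alpha,i}\mid k<j\rangle$ has bounded support in $\alpha$, so using $|\alpha| < \kappa$ and $j \le \mu^+ < \kappa$ one can argue that its pointwise modifications stabilize at some $k_{\alpha,i} < j$; set $C^j_{\alpha,i}$ to be this stabilized value. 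Uniform use of the same triangular arrays across indices $i$ again preserves the indexed coherence at the limit.

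The recursion cannot terminate at any $j<\mu^+$ (that would give the sought $\vec{\mathcal C}^\bullet$), so we obtain the full list $\langle E_j\mid j<\mu^+\rangle$. Since $\mu^+<\kappa$ and $\kappa$ is regular uncountable, the diagonal intersection $E^*:=\diagonal_{j<\mu^+}E_j$ is a club in $\kappa$. Apply the unthreadability hypothesis to $E^*$ to fix some $\alpha\in\acc(E^*)$ with $E^*\cap\alpha\notin\mathcal C_\alpha$. On the other hand, at this $\alpha$, the construction has forced every $\nacc$-point of $C^{\mu^+}_{\alpha,i(\alpha)}$ beyond a bounded initial segment to lie in $E^*$; combined with the stabilization of the modifications and the fact that $\Phi_{\mathfrak{x}^j}$ is $\min$- and $\acc$-preserving, one can peel back the postprocessings and identify $E^* \cap \alpha$ with (a tail of) the original $C_{\alpha,i(\alpha)} \in \mathcal{C}_\alpha$, contradicting the choice of $\alpha$.

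The main obstacle is the limit step of the recursion and its interaction with the final contradiction. Specifically, I must ensure that when composing $\mu^+$ many postprocessings, the resulting clubs still form an indexed $\p^-_\xi(\kappa,\mu^{\ind},{\sq},\ldots)$-sequence — the order type bound $\le\xi$, the width bound, the canonical enumeration, and the $\acc$-increasing property must all be checked to survive the iteration. The hypothesis $\mu^+<\kappa$ is crucial precisely here: it simultaneously guarantees stabilization of the construction at each $\alpha$ and allows the final diagonal intersection $E^*$ to be a club, so that the unthreadability hypothesis can be legitimately applied.
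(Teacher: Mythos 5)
Your overall plan — derive a contradiction by iterating postprocessings $\mu^+$ many times and then apply unthreadability to a diagonal intersection — is a plausible shape for a wide-club-guessing argument, but the specific postprocessing you use cannot yield the contradiction, and the paper does not argue this way at all.

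The fatal flaw is in the postprocessing itself. With $x^j_{\gamma,\beta} := \{\min(E_j\setminus(\gamma+1)),\beta\}$, the map $\Phi_{\mathfrak x^j}$ of Lemma~\ref{pp-from-micro} \emph{inserts} a point of $E_j$ between each pair of consecutive points of the club but \emph{retains} the original point $\beta$. Consequently, after $j$ many iterations, the $\nacc$-points of $C^j_{\alpha,i}$ include all the $\nacc$-points of the original $C^0_{\alpha,i}$; they have not been forced into $E^*$. So the assertion that "the construction has forced every $\nacc$-point of $C^{\mu^+}_{\alpha,i(\alpha)}$ beyond a bounded initial segment to lie in $E^*$" is simply false, and the subsequent step of "peeling back the postprocessings to identify $E^*\cap\alpha$ with (a tail of) $C_{\alpha,i(\alpha)}$" does not follow even if the $\nacc$-points were so contained. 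The limit step is also unsupported: the sequence $\langle C^k_{\alpha,i}\mid k<j\rangle$ is $\subseteq$-increasing (never stabilizing pointwise, since each step inserts fresh elements everywhere), so "bounded support" does not hold, and the closure of the union at stages of limit cofinality can push the order type beyond $\xi$ and introduce accumulation points not present at any earlier stage, breaking $\sq$-coherence with the lower levels.

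The paper's proof takes a shorter and quite different route. It does not re-derive the guessing: it \emph{cites} \cite[Claim~2.5.1]{paper29} to obtain a single club $D$ such that, for every club $E$, there is $\alpha\in S$ with $\sup(\nacc(\Phi_D(C))\cap E)=\alpha$ for all $C\in\mathcal C_\alpha$, where $\Phi_D$ (from \cite[Lemma~2.2]{paper29}) is a postprocessing whose output lies inside $D$ — it genuinely \emph{replaces} each $\eta\in x$ by $\sup(D\cap\eta)$, rather than merely splicing one new point per gap. Because $\Phi_D$ need not preserve $\min$ or $\acc$, the paper then transfers back via the order-preserving bijection $\pi:\kappa\leftrightarrow D$, setting $C^\bullet_{\alpha,i}:=\pi^{-1}[\Phi_D(C_{\pi(\alpha),i})]$ and $j(\alpha):=i(\pi(\alpha))$, and verifies that the indexed coherence, the $\s$-increasing accumulation-point structure, and the order-type bound all survive. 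You omit both the replacement character of the postprocessing and the $\pi$-transfer, and both are essential.
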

\begin{proof} Fix a sequence $\langle i(\alpha)\mid \alpha<\kappa\rangle$ of ordinals in $\mu$ as in Definition~\ref{indexedP}.
For every club $D\s\kappa$, we define a map $\Phi_D : \mathcal K(\kappa) \to \mathcal K(\kappa)$ via
\[\Phi_D(x):=\begin{cases}
x \setminus \sup(D \cap \sup(x)),\\
&\hfill\text{if } \sup(D \cap \sup(x)) < \sup(x); \\
\makebox[150pt][l]{$\{ \sup(D\cap\eta)\mid \eta\in x\ \&\ \eta>\min(D)\}$,} \\
&\hfill\text{if } \sup(D \cap \sup(x)) = \sup(x).
\end{cases}\]
By \cite[Lemma~2.2]{paper29}, $\Phi_D$ is a postprocessing function.
Furthermore, by \cite[Claim~2.5.1]{paper29}, 
we may fix a club $D\s\kappa$ such that, 
for every club $E\s\kappa$, there exists $\alpha\in S$ with $\sup(\nacc(\Phi_D(C))\cap E)=\alpha$ for all $C\in\mathcal C_\alpha$.
Let $\pi:\kappa\leftrightarrow D$ denote the order-preserving bijection.
Now, for every $\alpha\in\acc(\kappa)$, let $j(\alpha):=i(\pi(\alpha))$ and set $$C^\bullet_{\alpha,i}:=\pi^{-1}[\Phi_D(C_{\pi(\alpha),i})]$$
for every $i\in[j(\alpha),\mu)$. Put $\mathcal C_\alpha^\bullet:=\{ C^\bullet_{\alpha,i}\mid j(\alpha)\le i<\mu\}$.
\begin{claim} Let $\alpha\in\acc(\kappa)$ and $i\in[j(\alpha),\mu)$. Then:
\begin{enumerate}
\item $C^\bullet_{\alpha,i}$ is a club in $\alpha$ of order-type $\le\xi$;
\item for all $\bar\alpha\in\acc(C^\bullet_{\alpha,i})$, $i\ge j(\bar\alpha)$ and $C^\bullet_{\bar\alpha,i}\sq C^\bullet_{\alpha,i}$.
\end{enumerate}
\end{claim}
\begin{proof} (1) As $\alpha\in\acc(\kappa)$, $\pi(\alpha)\in \acc(D)$,
and hence the definition of $\Phi_D$ implies that $\Phi_D(C_{\pi(\alpha),i})$ is a club in $\pi(\alpha)$ which is a subset of $D$.
So $C^\bullet_{\alpha,i}=\pi^{-1}[\Phi_D(C_{\pi(\alpha),i})]$ is a club in $\alpha$ of order-type $\otp(C_{\pi(\alpha),i})\le\xi$.

(2) Suppose  $\bar\alpha\in\acc(C^\bullet_{\alpha,i})$.
Then $\bar\alpha\in\acc(\pi^{-1}[\Phi_D(C_{\pi(\alpha),i})])$,
so that $\pi(\bar\alpha)\in\acc(\Phi_D(C_{\pi(\alpha),i}))=\acc(C_{\pi(\alpha),i})\cap\acc(D)$.
In particular, $i\ge i(\pi(\bar\alpha))=j(\bar\alpha)$ and $C_{\pi(\bar\alpha),i}\sq C_{\pi(\alpha),i}$.
As $\Phi_D$ is a postprocessing function, $\Phi_D(C_{\pi(\bar\alpha),i})\sq\Phi_D(C_{\pi(\alpha),i})$, and hence $C_{\bar\alpha,i}^\bullet\sq C_{\alpha,i}^\bullet$.
\end{proof}
\begin{claim} Let $\alpha\in\acc(\kappa)$.
Then $\langle \acc(C^\bullet_{\alpha,i})\mid j(\alpha)\le i<\mu\rangle$
is $\s$-increasing with $\acc(\alpha) = \bigcup_{i\in[j(\alpha),\mu)}\acc(C^\bullet_{\alpha,i})$.
\end{claim}
\begin{proof} It is clear that for any two clubs $C,C'$ in $\pi(\alpha)$,
$C\s C'\implies \Phi_D(C)\s\Phi_D(C')$. 
Consequently,  $\langle \acc(C^\bullet_{\alpha,i})\mid j(\alpha)\le i<\mu\rangle$ is $\s$-increasing.
Finally, for any $\bar\alpha\in\acc(\alpha)$, as $\pi(\bar\alpha)\in\acc(\pi(\alpha))$,
we may find a large enough $i\ge i(\pi(\alpha))=j(\alpha)$ such that $\pi(\bar\alpha)\in\acc(C_{\pi(\alpha),i})$.
As $\pi(\bar\alpha)\in\acc(D)$, altogether, 
$\pi(\bar\alpha)\in\acc(C_{\pi(\alpha),i})\cap\acc(D)=\acc(\Phi_D(C_{\pi(\alpha),i}))$,
where the last equality is due to the definition of $\Phi_D$ and fact that $\pi(\alpha)\in\acc(D)$.
Finally, as $\Phi_D(C_{\pi(\alpha),i})\s\im(\pi)$, it follows that $\bar\alpha$ is an accumulation point of $\pi^{-1}[\Phi_D(C_{\pi(\alpha),i})]=C^\bullet_{\alpha,i}$.
\end{proof}
\begin{claim} Let $E\s\kappa$ be a club. Then there exist stationarily many $\alpha\in S$
such that, for all $C\in\mathcal C^\bullet_\alpha$, $\sup(\nacc(C)\cap E)=\alpha$.
\end{claim}
\begin{proof} Let $B$ be an arbitrary club. We need to find $\alpha\in B\cap S$
such that, for all $C\in\mathcal C^\bullet_\alpha$, $\sup(\nacc(C)\cap E)=\alpha$.

As $E^\bullet:=\{\beta\in B\cap E\mid \pi(\beta)=\beta\}$ is a club,
by the choice of $D$, we may now fix $\alpha\in S$ with $\sup(\nacc(\Phi_D(C))\cap E^\bullet)=\alpha$ for all $C\in\mathcal C_\alpha$.
In particular, $\alpha\in\acc(E^\bullet)$, so that $\alpha\in B\cap S$ and $\pi(\alpha)=\alpha$.
Consequently, $\mathcal C_\alpha^\bullet=\{ \pi^{-1}[\Phi_D(C)]\mid C\in\mathcal C_\alpha\}$.
Let $C\in\mathcal C_\alpha$ be arbitrary. 
Put $c:=\nacc(\Phi_D(C))\cap E^\bullet$. Then $c$ is a subset of $\nacc(\Phi_D(C))\cap D\cap E$,
consisting of fixed-points of $\pi$.
As $\sup(c)=\alpha$, it follows that $\pi^{-1}[c]=c$ is a cofinal subset of  $\nacc(\pi^{-1}[\Phi_D(C)])$.
So, $c \s \nacc(\pi^{-1}[\Phi_D(C)])\cap E$ and hence the latter is cofinal in $\alpha$, as sought.
\end{proof}
This completes the proof.
\end{proof}

If $\cvec{C} = \langle \mathcal{C}_\alpha \mid \alpha<\kappa \rangle$ is a $\p^-(\kappa, \mu^{\ind}, {\sq}, \dots)$-sequence
such that $S:=\{ \alpha<\kappa\mid |\mathcal{C}_\alpha|<\aleph_0 \}$ is stationary,
then we may find some $i<\mu$ for which $A:=\{\alpha\in S\cap\acc(\kappa)\mid C_{\alpha,i}=\max(\mathcal C_\alpha,{\s})\}$ is stationary.
For every $\alpha\in A$, $A\cap\alpha\s\acc(\alpha)\s \acc(C_{\alpha,i})$, and hence $\langle C_{\alpha,i}\mid \alpha\in A\rangle$ is an $\sq$-chain
converging to some club $D$ that forms a thread through $\cvec C$ (cf.~Remark~\ref{remarks-about-xbox}(3)).
In particular, $\p^-(\kappa, \mu^{\ind}, {\sq}, 1, \{\kappa\}, \nu)$ is inconsistent for $\nu\leq\aleph_0$.
By contrast, we have the following indexed variant of~\cite[Theorem~3.5]{paper37}:\footnote{The proofs are different, since here there is no guarantee that $\diamondsuit^*(E^{\lambda^+}_{<\lambda})$ holds.} 
\begin{thm}\label{mainindex} Suppose $\square^{\ind}(\kappa, \mu)$ holds for an infinite cardinal $\mu<\kappa$.

If $\kappa=\lambda^+=2^\lambda$, then for any regular cardinal $\chi\ge\mu$ such that $\lambda^\chi=\lambda$,
$\p^-(\kappa,\mu^{\ind},{\sq},1,\{E^\kappa_\chi\},\mu^+)$ holds.
\end{thm}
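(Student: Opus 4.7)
The plan is to build the desired sequence in three stages: first, repackage the $\square^{\ind}(\kappa,\mu)$-matrix as a $\p^-_\xi(\kappa,\mu^{\ind},\sq,\ldots)$-sequence in the sense of Definition~\ref{indexedP}; second, apply Lemma~\ref{Addingguessing} to obtain club-guessing on $E^\kappa_\chi$; and finally, upgrade from club-guessing to cofinal-set, ${<}\omega$-hitting using a $\clubsuit(E^\kappa_\chi)$-sequence extracted from the cardinal arithmetic via Shelah's theorem.

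For the setup, I take the given $\square^{\ind}(\kappa,\mu)$-matrix $\langle C_{\alpha,i}\mid\alpha\in\acc(\kappa),\ i(\alpha)\leq i<\mu\rangle$ and set $\mathcal C^0_\alpha:=\{C_{\alpha,i}\mid i(\alpha)\leq i<\mu\}$. The three coherence clauses defining $\square^{\ind}(\kappa,\mu)$ translate verbatim into Definition~\ref{indexedP} (with $\xi=\kappa$ by Convention~\ref{conventionxi}), and the genericity clause is precisely the unthreadability hypothesis of Lemma~\ref{Addingguessing}. A short cardinal calculation rules out $\mu=\lambda$: otherwise $\chi\geq\mu=\lambda$ would force $\lambda^\lambda=\lambda^\chi=\lambda\geq 2^\lambda=\lambda^+$. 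Hence $\aleph_1\leq\mu^+\leq\lambda<\kappa$, and invoking Lemma~\ref{Addingguessing} with $S:=E^\kappa_\chi$ delivers a $\p^-_\xi(\kappa,\mu^{\ind},\sq,\ldots)$-sequence $\cvec C^\bullet=\langle\mathcal C^\bullet_\alpha\rangle$ that enjoys club-guessing on $E^\kappa_\chi$. The width bound $|\mathcal C^\bullet_\alpha|\leq\mu<\mu^+$ is automatic.

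For the upgrade, the hypotheses $\kappa=\lambda^+=2^\lambda$ and $\lambda^\chi=\lambda$ force $\chi\neq\cf(\lambda)$ (else K\"onig gives $\lambda^{\cf(\lambda)}>\lambda$), so Shelah's theorem supplies $\diamondsuit(E^\kappa_\chi)$ whenever $\lambda$ is uncountable; in particular, $\clubsuit(E^\kappa_\chi)$ holds. Following the proof of Theorem~\ref{get-sigma-finite}(1), I extract from the $\clubsuit$-sequence a matrix $\langle\mathcal X_\beta\mid\beta\in E^\kappa_\chi\rangle$ via Lemma~\ref{club-equivalents}, build a triangular array $\mathfrak x=\langle x_{\gamma,\beta}\rangle$ drawn from the $\mathcal X_\beta$, and form the postprocessing function $\Phi_{\mathfrak x}$ of Lemma~\ref{pp-from-micro}. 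Applying $\Phi_{\mathfrak x}$ termwise along the canonical enumeration of $\mathcal C^\bullet_\alpha$ produces $\cvec C^\dagger=\langle\mathcal C^\dagger_\alpha\rangle$ which, since $\Phi_{\mathfrak x}$ is $\min$- and $\acc$-preserving and satisfies $\Phi_{\mathfrak x}(C)\cap\bar\alpha=\Phi_{\mathfrak x}(C\cap\bar\alpha)$ for every $\bar\alpha\in\acc(\Phi_{\mathfrak x}(C))$, retains the indexed $\sq$-coherence and the continuous $\subseteq$-increasing exhaustion $\acc(\alpha)=\bigcup_i\acc(C^\dagger_{\alpha,i})$ required by Definition~\ref{indexedP}. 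A verification parallel to Claim~\ref{claim41411} will then show that $\cvec C^\dagger$ witnesses $\p^-(\kappa,\mu^{\ind},\sq,1,\{E^\kappa_\chi\},\mu^+,{<}\omega)$, which by Convention~\ref{omitsigma} is the desired principle.

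The main obstacle is this final verification: the club-hitting supplied by $\cvec C^\bullet$ is a statement about clubs, whereas for a cofinal set $B\subseteq\kappa$ the $\clubsuit$-good set $G_B:=\{\beta\in E^\kappa_\chi\mid \mathcal X_\beta\subseteq [B]^{<\omega}\}$ is merely stationary. The bridge is that $\acc^+(G_B)$ is nonetheless a club, so applying the club-hitting of $\cvec C^\bullet$ to $\acc^+(G_B)$ yields stationarily many $\alpha\in E^\kappa_\chi$ at which every $C\in\mathcal C^\bullet_\alpha$ has $\nacc(C)\cap\acc^+(G_B)$ cofinal in $\alpha$; a thinning via Lemma~\ref{thinning-out} then refines these points to actual members of $G_B$, which in turn trigger insertions of finite subsets of $B$ into $\nacc(\Phi_{\mathfrak x}(C))$. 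Synchronizing this argument uniformly across all $\mu$ indexed clubs in $\mathcal C^\bullet_\alpha$ — and confirming that termwise postprocessing preserves the canonical enumeration demanded by Definition~\ref{indexedP} — is the delicate technical point.
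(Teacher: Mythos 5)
Your first two stages — repackaging the $\square^{\ind}(\kappa,\mu)$-matrix as a $\p^-(\kappa,\mu^{\ind},{\sq},\ldots)$-sequence and applying Lemma~\ref{Addingguessing} with $S:=E^\kappa_\chi$ — match the paper exactly, and the cardinal arithmetic ruling out $\mu=\lambda$ is correct. But the third stage has a genuine gap. You try to pass from club-hitting to cofinal-set hitting via a single $\clubsuit(E^\kappa_\chi)$-driven postprocessing function $\Phi_{\mathfrak{x}}$, and the ``bridge'' you propose — feeding the club $\acc^+(G_B)$ to the club-hitting feature, then ``thinning'' to land inside $G_B$ — does not work. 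Having $\sup\bigl(\nacc(C)\cap\acc^+(G_B)\bigr)=\alpha$ gives you no control over $\nacc(C)\cap G_B$: the set $G_B$ lives inside $E^\kappa_\chi\subseteq\acc(\kappa)$, so if $\nacc(C)$ happens to consist entirely of successor ordinals (which nothing rules out), then $\nacc(C)\cap G_B=\emptyset$, even though $\nacc(C)\cap\acc^+(G_B)$ is cofinal in $\alpha$. Lemma~\ref{thinning-out} separates two cofinal sets; it cannot move points of $\acc^+(G_B)$ into $G_B$. The mechanism of Theorem~\ref{get-sigma-finite}(1) (Claim~\ref{claim41411}) that you invoke requires the input sequence to already hit the \emph{stationary} set $\{\beta\mid\mathcal{X}_\beta\subseteq\mathcal{B}_i\}$ with $\sigma=1$; Lemma~\ref{Addingguessing} only delivers hitting of \emph{clubs}, and $E^\kappa_\chi$ contains no club.

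This is precisely the gap the paper's Engelking--Kar{\l}owicz step is designed to close. Using $\lambda^\chi=\lambda$, the paper fixes a family $\langle f_j\mid j<\lambda\rangle$ capturing all regressive functions on sets of size $\chi$, builds a whole $\lambda$-indexed family of postprocessing functions $\Phi_{\mathfrak{x}^j}$ (each one re-routing through the corresponding $f_j$ and the bijection $\pi:\lambda^+\leftrightarrow{}^\lambda\lambda^+$), and then argues by contradiction that at least one $j$ must work: if every $\cvec{C}^j$ fails on some cofinal $B_j$, one encodes these failures into a regressive function $f$ defined on a size-$\chi$ subset of a point $\alpha\in E^\kappa_\chi$ supplied by the club-hitting (this is exactly where $\cf(\alpha)=\chi$ is used), picks $j$ with $f\subseteq f_j$, and derives a contradiction. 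The $\clubsuit(\kappa)$ (via Gregory's lemma and Fact~\ref{clubvsdiamond}) appears only at the very end, to lift the resulting $\sigma=1$ instance to $\sigma={<}\omega$ via Theorem~\ref{get-sigma-finite}(1) — a role for which your intended use of $\clubsuit$ is fine, but only \emph{after} $\sigma=1$ cofinal-set hitting has already been secured. In short: you need a counting argument over a $\lambda$-sized family of postprocessing functions, not a single postprocessing applied directly to the club-guessing sequence.
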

\begin{proof} Suppose $\kappa=\lambda^+$ and $\chi\ge\mu$ is a regular cardinal such that $\lambda^\chi=\lambda$.
In particular, $\chi<\lambda$, so that $\mu^+\le\chi^+<\kappa$. Now, as $\square^{\ind}(\kappa,\mu)$ holds,
the hypotheses of Lemma~\ref{Addingguessing} are satisfied with $S := E^\kappa_\chi$.
Consequently, we may fix a $\p^-(\kappa, \mu^{\ind}, {\sq}, \ldots)$-sequence $\cvec C=\langle\mathcal C_\alpha\mid\alpha<\kappa\rangle$
with the additional property that, for every club $D\s\kappa$, there exist stationarily many $\alpha\in E^\kappa_\chi$
such that $\sup(\nacc(C)\cap D)=\alpha$ for all $C\in\mathcal C_\alpha$.

Since $\lambda^\chi=\lambda$, the Engelking--Kar{\l}owicz Theorem provides a sequence $\langle f_j\mid j<\lambda\rangle$ 
of functions from $\lambda^+$ to $\lambda$ with the property that, for every $z\in[\lambda^+]^\chi$ and every function $f:z\rightarrow\lambda$,
there exists $j<\lambda$ with $f\s f_j$.
Consequently, we may fix a sequence $\langle f_j\mid j<\lambda\rangle$ 
of functions from $\lambda^+$ to $\lambda^+$ with the property that, for every $z\in[\lambda^+]^\chi$ and every \emph{regressive} function $f:z\rightarrow\lambda^+$,
there exists $j<\lambda$ with $f\s f_j$.

Suppose that $2^\lambda=\lambda^+$ and fix a bijection $\pi:\lambda^+\leftrightarrow{}^\lambda\lambda^+$.
For every $j<\lambda$, define a triangular array $\mathfrak x^j=\langle x^j_{\gamma,\beta}\mid \gamma<\beta<\kappa\rangle$ via:
$$x^j_{\gamma,\beta}:=
\begin{cases}\{\pi(f_j(\beta))(j),\beta\} \cap (\gamma,\beta],&\text{if }\beta\in\acc(\kappa);\\
\{\beta\},&\text{otherwise}.
\end{cases}$$
and then consider the corresponding $\acc$-preserving postprocessing function $\Phi_{\mathfrak x^j}$ given by Lemma~\ref{pp-from-micro}.
Denote $\mathcal C_\alpha^j:=\{\Phi_{\mathfrak x^j}(C)\mid C\in\mathcal C_\alpha\}$,
so that, for every $j<\lambda$, $\cvec C^j:=\langle \mathcal C_\alpha^j\mid \alpha<\kappa\rangle$ is yet again a 
$\p^-(\kappa, \mu^{\ind}, {\sq}, \ldots)$-sequence.

\begin{claim} There is $j<\lambda$ such that $\cvec C^j$ witnesses $\p^-(\kappa,\mu^{\ind},{\sq},1,\{E^\kappa_\chi\},\mu^+,1)$.
\end{claim}
\begin{proof} Suppose not. Then, for every $j<\lambda$,
there exists a cofinal subset $B_j\s\kappa$ such that, for all $\alpha\in E^\kappa_\chi$,
there exists $C\in\mathcal C_\alpha^j$ for which $\sup(\nacc(C)\cap B_j)<\alpha$.
Now, let $g:\kappa\rightarrow\kappa$ denote the unique function satisfying,
for every $\gamma<\kappa$:
$$g(\gamma)=\delta\iff\bigwedge_{j<\lambda}\bigl(\pi(\delta)(j)=\min(B_j\setminus(\gamma+1))\bigr).$$
Fix a club $E\s\acc(\kappa)$ such that, for all $\beta\in E$ and $\gamma<\beta$,
$$\sup\{g(\gamma),\min(B_j\setminus(\gamma+1))\mid j<\lambda\}<\beta.$$

By the choice of $\cvec C$, let us now fix $\alpha\in E^\kappa_\chi$ such that $\sup(\nacc(C)\cap E)=\alpha$ for all $C\in\mathcal C_\alpha$.
As $|\mathcal C_\alpha|\le\chi=\cf(\alpha)$, we may find a sequence $\langle z_C\mid C\in\mathcal C_\alpha\rangle$ of pairwise disjoint sets
such that, for all $C\in\mathcal C_\alpha$, $z_C$ is a cofinal subset of $\nacc(C)\cap E\setminus\{\min(C)\}$ with order-type $\chi$.
Now, let $z:=\biguplus_{C\in\mathcal C_\alpha}z_C$. Define a function $f:z\rightarrow\lambda^+$ as follows.
For all $\beta\in z$, find the unique $C\in\mathcal C_\alpha$ such that $\beta\in z_C$
and then let $f(\beta):=g(\gamma)$, for $\gamma:=\sup(C\cap\beta)$.
Note that as $\beta\in z\s \nacc(C)\cap E\setminus\{\min(C)\}$, $\gamma<\beta$ and hence also $f(\beta)<\beta$.
That is, $f$ is regressive.

Pick $j<\lambda$ such that $f\s f_j$.
By the choice of $B_j$, let us now pick $C^*\in\mathcal C_\alpha^j$ such that $\epsilon:=\sup(\nacc(C^*)\cap B_j)$ is smaller than $\alpha$.
Find $C\in\mathcal C_\alpha$ such that $C^*=\Phi_{\mathfrak x^j}(C)$.
Pick a large enough $\beta\in z_C$ for which $\gamma:=\sup(C\cap\beta)$ is greater than $\epsilon$.
Let $\eta:=\pi(f_j(\beta))(j)$.
As $\gamma\in\beta\in E$, we infer that 
$$\eta=\pi(f_j(\beta))(j)=\pi(g(\gamma))(j)=\min(B_j\setminus(\gamma+1))<\beta,$$
and hence 
$x^j_{\gamma,\beta}=\{\eta,\beta\}$. Recalling the definition of $\Phi_{\mathfrak x^j}$ from Lemma~\ref{pp-from-micro},
we see that $\eta\in\nacc(C^*)$, contradicting the fact that $\eta\in B_j\setminus(\epsilon+1)$.
\end{proof}

As $2^\lambda=\lambda^+$ and $\lambda^{\aleph_0}=\lambda$, \cite[Lemma~2.1]{MR485361} entails $\diamondsuit(\lambda^+)$.
So, by Fact~\ref{clubvsdiamond}, $\clubsuit(\kappa)$ holds.
Then, by the proof of Theorem~\ref{get-sigma-finite}(1), $\p^-(\kappa,\mu^{\ind},{\sq},1,\{E^\kappa_\chi\},\mu^+)$ holds, as well.
\end{proof}

\begin{fact}[{\cite[Theorem~3.4]{lh_lucke}}]\label{indexingit}
If $\square(\kappa)$ holds, then so does $\square^{\ind}(\kappa, \mu)$ for every $\mu\in\reg(\kappa)$.
\end{fact}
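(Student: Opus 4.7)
Let $\vec{C} = \langle C_\alpha \mid \alpha \in \acc(\kappa)\rangle$ be a $\square(\kappa)$-sequence and let $\mu \in \reg(\kappa)$. The plan is to refine $\vec{C}$ into an indexed matrix $\langle C_{\alpha,i} \mid \alpha \in \acc(\kappa),\ i(\alpha) \le i < \mu\rangle$ witnessing $\square^{\ind}(\kappa,\mu)$. The overall strategy is to enlarge each club $C_\alpha$ into a $\s$-increasing chain of $\mu$ many clubs whose accumulation points collectively exhaust $\acc(\alpha)$, while maintaining the coherence $C_{\bar\alpha,i} \sq C_{\alpha,i}$ across $\alpha$ at each fixed index $i$.

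The construction I would attempt proceeds by recursion on $\alpha \in \acc(\kappa)$, simultaneously defining $i(\alpha) < \mu$ and the chain $\langle C_{\alpha,i} \mid i(\alpha) \le i < \mu\rangle$. For the base case at each $\alpha$, I would set $C_{\alpha,i(\alpha)}$ to be (essentially) the original $C_\alpha$; at successor indices $i \mapsto i+1$ I would enlarge by adjoining new accumulation points drawn from $\acc(\alpha) \setminus \acc(C_{\alpha,i})$ according to a globally-defined selection rule---for instance, derived from a fixed level function $\ell : \acc(\kappa) \to \mu$ that assigns each limit ordinal a ``target index'' at which it is first required to appear in the column above it. At limit indices $i < \mu$ one takes (closures of) unions, which automatically preserves coherence. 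The condition $\acc(\alpha) = \bigcup_{i(\alpha) \le i < \mu} \acc(C_{\alpha,i})$ is then built in, since every $\bar\alpha \in \acc(\alpha)$ has $\ell(\bar\alpha) < \mu$ and will eventually be caught.

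The hard part is the successor step: adjoining a new accumulation point $\bar\alpha$ into $\acc(C_{\alpha,i+1})$ forces $C_{\bar\alpha,i+1}$ to equal $C_{\alpha,i+1}\cap\bar\alpha$ (and $i+1 \ge i(\bar\alpha)$), so the selection rule must be consistent with the $\sq$-coherence of $\vec{C}$ across all ordinals that ever ``see'' $\bar\alpha$ as an accumulation point. This requires a careful bookkeeping argument---most naturally carried out via walks on $\vec{C}$---to ensure that the global selection rule interacts uniformly with the original coherence. Once the matrix is built, unthreadability transfers from $\vec{C}$ via a pigeonhole argument: any hypothetical thread $D \s \kappa$ would pick, for each $\alpha \in \acc(D)$, an index $i_\alpha$ with $D\cap\alpha = C_{\alpha,i_\alpha}$; since $\mu < \kappa$, Fodor's lemma stabilizes $i_\alpha$ on a stationary set $S$, and restricting to $S$ and stripping away the level-$\le i^*$ points adjoined at that common index $i^* := i_\alpha$ would yield a thread through (a postprocessing of) $\vec{C}$, contradicting $\square(\kappa)$.
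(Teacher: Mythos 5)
This statement is quoted as a Fact from \cite{lh_lucke} and is not proved in the paper, so I am comparing your sketch against what the actual construction requires rather than against a proof printed here.

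Your high-level plan --- thicken each $C_\alpha$ into a $\s$-increasing chain by progressively adjoining accumulation points, then transfer unthreadability by a pigeonhole argument over the $\mu<\kappa$ indices --- is the right shape. But the two places you flag as requiring care are precisely where the proof lives, and neither is filled in; moreover, one of the fills you suggest does not work as stated. First, the ``globally-defined selection rule'' $\ell:\acc(\kappa)\to\mu$ cannot be fixed in advance of the matrix: the constraint is that whenever $\bar\alpha$ becomes a new accumulation point of $C_{\alpha,i}$, the set $C_{\alpha,i}\cap\bar\alpha$ is \emph{forced} to be $C_{\bar\alpha,i}$, which was already determined at an earlier stage of the recursion on $\bar\alpha$; but what was determined there implicitly depended on which $\alpha$'s would later want to see $\bar\alpha$. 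This chicken-and-egg interaction already bites at the very first layer: if you set $C_{\alpha,i(\alpha)}\approx C_\alpha$, then for every $\bar\alpha\in\acc(C_\alpha)$ you are committed to $i(\alpha)\ge i(\bar\alpha)$ and to $C_{\bar\alpha,j}=C_{\bar\alpha}$ for all $j\in[i(\bar\alpha),i(\alpha)]$, i.e.\ the column above $\bar\alpha$ must stay frozen on an interval whose length depends on the ordinals above $\bar\alpha$. Second, the phrase ``carried out via walks on $\vec C$'' is suspicious for regular $\mu>\omega$: walks have finite length, so the natural quantity they produce (the depth $\rho_2$) can stratify $\acc(\alpha)$ into $\omega$ many layers, not $\mu$ many. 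For uncountable regular $\mu$, the layering has to come from a different recursive engine.

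The unthreadability transfer also has a gap. The pigeonhole step (which you call Fodor; it is really $\kappa$-completeness of $\ns_\kappa$ applied to a partition into $\mu<\kappa$ pieces) is fine and gives a stationary $S\s\acc(D)$ with $D\cap\alpha=C_{\alpha,i^*}$ for all $\alpha\in S$. But ``stripping away the level-$\le i^*$ points'' presupposes that $C_{\alpha,i^*}$ decomposes uniformly as $C_\alpha$ together with an $\alpha$-independent side set $E_{i^*}\cap\alpha$; as noted above, the construction cannot have this form, because the adjoined points below $\bar\alpha$ in column $i^*$ of the $\alpha$-row are dictated by coherence to be $C_{\bar\alpha,i^*}\setminus C_{\bar\alpha}$, an object that varies with the ambient row. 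The correct route is to argue directly that each column $\langle C_{\alpha,i^*}\mid i(\alpha)\le i^*\rangle$ is itself an unthreadable partial square-sequence (or that the matrix is built so that a thread at any fixed index would induce a thread of $\vec C$), and this has to be \emph{built into} the recursion rather than recovered afterwards. In short: the proposal identifies the right skeleton, but the two essential verifications --- consistent definition of the recursion and transfer of unthreadability --- are deferred in ways that would not survive an attempt to write them out.
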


\begin{cor}\label{corindex} Suppose that $\kappa=\lambda^+=2^\lambda$ and $\square(\kappa)$ holds.
For every pair $\mu\le\chi$ of infinite regular cardinals with $\lambda^\chi=\lambda$, $\p^-(\kappa,\mu^{\ind},{\sq},1,\{E^\kappa_\chi\},\mu^+)$ holds.\qed
\end{cor}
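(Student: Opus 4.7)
The plan is to chain together the two results that immediately precede the corollary. Given the hypotheses $\kappa=\lambda^+=2^\lambda$ and $\square(\kappa)$, together with a pair $\mu\le\chi$ of infinite regular cardinals with $\lambda^\chi=\lambda$, I would first verify that $\mu\in\reg(\kappa)$, which is automatic since $\mu$ is regular and $\mu\le\chi<\kappa$ (the inequality $\chi<\kappa$ follows from $\lambda^\chi=\lambda<\lambda^+=\kappa$). Then Fact~\ref{indexingit} applied to the regular cardinal $\mu$ yields $\square^{\ind}(\kappa,\mu)$.

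Next, I would feed the conclusion $\square^{\ind}(\kappa,\mu)$ into Theorem~\ref{mainindex}. The hypotheses of that theorem require $\kappa=\lambda^+=2^\lambda$ (given), and a regular cardinal $\chi\ge\mu$ with $\lambda^\chi=\lambda$ (also given). Theorem~\ref{mainindex} then directly delivers
\[
\p^-(\kappa,\mu^{\ind},{\sq},1,\{E^\kappa_\chi\},\mu^+),
\]
which is exactly the asserted conclusion.

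There is essentially no obstacle here: the corollary is a routine combination of the two preceding results, and the only bookkeeping is to check that the cardinal parameters line up (in particular, that $\mu$ and $\chi$ are below $\kappa$ so that the invocations make sense). Accordingly, the entire proof reduces to the single sentence "by Theorem~\ref{mainindex} together with Fact~\ref{indexingit}."
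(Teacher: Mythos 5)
Your proof is correct and is exactly the paper's intended argument: the corollary is marked \qed with no written proof, precisely because it is the immediate composition of Fact~\ref{indexingit} (giving $\square^{\ind}(\kappa,\mu)$ from $\square(\kappa)$) with Theorem~\ref{mainindex}. One tiny nit: ``$\chi<\kappa$ follows from $\lambda^\chi=\lambda<\lambda^+$'' is not quite a one-step inference as written; the cleanest version is that $\chi\ge\lambda$ would force $\lambda^\chi\ge 2^\lambda>\lambda$, so in fact $\chi<\lambda<\kappa$, which also confirms $\mu<\kappa$ as required by Theorem~\ref{mainindex}.
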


\section{Departing from \texorpdfstring{$\diamondsuit$}{diamond}}\label{xboxdiamond}

As seen in Section~\ref{rightway},
instances of $\p^-(\kappa,\ldots)$ together with $\diamondsuit(\kappa)$ suffice for the construction of a $\kappa$-Souslin tree. 
For this, in \cite[Definition~1.6]{paper22}, we defined the principle $\p(\kappa, \mu, \mathcal R, \theta,\mathcal S,  \nu,\sigma)$
to assert both $\p^-(\kappa, \mu, \mathcal R, \theta, \mathcal S,  \nu,\sigma)$ and $\diamondsuit(\kappa)$,
and then, in that paper as well as other papers in this project, we presented a gallery of constructions of $\kappa$-Souslin trees having various additional features
as applications of the principle $\p(\kappa,\ldots)$.

The goal of this section is to present a principle $\p^\bullet(\kappa,\ldots)$
which is strong enough to still allow all of the said constructions, and weak enough to not rely on $\diamondsuit$. 
Note, however, that assuming $\kappa^{<\kappa}=\kappa$, $\p^-(\kappa,\infty,{\sq_\chi^*},1,\{\kappa\},\kappa,\sigma)$ with $\sigma\ge\omega$ implies $\diamondsuit(\kappa)$,
hence, here one should really only focus on the case $\sigma={<}\omega$.\footnote{For constructions of $\kappa$-Souslin trees that rely on instances with $\sigma=\omega$, see \cite{rinot20}.}
For this, we shall adopt Convention~\ref{omitsigma} at the outset, and will always omit the mention of $\sigma$.
We shall establish the following, which is the special case $\mathcal{R} := {\sq}$ of Corollary~\ref{Pbullet-equiv}:

\begin{cor}  For $\theta>0$, $\p_\xi^\bullet(\kappa, \mu, {\sq}, \theta, \mathcal S,  \nu)$ is equivalent to
$\p_\xi^-(\kappa, \mu, {\sq}, \theta, \mathcal S,  \nu)\wedge(\kappa^{<\kappa}=\kappa)$.\qed
\end{cor}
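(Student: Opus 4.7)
The plan is to prove both directions of the equivalence. The forward direction should be nearly definitional: since $\p_\xi^\bullet(\kappa, \mu, {\sq}, \theta, \mathcal S, \nu)$ is designed as a generalization and weakening of the conjunction $\boxtimes^-(\kappa) + \diamondsuit(\kappa)$, its underlying ladder sequence must witness $\p_\xi^-(\kappa, \mu, {\sq}, \theta, \mathcal S, \nu)$. Moreover, the guessing clause of $\p^\bullet$ should attach predictions from $H_\kappa$ to each relevant ladder point, and the requirement that every subset of $H_\kappa$ eventually be predicted forces the coding of such predictions to exhaust $H_\kappa$, yielding $|H_\kappa| = \kappa$ and thus $\kappa^{<\kappa} = \kappa$.

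For the reverse direction, fix a $\p_\xi^-(\kappa, \mu, {\sq}, \theta, \mathcal S, \nu)$-witness $\cvec{C} = \langle \mathcal C_\alpha \mid \alpha < \kappa \rangle$. Using $\kappa^{<\kappa} = \kappa$, fix an enumeration $\langle h_\iota \mid \iota < \kappa \rangle$ of $H_\kappa$ together with a club $E \subseteq \kappa$ such that $\{h_\iota \mid \iota < \beta\} \subseteq H_{|\beta|^+}$ for every $\beta \in E$. The idea is that for every $\alpha \in \acc(\kappa)$ and $C \in \mathcal C_\alpha$, one reads off a sequence of predictions along $\nacc(C)$ by assigning to each $\beta \in \nacc(C)$ the element $h_\iota$ whose index $\iota$ is recovered from $C \cap \beta$ (for instance, via the order-type of a suitable initial segment). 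The $\sq$-coherence of $\cvec{C}$ guarantees that if $\bar\alpha \in \acc(C)$ then $C \cap \bar\alpha \in \mathcal C_{\bar\alpha}$ and the predictions attached along $\nacc(C \cap \bar\alpha)$ coincide with those along $\nacc(C) \cap \bar\alpha$, so the assignment is globally well-defined.

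The main obstacle will be verifying that this assignment actually realizes the guessing clause of $\p^\bullet$: given any $\Omega \subseteq H_\kappa$ and any sequence $\langle B_i \mid i < \theta \rangle$ of cofinal subsets of $\kappa$, one must exhibit stationarily many $\alpha \in S$ (for each $S \in \mathcal S$) at which $\Omega$ is correctly predicted along $C \in \mathcal C_\alpha$ \emph{simultaneously} with the hitting of all $B_i$. The plan is to reduce this to a single application of the hitting feature of $\cvec{C}$ applied to a cleverly chosen cofinal set $B_\Omega \subseteq \kappa$ that codes $\Omega$ through the enumeration $\langle h_\iota \mid \iota < \kappa \rangle$: the values of $h_\iota$ for $\iota \in B_\Omega$ trace out $\Omega$. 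Since the prediction at $\beta \in \nacc(C)$ depends only on $C \cap \beta$, the $\p^-$-hitting of $B_\Omega$ together with the hitting of the $B_i$ via the $\theta$-parameter transfers directly into the required guessing of $\Omega$, thereby simulating $\diamondsuit$-style prediction purely from the $\sq$-coherent structure and $\kappa^{<\kappa}=\kappa$, without ever invoking $\diamondsuit(\kappa)$.
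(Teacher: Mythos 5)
The forward direction is not ``nearly definitional,'' and this is a genuine gap. A $\p^\bullet$-sequence consists of functions $C\in\mathcal F(\kappa)$ (club $\to H_\kappa$), and one might hope that the clubs $\dm{C}$ inherit the $\p^-$-hitting feature. They do not, at least not directly. The $\p^\bullet$-hitting clause gives, for many $\beta\in\nacc(\dm{C})$, an elementary submodel $\mathcal M\prec H_{\kappa^+}$ with $\mathcal M\cap\kappa=\beta$ and $\mathcal M\cap A_i = C(\beta)$; this implies that $C(\beta)$ is a cofinal subset of $\beta$ contained in $A_i$, but it does \emph{not} imply $\beta\in A_i$, which is what the $\p^-$-hitting of $\dm{C}$ would require. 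The paper (Lemma~\ref{Pbullet->Pminus}) overcomes this by a nontrivial postprocessing: each $\beta\in\nacc(\dm{C})\cap\acc(\kappa)$ is replaced by $g_C(\beta):=\min\bigl((C(\beta)\cup\{\beta\})\cap(\sup(\dm{C}\cap\beta),\beta]\bigr)$, producing a new club $x_C$ whose non-accumulation points actually land in the target sets. Your sketch elides this step entirely, and without it the forward direction fails. (The $\kappa^{<\kappa}=\kappa$ part of your forward direction is fine and matches Proposition~\ref{Pbullet->CH}.)

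The reverse direction is the right kind of idea — simulate $\diamondsuit$-style predictions using only $\kappa^{<\kappa}=\kappa$ and the $\p^-$-hitting on well-chosen cofinal sets — and matches the paper's Theorem~\ref{mainpbullet} in spirit. But your sketch leaves out the parts that carry the proof: the role of the partition $\langle B_\iota\mid\iota<\kappa\rangle$ and the $\iota_i$'s in the guessing clause; the construction of a sequence $\langle A_\gamma\mid\gamma<\kappa\rangle$ with $\{\gamma\in B_\iota\mid A_\gamma=A\}$ stationary for every $A\in H_\kappa$ and $\iota<\kappa$ (which is where $\kappa^{<\kappa}=\kappa$ is actually used, not merely an enumeration of $H_\kappa$); the function $c:[\kappa]^2\to\kappa$ inserting the ordinals $\beta_\gamma:=\otp(A_\gamma,{\lhd})$ into the clubs so the elementary submodels align; and the case split on $|\Omega_i|=\kappa$ versus $|\Omega_i|<\kappa$. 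Your phrase ``the hitting ... transfers directly into the required guessing'' hides precisely where the elementary-submodel witnesses come from. As written, neither direction closes, and the forward one is actively misrepresented as trivial.
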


\begin{definition} Let $\mathcal F(\kappa):=\bigcup_{x\in\mathcal K(\kappa)}{}^xH_\kappa$ denote the collection of all functions from an element of $\mathcal K(\kappa)$ to $H_\kappa$.

For each $C\in\mathcal F(\kappa)$, denote $\dm{C}:=\dom(C)$ and $\alpha_C:=\sup(\dm{C})$.
\end{definition}
\begin{example} For any $C\in\mathcal F(\kappa)$, $\dm C$ is an element of $\mathcal K(\kappa)$.
Going in the other direction,
for every  sequence  $\langle A_\beta\mid\beta<\kappa\rangle$ of elements of $H_\kappa$,
for any $x\in\mathcal K(\kappa)$, $C_x:=\langle A_\beta\mid \beta\in x\rangle$ is an element of $\mathcal F(\kappa)$.
\end{example}

For a binary relation $\mathcal R$  over $\mathcal F(\kappa)$, and a nonempty collection $\mathcal S$ of stationary subsets of $\kappa$,
we shall define a principle $\p^\bullet_\xi(\kappa, \mu, \mathcal R, \theta, \mathcal S,  \nu)$  in two stages.
In the first stage, we focus on the first four parameters.

\begin{defn} We say that $\langle\mathcal C_\alpha\mid\alpha<\kappa\rangle$ is a $\p^\bullet_\xi(\kappa, \mu, \mathcal R, \ldots)$-sequence iff, for every $\alpha\in\acc(\kappa)$, all of the following hold:
\begin{itemize}
\item $\mathcal C_\alpha\s\{ C\in \mathcal F(\kappa)\mid \otp(\dm{C})\le\xi\ \&\ \alpha_C=\alpha\}$;
\item $0<|\mathcal C_\alpha|<\mu$;
\item for all $C \in \mathcal C_\alpha$ and $\bar\alpha \in \acc(\dm{C})$, there exists $D \in \mathcal C_{\bar\alpha}$ with $D \mathrel{\mathcal R} C$.
\end{itemize}
\end{defn}
\begin{conv}
We shall always assume
that $\mathcal C_0:=\{\emptyset\}$ and $\mathcal C_{\alpha+1}:=\{\{(\alpha,\emptyset)\}\}$ for all $\alpha<\kappa$.
Likewise, whenever we construct a  $\p^\bullet_\xi(\kappa, \mu, \mathcal R, \ldots)$-sequence $\langle \mathcal D_\alpha\mid\alpha<\kappa\rangle$,
we shall never bother to define $\mathcal D_0$ and $\mathcal D_{\alpha+1}$ for $\alpha<\kappa$.
We also adopt Conventions \ref{conventionxi}, \ref{convention-mu-infty}, and \ref{conv<theta}.
\end{conv}

\begin{example}\label{example53b}
The binary relations over $\mathcal F(\kappa)$ that fit as the parameter $\mathcal R$
should be understood as \emph{coherence} relations.
The basic example is the \emph{end-extension} relation, $\sq$,
where, for $C,D\in\mathcal F(\kappa)$, we define $C \sqsubseteq D$ iff  $C = D \restriction \alpha_C$.
More nuanced binary relations over $\mathcal F(\kappa)$ are obtained by modifying the $\sq$ relation as follows:
\begin{itemize}
\item We define $C \sq^* D$ iff there exists $\gamma < \alpha_C$ such that ${C\restriction(\dm{C}\setminus\gamma)} \sq {D\restriction(\dm{D}\setminus\gamma)}$;
\item For $\mathcal R \in \{ {\sq}, {\sq^*} \}$, we define
$C \mathrel{_{\chi}{\mathcal R}}D$ iff ((${C}\mathrel{\mathcal{R}}{D}$) or ($\cf(\alpha_C)<\chi$));
\item For $\mathcal{R} \in \{ {\sq}, {\sq^*} \}$,
we define $C \mathrel{\mathcal{R}_\chi} D$ iff ((${C} \mathrel{\mathcal{R}} {D}$) or 
($\otp(\dm D)<\chi$ and $\nacc(\dm D)$ consists only of successor ordinals));
\item For any binary relation $\mathcal R$ over $\mathcal F(\kappa)$ and any class $\Omega\s\on$, we define
$C\mathrel{^{\Omega}{\mathcal R}}D$ iff ((${C}\mathrel{\mathcal{R}}{D}$) and ($\alpha_C\notin\Omega$)).
\end{itemize}
\end{example}

The principle $\p^-(\kappa,\ldots)$ of the previous section (Definition~\ref{pminus}) dealt with hitting of arbitrary cofinal subsets of $\kappa$.
The new principle $\p^\bullet(\kappa,\ldots)$ focuses on hitting sets of the sort arising by Proposition~\ref{motivate} and by the following strengthening of Fact~\ref{def_Diamond_H_kappa}.
\begin{fact}[{\cite[Lemma~2.2]{paper22}}]\label{def_Diamond_H_kappa2}
$\diamondsuit(\kappa)$ is equivalent to the existence of a sequence 
$\langle  A_\beta \mid \beta < \kappa \rangle$ of elements of $H_\kappa$
and a partition $\langle B_\iota\mid\iota<\kappa\rangle$ of $\kappa$,
such that, for every subset $\Omega\subseteq H_\kappa$, every parameter $p\in H_{\kappa^{+}}$, and every $\iota<\kappa$, the following set is cofinal in $\kappa$:
$$B_\iota(\Omega,p):=\{ \beta\in B_\iota\mid \exists \mathcal M\prec H_{\kappa^+}(\mathcal M\cap\Omega= A_\beta\ \&\ p\in\mathcal M\ \&\ \mathcal M\cap\kappa=\beta)\}.$$
\end{fact}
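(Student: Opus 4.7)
The plan is to prove both directions by leveraging results the paper has already established. For the backward direction, suppose we have the sequence $\langle A_\beta\mid\beta<\kappa\rangle$ and the partition $\langle B_\iota\mid\iota<\kappa\rangle$ as described. Fix any $\Omega\s H_\kappa$ and $p\in H_{\kappa^+}$. By hypothesis, $B_0(\Omega,p)$ is cofinal in $\kappa$. Since $B_0(\Omega,p)\s B(\Omega,p)$, the set $B(\Omega,p)$ defined in Fact~\ref{def_Diamond_H_kappa} is also cofinal in $\kappa$, so by that fact the sequence $\langle A_\beta\mid\beta<\kappa\rangle$ already witnesses $\diamondsuit(\kappa)$.

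For the forward direction, the strategy is to first partition $\kappa$ into $\kappa$ stationary pieces each carrying its own diamond, and then assemble the global sequence out of submodel-guessing sequences on each piece. In detail: by Remark~\ref{rmk_Diamond_H_kappa}, $\diamondsuit(\kappa)$ implies $\kappa^{<\kappa}=\kappa$, so by Fact~\ref{clubvsdiamond} we also have $\clubsuit(\kappa)$. Applying Theorem~\ref{partition-clubsuit} with $S:=\kappa$ produces a partition $\langle B_\iota\mid\iota<\kappa\rangle$ of $\kappa$ with $\clubsuit(B_\iota)$ holding for every $\iota<\kappa$. A second application of Fact~\ref{clubvsdiamond}, again using $\kappa^{<\kappa}=\kappa$, upgrades each $\clubsuit(B_\iota)$ to $\diamondsuit(B_\iota)$.

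Next, the stationary-set-relative analog of Fact~\ref{def_Diamond_H_kappa} (established by the same argument as in \cite[Lemma~2.2]{paper22}) converts each $\diamondsuit(B_\iota)$ into a sequence $\langle A^\iota_\beta\mid\beta\in B_\iota\rangle$ such that, for every $\Omega\s H_\kappa$ and every $p\in H_{\kappa^+}$, cofinally many $\beta\in B_\iota$ admit some $\mathcal M\prec H_{\kappa^+}$ with $\mathcal M\cap\Omega=A^\iota_\beta$, $p\in\mathcal M$, and $\mathcal M\cap\kappa=\beta$. Amalgamating, define $A_\beta:=A^\iota_\beta$ for the unique $\iota<\kappa$ with $\beta\in B_\iota$. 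The resulting $\langle A_\beta\mid\beta<\kappa\rangle$ together with $\langle B_\iota\mid\iota<\kappa\rangle$ is exactly as required.

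The main point needing care is the stationary-set-relative version of Fact~\ref{def_Diamond_H_kappa}: given a $\diamondsuit(B_\iota)$-sequence, one must produce a sequence that enumerates intersections of elementary submodels with $H_\kappa$ at stationarily many $\beta\in B_\iota$. This is handled by the same bookkeeping used in the $S=\kappa$ case, exploiting $\kappa^{<\kappa}=\kappa$ to enumerate the candidate submodel-intersections below $\kappa$ and using a standard catch-your-tail argument to hit each $(\Omega,p)$ along $B_\iota$. Once this ingredient is in hand, the rest of the construction is pure bookkeeping.
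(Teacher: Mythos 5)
Your argument is correct, and since the paper itself imports this statement as a Fact (citing \cite[Lemma~2.2]{paper22}) rather than proving it, there is no in-paper proof to compare against; the route you take --- partition $\kappa$ into pieces each carrying $\diamondsuit$, relativize Fact~\ref{def_Diamond_H_kappa} to each piece, then amalgamate --- is the natural and standard one. Two small remarks. First, the detour through $\clubsuit$ via Fact~\ref{clubvsdiamond} and Theorem~\ref{partition-clubsuit} is slightly circuitous: Devlin's original partition theorem for $\diamondsuit$, quoted in the paper immediately before Theorem~\ref{partition-clubsuit}, already produces a partition $\langle S_\iota \mid \iota<\kappa\rangle$ of $\kappa$ with $\diamondsuit(S_\iota)$ for each $\iota$, so the passage to $\clubsuit$ and back is avoidable (though using Theorem~\ref{partition-clubsuit} has the virtue of relying on a result the present paper actually proves). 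Second, the load-bearing step --- the ``stationary-set-relative analog of Fact~\ref{def_Diamond_H_kappa}'' --- is only sketched, but the sketch is right: fix a $\diamondsuit(S_\iota)$-sequence $\langle D_\beta \mid \beta\in S_\iota\rangle$ and a bijection $\phi\colon\kappa\leftrightarrow H_\kappa$, set $A^\iota_\beta:=\phi[D_\beta]$, and given $(\Omega,p)$ let $D:=\phi^{-1}[\Omega]$; on the club of $\beta$ for which some $\mathcal M\prec H_{\kappa^+}$ has $p,\phi,\Omega\in\mathcal M$, $\mathcal M\cap\kappa=\beta$, and $\mathcal M\cap H_\kappa=\phi[\beta]$, every $\beta\in S_\iota$ with $D\cap\beta=D_\beta$ yields $\mathcal M\cap\Omega=\phi[D_\beta]=A^\iota_\beta$, and this set is stationary, hence cofinal, as needed.
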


As the reader by now probably expects, the cofinal sets considered by $\p^\bullet(\kappa,\ldots)$ take the following form.

\begin{definition}\label{defbsets} Given $B\s\kappa$, $\Omega\s H_\kappa$, $p\in H_{\kappa^+}$ and $C\in\mathcal F(\kappa)$,
we let $B(\Omega,p,C)$ denote the set of all $\beta\in B\cap\dm{C}$ such that there exists an elementary submodel $\mathcal M\prec H_{\kappa^+}$ satisfying:
\begin{itemize}
\item $p\in\mathcal M$;
\item $\mathcal M\cap\kappa=\beta$;
\item $\mathcal M\cap\Omega=C(\beta)$.
\end{itemize} 
\end{definition}

We now arrive at the second stage of the definition of this principle.

\begin{definition}\label{pbullet}
$\p_\xi^\bullet(\kappa, \mu, \mathcal R, \theta, \mathcal S,  \nu)$
asserts the existence 
of a $\p_\xi^\bullet(\kappa, \mu, \mathcal R, \ldots)$-sequence $\langle \mathcal C_\alpha \mid \alpha < \kappa \rangle$
and a partition $\langle B_\iota\mid \iota<\kappa\rangle$ of $\kappa$, satisfying the following.

For every sequence $\langle (\Omega_i,p_i,\iota_i) \mid i < \theta \rangle$ of elements of $\mathcal P(H_\kappa)\times H_{\kappa^+}\times\kappa$,
every $S \in \mathcal S$, and every $n<\omega$, there exist stationarily many $\alpha \in S$ such that:
\begin{enumerate}
\item $| \mathcal C_\alpha| < \nu$; and
\item for all $C \in \mathcal C_\alpha$ and $i < \min\{\alpha, \theta\}$,
\begin{equation}\label{hittingeqn2}\tag{$\star\star$}
\sup\{ \gamma\in \dm{C} \mid \suc_n (\dm{C} \setminus \gamma) \subseteq B_{\iota_i}(\Omega_i,p_i,C) \} = \alpha.
\end{equation}
\end{enumerate}
\end{definition}

Before we prove the main result of this section, let us point out that $\p^\bullet(\kappa,\ldots)$ is indeed a consequence of $\p(\kappa,\ldots)$.
\begin{prop}\label{P->Pbullet} Suppose that $\p_\xi^-(\kappa, \mu, \mathcal R, \theta,\mathcal S,  \nu)+\diamondsuit(\kappa)$ holds,
with $\mathcal R$ from Example~\ref{example53b}.
Then so does $\p_\xi^\bullet(\kappa, \mu, \mathcal R, \theta, \mathcal S,  \nu)$.
\end{prop}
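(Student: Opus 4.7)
The plan is to use the $\diamondsuit(\kappa)$ sequence to \emph{upgrade} a $\p^-$-sequence (which lives in $\mathcal K(\kappa)$) to a $\p^\bullet$-sequence (which lives in $\mathcal F(\kappa)$) by gluing the diamond values onto the domains. Concretely, fix a witness $\cvec{C}^- = \langle \mathcal C_\alpha^- \mid \alpha<\kappa\rangle$ to $\p_\xi^-(\kappa, \mu, \mathcal R, \theta, \mathcal S, \nu)$, together with a sequence $\langle A_\beta\mid\beta<\kappa\rangle$ and a partition $\langle B_\iota\mid\iota<\kappa\rangle$ as provided by Fact~\ref{def_Diamond_H_kappa2}. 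For every $\alpha\in\acc(\kappa)$, set
\[
\mathcal C_\alpha := \bigl\{\,\langle A_\beta\mid \beta\in C^-\rangle \bigm| C^-\in \mathcal C_\alpha^-\,\bigr\}.
\]
I claim that $\cvec{C}:=\langle\mathcal C_\alpha\mid\alpha<\kappa\rangle$, together with the same partition $\langle B_\iota\mid\iota<\kappa\rangle$, witnesses $\p_\xi^\bullet(\kappa, \mu, \mathcal R, \theta, \mathcal S, \nu)$.

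First I would verify that $\cvec{C}$ is a $\p_\xi^\bullet(\kappa, \mu, \mathcal R, \ldots)$-sequence. For any $C\in\mathcal C_\alpha$ arising from $C^-\in\mathcal C_\alpha^-$, one has $\dm C = C^-$, so $\otp(\dm C)\leq\xi$, $\alpha_C = \alpha_{C^-}=\alpha$, and $|\mathcal C_\alpha|\leq|\mathcal C_\alpha^-|<\mu$. The coherence axiom transfers directly: each relation in Example~\ref{example53b} is defined in terms of the two corresponding relations in Example~\ref{example53} applied to $\dm C$ and $\dm D$, together with the pointwise values; but in our setup the values are uniformly given by $\beta\mapsto A_\beta$, so for any $\bar\alpha\in\acc(\dm C) = \acc(C^-)$, choosing $D^-\in\mathcal C_{\bar\alpha}^-$ with $D^-\mathrel{\mathcal R} C^-$ and setting $D := \langle A_\beta\mid\beta\in D^-\rangle\in\mathcal C_{\bar\alpha}$ yields $D\mathrel{\mathcal R} C$. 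The modifiers ${}_\chi$, ${}^{\Omega}$, and ${}_\chi$ depend only on the ordinals $\cf(\alpha_C)$, $\alpha_C$, and $\otp(\dm D)$, which are preserved under the lift $C^-\mapsto \langle A_\beta\mid\beta\in C^-\rangle$.

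For the hitting feature, let $\langle (\Omega_i, p_i, \iota_i)\mid i<\theta\rangle$, $S\in\mathcal S$, and $n<\omega$ be given. For each $i<\theta$, define
\[
B_i^* := \bigl\{\beta\in B_{\iota_i} \bigm| \exists \mathcal M\prec H_{\kappa^+}\bigl(\mathcal M\cap\Omega_i = A_\beta\ \&\ p_i\in\mathcal M\ \&\ \mathcal M\cap\kappa=\beta\bigr)\bigr\},
\]
which is cofinal in $\kappa$ by Fact~\ref{def_Diamond_H_kappa2}. Applying $\p_\xi^-(\kappa, \mu, \mathcal R, \theta, \mathcal S, \nu)$ (invoked with $\sigma := n+1$, available via Convention~\ref{omitsigma}) to the sequence $\langle B_i^*\mid i<\theta\rangle$ and $S$ produces stationarily many $\alpha\in S$ with $|\mathcal C_\alpha^-|<\nu$ such that, for every $C^-\in\mathcal C_\alpha^-$ and every $i<\min\{\alpha,\theta\}$,
\[
\sup\{\gamma\in C^- \mid \suc_n(C^-\setminus\gamma)\s B_i^*\}=\alpha.
\]
For any such $\alpha$ we have $|\mathcal C_\alpha|\leq|\mathcal C_\alpha^-|<\nu$. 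Moreover, for $C\in\mathcal C_\alpha$ with underlying $C^- = \dm C$, we have $C(\beta) = A_\beta$ for all $\beta\in\dm C$, so by Definition~\ref{defbsets} the inclusion $\beta\in B_i^*\cap\dm C$ is exactly $\beta\in B_{\iota_i}(\Omega_i,p_i,C)$. Since $\suc_n(\dm C\setminus\gamma)\s\dm C$, the hitting for $B_i^*$ at $\dm C$ is identical to the hitting for $B_{\iota_i}(\Omega_i,p_i,C)$, delivering Equation~\eqref{hittingeqn2} of Definition~\ref{pbullet}.

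There is no real obstacle; the proof is essentially a translation, and the only point requiring attention is the verbatim correspondence between the relations of Examples~\ref{example53} and~\ref{example53b} on the image of $C^-\mapsto \langle A_\beta\mid\beta\in C^-\rangle$, which is routine to verify case by case.
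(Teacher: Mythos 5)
Your proof is correct and takes essentially the same approach as the paper's (which compresses the entire verification into an ``Evidently''): form $D_x := \langle A_\beta \mid \beta \in x\rangle$ from the $\diamondsuit(\kappa)$-sequence of Fact~\ref{def_Diamond_H_kappa2} and observe that $B_{\iota_i}(\Omega_i,p_i,D_x)$ is precisely $B_{\iota_i}(\Omega_i,p_i)\cap x$. The only cosmetic slip is writing ``invoked with $\sigma := n+1$'' where your displayed equation uses $\suc_n$, so $\sigma := n$ is the right choice; harmless either way since Convention~\ref{omitsigma} gives all finite $\sigma$ at once.
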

\begin{proof} Recalling Convention~\ref{omitsigma}, fix a sequence $\langle\mathcal C_\alpha\mid\alpha<\kappa\rangle$ witnessing $\p_\xi^-(\kappa, \mu, \mathcal R,\allowbreak\theta, \mathcal S,  \nu, {<}\omega)$.
As $\diamondsuit(\kappa)$ holds, let $\langle  A_\beta \mid \beta < \kappa \rangle$ and $\langle B_\iota\mid\iota<\kappa\rangle$ be given by Fact~\ref{def_Diamond_H_kappa2}. 

For every $x\in\mathcal K(\kappa)$, let $D_x:=\langle A_\beta\mid \beta\in x\rangle$.
Then, for every $\alpha\in\acc(\kappa)$, let $\mathcal D_\alpha:=\{ D_x\mid x\in\mathcal C_\alpha\}$.
Evidently, $\langle\mathcal D_\alpha\mid\alpha<\kappa\rangle$ and $\langle B_\iota\mid\iota<\kappa\rangle$ together witness $\p_\xi^\bullet(\kappa, \mu, \mathcal R, \theta, \mathcal S,  \nu)$.
\end{proof}

\begin{prop}\label{Pbullet->CH} If $\p^\bullet_\xi(\kappa,\mu,\mathcal{R}, \theta, \mathcal{S}, \nu)$ holds with $\theta>0$, then $\kappa^{<\kappa}=\kappa$.
\end{prop}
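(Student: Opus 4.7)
The plan is to show that, under the stated hypothesis, $|H_\kappa|\le\kappa$, which (together with the trivial $|H_\kappa|\geq\kappa$) gives $\kappa^{<\kappa}=|H_\kappa|=\kappa$. The key idea is to exploit $\theta>0$ and use the hitting feature to ``catch'' every element of $H_\kappa$ in the codomain of some $C\in\bigcup_\alpha\mathcal C_\alpha$.

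First I would fix a witness: a $\p_\xi^\bullet(\kappa,\mu,\mathcal R,\ldots)$-sequence $\langle\mathcal C_\alpha\mid\alpha<\kappa\rangle$ together with a partition $\langle B_\iota\mid\iota<\kappa\rangle$ of $\kappa$ satisfying Definition~\ref{pbullet}. Then I would consider the set of values appearing in the ladders,
\[
\mathcal A:=\bigcup_{\alpha<\kappa}\bigcup_{C\in\mathcal C_\alpha}\{C(\beta)\mid\beta\in\dm{C}\}.
\]
Since $\mu\le\kappa$ and $\xi\le\kappa$ by Conventions~\ref{conventionsforpaper}, we have $|\mathcal C_\alpha|<\mu\le\kappa$ and $|\dm{C}|\le|\xi|\le\kappa$ for every $\alpha<\kappa$ and every $C\in\mathcal C_\alpha$, so that $|\mathcal A|\le\kappa\cdot\kappa\cdot\kappa=\kappa$. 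Since $C\in\mathcal F(\kappa)$ entails $C(\beta)\in H_\kappa$, we have $\mathcal A\s H_\kappa$.

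Next, fix an arbitrary $x\in H_\kappa$, and I would invoke the hitting feature with $\theta>0$ by setting $\Omega_0:=\{x\}\in\mathcal P(H_\kappa)$, $p_0:=x\in H_{\kappa^+}$, $\iota_0:=0$, $n:=1$, and some $S\in\mathcal S$. This produces $\alpha\in S$ (indeed stationarily many) such that, fixing any $C\in\mathcal C_\alpha$, the set $B_0(\{x\},x,C)\cap\dm{C}$ is nonempty; pick any $\beta$ in this set. By Definition~\ref{defbsets}, there is $\mathcal M\prec H_{\kappa^+}$ with $x=p_0\in\mathcal M$, $\mathcal M\cap\kappa=\beta$, and $\mathcal M\cap\{x\}=C(\beta)$. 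Because $x\in\mathcal M$, we get $\mathcal M\cap\{x\}=\{x\}$, and hence $C(\beta)=\{x\}$. Therefore $\{x\}\in\mathcal A$.

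The map $x\mapsto\{x\}$ from $H_\kappa$ into $\mathcal A$ is injective, so $|H_\kappa|\le|\mathcal A|\le\kappa$, yielding $\kappa^{<\kappa}=\kappa$ as claimed. There is no real obstacle here: the argument is a direct application of the hitting feature with carefully chosen parameters $(\Omega_0,p_0)=(\{x\},x)$, together with the observation that the elementarity of $\mathcal M$ with $x\in\mathcal M$ forces $C(\beta)$ to literally equal $\{x\}$ rather than the empty trace. The only minor care needed is confirming that all parameters lie in the correct universe and that a single application of the hitting property (with $\theta>0$ allowing $i=0$) suffices.
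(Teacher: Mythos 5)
Your proof is correct and takes essentially the same approach as the paper: invoke the hitting feature with $\theta>0$ to produce an elementary submodel $\mathcal M$ witnessing $\beta\in B_0(\Omega_0,p_0,C)$, then use $p_0\in\mathcal M$ to pin down $C(\beta)$ and thereby show $|H_\kappa|\le\kappa$. The only difference is a small optimization: you feed $\Omega_0:=\{x\}$ so that $x\in\mathcal M$ immediately forces $\mathcal M\cap\Omega_0=\{x\}$, whereas the paper feeds $\Omega_0:=A$ (the target element itself) and must observe that $|A|\in\mathcal M\cap\kappa=\beta$ implies $A\s\mathcal M$, yielding $\mathcal M\cap A=A$. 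Your variant sidesteps that cardinality step, but the structure of the argument is the same.
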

\begin{proof} Let $\cvec{C} = \langle \mathcal C_\alpha \mid \alpha<\kappa \rangle$ be a sequence that,
together with some partition $\langle B_\iota \mid \iota<\kappa \rangle$ of $\kappa$, witnesses 
$\p^\bullet_\xi(\kappa,\mu,\mathcal{R}, \theta, \mathcal{S}, \nu)$ with $\theta>0$.
\begin{claim}
$H_\kappa = \bigcup_{\alpha\in\acc(\kappa)} \bigcup_{C\in\mathcal{C}_\alpha}\im(C).$
\end{claim}
\begin{proof} We focus on the nontrivial inclusion.
Let $A \in H_\kappa$.
Set $\Omega_0 := p_0 := A$, $\iota_0 := 0$, and $n:=1$.
Then by the hypothesis on the sequence $\cvec{C}$,
we can choose some $\alpha\in\acc(\kappa)$ such that for all $C\in\mathcal{C}_\alpha$,
Equation~\eqref{hittingeqn2} holds with $i:=0$.
Choose some $C\in\mathcal{C}_\alpha$ and some $\beta\in\nacc(\dm{C})\cap B_0(A,A,C)$.
Then we can fix an elementary submodel $\mathcal{M}\prec H_{\kappa^+}$ such that
$A\in\mathcal{M}$, $\mathcal{M}\cap\kappa=\beta$, and $\mathcal{M}\cap A = C(\beta)$.
But $|A|<\kappa$ and $|A|\in\mathcal{M}$ by elementarity, so that $|A| \in \mathcal{M}\cap\kappa = \beta$,
and we infer that $A \s \mathcal{M}$.
Thus, $A = A\cap\mathcal{M} = C(\beta)\in\im(C)$.
\end{proof}

In particular, $|H_\kappa| =  \kappa$.
\end{proof}

\begin{lemma}\label{Pbullet->Pminus} Suppose $\mathcal{R}$ is taken from Example~\ref{example53b}.
Then $\p_\xi^\bullet(\kappa, \mu, \mathcal R, \theta, \mathcal S,  \nu)$ implies 
$\p_\xi^-(\kappa, \mu, \mathcal R, \theta, \mathcal S,  \nu)$.
\end{lemma}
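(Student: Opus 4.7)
The plan is to define $\mathcal{D}_\alpha := \{\dm{C} \mid C \in \mathcal{C}_\alpha\}$ for every $\alpha<\kappa$, and argue that $\cvec{D}$ witnesses $\p_\xi^-(\kappa,\mu,\mathcal R,\theta,\mathcal S,\nu)$, reinterpreting each relation of Example~\ref{example53b} as its counterpart from Example~\ref{example53} under the domain map $C\mapsto\dm{C}$. The coherence features transfer routinely because the two families are defined in strict parallel: $C\sq D$ (that is, $C = D\restriction\alpha_C$) immediately gives $\dm{C} = \dm{D}\cap\alpha_C$, and analogous intertwinings hold for $\sq^*$, $\sqleft{\chi}$, $\sqleft{\chi}^*$, the $\mathcal R_\chi$-variants, and the $^\Omega\mathcal R$-variants. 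Together with $\alpha_{\dm{C}}=\alpha_C$, $\otp(\dm{C})\le\xi$, and $|\mathcal D_\alpha|\le|\mathcal C_\alpha|<\mu$, this makes $\cvec D$ a $\p^-_\xi(\kappa,\mu,\mathcal R,\ldots)$-sequence.

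For the hitting property, given a sequence $\langle B_i\mid i<\theta\rangle$ of cofinal subsets of $\kappa$, $S\in\mathcal S$, and $n<\omega$, the idea is to feed the $\p^\bullet$-hitting with a sequence of triples $(\Omega_i,p_i,\iota_i)$ that encodes each $B_i$ in such a way that the model-theoretic set $B_{\iota_i}(\Omega_i,p_i,C)$ is contained in $B_i$. A first natural attempt is $(\Omega_i,p_i,\iota_i) := (B_i,\{B_i\},0)$: invoking $\p^\bullet$ with these parameters then yields stationarily many $\alpha\in S$ with $|\mathcal{C}_\alpha|<\nu$ (whence $|\mathcal D_\alpha|<\nu$) and $\sup\{\gamma\in\dm{C}\mid\suc_n(\dm{C}\setminus\gamma)\subseteq B_0(B_i,\{B_i\},C)\}=\alpha$ for every $C\in\mathcal C_\alpha$ and $i<\min\{\alpha,\theta\}$. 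Under this choice, for $\beta\in B_0(B_i,\{B_i\},C)$ the witnessing elementary submodel $\mathcal M\prec H_{\kappa^+}$ has $B_i\in\mathcal M$ and $\mathcal M\cap\kappa=\beta$, whence $\beta\subseteq\mathcal M$ and $C(\beta)=\mathcal M\cap B_i=B_i\cap\beta$.

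The main obstacle I anticipate is that this identity $C(\beta)=B_i\cap\beta$ records the trace of $B_i$ below $\beta$ but does not by itself decide whether $\beta\in B_i$, since any $\mathcal M$ with $\mathcal M\cap\kappa=\beta$ sees exactly $B_i\cap\beta$ regardless of whether $\beta$ itself lies in $B_i$. Closing this gap is the key technical step: one must either sharpen the triple $(\Omega_i,p_i,\iota_i)$ (for example by selecting $\iota_i$ sensitive to the structure of $B_i$, or by replacing $\Omega_i$ with a coding that leverages the successor structure of $\dm{C}$ past $\beta$) or else invoke the coherence of $\cvec C$ together with auxiliary $\p^\bullet$-hitting applications to force the containment $B_{\iota_i}(\Omega_i,p_i,C)\subseteq B_i$. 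Once this containment is secured, Convention~\ref{omitsigma} combined with Theorem~\ref{thm416} converts the conclusion into the full $\p_\xi^-(\kappa,\mu,\mathcal R,\theta,\mathcal S,\nu)$.
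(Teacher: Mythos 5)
Your diagnosis of the gap is correct, but the fix you suggest points in the wrong direction, and as a result the overall plan does not work as stated. Setting $\mathcal{D}_\alpha := \{\dm{C} \mid C\in\mathcal{C}_\alpha\}$ cannot succeed, because the $\p^\bullet$-hitting feature gives you control only over the \emph{label values} $C(\beta)=\mathcal{M}\cap\Omega_i$, and, as you observe, this traces $B_i$ below $\beta$ without forcing $\beta\in B_i$. There is no way to ``sharpen the triple'' or use auxiliary hitting applications to force the domain points $\beta\in\nacc(\dm{C})$ themselves into $B_i$: for the $\p^\bullet$-definition, $\beta$ is always of the form $\mathcal M\cap\kappa$ and whether such an ordinal lies in $B_i$ is simply not decidable from $\mathcal M\cap B_i$.

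The missing idea is to change which club one passes to $\mathcal D_\alpha$: instead of $\dm{C}$, build from each $C\in\mathcal C_\alpha$ a \emph{postprocessed} club $x_C\s\alpha_C$ whose non-accumulation points are drawn from the label values $C(\beta)$. Concretely, the paper sets $x_C := \im(g_C)$ where $g_C(\beta) := \beta$ for $\beta\in\acc(\dm{C})\cup\nacc(\kappa)$, and $g_C(\beta) := \min\bigl((C(\beta)\cup\{\beta\})\cap(\sup(\dm{C}\cap\beta),\beta]\bigr)$ for $\beta\in\nacc(\dm{C})\cap\acc(\kappa)$. Then $\acc(x_C)=\acc(\dm{C})$, so the coherence transfer you describe goes through (with some bookkeeping) for this modified map as well. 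The payoff is in the hitting argument: with $(\Omega_i,p_i,\iota_i):=(A_i,A_i,0)$, at any $\beta\in B_0(A_i,A_i,C)$ the witnessing $\mathcal M$ satisfies $A_i\in\mathcal M$ and $\mathcal M\cap A_i = C(\beta)$; elementarity (with $A_i$ cofinal in $\kappa$) makes $C(\beta)$ cofinal in $\beta$, so $g_C(\beta)\in C(\beta)\s A_i$. Thus the successive $\nacc$-points of $x_C$ between two consecutive elements of $\dm{C}$ really do land in $A_i$, which gives $\suc_n(x_C\setminus\delta)\s A_i$ for the appropriate $\delta$, and the hitting for $\cvec{D}$ follows. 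Without this shift from ``domain points'' to ``label values'' the proof cannot be completed.
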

\begin{proof} For every $C\in\mathcal{F}(\kappa)$, let $x_C := \im(g_C)$, where the function $g_C : \dm{C}\to\alpha_C$ is defined by setting
\[
g_C(\beta) := \begin{cases}
\min\Bigl(\bigl(C(\beta)\cup\{\beta\}\bigr)\cap\bigl(\sup(\dm{C}\cap\beta),\beta\bigr]\Bigr), 
&\text{if } \beta\in\nacc(\dm{C})\cap\acc(\kappa); \\
\beta, &\text{if } \beta\in\acc(\dm{C})\cup\nacc(\kappa).
\end{cases}
\]

\begin{claim}\label{coherence-in-Pbullet->Pminus}
For all $C, C' \in \mathcal{F}(\kappa)$:
\begin{enumerate}
\item $x_C$ is a club in $\alpha_C$ with $\acc(x_C) = \acc(\dm{C})$.
\item If $C \mathrel{\mathcal{R}} C'$ then $x_C \mathrel{\mathcal{R}} x_{C'}$.
\end{enumerate}
\end{claim}
\begin{proof} Left to the reader (cf.~Lemma~\ref{pp-from-micro}).
\end{proof}

Fix a sequence $\cvec{C} = \langle \mathcal{C}_\alpha \mid \alpha<\kappa \rangle$ and a partition
$\vec{B} = \langle B_\iota \mid \iota<\kappa \rangle$ of $\kappa$
together witnessing $\p_\xi^\bullet(\kappa, \mu, \mathcal R, \theta, \mathcal S,  \nu)$.
Then, for every $\alpha\in\acc(\kappa)$, let $\mathcal{D}_\alpha := \{x_C \mid C \in \mathcal{C}_\alpha\}$.
It follows from Claim~\ref{coherence-in-Pbullet->Pminus} that
$\cvec{D} := \langle \mathcal{D}_\alpha \mid \alpha<\kappa \rangle$ is a 
$\p_\xi^-(\kappa, \mu, \mathcal R, \dots)$-sequence.
To show that $\cvec{D}$ witnesses $\p_\xi^-(\kappa, \mu, \mathcal R, \theta, \mathcal S,  \nu)$,
we shall now verify that it satisfies the hitting feature of Definition~\ref{pminus},
recalling Convention~\ref{omitsigma}.

\begin{claim} Suppose $S\in\mathcal{S}$, $\langle A_i \mid i<\theta \rangle$ is a sequence of cofinal subsets of $\kappa$, and $n<\omega$.
Then there exist stationarily many $\alpha \in S$ such that $|\mathcal{D}_\alpha|<\nu$ and, for all $x \in \mathcal{D}_\alpha$ and all $i<\min\{\alpha,\theta\}$,
$\sup\{ \delta\in x \mid \suc_n (x \setminus \delta) \subseteq A_i \} = \alpha$.
\end{claim}
\begin{proof} For every $i<\theta$, let $\Omega_i := p_i := A_i$ and $\iota_i := 0$.
By the choice of $\cvec{C}$ and $\vec{B}$, there are stationarily many $\alpha \in S\cap\acc(\kappa)$ such that
$|\mathcal{C}_\alpha|<\nu$ and, for all $C \in \mathcal{C}_\alpha$ and all $i<\min\{\alpha,\theta\}$,
$\sup\{ \gamma\in \dm{C} \mid \suc_{n+1} (\dm{C} \setminus \gamma) \subseteq B_{0}(A_i,A_i,C) \} = \alpha$.
Consider any such $\alpha$.
Clearly, $|\mathcal{D}_\alpha| \leq |\mathcal{C}_\alpha| <\nu$.
Now, consider any given $x \in \mathcal{D}_\alpha$, $i<\min\{\alpha,\theta\}$, and $\varepsilon<\alpha$.
By definition of $\mathcal{D}_\alpha$, we can fix some $C \in \mathcal{C}_\alpha$ such that $x = x_C$.
By our choice of $\alpha$, we can fix $\gamma \in \dm{C}$ with $\varepsilon<\gamma<\alpha$ such that
$\suc_{n+1} (\dm{C} \setminus \gamma) \subseteq B_{0}(A_i,A_i,C)$.

Consider each $j<n+1$.
Set $\beta_j := (\dm{C}\setminus\gamma)(j+1)$.
Then $\beta_j \in B_0(A_i,A_i,C)$, meaning that we can fix an elementary submodel
$\mathcal{M}_j \prec H_{\kappa^+}$ satisfying $A_i \in \mathcal{M}_j$, $\mathcal{M}_j\cap\kappa = \beta_j$,
and $\mathcal{M}_j \cap A_i = C(\beta_j)$.
As $A_i$ is a cofinal subset of $\kappa$, it follows by elementarity that
$\mathcal{M}_j \models ``A_i \text{ is a cofinal subset of }\kappa"$,
so that in fact $C(\beta_j)$ is a cofinal subset of $\beta_j$.
Of course, $\beta_j\in\acc(\kappa)$.
Thus, as $\beta_j \in \nacc(\dm{C})$,
we infer from the definition of $g_C$ that $g_C(\beta_j) \in C(\beta_j) \s A_i$.

It is clear that $\gamma, \beta_0, \beta_1, \dots, \beta_n$ are n+2 consecutive points of $\dm{C}$.
Altogether, we infer from the definition of $x_C$ that 
$g_C(\beta_0), g_C(\beta_1), \dots, g_C(\beta_n)$ are n+1 consecutive points of $x_C$, all above $\gamma$.
Letting $\delta := g_C(\beta_0)$, we obtain $\varepsilon<\gamma<\delta$, $\delta \in x$, and
$\suc_n(x\setminus\delta)\s A_i$, as sought.
\end{proof}

It follows that $\cvec{D}$ is as sought.
\end{proof}

We now improve Proposition~\ref{P->Pbullet} in two ways. First, we reduce $\diamondsuit(\kappa)$ down to $\kappa^{<\kappa}=\kappa$.
Second, and more surprisingly, we let our reader choose the partition of~$\kappa$.

\begin{thm}\label{mainpbullet} Suppose that $\kappa^{<\kappa}=\kappa$ and $\p_\xi^-(\kappa, \mu, \mathcal R, \theta, \mathcal S,  \nu)$ holds 
with $\mathcal R$ from Example~\ref{example53b}.
Let $\vec B=\langle B_\iota\mid \iota<\kappa\rangle$ be a given partition of $\kappa$ into stationary sets.
Then there exists a sequence $\langle\mathcal C_\alpha\mid\alpha<\kappa\rangle$ that, together with $\vec B$, witnesses $\p_\xi^\bullet(\kappa, \mu, \mathcal R, \theta, \mathcal S,  \nu)$.
\end{thm}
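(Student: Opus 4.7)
The plan is to decorate each ladder $x$ in the given $\p^-$-sequence $\cvec X = \langle\mathcal X_\alpha\mid\alpha<\kappa\rangle$ with a function $C_x:x\to H_\kappa$ whose value at $\beta$ depends only on $x\cap\beta$, thereby automatically transferring the coherence structure. Using $\kappa^{<\kappa}=\kappa$, I would fix an enumeration $\langle e_\gamma\mid\gamma<\kappa\rangle$ of $H_\kappa$ arranged so that every element of $H_\kappa$ appears cofinally often, and moreover, by a standard ``spreading'' argument, stationarily often in each piece $B_\iota$ of the given partition $\vec B$. Define $C_x(\beta):=e_{\sup(x\cap\beta)}$ for $\beta\in\nacc(x)$ (with a canonical default on $\acc(x)$), and set $\mathcal C_\alpha:=\{C_x\mid x\in\mathcal X_\alpha\}$. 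Since $C_x(\beta)$ is a function of $x\cap\beta$, the $\mathcal R$-coherence for $\cvec C$ follows from that of $\cvec X$; the cardinality and order-type bounds also transfer.

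For the hitting feature, I would appeal to the pair/tuple reformulation of $\p^-$ from Theorem~\ref{thm416}(v). Given $\langle(\Omega_i,p_i,\iota_i)\mid i<\theta\rangle$, $S\in\mathcal S$, and $n<\omega$, define $\mathcal B_i\subseteq[\kappa]^{n+1}$ to consist of those tuples $\{\gamma_0<\gamma_1<\cdots<\gamma_n\}$ such that, for every $j\in\{1,\ldots,n\}$, we have $\gamma_j\in B_{\iota_i}$ and there exists $\mathcal M_j\prec H_{\kappa^+}$ with $p_i\in\mathcal M_j$, $\mathcal M_j\cap\kappa=\gamma_j$, and $\mathcal M_j\cap\Omega_i=e_{\gamma_{j-1}}$. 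Applying Theorem~\ref{thm416}(v) to $\langle\mathcal B_i\mid i<\theta\rangle$ with the $\p^-$-sequence $\cvec X$ then yields stationarily many $\alpha\in S$ with $|\mathcal X_\alpha|<\nu$ such that, for every $x\in\mathcal X_\alpha$, $i<\min\{\alpha,\theta\}$, and $\varepsilon<\alpha$, one finds $\gamma<\beta<\alpha$ with $\varepsilon\le\gamma$ and $x\cap(\gamma,\beta)\in\mathcal B_i$. Unpacking, $x$ has $n$ successive elements $\gamma_1<\cdots<\gamma_n$ in $(\gamma,\beta)$ with $\sup(x\cap\gamma_j)=\gamma_{j-1}$ (setting $\gamma_0:=\gamma$), so $C_x(\gamma_j)=e_{\gamma_{j-1}}=\mathcal M_j\cap\Omega_i$, which places each $\gamma_j$ in $B_{\iota_i}(\Omega_i,p_i,C_x)$, realizing the $\p^\bullet$ hitting.

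The main obstacle is verifying $\mup(\mathcal B_i)=\kappa$ for every $i$. For each value $A\in H_\kappa$ that is \emph{realizable}, meaning there exists $\mathcal M\prec H_{\kappa^+}$ with $p_i\in\mathcal M$, $\mathcal M\cap\kappa\in B_{\iota_i}$, and $\mathcal M\cap\Omega_i=A$, our choice of enumeration ensures $E_A:=\{\gamma\mid e_\gamma=A\}$ is stationary in $B_{\iota_i}$; the delicate point is that once one pins down $\gamma_{j-1}\in E_A$, one must locate $\gamma_j>\gamma_{j-1}$ in $B_{\iota_i}$ at which $A$ remains realizable, and for the intermediate positions the index $\gamma_j$ must additionally lie in $E_{A'}$ for some realizable value $A'$ depending on the continuation. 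A Löwenheim--Skolem argument building each $\mathcal M_j$ as an extension of the preceding one—with just enough flexibility to pin $\mathcal M_j\cap\Omega_i$ at the prescribed $e_{\gamma_{j-1}}$ while forcing $\mathcal M_j\cap\kappa=\gamma_j$—should suffice, but the combinatorics must be handled with care, especially for $\Omega_i$ of small elementary complexity (e.g., $\Omega_i\in H_\kappa$) where the set of achievable values stabilizes. Stratifying the enumeration $\langle e_\gamma\rangle$ across the partition $\vec B$, so that each realizable value for each relevant $\Omega_i$ is represented cofinally in each piece, provides the flexibility needed to close this step.
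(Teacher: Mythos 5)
There is a genuine gap, and it lies exactly where you flag it: the claim that $\mup(\mathcal B_i)=\kappa$ fails, and no rearrangement of the enumeration $\langle e_\gamma\mid\gamma<\kappa\rangle$ can fix it. The underlying structural problem is that you keep $\dm{C_x}=x$. Recall that $\beta\in B_{\iota_i}(\Omega_i,p_i,C_x)$ asks for a model $\mathcal M\prec H_{\kappa^+}$ with $\mathcal M\cap\kappa=\beta$ \emph{and} $\mathcal M\cap\Omega_i=C_x(\beta)$ simultaneously. Once the model contains a well-ordering $\lhd$ of $H_\kappa$ of order-type $\kappa$, these two data determine each other: $\beta=\otp(\mathcal M\cap\Omega_i,\lhd)$. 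So $\beta$ cannot be an arbitrary element of the ladder $x$; it is forced by the value stored at it. Your construction assigns $C_x(\gamma_j)=e_{\gamma_{j-1}}$ at a point $\gamma_j\in x$ over which you have no control, while simultaneously needing $\gamma_j$ to be the intersection ordinal of a model whose intersection with $\Omega_i$ equals $e_{\gamma_{j-1}}$. This is over-constrained, and in fact is impossible in a strong sense: setting $\gamma_1:=\otp(e_{\gamma_0},\lhd)$, your $\mathcal B_i$ can only contain tuples with $\gamma_0<\gamma_1=\otp(e_{\gamma_0},\lhd)$; the map $\gamma_1\mapsto\gamma_0$ would be an injective regressive function, so the set of admissible $\gamma_1$ is non-stationary — and one can in fact make $\mathcal B_i=\emptyset$. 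Concretely, set $p_i:=\langle e_\gamma\mid\gamma<\kappa\rangle$ and $\Omega_i:=\{\langle\xi,e_\xi\rangle\mid\xi<\kappa\}$. For any model $\mathcal M$ containing $p_i$ with $\mathcal M\cap\kappa=\gamma_1$, one has $\mathcal M\cap\Omega_i=\{\langle\xi,e_\xi\rangle\mid\xi<\gamma_1\}$; if this equaled $e_{\gamma_0}$ for some $\gamma_0<\gamma_1$, then $\langle\gamma_0,e_{\gamma_0}\rangle\in e_{\gamma_0}$, which violates foundation. So $\mathcal B_i=\emptyset$ and there is nothing for Theorem~\ref{thm416}(v) to hit.

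The paper's proof resolves precisely this tension by decoupling the ``free index'' from the ``model-intersection ordinal,'' and it does so by modifying $\dm{C_x}$. One fixes $\vec A=\langle A_\gamma\mid\gamma<\kappa\rangle$ so that each value is realized stationarily often in each $B_\iota$, and a well-ordering $\lhd$ of $H_\kappa$ of order-type $\kappa$, and one sets $\beta_\gamma:=\otp(A_\gamma,\lhd)$. Given $\Omega$ with $|\Omega|=\kappa$ and a model $\mathcal M\ni\lhd,\Omega,p$ with $\mathcal M\cap\kappa=\beta\in B_\iota$, one can then choose $\gamma>\beta$ with $A_\gamma=\mathcal M\cap\Omega$ (the index is chosen \emph{after} $\mathcal M$), and automatically $\beta_\gamma=\otp(\mathcal M\cap\Omega,\lhd)=\beta$. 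The domain of $C_x$ is then set to be $\acc(x)$ together with the points $c(\sup(x\cap\gamma),\gamma)$ for $\gamma\in\nacc(x)$, where $c(\delta,\gamma)=\beta_\gamma$ if $\beta_\gamma\in(\delta,\gamma]$ and $c(\delta,\gamma)=\gamma$ otherwise, and $C_x(\beta):=A_{\min(x\setminus\beta)}$. The new domain point $\beta_\gamma$ sits strictly between consecutive elements of $x$ and carries the value $A_\gamma=\mathcal M\cap\Omega$, so that $\beta_\gamma\in B_\iota(\Omega,p,C_x)$ by design. The hitting is then obtained by feeding the $\p^-$-sequence a suitable cofinal set $G_i$ (built from the models), using $\suc_{n+1}$, and reading off the markers $\beta_\gamma$. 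In short: your decoration $C_x$ must introduce new domain points that \emph{are} the forced intersection ordinals; keeping $\dm{C_x}=x$ is the step that must be abandoned.
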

\begin{proof} Using $\kappa^{<\kappa}=\kappa$, let $\lhd$ be some well-ordering of $H_\kappa$ of order-type $\kappa$. 
Also, fix a sequence $\vec A=\langle A_\gamma\mid \gamma<\kappa\rangle$ of elements of $H_\kappa$ such that:
\begin{itemize}
\item  for each $\gamma\in\nacc(\kappa)$, $A_\gamma=\emptyset$, and 
\item  for all $\iota<\kappa$ and $A\in H_\kappa$, $\{ \gamma\in B_\iota\mid A_\gamma=A\}$ is stationary in $\kappa$.
\end{itemize}
For every $\gamma<\kappa$, let $\beta_\gamma:=\otp(A_\gamma,{\lhd})$.
\begin{claim}\label{claim111}
Let $\iota<\kappa$, $\Omega \in[H_\kappa]^\kappa$ and $p\in H_{\kappa^{+}}$.
There exists $G\s\kappa$ such that $\{\beta_\gamma\mid\gamma\in G\}$ is cofinal in $\kappa$, and, for every $\gamma\in G$,
there exists an elementary submodel $\mathcal M\prec H_{\kappa^{+}}$ such that:
$$\begin{array}{cccc}
p\in\mathcal M,&
\mathcal M\cap\kappa=\beta_\gamma,&
\mathcal M\cap \Omega=A_{\gamma},\text{ and}&
\beta_\gamma\in B_\iota\cap\gamma.
\end{array}$$
\end{claim}
\begin{proof} Let $\epsilon<\kappa$ be arbitrary; we shall exhibit the existence of an ordinal $\gamma<\kappa$ with $\beta_\gamma>\epsilon$ satisfying all of the four requirements.

As $B_\iota$ is stationary, we may pick $\mathcal M\prec H_{\kappa^+}$ with $p,{\lhd},\Omega\in\mathcal M$ such that $\mathcal M\cap\kappa\in B_\iota\setminus(\epsilon+1)$.
Denote $\beta:=\mathcal M\cap\kappa$. As $\lhd\in\mathcal M$, we infer that $|\mathcal M\cap H_\kappa|=|\beta|<\kappa$,
so that $\mathcal M\cap\Omega\in H_\kappa$.
Thus, by the choice of $\vec A$, let us fix $\gamma>\beta$ such that $\mathcal M\cap\Omega=A_\gamma$.
Finally, as $H_{\kappa^+}\models \otp(\Omega,{\lhd})=\kappa$, we infer that $\beta_\gamma=\otp( A_\gamma,{\lhd})=\otp(\mathcal M\cap\Omega,{\lhd})=\kappa^{\mathcal M}=\beta$.
\end{proof}

Define $c:[\kappa]^2\rightarrow\kappa$ by letting, for all $\delta<\gamma<\kappa$,
$$c(\delta,\gamma):=\begin{cases}
\beta_\gamma,&\text{if }\beta_\gamma\in(\delta,\gamma];\\
\gamma,&\text{otherwise}.
\end{cases}
$$

Let $x\in\mathcal K(\kappa)$ be arbitrary. We define an element $C_x$ of $\mathcal F(\kappa)$, as follows:
\begin{itemize}
\item $\dom(C_x):=\acc(x)\cup\{c(\sup(x\cap\gamma),\gamma)\mid \gamma\in\nacc(x)\ \&\ \gamma\neq\min(x)\}$;
\item for all $\beta\in\dom(C_x)$, we let $C_x(\beta):=A_{\min(x\setminus\beta)}$.
\end{itemize}
\begin{claim} For all $x,y\in\mathcal K(\kappa)$:
\begin{enumerate}
\item $\dom(C_x)$ is a club in $\sup(x)$ with $\acc(\dom(C_x)) = \acc(x)$;
\item if $x\mathrel{\mathcal R}y$, then $C_x\mathrel{\mathcal R}C_y$.
\end{enumerate}
\end{claim}
\begin{proof} Left to the reader.
\end{proof}

Recalling Convention~\ref{omitsigma}, fix a sequence $\cvec D=\langle\mathcal D_\alpha\mid\alpha<\kappa\rangle$ witnessing $\p_\xi^-(\kappa, \mu,\allowbreak \mathcal R,\allowbreak\theta, \mathcal S,  \nu, {<}\omega)$.
Then, for every $\alpha\in\acc(\kappa)$, let $\mathcal C_\alpha:=\{ C_{x}\mid x\in\mathcal D_\alpha\}$.

\begin{claim} Suppose that $S\in\mathcal S$ and $\langle (\Omega_i,p_i,\iota_i) \mid i < \theta \rangle$ is a sequence of elements of $\mathcal P(H_\kappa)\times H_{\kappa^+}\times \kappa$,
and $n$ is a positive integer. Then there exist stationarily many $\alpha \in S$ 
such that  $| \mathcal C_\alpha| < \nu$, and, for every $C \in \mathcal C_\alpha$ and $i < \min\{\alpha, \theta\}$,
$$\sup\{ \beta \in \dm{C} \mid \suc_{n} (\dm{C} \setminus \beta) \subseteq B_{\iota_i}(\Omega_i,p_i,C) \} = \alpha.$$
\end{claim}
\begin{proof} Let $i<\theta$ be arbitrary.

$\br$ If $|\Omega_i|=\kappa$, then, by Claim~\ref{claim111}, we may fix a cofinal subset $G_i\s\kappa$ such that, for every $\gamma\in G_i$,
there exists an elementary submodel $\mathcal M\prec H_{\kappa^{+}}$, such that:
$$\begin{array}{ccccc}
p_i\in\mathcal M,&
\mathcal M\cap\kappa=\beta_\gamma,&
\mathcal M\cap \Omega_i=A_{\gamma},&
\beta_\gamma\in B_{\iota_i}\cap\gamma,\text{ and}&
G_i\cap\gamma=G_i\cap\beta_\gamma.
\end{array}$$

$\br$ If $|\Omega_i|<\kappa$, then as $\{\gamma\in B_{\iota_i}\mid A_\gamma=\Omega_i\}$ is stationary, we may fix a cofinal subset $G_i\s\kappa$ such that, 
for every $\gamma\in G_i$, there exists an elementary submodel $\mathcal M\prec H_{\kappa^{+}}$, such that:
$$\begin{array}{cccc}
p_i,{\lhd},\Omega_i\in\mathcal M,&
\mathcal M\cap\kappa=\gamma,&
\Omega_i=A_{\gamma},\text{ and}&
\gamma\in B_{\iota_i}.
\end{array}$$

Next, fix a club $E\s\kappa$ with the property that, for every $\alpha\in E$ and $i<\min\{\alpha,\theta\}$,
if $|\Omega_i|<\kappa$, then $\otp(\Omega_i,{\lhd})<\alpha$.
By the choice of $\cvec D$,
we may find a stationary $S'\s S\cap E$ such that, for every $\alpha\in S'$,
$| \mathcal D_\alpha| < \nu$, and,
for all $x \in \mathcal D_\alpha$ and $i < \min\{\alpha, \theta\}$:
$$\sup\{ \gamma\in x \mid \suc_{n+1} (x \setminus \gamma) \subseteq G_i \} = \alpha.$$

Let $\alpha\in S'$ be arbitrary. Clearly, $|\mathcal C_\alpha|\le|\mathcal D_\alpha|<\nu$. 
Fix $x\in\mathcal D_\alpha$ and $i<\min\{\alpha,\theta\}$; 
we need to prove that for every $\epsilon<\alpha$, there exists $\beta\in \dm C_x\setminus\epsilon$ with
$$\suc_{n}(\dm{C}_x \setminus \beta) \subseteq B_{\iota_i}(\Omega_i,p_i,C_x).$$

As $\alpha\in S'$, let us fix a large enough $\gamma\in x\setminus\epsilon$ such that $\suc_{n+1} (x \setminus \gamma) \subseteq G_i$.
If $|\Omega_i|<\kappa$, then we also require that $\otp(\Omega_i,{\lhd})\le\gamma$,
which is possible since $\alpha \in E$.

Let $\{ \gamma_j\mid j<n+1\}$ denote the increasing enumeration of $\suc_{n+1}(x\setminus\gamma)$.
Set $\beta:=c(\gamma,\gamma_0)$, so that $\beta\in\dm{C}_x\setminus\epsilon$ and $\suc_{n}(\dm{C}_x\setminus \beta)=\{c(\gamma_j,\gamma_{j+1})\mid j<n\}$.

Fix an arbitrary $j<n$, and we shall show that $c(\gamma_j,\gamma_{j+1})\in B_{\iota_i}(\Omega_i,p_i,C_x)$. 

$\br$ If $|\Omega_i|=\kappa$, then as $\gamma_{j+1}\in G_i$, pick  $\mathcal M\prec H_{\kappa^{+}}$ such that:
\begin{itemize}
\item $p_i\in\mathcal M$;
\item $\mathcal M\cap\kappa=\beta_{\gamma_{j+1}}$;
\item $\mathcal M\cap \Omega_i=A_{\gamma_{j+1}}$;
\item $\beta_{\gamma_{j+1}}\in B_{\iota_i}\cap\gamma_{j+1}$;
\item $G_i\cap\gamma_{j+1}=G_i\cap\beta_{\gamma_{j+1}}$.
\end{itemize}

By the last two bullets, $\gamma_j<\beta_{\gamma_{j+1}}<\gamma_{j+1}$, so that $c(\gamma_j,\gamma_{j+1})=\beta_{\gamma_{j+1}}$.
Furthermore, $\beta_{\gamma_{j+1}}$ is the unique element of the interval $\dm{C}_x\cap(\gamma_j,\gamma_{j+1})$,
so that $C_x(\beta_{\gamma_{j+1}})=A_{\min(x\setminus\beta_{\gamma_{j+1}})}=A_{\gamma_{j+1}}=\mathcal M\cap\Omega_i$. 
Altogether, $\mathcal M$ witnesses that $c(\gamma_j,\gamma_{j+1})\in B_{\iota_i}(\Omega_i,p_i,C_x)$.

$\br$ If $|\Omega_i|<\kappa$, then as $\gamma_{j+1}\in  G_i$, pick $\mathcal M\prec H_{\kappa^{+}}$ such that:
\begin{itemize}
\item $p_i,{\lhd},\Omega_i\in\mathcal M$;
\item $\mathcal M\cap\kappa=\gamma_{j+1}$;
\item $\Omega_i=A_{\gamma_{j+1}}$;
\item $\gamma_{j+1}\in B_{\iota_i}$.
\end{itemize}

As $\beta_{\gamma_{j+1}}=\otp(A_{\gamma_{j+1}},{\lhd})=\otp(\Omega_i,{\lhd})\le\gamma<\gamma_j<\gamma_{j+1}$, we infer that $\mathcal M\cap\Omega_i=\Omega_i=A_{\gamma_{j+1}}$ and $c(\gamma_j,\gamma_{j+1})=\gamma_{j+1}$.
In particular, $C_x(\gamma_{j+1})=A_{\min(x\setminus\gamma_{j+1})}=A_{\gamma_{j+1}}=\mathcal M\cap\Omega_i$. 
Altogether, $\mathcal M$ witnesses that $c(\gamma_j,\gamma_{j+1})\in B_{\iota_i}(\Omega_i,p_i,C_x)$.
\end{proof}
It follows that $\langle\mathcal C_\alpha\mid\alpha<\kappa\rangle$ is as sought.
\end{proof}

Putting the last three results together, we obtain:
\begin{cor}\label{Pbullet-equiv}
For $\theta>0$, and $\mathcal{R}$ from Example~\ref{example53b}, the following are equivalent:
\begin{enumerate}
\item $\p_\xi^-(\kappa, \mu, \mathcal{R}, \theta, \mathcal S,  \nu)\wedge(\kappa^{<\kappa}=\kappa)$;
\item $\p_\xi^\bullet(\kappa, \mu, \mathcal{R}, \theta, \mathcal S,  \nu)$;
\item For every partition $\vec B=\langle B_\iota\mid \iota<\kappa\rangle$ of $\kappa$ into stationary sets,
there exists a sequence $\cvec{C}$ that, together with $\vec B$, witnesses $\p_\xi^\bullet(\kappa, \mu, \mathcal R, \theta, \mathcal S,  \nu)$. \qed
\end{enumerate}
\end{cor}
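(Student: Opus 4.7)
The plan is to close the cycle $(1)\Longrightarrow(3)\Longrightarrow(2)\Longrightarrow(1)$ by stitching together the three results immediately preceding the corollary. Since all of the heavy lifting has already been done, the proof amounts to a short piece of bookkeeping, and there is no substantive obstacle to overcome.

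First, for $(1)\Longrightarrow(3)$, I will directly invoke Theorem~\ref{mainpbullet}: given any partition $\vec B=\langle B_\iota\mid\iota<\kappa\rangle$ of $\kappa$ into stationary sets, the hypotheses $\kappa^{<\kappa}=\kappa$ and $\p_\xi^-(\kappa,\mu,\mathcal R,\theta,\mathcal S,\nu)$ are precisely what that theorem requires in order to produce an accompanying sequence $\cvec C$ making $(\cvec C,\vec B)$ into a witness to $\p_\xi^\bullet(\kappa,\mu,\mathcal R,\theta,\mathcal S,\nu)$. The assumption that $\mathcal R$ is taken from Example~\ref{example53b} matches the hypothesis of Theorem~\ref{mainpbullet} verbatim.

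Next, for $(3)\Longrightarrow(2)$, I will simply fix any one partition $\vec B$ of $\kappa$ into $\kappa$ stationary sets; such a partition exists by Solovay's theorem for the regular uncountable cardinal $\kappa$. Applying clause~(3) to this particular $\vec B$ yields a sequence $\cvec C$ which, paired with $\vec B$, is by definition a witness to $\p_\xi^\bullet(\kappa,\mu,\mathcal R,\theta,\mathcal S,\nu)$, thereby establishing~(2).

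Finally, for $(2)\Longrightarrow(1)$, the two conjuncts are supplied by two independent citations. The cardinal arithmetic conclusion $\kappa^{<\kappa}=\kappa$ follows from Proposition~\ref{Pbullet->CH}, and this is exactly where the assumption $\theta>0$ is consumed (it guarantees at least one cofinal ``guessing'' coordinate, which, together with elementarity, forces $|H_\kappa|=\kappa$). The combinatorial conclusion $\p_\xi^-(\kappa,\mu,\mathcal R,\theta,\mathcal S,\nu)$ follows from Lemma~\ref{Pbullet->Pminus}, whose construction extracts the clubs $x_C$ from a $\p_\xi^\bullet$-sequence $\cvec C$; this is exactly where the assumption that $\mathcal R$ is drawn from Example~\ref{example53b} is used, in the form of the preservation of $\mathcal R$-coherence under $C\mapsto x_C$. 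The only ``obstacle'' is verifying that the ancillary hypotheses $\theta>0$ and $\mathcal R\in\{{\sq},{\sq^*},{_\chi{\sq}},\ldots\}$ land in the three cited results at the points where they are needed, and this bookkeeping is already built into the statement of the corollary.
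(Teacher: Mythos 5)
Your proposal is correct and follows essentially the same route the paper intends: the corollary is stated with an immediate \qed precisely because it is the conjunction of Theorem~\ref{mainpbullet} (giving $(1)\Rightarrow(3)$), the trivial specialization $(3)\Rightarrow(2)$ via any Solovay partition, and Proposition~\ref{Pbullet->CH} together with Lemma~\ref{Pbullet->Pminus} (giving $(2)\Rightarrow(1)$). You have also correctly located where the ancillary hypotheses $\theta>0$ and $\mathcal R$ from Example~\ref{example53b} are consumed.
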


The following combines Definition~\ref{indexedP} with Definition~\ref{pbullet}:

\begin{definition}\label{indexedpb}
$\p_\xi^\bullet(\kappa, \mu^{\ind}, {\sq}, \theta, \mathcal S,  \nu)$
asserts the existence of sequences $\cvec C=\langle \mathcal C_\alpha\mid \alpha<\kappa\rangle$, $\vec B=\langle B_\iota\mid\iota<\kappa\rangle$,
and $\langle i(\alpha)\mid \alpha<\kappa\rangle$ such that
$\cvec C$ and $\vec B$ together witness $\p_\xi^\bullet(\kappa, \mu^+,\allowbreak {\sq}, \theta, \mathcal S,  \nu)$,
and for every $\alpha\in\acc(\kappa)$, all of the following hold:
\begin{itemize}
\item there exists a canonical enumeration $\langle C_{\alpha,i}\mid i(\alpha)\le i<\mu\rangle$ of $\mathcal C_\alpha$;
\item for all $i\in[i(\alpha),\mu)$ and $\bar\alpha\in\acc(\dm{C}_{\alpha,i})$, $i\ge i(\bar\alpha)$ and $C_{\bar\alpha,i}\sq C_{\alpha,i}$;
\item $\langle \acc(\dm{C}_{\alpha,i})\mid i(\alpha)\le i<\mu\rangle$
is $\s$-increasing with $\acc(\alpha) = \bigcup_{i\in[i(\alpha),\mu)}\acc(\dm C_{\alpha,i})$.
\end{itemize}
\end{definition}

The proofs in this section make clear that the following holds as well.
\begin{cor}\label{upshotindx} Suppose that $\theta>0$ and $\mu<\kappa$.
Then $\p_\xi^\bullet(\kappa, \mu^{\ind}, {\sq}, \theta, \mathcal S,  \nu)$ is equivalent to
$\p_\xi^-(\kappa, \mu^{\ind}, {\sq}, \theta, \mathcal S,  \nu)\wedge(\kappa^{<\kappa}=\kappa)$.\qed
\end{cor}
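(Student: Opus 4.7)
The plan is to mirror the proof of Corollary~\ref{Pbullet-equiv} while verifying that the two postprocessing steps (one for each direction) preserve the additional indexed structure from Definition~\ref{indexedpb}. Since the hitting feature in Definition~\ref{indexedpb} is inherited verbatim from $\p^\bullet_\xi(\kappa,\mu^+,{\sq},\theta,\mathcal S,\nu)$, the three implications $(1)\Leftrightarrow(2)\Leftrightarrow(3)$ of Corollary~\ref{Pbullet-equiv} can serve as black boxes for the hitting content; the only new work is to make sure the canonical enumerations survive the transport.

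For the forward direction, suppose that $\cvec C=\langle\mathcal C_\alpha\mid\alpha<\kappa\rangle$, together with a partition $\vec B$ and indices $\langle i(\alpha)\mid\alpha<\kappa\rangle$ and canonical enumeration $\mathcal C_\alpha=\{C_{\alpha,i}\mid i(\alpha)\le i<\mu\}$, witnesses $\p^\bullet_\xi(\kappa,\mu^{\ind},{\sq},\theta,\mathcal S,\nu)$. The verbatim argument of Proposition~\ref{Pbullet->CH}, which invokes only the hitting feature, yields $|H_\kappa|=\kappa$ and hence $\kappa^{<\kappa}=\kappa$. For the $\p^-$ part, I would apply the postprocessing $C\mapsto x_C$ from Lemma~\ref{Pbullet->Pminus} pointwise on the enumeration: define $D_{\alpha,i}:=x_{C_{\alpha,i}}$ for every $\alpha\in\acc(\kappa)$ and $i\in[i(\alpha),\mu)$, and set $\mathcal D_\alpha:=\{D_{\alpha,i}\mid i(\alpha)\le i<\mu\}$. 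Claim~\ref{coherence-in-Pbullet->Pminus} gives $\acc(D_{\alpha,i})=\acc(\dm{C_{\alpha,i}})$ and $C_{\bar\alpha,i}\sq C_{\alpha,i}\Rightarrow D_{\bar\alpha,i}\sq D_{\alpha,i}$, so the three bullets of Definition~\ref{indexedP} transfer directly: whenever $\bar\alpha\in\acc(D_{\alpha,i})$ we obtain $\bar\alpha\in\acc(\dm{C_{\alpha,i}})$, hence $i\ge i(\bar\alpha)$ and $D_{\bar\alpha,i}\sq D_{\alpha,i}$; and the $\s$-increasing chain $\langle\acc(D_{\alpha,i})\mid i(\alpha)\le i<\mu\rangle$ still covers $\acc(\alpha)$. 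The hitting feature of $\p^-_\xi(\kappa,\mu^{\ind},{\sq},\theta,\mathcal S,\nu)$ is then read off exactly as in Lemma~\ref{Pbullet->Pminus}.

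For the backward direction, assume $\kappa^{<\kappa}=\kappa$ and fix $\cvec D$ witnessing $\p^-_\xi(\kappa,\mu^{\ind},{\sq},\theta,\mathcal S,\nu)$ with indices $\langle i(\alpha)\mid\alpha<\kappa\rangle$ and enumeration $D_{\alpha,i}$; let $\vec B=\langle B_\iota\mid\iota<\kappa\rangle$ be any prescribed partition of $\kappa$ into stationary sets. I would then run the construction of Theorem~\ref{mainpbullet}: fix $\lhd$, $\vec A$, the coloring $c$, and the map $x\mapsto C_x$ given there, and define $C^\bullet_{\alpha,i}:=C_{D_{\alpha,i}}$ for $\alpha\in\acc(\kappa)$ and $i\in[i(\alpha),\mu)$. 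Since the definition of $C_x$ satisfies $\acc(\dm{C_x})=\acc(x)$ and $x\sq y\Rightarrow C_x\sq C_y$, the indexed structure lifts unchanged from $\cvec D$ to $\cvec{C}^\bullet:=\langle\{C^\bullet_{\alpha,i}\mid i(\alpha)\le i<\mu\}\mid\alpha<\kappa\rangle$, while the hitting feature of $\p^\bullet_\xi(\kappa,\mu^{\ind},{\sq},\theta,\mathcal S,\nu)$ against $\vec B$ is delivered by the third claim in the proof of Theorem~\ref{mainpbullet}.

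The main obstacle here is really only bookkeeping, namely tracking that both postprocessings act on each enumerated element individually without altering the index $i$ or the floor $i(\alpha)$. Once Claim~\ref{coherence-in-Pbullet->Pminus} (respectively the coherence claim inside Theorem~\ref{mainpbullet}) is invoked to supply $\acc$-preservation and $\sq$-preservation, the three indexed bullets of Definitions~\ref{indexedP} and~\ref{indexedpb} transfer formally, so no genuinely new combinatorics beyond what is already developed in the preceding subsections is required.
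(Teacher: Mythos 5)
Your proposal is correct and matches what the paper intends (the paper offers no proof beyond the assertion that ``the proofs in this section make clear'' the statement). You correctly identify that each of the two transporting maps — $C\mapsto x_C$ from Lemma~\ref{Pbullet->Pminus} in the forward direction and $x\mapsto C_x$ from Theorem~\ref{mainpbullet} in the backward direction — is applied pointwise along the canonical enumeration, and that the needed preservation of the indexed bullets of Definitions~\ref{indexedP} and~\ref{indexedpb} follows from the $\acc$-preservation and $\sq$-preservation guaranteed by Claim~\ref{coherence-in-Pbullet->Pminus} and the claim inside Theorem~\ref{mainpbullet}; the remaining checks (that $\otp$ is not increased, that $\alpha_C$ is preserved, and that the hitting feature transfers verbatim) all go through as you say.
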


\section{Tree constructions}\label{constructions-section}
In this section, we present various constructions from instances of the proxy principle $\p^\bullet(\kappa,\dots)$.
The next table summarizes the kind of $\kappa$-Souslin trees we obtain from the instance $\p^\bullet(\kappa,\mu,\mathcal R,\theta,\{E^\kappa_{\ge\partial}\},\nu)$.
Of course, for the $\chi$-complete trees, we must also assume that $\kappa$ is $({<}\chi)$-closed,
or we can simply assume that $\chi=\aleph_0$.
\begin{figure}[H]\label{table1}
$$\begin{array}{c|l|l|l|c|l|l}
\text{Theorem} &\mu&\hspace{6pt}\mathcal R&\theta&\partial&\nu&\text{Type of $\kappa$-Souslin tree}\\ \hline\hline
\ref{basicthm}&\kappa&{\sqx^*}&1&{\chi}&\kappa&\chi\text{-complete}\\ \hline
\ref{souslinwithascentpath}&\mu^{\ind}&{\hspace{6pt}\sq}&1&{\chi}&\kappa&\chi\text{-complete}\text{ with a }\mu\text{-ascent path}\\ \hline
\ref{omitpath}&\kappa&{\sqx^*}&1&{\max\{\chi,\lambda\}}&2&\begin{aligned}&\chi\text{-complete,}\\&\text{with no ascending path of width} <\lambda\end{aligned}\\ \hline
\ref{rigid}&\kappa&{\sqx}&1&{\max\{\chi,\lambda\}}&2&\begin{aligned}&\chi\text{-complete, rigid,}\\&\text{with no ascending path of width} <\lambda\end{aligned}\\ \hline
\ref{free}&\kappa&{\sqx}&\kappa&{\chi}&2&\chi\text{-complete}, \chi\text{-free}\\ \hline
\ref{corhom}&\kappa&{\sqx^*}&\kappa&{\chi}&\kappa&\chi\text{-complete, uniformly homogeneous}\\ \hline
\ref{prop25}&2&{\hspace{6pt}\sq}&\kappa&0&2&\text{slim, uniformly coherent} \\\hline
\end{array}$$
\caption{Relationship between the vector of parameters and the characteristics of trees obtained.}
\end{figure}

By \cite{ShZa:610}, every uniformly coherent $\omega_1$-Souslin tree is the product of two free $\omega_1$-Souslin trees.
In addition, it is not hard to see that for any cardinal $\lambda$, any $\lambda$-free $\lambda^+$-Souslin tree is specializable.
Finally, by a result of Baumgartner that we mentioned in the introduction, there consistently exist $\omega_2$-Souslin trees which are not specializable.
This suggests that uniformly coherent $>$ free $>$ specializable $>$ plain, and this claim is in fact supported by the results summarized in the above table.
Indeed, for a uniformly coherent $\kappa$-Souslin tree, we assume $\mu=\nu=2$ and $\theta=\kappa$;
for a free $\kappa$-Souslin tree, we (allow $\mu=\kappa$ but) assume $\nu=2$ and $\theta=\kappa$;
for a $\kappa$-Souslin tree omitting a narrow ascending path (which is a generalization of specializable),
we assume $\nu=2$ and $\theta=1$,
whereas, for a plain $\kappa$-Souslin tree, $\nu=\kappa$ and $\theta=1$ will do.

\medskip

The next list demonstrates well the utility of the proxy principle 
as a device that provides a disconnection between the tree constructions and the study of the combinatorial hypotheses.\footnote{Keep in mind
the monotonicity properties of the proxy principle, as described in Remark~\ref{monotonicity}.}

\begin{thm}\label{thm61}
\begin{enumerate}
\item Assuming $\lambda=\cf(\lambda)\ge\aleph_0$,  $\diamondsuit(E^{\lambda^+}_\lambda)$ entails $\p^\bullet_\lambda(\lambda^+,2,\sqleft{\lambda},\allowbreak\lambda^+,\{E^{\lambda^+}_\lambda\},2)$.
In particular, $\diamondsuit(\omega_1)$ entails $\p^\bullet_\omega(\omega_1,2,{\sq},\omega_1,\{\omega_1\},2)$.
\item Assuming $\lambda=\cf(\lambda)\ge\aleph_0$, $\diamondsuit^*(E^{\lambda^+}_\lambda)$ entails
$\p^\bullet_\lambda(\lambda^+, 2, {\sql}, \lambda^+, {\ns^+_{\lambda^+} \restriction E^{\lambda^+}_\lambda}, 2)$. 

\item Assuming $\lambda\ge\aleph_0$, $\sd_\lambda$ entails $\p^\bullet_\lambda(\lambda^+,2,{\sq},\lambda^+,\{\lambda^+\},2)$.

\item Assuming $\lambda\ge\aleph_1$,
$\square_\lambda+\ch_\lambda$ entails $\p^\bullet_\lambda(\lambda^+,2,{\sq},{<}\lambda,\{E^{\lambda^+}_\chi\},2)$
for every $\chi\in\reg(\lambda)$,
as well as $\p^\bullet(\lambda^+,2,{\sq^*},1,\{E^{\lambda^+}_{\lambda}\},2)$ for $\lambda$ regular.
\item Assuming $\lambda\geq\beth_\omega$,
$\square(\lambda^+)+\ch_\lambda$ entails $\p^\bullet(\lambda^+,2,{\sq},1,\{\lambda^+\},2)$,
as well as $\p^\bullet(\lambda^+,2,{\sq^*},1,\{S\},2)$ for every stationary $S\s\lambda^+$.
\item Assuming $\lambda$ is singular, ${\square(\lambda^+)}+{\gch}$ entails $\p^\bullet(\lambda^+,2,{\sq},\lambda^+,\{\lambda^+\},2)$.
\item Assuming $\lambda\ge\aleph_1$,
$\square(\lambda^+)+\gch$ entails $\p^\bullet(\lambda^+,2,{\sq},1,\{E^{\lambda^+}_\chi\},2)$
for every $\chi \in \reg(\lambda)$.
\item Assuming $\lambda\ge\aleph_1$ and $1\le\mu<\cf(\lambda)$,
$\square(\lambda^+,\mu)+\gch$ entails $\p^\bullet(\lambda^+,\mu^+,{\sq},1,\allowbreak\{E^{\lambda^+}_{\geq\chi}\},\mu^+)$
for every $\chi\in\reg(\cf(\lambda))$,\footnote{For a finite cardinal $\mu$, $\mu^+$ stands for $\mu+1$.}
as well as $\p^\bullet(\lambda^+,\mu^+,{\sq^*},1, \{S\},\mu^+)$ for every stationary $S\s\lambda^+$.
\item Assuming $\aleph_0\le\mu<\cf(\lambda)$, $\square^\ind(\lambda^+,\mu)+\gch$ entails $\p^\bullet(\lambda^+,\mu^\ind,{\sq},\allowbreak 1,\allowbreak\{E^{\lambda^+}_{\ge\chi}\},\mu^+)$
for every $\chi\in\reg(\cf(\lambda))$.
\item For $\kappa\ge\aleph_2$ and a stationary $E\subset\kappa$,
$\square(E)+\diamondsuit(E)$ entails $\p^\bullet(\kappa,2,{\sq^*},1,\{S\},2)$ for every stationary $S\s\kappa$.
\item Assuming $\lambda^{<\lambda}=\lambda\ge\aleph_0$,
after forcing to add a single Cohen subset of $\lambda$,
$\p^\bullet_\lambda(\lambda^+,2,{\mathcal R},\lambda^+,{\ns^+_{\lambda^+}\restriction E^{\lambda^+}_\lambda},2)$ holds
with $\mathcal R={\sql}$. If $\square_\lambda$ holds in the ground model, then the conclusion holds with $\mathcal R={\sq}$.
\item Assuming $\lambda^{<\lambda}=\lambda\ge\aleph_0$
and $\kappa>\lambda$ strongly inaccessible,
after forcing by a $({<}\lambda)$-distributive $\kappa$\nobreakdash-cc notion of forcing collapsing $\kappa$ to $\lambda^+$,
$\p^\bullet_\lambda(\lambda^+,\infty,{\sq},\lambda^+,\allowbreak{\ns^+_\kappa\restriction E^{\lambda^+}_\lambda},2)$ holds.
\item Assuming $\lambda^{<\lambda}=\lambda\ge\aleph_1$ and $\ch_\lambda$,
after forcing with a $\lambda^+$-cc notion of forcing of size $\le\lambda^+$ that preserves the regularity of $\lambda$ and is not ${}^\lambda\lambda$-bounding,
$\p^\bullet_\lambda(\lambda^+,\infty,{\sq},\lambda^+, {\ns^+_{\lambda^+} \restriction E^{\lambda^+}_\lambda}, 2)$ holds.
\item Assuming $\lambda^{<\lambda}=\lambda\ge\aleph_1$ and $\ch_\lambda$,
after forcing with a $\lambda^+$-cc notion of forcing of size $\le\lambda^+$ that forces $\cf(\lambda)<|\lambda|$ (e.g., Prikry, Magidor, and Radin forcing),
$\p^\bullet_\lambda(\kappa,\infty,{\sq},\kappa,{\ns^+_\kappa\restriction T},2)$ holds,
where $\kappa := \lambda^+$ and $T := E^\kappa_\lambda$ are defined in the ground model.
\item For infinite regular cardinals $\theta<\lambda$ satisfying $\lambda^{<\theta}=\lambda$ and $\ch_\lambda$,
after L\'evy-collapsing $\lambda$ to $\theta$,
$\p^\bullet_\theta(\kappa,\infty,{\sq},\kappa,{\ns^+_\kappa\restriction T},2)$ holds,
where $\kappa := \lambda^+$ and $T := E^\kappa_\lambda$ are defined in the ground model.
\item
If $\lambda=\theta^+$ for a regular cardinal $\theta$, and $\ns\restriction E^{\lambda}_{\theta}$ is saturated,
then $\ch_\lambda$ entails $\p^\bullet(\lambda^+,2,{\sql}^*,\theta,\{E^{\lambda^+}_{\lambda}\},2)$.
\item  Assuming $\lambda^{<\lambda}=\lambda\ge\aleph_1$ and $\ch_\lambda$,
if there exists a nonreflecting stationary subset of $E^{\lambda^+}_{\neq\lambda}$,
then $\p^\bullet_\lambda(\lambda^+,\lambda^+,{\sq},{<}\lambda,\{\lambda^+\},2)$ holds,
and so does $\p^\bullet(\lambda^+,\lambda^+,{\sq^*},1,\{E^{\lambda^+}_\lambda\},2)$.
\item Assuming $\lambda=2^{<\lambda}$ is singular and ${\square^*_\lambda}+{\ch_\lambda}$, if there exists a nonreflecting stationary subset of $E^{\lambda^+}_{\neq\cf(\lambda)}$,
then $\p^\bullet_{\lambda^2}(\lambda^+,\lambda^+,{\sq},\lambda^+,\{\lambda^+\},2)$ holds.
\item For $\kappa$ strongly inaccessible, if there exists a sequence $\langle A_\alpha \mid \alpha\in E \rangle$
satisfying the hypothesis of Theorem~B,
then $\p^\bullet(\kappa,\kappa,{\sqleftup{E}},1,\{E\},2)$ holds.
\item For $\kappa$ strongly inaccessible, if $\diamondsuit(E)$ holds over some nonreflecting stationary $E\s\kappa$, 
then $\p^\bullet(\kappa,\kappa,{\sqleftup{E}},\kappa,\{E\},2)$ holds,
and so does $\p^\bullet(\kappa,\kappa,{\sq^*},1,\{S\},2)$ for every nonreflecting stationary $S\s\kappa$.
\item If $V=L$  and $\kappa$ is not weakly compact, then
$\p^\bullet(\kappa,2,{\sq},\kappa,\mathcal{S},2)$ holds for
$\mathcal{S} := \{E^\kappa_{\geq\chi} \mid \chi\in\reg(\kappa) \text{ and $\kappa$ is $({<}\chi)$-closed} \}$.
\end{enumerate}
\end{thm}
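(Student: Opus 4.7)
The overarching strategy is to observe that Theorem~\ref{thm61} is, essentially, a compilation theorem: for each of the 21 clauses, the conclusion is an instance of $\p^\bullet$, and by Corollary~\ref{Pbullet-equiv} (for $\theta>0$) it suffices to verify (a) the corresponding instance of $\p^-$ with the same parameters, and (b) the cardinal arithmetic equation $\kappa^{<\kappa}=\kappa$. For the two indexed clauses (8) and~(9), the analogous reduction uses Corollary~\ref{upshotindx} instead. Thus the plan is, clause by clause, to cite the $\p^-$ half from the appropriate source (either from earlier in this paper or from the cited companion papers) and to verify that the cardinal arithmetic hypothesis holds in the relevant model.

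First I would dispatch the ``pure $\zfc$'' clauses (1)--(10) in order. Clauses (1)--(3) follow from results in \cite{paper22}, using the fact that $\diamondsuit(\kappa)$ (or $\sd_\lambda$) implies $\kappa^{<\kappa}=\kappa$; clauses (4)--(5) quote the $\p^-$ versions from \cite{paper22} and \cite{paper24}, with $\ch_\lambda$ (or $\lambda\geq\beth_\omega$ plus $\ch_\lambda$) supplying $\lambda^{<\lambda}=\lambda$. Clauses (6)--(7) invoke \cite[Corollary 4.5]{paper24} and \cite[Corollary 4.7]{paper24}, with $\gch$ giving the required cardinal arithmetic; clause~(8) applies Fact~\ref{indexingit} to convert $\square(\lambda^+,\mu)+\gch$ into a $\p^-(\lambda^+,\mu^{\ind},{\sq},\dots)$-sequence (using, e.g., the argument of Corollary~\ref{corindex} with general $\mu$), and then Corollary~\ref{upshotindx} upgrades to the bullet version; clause~(9) is similar but more direct, invoking Theorem~\ref{mainindex} and Corollary~\ref{corindex}. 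Clause~(10) uses our new Corollary~\ref{corollaryalajensen} together with the fact (from \cite{paper22}) that $\square(E)+\diamondsuit(E)$ entails $\kappa^{<\kappa}=\kappa$.

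Next I would treat the forcing clauses (11)--(15). In each case the hypothesis $\lambda^{<\lambda}=\lambda$ (and, where relevant, $\ch_\lambda$) ensures $\kappa^{<\kappa}=\kappa$ holds in the extension, while the $\p^-$ halves are drawn from \cite{paper26} (which handles Cohen, distributive $\kappa$\nobreakdash-cc, non-${}^\lambda\lambda$-bounding, Prikry/Magidor/Radin, and L\'evy-collapse scenarios). Clause (16) (the saturated-ideal case) follows by combining the $\p^-$-version from \cite{paper22} (where the saturated ideal is used to derive the relevant guessing) with $\ch_\lambda$. Clauses (17) and (18) use the results of \cite{paper32} on nonreflecting stationary sets together with $\ch_\lambda$; clauses (19) and~(20) are handled by our Theorems \ref{cor430}, \ref{thm43}, and Corollary~\ref{cor29}, noting that a nonreflecting stationary set on which $\diamondsuit$ holds forces $\kappa^{<\kappa}=\kappa$ at a strongly inaccessible $\kappa$. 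Finally, clause~(21) follows from Jensen's theorem that in $L$, for every non-weakly-compact regular uncountable~$\kappa$ and every $\chi\in\reg(\kappa)$ with $\kappa$ being $({<}\chi)$-closed, a strong coherent square sequence on $E^\kappa_{\geq\chi}$ together with $\diamondsuit$ gives the required $\p^-(\kappa,2,{\sq},\kappa,\{E^\kappa_{\geq\chi}\},2)$-sequence; cardinal arithmetic is trivial in~$L$.

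The main obstacle I anticipate is not any single clause but rather uniformity of the bookkeeping: several of the cited $\p^-$ instances appear in the literature only with a slightly weaker parameter vector than what is stated here (for example, with $\theta=1$ rather than $\theta=\kappa$, or without the ``on a club'' hitting feature~$\ns^+_\kappa\restriction T$). These upgrades require Lemma~\ref{lemma916} (for passage from $\theta=1$ to $\theta=\kappa$ when $\mathcal S={\ns^+_\kappa\restriction T}$), Theorem~\ref{get-sigma-finite} (to pass from $\sigma=1$ to $\sigma={<}\omega$, which is implicit in our Convention~\ref{omitsigma}), and Theorem~\ref{get-full-coherence-from-sqleft*} (to move between $\sq$ and $\sq^*$ while widening the sequence). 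The hardest of these bookkeeping steps is ensuring compatibility with the indexed variant in clauses (8)--(9), where Corollary~\ref{upshotindx} does the work but requires that the ``$\mu$-indexed'' features be preserved — this is the one place where simply quoting prior literature is insufficient and Lemma~\ref{Addingguessing} together with Theorem~\ref{mainindex} must be invoked explicitly.
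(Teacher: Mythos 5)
Your overall strategy matches the paper's: reduce each clause to the corresponding $\p^-$-instance together with $\kappa^{<\kappa}=\kappa$ (via Theorem~\ref{mainpbullet}, equivalently Corollary~\ref{Pbullet-equiv}), use Corollary~\ref{upshotindx} for the indexed case, appeal to Theorem~\ref{get-sigma-finite} for the $\sigma$-upgrade and Lemma~\ref{lemma916} for the club-hitting upgrade, and then cite the companion papers for the $\p^-$ instances. One concrete slip worth fixing: for clause~(8) you propose applying Fact~\ref{indexingit}, but that Fact converts $\square(\lambda^+)$ into $\square^{\ind}(\lambda^+,\mu)$, whereas the hypothesis of~(8) is the weaker $\square(\lambda^+,\mu)$ (for $\mu\geq 2$), so Fact~\ref{indexingit} does not apply; moreover the target parameter in~(8) is $\mu^+$, not $\mu^{\ind}$, so Corollary~\ref{upshotindx} is not the right reduction there either. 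The paper handles~(8) by adapting the \emph{proof} of Theorem~\ref{mainindex} and then applying \cite[Lemma~3.8]{paper32} for the $\sq^*$ half. Smaller reference mismatches (which do not affect correctness of the strategy): clause~(2) actually relies on the present paper's Theorem~\ref{diamond*-to-P*}(1) followed by Lemma~\ref{lemma916}, rather than on \cite{paper22}; clause~(6) cites \cite[Corollary~4.22]{paper29} with $\chi:=\aleph_0$ (using that under \gch\ every singular cardinal is strong limit) rather than \cite{paper24}; and clause~(11) is drawn from \cite{paper22} together with Lemma~\ref{lemma916}, not from \cite{paper26}.
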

\begin{proof} 
Each statement asserts that an instance of $\p^\bullet(\kappa,\dots)$ follows from hypotheses that include
$\kappa^{<\kappa}=\kappa$.
Thus, by Theorem~\ref{mainpbullet} (or Corollary~\ref{upshotindx}),
it suffices to prove $\p^-(\kappa,\dots)$ in each case.
Furthermore, for statements where the hypotheses include $\diamondsuit(\kappa)$,
we may appeal to Theorem~\ref{get-sigma-finite},
so that it suffices to prove $\p^-(\kappa,\dots,1)$.

(1) By \cite[Theorem~3.6]{paper22} and \cite[Corollary~1.12]{paper22}.
(2) By Theorem~\ref{diamond*-to-P*}(1) followed by Lemma~\ref{lemma916}.
(3) By \cite[Theorem~3.6]{paper22}.
(4) By \cite[Corollary~3.9]{paper22} and \cite[Corollary~4.13]{paper24}.
(5) By \cite[Corollaries 4.7 and~4.13]{paper24}.
(6) By \cite[Corollary~4.22]{paper29} using $\chi:=\aleph_0$, since under \gch\ every singular cardinal is strong limit.
(7) By \cite[Corollary~4.5]{paper24}.
(8) By the same proof of Theorem~\ref{mainindex}, and then \cite[Lemma~3.8]{paper32}.
(9) By Theorem~\ref{mainindex}.
(10) This is Corollary~\ref{corollaryalajensen}(2).
(11) By Lemma \ref{lemma916} above and either Theorem~5.7 or Theorem~4.2(2) of \cite{paper22}.
Note that the cited theorems from \cite{paper22} require $\lambda$ to be uncountable just to be able to verify $\diamondsuit(\lambda^+)$,
whereas, here, we settle for $\p^-(\lambda^+,\ldots) \land\ch_\lambda$. See the proof of \cite[Theorem~2.3]{rinot12} for the exact details.
(12) By \cite[Proposition~3.10]{paper26} (which relies on Theorem~\ref{diamond*-to-P*}(2) above) 
followed by Lemma~\ref{lemma916}.
(13) By \cite[Proposition~B(1) and Theorem~3.4]{paper26} followed by Lemma~\ref{lemma916} above 
(cf.~Remark~\ref{remark-P*}).
(14) By \cite[Proposition~B(2) and Theorem~3.4]{paper26} followed by Lemma~\ref{lemma916} above 
(cf.~Remark~\ref{remark-P*}).
(15) By \cite[Proposition~3.9]{paper26} followed by Lemma~\ref{lemma916} above 
(cf.~Remark~\ref{remark-P*}).
(16) By \cite[Theorem~6.4]{paper22}.
(17) By \cite[Theorem~A]{paper32}.
(18) By \cite[Theorem~B]{paper32}.
(19) This is Theorem~\ref{cor430}.
(20) By Theorem~\ref{thm43} and Corollary~\ref{cor29}.
(21) By \cite[Corollary~1.10(5)]{paper22} or \cite[Corollary~4.14]{paper29}.
\end{proof}

\subsection{Basic characteristics of trees}
Examining our construction of the $\kappa$-Souslin tree in the proof of Proposition~\ref{prop23},
we notice that all of the hard work took place when constructing the nonzero limit levels of the tree.
It was at those levels that we balanced the normality requirement with the need to bound the size of $T_\alpha$ 
and to seal antichains.

In contrast, the only constraint we addressed at successor levels was the requirement that the tree be ever-branching,
in consideration of Lemma~\ref{enough-no-antichains},
and we did that by assigning two immediate successors to every node from the previous level.\footnote{For two nodes $x,y$ in a streamlined tree $T$,
we say that $y$ is an \emph{immediate successor} of $x$ iff $x\stree y$ and $\dom(y)=\dom(x)+1$.}
But here we have the flexibility to impose some additional features,
without affecting the most important global properties of the tree.

\begin{definition}
A streamlined $\kappa$-tree $T$ is said to be:
\begin{itemize}
\item \emph{binary} iff $T \s {}^{<\kappa}2$;
\item \emph{$\varsigma$-splitting} (for an ordinal $\varsigma<\kappa$) iff 
every node in $T$ admits at least $\varsigma$ many immediate successors;
\item \emph{splitting} iff it is 2-splitting;
\item \emph{prolific} iff, for all $\alpha<\kappa$ and $t\in T_\alpha$,
$\{ \conc{t}{i} \mid i<\max \{\omega, \alpha\}\}\s T_{\alpha+1}$.
\end{itemize}
\end{definition}

While a $\varsigma$-splitting tree, for any value $\varsigma>2$, cannot be binary,
we observe the following implications between properties of a streamlined $\kappa$-tree:
\begin{gather*}
\text{prolific} \implies \omega\text{-splitting} \implies \text{splitting} \implies \text{ever-branching}.
\end{gather*}

Referring back to our construction in the proof of Proposition~\ref{prop23},
we note that the tree constructed there was binary.
However, we can easily tweak the construction of the successor levels in order to ensure that
the resulting tree ends up being (no longer binary, but rather) 
prolific and/or $\varsigma$-splitting for some value of $\varsigma<\kappa$,
without affecting the validity of any other aspects of the proof.
Precisely, for an ordinal $\varsigma<\kappa$,
to obtain a $\kappa$-Souslin tree that is prolific and $\varsigma$-splitting,
we modify the successor level in the proof of Proposition~\ref{prop23},
setting,
for every $\alpha<\kappa$,
$$T_{\alpha+1}:=\{ \conc{t}{i}\mid t\in T_\alpha, i<\max\{\alpha,\varsigma,\omega\}\}.$$
Of course, it is no longer true that 
each $T_\alpha$ is a subset of ${}^\alpha2$ of size $\le\max\{\aleph_0,|\alpha|\}$
as claimed in the original proof,
but that was simply a matter of preference.
The important constraint, to be maintained for all $\alpha<\kappa$ throughout the recursive construction, is that
$T_\alpha$ is a subset of ${}^\alpha\kappa$ of size $<\kappa$;
this follows at successor levels from the fact that $\varsigma<\kappa$,
and at limit levels from regularity of $\kappa$ together with the property $(*)_\alpha$ of the construction.
Furthermore, $\wo$ must be chosen at the outset to be a well-ordering of $^{<\kappa}\kappa$ (or some larger set) instead of ${}^{<\kappa}2$.

As we present the more involved Souslin-tree constructions throughout the rest of this paper,
the reader should have no trouble adapting them between binary and prolific/$\varsigma$-splitting, as desired.
Note also that an abstract translation between various kinds of trees is offered in the appendix of \cite{rinot20}.

\medskip

But even at the limit levels, there is some degree of flexibility, as witnessed by the two alternatives of the following definition.
\begin{definition}\label{def63} A streamlined tree $T$ is said to be:
\begin{itemize}
\item \emph{slim} if $|T_\alpha| \leq \max \{\left|\alpha\right|, \aleph_0\}$ for every ordinal $\alpha$.
\item  \emph{$\chi$-complete} if,
for any $\stree$-increasing sequence $\eta$, of length $<\chi$, of elements of $T$,
the limit of the sequence,  $\bigcup\im(\eta)$, is also in $T$.
\end{itemize}
\end{definition}

Notice that the $\kappa$-Souslin tree constructed in the proof of Proposition~\ref{prop23} is slim.
This is a result of adhering to property~$(*)_\alpha$ in the definition of levels $T_\alpha$ for every $\alpha\in\acc(\kappa)$.
Recalling the discussion in Subsection~\ref{subsection:completing+sealing},
there needs to be some stationary $\Gamma\s\kappa$ on which, for every $\alpha\in\Gamma$,
not every $\alpha$-branch through $T\restriction\alpha$ will have its limit placed into $T_\alpha$.
In the proof of Proposition~\ref{prop23}, 
we took the simplest approach by setting $\Gamma := \acc(\kappa)$.\footnote{As a result of taking this simplest approach,
the tree also satisfies the property of being \emph{club-regressive} (in addition to being slim), 
as explored in~\cite[Proposition~2.3]{paper22}.}
A much more complicated approach is taken in \cite[\S5]{rinot20}.
In the upcoming treatment, we shall focus on a relatively simple form of a set $\Gamma$,
namely, $\Gamma:=E^\kappa_{\ge\chi}$ for some $\chi\in\reg(\kappa)$. 
Note that this means that the resulting $\kappa$-Souslin tree would be $\chi$-complete.
Nevertheless, a minor tweaking would facilitate obtaining slim $\kappa$-Souslin trees;
simply set $\chi:=\omega$ and make sure to impose $\omega_1$ as the value of the second parameter of the proxy principle.\footnote{The notion of slimness is more prevalent 
in the context of $\kappa$-Kurepa trees as a property that rules out some trivial examples. 
Our interest in slim $\kappa$-Souslin trees comes from \cite{rinot20},
where we constructed $\kappa$-Souslin trees whose reduced powers are $\kappa$-Kurepa.}

\subsection{The underlying setup}
Throughout the rest of this section, we fix a well-ordering $\lhd_\kappa$ of $H_\kappa$.
All of the constructions in this section will use hypotheses of the form $\p^\bullet(\kappa,\ldots)$
that imply $\kappa^{<\kappa}=\kappa$. Thus, we shall moreover assume that $\otp(H_\kappa,{\lhd_\kappa})=\kappa$.

\begin{definition}
For every $T\in  H_\kappa$, denote $\beta(T):=0$ unless there is $\beta<\kappa$
such that $T\s{}^{\le\beta}H_\kappa$ and $T \nsubseteq {}^{<\beta}H_\kappa$,
in which case, we let $\beta(T):=\beta$ for this unique $\beta$.
We also let $T_{\beta(T)}:=\{ x\in T\mid \dom(x)=\beta(T)\}$.\footnote{If $T$ is a streamlined tree,
then this notation coheres with the fourth bullet of Lemma~\ref{streamlined-basics}, but, in general,
$T_{\beta(T)}$ may well be empty.}
\end{definition}
We collect here a gallery of \emph{actions} which we will use throughout the constructions of this section.
The reader may skip this definition at the moment, and come back to each of its clauses upon its use.

\begin{definition}\label{actions}
\begin{enumerate}
\item The default extension function, $\defaultaction:(H_\kappa)^2\rightarrow H_\kappa$, is defined as follows.
Let $\defaultaction(x,T):=x$, unless $\bar Q:=\{ z\in T_{\beta(T)}\mid x\s z\}$ is nonempty,
in which case, we let $\defaultaction(x,T):=\min(\bar Q, {\lhd_\kappa})$.

\item The function for sealing antichains, $\sealantichain:(H_\kappa)^3\rightarrow H_\kappa$,
is defined as follows.
Let $\sealantichain(x,T,\mho):=\defaultaction(x,T)$,
unless $$Q:=\{ z\in T_{\beta(T)}\mid \exists y\in\mho( x\cup y\s z)\}$$ is nonempty,
in which case, we let $\sealantichain(x,T,\mho):=\min(Q, {\lhd_\kappa})$.

\item\label{actionsealpath}
The function for sealing ascending paths,
$\sealpath : (H_\kappa)^3\rightarrow H_\kappa$, is defined as follows.
Given $(x,T,\mho)\in(H_\kappa)^3$, if the set
\[
Q := \{ z \in T_{\beta(T)}\mid x\s z\ \&\ \forall y\in \mho[\dom(y)=\dom(x)+1\implies y\nsubseteq z]\}
\]
is nonempty, then let $\sealpath(x,T,\mho) := \min (Q, {\lhd_\kappa})$.
Otherwise, let $\sealpath(x,T,\mho) := \defaultaction(x,T)$.
\item\label{autoaction} The function for sealing automorphisms,
$\sealautomorphism: (H_\kappa)^4\rightarrow H_\kappa$, is defined as follows.
Given $(x,T,b,\mho)$:
If $x \in T \restriction (\beta(T))$,
$b$ is a partial function from $T \restriction (\beta(T))$ to $T \restriction (\beta(T))$,
$\mho$ is an automorphism of $T \restriction (\beta(T))$, and the set
\[
Q := \{ x_0 \in \dom(b) \mid \mho(x_0) \neq x_0 \}
\]
is nonempty, then let $x_0 := \min(Q, {\lhd_\kappa})$, $\tilde{x}_0 := \defaultaction(b(x_0),T)$,
$\tilde{y}_0 := \bigcup_{\gamma<\beta(T)} \mho(\tilde{x}_0 \restriction\gamma)$,
and $\sealautomorphism(x,T,b,\mho) := \defaultaction(x, T \setminus\{\tilde{y}_0\})$.
Otherwise, let $\sealautomorphism(x,T,b,\mho):=\defaultaction(x,T)$.

\item\label{actionfree} The function for sealing antichains in derived trees,
$\free:(H_\kappa)^4\rightarrow H_\kappa$, is defined as follows.\footnote{The notation $(z)_\xi$ and $T(\vec{b})$ will be introduced in Subsection~\ref{subsection:free} below.}
Given $(x,T,\vec{b},\mho)\in (H_\kappa)^4$: If $T$ is a streamlined tree, $\vec b\in{}^{<\kappa}T$ and $\vec b\neq\emptyset$,
then put $z:=\sealantichain(\emptyset,T(\vec b),\mho)$, and consider the following options:

$\br$ If 
$z \in {}^{\beta(T)}({}^{\dom(\vec{b})}H_\kappa)$ and
there exists some $\xi<\dom(\vec b)$ such that $x\cup (z)_\xi$ is in $T$, 
then let $\free(x,T,\vec b,\mho):=(z)_\xi$ for the least such $\xi$.

$\br$ Otherwise, let $\free(x,T,\vec b,\mho):=\defaultaction(x,T)$.
\end{enumerate}
\end{definition}

The following is obvious.

\begin{lemma}[Extension Lemma]\label{extendfact}
Suppose $x\in T\in  H_\kappa$, $\mho,b, \vec{b} \in H_\kappa$, and $T$ is a normal subtree of ${}^{\le\beta(T)}\kappa$.
Then $\defaultaction(x,T)$, $\sealantichain(x,T,\mho)$, $\sealpath(x,T,\mho)$, and $\free(x,T,\vec b,\mho)$
are elements of $T_{\beta(T)}$ extending $x$.
If $T\restriction\beta(T)$ is moreover ever-branching,
then $\sealautomorphism(x,T,b,\mho)$ is an element of $T_{\beta(T)}$ extending $x$, as well.
\qed
\end{lemma}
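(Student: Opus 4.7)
The plan is to verify each of the five assertions by reducing everything to the claim that the auxiliary set
\[
\bar Q = \{z \in T_{\beta(T)} \mid x \s z\}
\]
is nonempty, which powers $\defaultaction$. First I would handle $\defaultaction(x,T)$ itself: by the very definition of $\beta(T)$ we have $T_{\beta(T)} \neq \emptyset$, and since $\dom(x) \le \beta(T)$, either $x \in T_{\beta(T)}$ (in which case $x \in \bar Q$) or $\dom(x) < \beta(T)$ and the normality of $T$ delivers some $z \in T_{\beta(T)}$ with $x \s z$; in either case $\min(\bar Q, {\lhd_\kappa})$ is a well-defined element of $T_{\beta(T)}$ extending $x$.

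Next I would dispose of $\sealantichain$, $\sealpath$, and $\free$ in turn. For $\sealantichain(x,T,\mho)$, either we are in the default branch (already handled) or we pick $\min(Q,{\lhd_\kappa})$ where $Q \s \bar Q$ and the defining relation $x \cup y \s z$ forces $x \s z$, so the element lies in $T_{\beta(T)}$ above $x$. For $\sealpath(x,T,\mho)$ the reasoning is identical, since $x \s z$ is written directly into the definition of $Q$. For $\free(x,T,\vec b,\mho)$, the ``Otherwise'' clause is again $\defaultaction$; in the remaining case we chose $\xi$ so that $x \cup (z)_\xi \in T$, and since $(z)_\xi$ has domain $\beta(T) \ge \dom(x)$, necessarily $x \cup (z)_\xi = (z)_\xi \in T_{\beta(T)}$ and $x \s (z)_\xi$.

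The delicate case will be $\sealautomorphism(x,T,b,\mho)$, which is the one that really uses the extra hypothesis that $T\restriction\beta(T)$ is ever-branching. In the default branch (when the typing hypotheses fail or $Q = \emptyset$) we again reduce to $\defaultaction(x,T)$. In the main branch, the value is $\defaultaction(x, T\setminus\{\tilde y_0\})$, so what needs to be verified is that the set $\{z \in (T\setminus\{\tilde y_0\})_{\beta(T)} \mid x \s z\}$ is still nonempty; once this is established, the $\defaultaction$ argument from the first paragraph finishes the job. The activation condition for this branch forces $x \in T\restriction\beta(T)$, hence $\dom(x) < \beta(T)$, so the upward cone $\cone{x}$ in $T\restriction\beta(T)$ is not linearly ordered and contains two incomparable nodes $s_1, s_2$. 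Normality of $T$ extends each to some $t_i \in T_{\beta(T)}$ with $s_i \s t_i$, and incomparability of $s_1,s_2$ forces $t_1 \neq t_2$, so at least one of $t_1,t_2$ survives in $T\setminus\{\tilde y_0\}$ and witnesses $x \s t_i$. The main obstacle I anticipate is precisely this last step: confirming that ever-branching (a property of the small tree $T\restriction\beta(T)$) combined with normality of $T$ genuinely suffices to produce two distinct extensions on the top level $T_{\beta(T)}$, so that deleting the single node $\tilde y_0$ cannot sever every available extension of $x$.
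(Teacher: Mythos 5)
The paper gives no proof of this lemma, declaring it obvious and stamping it with a \qed; your argument correctly supplies the routine verification, reducing each action to the nonemptiness of the relevant $\bar Q$-type set and using normality, and for $\sealautomorphism$ the ever-branching hypothesis to produce two distinct top-level extensions of $x$ so that deleting the single node $\tilde y_0$ cannot empty the candidate set. This is exactly the argument the authors intend the reader to supply, including the implicit observation that $T_{\beta(T)}\setminus\{\tilde y_0\}$ remains nonempty so that $\beta(T\setminus\{\tilde y_0\})=\beta(T)$.
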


\subsection{The prototype construction}
\label{subsection:construction-from-weakest}

In this subsection, we present a construction of a $\kappa$-Souslin tree from the weakest useful instance of the proxy principle.
\begin{cor}
If $\p^\bullet(\kappa,\kappa,{\sq^*},1,\{\kappa\},\kappa)$ holds, then there exists a $\kappa$-Souslin tree.
\end{cor}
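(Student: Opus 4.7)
The plan is to adapt the argument of Proposition~\ref{prop23} to the present context, where the $\p^\bullet$-sequence supplies antichain guesses intrinsically (through the values $C(\beta)$) rather than through a separate $\diamondsuit$-sequence, and where the coherence relation $\sq^*$ forces us to rework the limit-level argument. Fix witnesses $\cvec C=\langle\mathcal C_\alpha\mid\alpha<\kappa\rangle$ and $\langle B_\iota\mid\iota<\kappa\rangle$ to $\p^\bullet(\kappa,\kappa,{\sq^*},1,\{\kappa\},\kappa)$; by Proposition~\ref{Pbullet->CH}, $\kappa^{<\kappa}=\kappa$, enabling the underlying setup with $\lhd_\kappa$.

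Recursively build $T=\bigcup_{\alpha<\kappa}T_\alpha$ as a binary streamlined tree, with $T_0:=\{\emptyset\}$ and $T_{\alpha+1}:=\{\conc{t}{0},\conc{t}{1}\mid t\in T_\alpha\}$. At a limit $\alpha\in\acc(\kappa)$, for each $C\in\mathcal C_\alpha$ and each $x\in T\restriction\dm{C}$, define a microscopic sequence $b^{\alpha,C}_x\in\prod_{\beta\in\dm{C}\setminus\dom(x)}T_\beta$ by recursion on $\beta$: start with $b^{\alpha,C}_x(\dom(x)):=x$; at successive points $\beta^-<\beta$ of $\dm{C}$ let $b^{\alpha,C}_x(\beta):=\sealantichain(b^{\alpha,C}_x(\beta^-),T\restriction(\beta+1),C(\beta))$; at limit points $\bar\alpha\in\acc(\dm{C})$ let $b^{\alpha,C}_x(\bar\alpha):=\bigcup\im(b^{\alpha,C}_x\restriction\bar\alpha)$. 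Finally, set $\mathbf b^{\alpha,C}_x:=\bigcup\im(b^{\alpha,C}_x)$ and $T_\alpha:=\{\mathbf b^{\alpha,C}_x\mid C\in\mathcal C_\alpha,\ x\in T\restriction\dm{C}\}$. Regularity of $\kappa$, together with $|\mathcal C_\alpha|<\kappa$ and the inductive bound $|T\restriction\alpha|<\kappa$, gives $|T_\alpha|<\kappa$; ever-branching follows from binary splitting at successors, and normality at $\alpha$ holds because any $s\in T\restriction\alpha$ can be extended (by inductive normality) to some $x\in T_\beta$ with $\beta\in\dm{C}$ for any fixed $C\in\mathcal C_\alpha$, and then $\mathbf b^{\alpha,C}_x$ extends $s$.

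The principal obstacle is verifying, at each $\bar\alpha\in\acc(\dm{C})$, that the prescribed value $b^{\alpha,C}_x(\bar\alpha)$ actually belongs to $T_{\bar\alpha}$, which is proved by induction on $\bar\alpha$. Since we only have $\sq^*$-coherence, the proof of Claim~\ref{coherence} does not apply verbatim: fix $D\in\mathcal C_{\bar\alpha}$ with $D\sq^* C$ and a $\gamma<\bar\alpha$ witnessing this, so that $D\restriction(\dm{D}\setminus\gamma)=C\restriction(\dm{C}\cap\bar\alpha\setminus\gamma)$. Pick $\gamma'\in\dm{C}\cap\bar\alpha$ with $\gamma'>\max\{\gamma,\dom(x)\}$ and set $y:=b^{\alpha,C}_x(\gamma')$. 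A microscopic dependencies argument, noting that each successor step depends only on the previous value, the corresponding $C(\beta)$, and the already-built level $T_\beta$, yields $b^{\bar\alpha,D}_y=b^{\alpha,C}_x\restriction[\gamma',\bar\alpha)$; taking unions, $b^{\alpha,C}_x(\bar\alpha)=\mathbf b^{\bar\alpha,D}_y$, which by induction lies in $T_{\bar\alpha}$.

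To seal antichains, let $A\s T$ be any maximal antichain and invoke the hitting feature of $\p^\bullet$ with $\Omega_0:=A$, $p_0:=\{T,A,\lhd_\kappa\}$, $\iota_0:=0$, and $n:=1$. We obtain $\alpha\in\acc(\kappa)$ such that, for every $C\in\mathcal C_\alpha$, cofinally many $\gamma\in\dm{C}$ satisfy $\beta:=\min(\dm{C}\setminus(\gamma+1))\in B_0(A,p_0,C)$. For any $x\in T\restriction\dm{C}$, choose such a $\gamma$ above $\dom(x)$; the witnessing $\mathcal M\prec H_{\kappa^+}$ together with Proposition~\ref{motivate}(2) force $C(\beta)=\mathcal M\cap A=A\cap(T\restriction\beta)$ to be a maximal antichain in $T\restriction\beta$, so that normality of $T\restriction\beta$ makes the set $Q$ in the definition of $\sealantichain$ nonempty, and hence $b^{\alpha,C}_x(\beta)$ extends some element of $A$. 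Therefore $A\cap(T\restriction\alpha)$ is sealed at level $\alpha$, and Lemmas~\ref{equiv-no-antichains} and~\ref{enough-no-antichains} imply that $T$ is $\kappa$-Souslin.
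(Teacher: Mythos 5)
Your proposal is correct and follows the same route as the paper, which proves this corollary by appealing to Theorem~\ref{basicthm} with $\chi:=\aleph_0$; your argument is precisely the specialization of that theorem's construction to $\chi=\aleph_0$, where $\Gamma=\acc(\kappa)$. Your minor simplification — always applying $\sealantichain$ at successor points of $\dm{C}$ rather than conditioning on $\beta\in B_0$ as in the general theorem — is sound here, since with $\theta=1$ only one action slot is needed and $\sealantichain$ reduces to $\defaultaction$ whenever the corresponding set $Q$ is empty.
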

\begin{proof} Appeal to Theorem~\ref{basicthm} below with $\chi:=\aleph_0$.
\end{proof}

All of the subsequent constructions in this section, with the exception of the construction of a uniformly homogeneous tree in Subsection~\ref{homogeneoussection}, will be modeled after the following construction.

\begin{thm}\label{basicthm}
Suppose that $\kappa$ is $({<}\chi)$-closed for a given $\chi\in\reg(\kappa)$. Let $\varsigma<\kappa$.
If $\p^\bullet(\kappa,\kappa,{\sqx^*},1,\{E^\kappa_{\geq\chi}\},\kappa)$ holds, then there exists a
normal, prolific, $\varsigma$-splitting, $\chi$-complete $\kappa$-Souslin tree.
\end{thm}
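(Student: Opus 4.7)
The plan is to adapt the prototype construction of Proposition~\ref{prop23} from Section~\ref{rightway}, replacing the $\diamondsuit$-based prediction by the microscopic elementary-submodel anticipation built into $\p^\bullet$, and modifying the recipe for low-cofinality limit levels so as to secure $\chi$-completeness. Let $\cvec{C} = \langle \mathcal{C}_\alpha \mid \alpha<\kappa \rangle$ together with a partition $\vec{B} = \langle B_\iota \mid \iota<\kappa \rangle$ of $\kappa$ witness $\p^\bullet(\kappa,\kappa,{\sqx^*},1,\{E^\kappa_{\geq\chi}\},\kappa)$, fix the well-ordering $\lhd_\kappa$ of $H_\kappa$, and set $\Gamma := E^\kappa_{\geq\chi}$.

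I recursively construct levels $T_\alpha \subseteq {}^\alpha\kappa$ of size ${<}\kappa$. First let $T_0 := \{\emptyset\}$, and for any ordinal $\alpha<\kappa$ set $T_{\alpha+1} := \{ \conc{t}{i} \mid t \in T_\alpha,\ i < \max\{\omega, |\alpha|, \varsigma\} \}$, securing the prolific and $\varsigma$-splitting properties. For $\alpha \in \acc(\kappa) \setminus \Gamma$ (i.e., $\cf(\alpha) < \chi$), let $T_\alpha$ consist of all limits $\bigcup \im(\eta)$ of $\stree$-increasing cofinal sequences $\eta$ of length $\cf(\alpha)$ through $T\restriction\alpha$; since $|T\restriction\alpha|<\kappa$ and $\kappa$ is $({<}\chi)$-closed, this level has size $<\kappa$, and $\chi$-completeness of the eventual $T$ drops out. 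For $\alpha \in \Gamma$, for each $C \in \mathcal{C}_\alpha$ and each $x \in T\restriction\dm{C}$, define $b^{C}_x$ along $\dm C \setminus \dom(x)$ by the microscopic recipe: $b^{C}_x(\dom(x)) := x$; at successor points $\beta^- < \beta$ of $\dm C$, $b^{C}_x(\beta) := \sealantichain(b^{C}_x(\beta^-), T\restriction(\beta+1), C(\beta))$; and at $\bar\alpha \in \acc(\dm C)$, $b^{C}_x(\bar\alpha) := \bigcup \im(b^{C}_x\restriction\bar\alpha)$. Then set $\mathbf{b}^{C}_x := \bigcup\im(b^{C}_x)$ and $T_\alpha := \{ \mathbf{b}^{C}_x \mid C \in \mathcal{C}_\alpha,\ x \in T\restriction\dm C \}$. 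Since $|\mathcal{C}_\alpha|<\kappa$, the size bound is maintained; normality at $\alpha$ follows by picking any $C \in \mathcal{C}_\alpha$ and using inductive normality below $\alpha$ to lift an arbitrary $x \in T\restriction\alpha$ to a node at some level in $\dm{C}$ above $\dom(x)$.

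To seal antichains, set $T := \bigcup_{\alpha<\kappa} T_\alpha$ and take any maximal antichain $A \subseteq T$. Plugging $\Omega := A$, $p := \{T,A\}$, $\iota := 0$, and $n := 1$ into the hitting feature of $\p^\bullet$ yields stationarily many $\alpha \in \Gamma$ such that, for every $C \in \mathcal{C}_\alpha$, the set of $\gamma \in \dm C$ with $\suc_1(\dm C \setminus \gamma) \subseteq B_0(A,p,C)$ is cofinal in $\alpha$. Fix such an $\alpha$ and any $t = \mathbf{b}^{C}_x \in T_\alpha$; pick any $\beta \in \nacc(\dm C) \cap B_0(A,p,C)$ above $\dom(x)$. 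The elementary submodel $\mathcal{M}$ witnessing $\beta \in B_0(A,p,C)$ contains $T$ and $A$ and satisfies $\mathcal{M} \cap A = C(\beta)$, whence Proposition~\ref{motivate}(2) yields $C(\beta) = A \cap (T\restriction\beta)$ as a maximal antichain in $T\restriction\beta$. By Lemma~\ref{extendfact} the call to $\sealantichain$ at stage $\beta$ therefore succeeds in extending $b^C_x(\beta^-)$ through some $s \in C(\beta) \subseteq A$, so $t \sqsupseteq s$. Lemma~\ref{equiv-no-antichains} then gives $|A|<\kappa$, and combined with Lemma~\ref{enough-no-antichains} and the splitting property this delivers the $\kappa$-Souslin conclusion.

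The principal obstacle is the coherence verification: showing that $b^{C}_x(\bar\alpha) \in T_{\bar\alpha}$ at each $\bar\alpha \in \acc(\dm C)$. When $\cf(\bar\alpha) < \chi$ this is immediate, since $\bigcup\im(b^{C}_x\restriction\bar\alpha)$ is realized as the limit of some cofinal subchain of length $\cf(\bar\alpha)$, and all such limits were placed in $T_{\bar\alpha}$ by construction. The harder case is $\cf(\bar\alpha) \geq \chi$, where ${\sqx^*}$-coherence produces only some $D \in \mathcal{C}_{\bar\alpha}$ with $D \sq^* C$ --- that is, $D$ and $C\cap\bar\alpha$ coincide past some $\gamma<\bar\alpha$, rather than being outright equal. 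The fix is the familiar microscopic argument: pick any $\gamma' \in \dm D$ above $\max\{\gamma, \dom(x)\}$, set $x' := b^{C}_x(\gamma')$, and observe that the successor recipe $\sealantichain(\cdot, T\restriction(\beta+1), C(\beta))$ depends only on the previous value and on $(T\restriction(\beta+1), C(\beta))$; since $C(\beta) = D(\beta)$ for $\beta \in \dm D \setminus \gamma'$, induction along $\dm D \setminus \gamma'$ gives $b^{D}_{x'} = b^{C}_x \restriction (\dm D \setminus \gamma')$, and hence $b^{C}_x(\bar\alpha) = \mathbf{b}^{D}_{x'}$, which lies in $T_{\bar\alpha}$ by the outer induction hypothesis.
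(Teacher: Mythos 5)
Your proposal is correct and follows essentially the same route as the paper's proof: build levels recursively with the prolific/splitting successor rule, take all limits at cofinality ${<}\chi$ to secure completeness, use the microscopic $\sealantichain$ recipe at nonaccumulation points of $\dm{C}$ when $\alpha\in E^\kappa_{\geq\chi}$, prove coherence at accumulation points via ${\sq^*}$-agreement plus the microscopic dependency, and seal antichains via the $B_0(\Omega,p,C)$ elementary-submodel prediction combined with Proposition~\ref{motivate}(2). The one cosmetic difference is that you apply $\sealantichain$ at every nonaccumulation step, whereas the paper conditions on $\beta\in B_0$ and falls back to $\defaultaction$ otherwise; since $\sealantichain$ already defaults to $\defaultaction$ when its $Q$-set is empty, and coherence only needs that $b^C_x(\beta)$ be a function of $(b^C_x(\beta^-),T\restriction(\beta+1),C(\beta))$, your variant works equally well here.
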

\begin{proof}
The proof is very similar in spirit to the one from Proposition~\ref{prop23};
one just needs to be a little bit more careful.

Suppose $\p^\bullet(\kappa,\kappa,{\sqx^*},1,\{E^\kappa_{\geq\chi}\},\kappa)$ holds,
as witnessed by $\cvec{C} = \langle \mathcal C_\alpha\mid \alpha<\kappa\rangle$ and
$\vec B=\langle B_\iota \mid \iota< \kappa \rangle$.
We shall recursively construct a sequence $\langle T_\alpha\mid \alpha<\kappa\rangle$ of levels
whose union will ultimately be the desired tree $T$.
In this construction,
we shall ensure that for each $\alpha<\kappa$,
the level $T_\alpha$ will be a subset of ${}^\alpha\kappa$ of size $<\kappa$.

Let $T_0:=\{\emptyset\}$, and for all $\alpha<\kappa$,
let $T_{\alpha+1}:=\{ \conc{t}{i}\mid t\in T_\alpha, i<\max\{\alpha,\varsigma,\omega\}\}$.

Next, suppose that $\alpha<\kappa$ is a nonzero limit ordinal,
and that $\langle T_\beta\mid \beta<\alpha\rangle$ has already been defined.
Constructing the level $T_\alpha$ involves deciding which $\alpha$-branches
through $T \restriction \alpha$ will have their limits placed into the tree.
Denote $\Gamma:=E^\kappa_{\ge\chi}$.
The construction splits into two cases, depending on whether $\alpha\in\Gamma$:

$\br$ Suppose $\alpha\notin\Gamma$. Then we let $T_\alpha$ consist of the limits of all $\alpha$-branches through $T\restriction\alpha$.
This construction ensures that the tree will be $\chi$-complete,
and as any $\alpha$-branch through $T\restriction\alpha$ is determined by a subset of $T\restriction\alpha$
of cardinality $\cf(\alpha)<\chi$,
the $({<}\chi)$-closedness of $\kappa$ together with $|T\restriction\alpha|<\kappa$ ensures that $\left|T_\alpha\right| < \kappa$ at these levels.
Normality at level $T_\alpha$ is verified by induction:
Fixing a closed sequence of ordinals of minimal order-type converging to $\alpha$
enables us to find, for any given $x \in T \restriction \alpha$,
an $\alpha$-branch through $T\restriction\alpha$ containing $x$,
and the limit of such an $\alpha$-branch will necessarily be in $T_\alpha$.

$\br$ Now suppose $\alpha\in\Gamma$.
The idea for ensuring normality at level $T_\alpha$ is to attach to each
 $C \in \mathcal C_\alpha$ and $x \in T \restriction \dm{C}$
some node $\mathbf{b}^C_x \in {}^\alpha\kappa$ above $x$,
and then let
\[
T_\alpha := \{ \mathbf{b}^C_x \mid C \in \mathcal C_\alpha, x \in T \restriction \dm{C}\}.\tag*{$(**)_\alpha$}
\]

By the induction hypothesis, $|T_\beta|<\kappa$ for all $\beta<\alpha$,
and by the choice of $\cvec{C}$ we have $|\mathcal C_\alpha|<\kappa$,
so that by regularity of $\kappa$ we are guaranteed to end up with $|T_\alpha|<\kappa$.

As for every $C \in \mathcal C_\alpha$ and $x\in T\restriction \dm{C}$,
$\mathbf{b}^C_x$ will be the limit of some $\alpha$-branch through $T\restriction\alpha$ that contains $x$,
we opt to describe $\mathbf{b}^C_x$ as the limit $\bigcup\im(b^C_x)$ of a sequence
$b^C_x\in\prod_{\beta\in\dm{C}\setminus\dom(x)}T_\beta$ such that:
\begin{itemize}
\item $b^C_x(\dom(x))=x$;
\item $b_x^C(\beta')\stree b_x^C(\beta)$ for any pair $\beta'<\beta$ of ordinals from $(\dm{C}\setminus\dom(x))$;
\item $b^C_x(\beta)=\bigcup\im(b^C_x\restriction\beta)$ for all $\beta\in\acc(\dm{C}\setminus\dom(x))$.
\end{itemize}

Let $C \in \mathcal C_\alpha$ be arbitrary.
By recursion over $\beta \in \dm{C}$, we shall assign a value $b^C_x (\beta)$ in $T_\beta$
for all $x \in T \restriction (\dm{C}\cap(\beta+1))$.

Fix $\beta \in \dm{C}$, and assume that for every $x \in T \restriction(\dm{C}\cap\beta)$ we have already defined
$b^C_x \restriction \beta$.
We must define the value of $b^C_x(\beta)$ for all $x \in T \restriction (\dm{C}\cap (\beta+1))$.

\begin{enumerate}
\item For every $x \in T_\beta$, let $b^C_x(\beta) := x$.
We take care of these nodes separately, because for any such node $x$,
the sequence $b^C_x$ is just starting here.

\item Next, let $x \in T \restriction (\dm{C}\cap \beta)$ be arbitrary. In particular, assume that $\dm{C}\cap\beta\neq\emptyset$.

\begin{enumerate}
\item If $\beta\in\nacc(\dm{C})$, then let $\beta^-:=\sup(\dm{C}\cap\beta)$ denote the predecessor of $\beta$ in $\dm{C}$,
and then set
$$b_x^C(\beta):=
\begin{cases}
\sealantichain(b^C_x(\beta^-),T\restriction(\beta+1),C(\beta)),&\text{if }\beta\in B_0;\\
\defaultaction(b^C_x(\beta^-),T\restriction(\beta+1)),&\text{otherwise}.
\end{cases}$$

\item If $\beta \in \acc(\dm{C})$,
then we let $b^C_x(\beta):=\bigcup\im(b^C_x\restriction\beta)$, as promised.
\end{enumerate}
\end{enumerate}

The following is obvious, and is aligned with the microscopic perspective described in requirement (2) of Subsection~\ref{subsection:coherent}.

\begin{dependencies}\label{depends651}
For any two consecutive points $\beta^-<\beta$ of $\dom(b^C_x)$,
the value of $b^C_x(\beta)$ is completely determined by
$b^C_x(\beta^-)$, $T\restriction(\beta+1)$ and $C(\beta)$.
\end{dependencies}

In the case $\beta\in\nacc(\dm{C})$, since $b^C_x(\beta^-)$ belongs to the normal tree $T \restriction (\beta+1)$,
we infer from the Extension Lemma (Lemma~\ref{extendfact}) that $b_x^C(\beta)$ is an element of $T_\beta$ extending $b^C_x(\beta^-)$.
In the case $\beta\in\acc(\dm{C})$, the fact that $b^C_x(\beta)$ belongs to $T_\beta$ requires an argument:

\begin{claim}\label{coherent-from-weakest} Let $\beta\in\acc(\dm C)$. 
Then $b^C_x (\beta) \in T_\beta$.
\end{claim}
\begin{proof}  If $\beta\notin\Gamma$, then $T_\beta$ was constructed to consist of
the limits of all $\beta$-branches through $T\restriction\beta$,
including the limit of the $\beta$-branch determined by $b^C_x\restriction\beta$, which is $b^C_x (\beta)$.
Thus, we may assume that $\beta\in\Gamma$.

Since $\beta \in \acc(\dm C)$,
let us fix $D\in\mathcal C_\beta$ such that $D\sqleft{\chi}^* C$.
As $\Gamma=E^\kappa_{\ge\chi}$, in fact, $D\sq^* C$.
Fix $\gamma\in\dm C$ such that 
${D\restriction(\dm{D}\setminus\gamma)}\sq{C\restriction(\dm{C}\setminus\gamma)}$.
Set $d:=\dm{D}\setminus\gamma$ and $y:=b_x^C(\gamma)$.
We now prove, by induction on $\delta \in d$, that $b^C_x(\delta)=b^C_y(\delta)=b^D_y(\delta)$.

\begin{itemize}
\item Clearly, $b^C_x(\min(d)) = y = b^C_y(\min(d))=b^D_y(\min(d))$.
\item Suppose $\delta^-<\delta$ are successive points of $d$,
and $b^C_x(\delta^-)=b^C_y(\delta^-)=b^D_y(\delta^-)$.
Then, by Dependencies~\ref{depends651}, also $b^C_x(\delta)=b^C_y(\delta)=b^D_y(\delta)$.
\item For $\delta \in \acc(d)$:
If the sequences are identical up to $\delta$, then their limits must be identical.
\end{itemize}

It follows that $b^C_x(\beta)=\bigcup_{\delta\in d}b^C_x(\delta)=\bigcup_{\delta\in d}b^D_y(\delta)=\mathbf{b}^D_y$, and by the induction hypothesis $(**)_\beta$, the latter is in $T_\beta$.
So $b^C_x(\beta)\in T_\beta$, as sought.
\end{proof}

This completes the definition of the sequence $b^C_x$, and thus of its limit $\mathbf{b}^C_x$,
for each $C \in \mathcal C_\alpha$ and each $x\in T\restriction\dm{C}$.
Consequently, the level $T_\alpha$ is defined as promised in $(**)_\alpha$.

\begin{claim}\label{tailwithsame} Let $\alpha\in\Gamma$ and $t\in T_\alpha$.
Then there exists $C\in\mathcal C_\alpha$ such that $\{\delta\in\dm C\mid t=\mathbf{b}_{t\restriction \delta}^C\}$ is a final segment of $\dm C$.
\end{claim}
\begin{proof} By the same analysis from the proof of Claim~\ref{coherent-from-weakest}.
\end{proof}

Having constructed all levels of the tree, we then let
$T := \bigcup_{\alpha < \kappa} T_\alpha$.
It is clear from the construction that
$T$ is a normal, prolific, $\varsigma$-splitting, $\chi$-complete streamlined $\kappa$-tree.
By Lemma~\ref{enough-no-antichains}, to prove that  $T$ is $\kappa$-Souslin,
it suffices to show that it has no $\kappa$-sized antichains.
By Lemma~\ref{equiv-no-antichains}, it suffices to prove the following.

\begin{claim}\label{naantileft} Let $A\s T$ be a maximal antichain.
Then there exists $\alpha<\kappa$ such that every node of $T_\alpha$ extends some element of $A \cap (T\restriction\alpha)$.
\end{claim}
\begin{proof}
Set $\Omega:=A$ and $p:=\{T,A\}$.
By our choice of $\cvec{C}$ and $\vec{B}$,
and recalling Definition~\ref{pbullet},
we now fix $\alpha\in E^\kappa_{\ge\chi}$ such that,  for all $C \in \mathcal C_\alpha$,
$$\sup\{ \gamma\in \dm{C} \mid \suc_1(\dm{C} \setminus \gamma) \subseteq B_{0}(\Omega,p,C) \} = \alpha.$$

Let $t \in T_\alpha$ be arbitrary.
As $\alpha\in E^\kappa_{\ge\chi}=\Gamma$, 
we appeal to Claim~\ref{tailwithsame}, 
to find $C\in\mathcal C_\alpha$ and $x \in T \restriction \dm C$ such that $t = \mathbf{b}^C_x$.
By our choice of $\alpha$, fix $\gamma\in \dm{C}\setminus\dom(x)$ with $\suc_1(\dm{C} \setminus \gamma) \subseteq B_{0}(\Omega,p,C)$.
Let $\beta:=\min(\dm{C}\setminus(\gamma+1))$, so that $\beta\in B_0(\Omega,p,C)$.
Recalling Definition~\ref{defbsets} and Proposition~\ref{motivate}(2),
we infer that $C(\beta)=A\cap(T\restriction\beta)$ and the latter is a maximal antichain in $T\restriction\beta$.

As $\gamma$ is the predecessor of $\beta$ in $\dm C$, and as $\beta\in B_0$,
we infer that $$b_x^C(\beta):=
\sealantichain(b^C_x(\gamma),T\restriction(\beta+1),C(\beta)).$$

Write $\bar x:=b^C_x(\gamma)$, $\bar T:=T\restriction(\beta+1)$ and $\mho:=C(\beta)$.
Recalling Definition~\ref{actions}(2), we consider the set:
$$Q:=\{ z\in \bar T_{\beta(\bar{T})}\mid \exists y\in\mho( \bar x\cup y\s z)\}.$$
By now, we know that
$$Q=\{ z\in T_{\beta}\mid \exists y\in A\cap (T\restriction\beta)( b^C_x(\gamma)\cup y\s z)\}.$$

As $A\cap (T\restriction\beta)$ is a maximal antichain in $T\restriction\beta$,
we infer from the normality of $T$ that $Q$ is nonempty, meaning that $b_x^C(\beta)$ extends some element of $A\cap (T\restriction\beta)$.
In particular, $t$ extends some element of $A\cap (T\restriction\alpha)$.
\end{proof}
This completes the proof.
\end{proof}

\subsection{A tree with an ascent path}\label{indexedtree} 

In \cite{rinot20}, a gallery of constructions of $\kappa$-Souslin trees with ascent paths was presented.
Each of those constructions assumed an instance of the form
$\p(\kappa,\ldots,\sigma)$ with $\sigma=\omega$, which, by Proposition~\ref{nonreflectcons}(2),
requires the existence of a nonreflecting stationary subset of $E^\kappa_\omega$.
In this section, we present a construction from a weaker instance of the proxy principle which is compatible with reflection,
thereby improving an old theorem of Baumgartner (see \cite{MR0732661}).
Note, however, that the objects we obtain here are not as complex and flexible as the ones obtained from the stronger instances in \cite{rinot20}.

\begin{definition}\label{defascentpath} Suppose that $T\s{}^{<\kappa}H_\kappa$ is a streamlined $\kappa$-tree.
We say that $\vec f =\langle f_\alpha\mid \alpha<\kappa\rangle$
is a \emph{$\mu$-ascent path} through $T$
iff for every pair $\alpha<\beta<\kappa$:
\begin{enumerate}
\item $f_\alpha$ is a function from $\mu$ to $T_\alpha$;
\item $\{ i<\mu \mid f_\alpha (i) \stree f_\beta (i) \}$ is co-bounded in $\mu$.
\end{enumerate}
\end{definition}
\begin{remark} In the general language of \cite[Definition~1.2]{rinot20}, the above is called an \emph{$\mathcal F^\bd_\mu$-ascent path}.
\end{remark}

\begin{thm}\label{souslinwithascentpath}
Suppose that $\kappa$ is $({<}\chi)$-closed for a given $\chi\in\reg(\kappa)$, and $\aleph_0\leq\mu<\kappa$. Let $\varsigma<\kappa$.
If $\p^\bullet(\kappa,\mu^{\ind},{\sq},1,\{E^\kappa_{\geq\chi}\},\kappa)$ holds, then there exists a
normal, prolific, $\varsigma$-splitting, $\chi$-complete $\kappa$-Souslin tree
admitting a $\mu$-ascent path.
\end{thm}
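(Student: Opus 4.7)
The plan is to apply the tree construction of Theorem~\ref{basicthm} to a witness $\cvec{C}=\langle\mathcal{C}_\alpha\mid\alpha<\kappa\rangle$ to $\p^\bullet(\kappa,\mu^{\ind},{\sq},1,\{\Gamma\},\kappa)$, where $\Gamma:=E^\kappa_{\ge\chi}$, and then read off a $\mu$-ascent path from the canonical branches $\mathbf{b}^{C_{\alpha,i}}_\emptyset$ built during the construction, exploiting the indexed coherence supplied by Definition~\ref{indexedpb}. Because $\sq$ strengthens $\sqx^*$ and $\mu^+\le\kappa$, the construction yields a tree $T$ that is automatically normal, prolific, $\varsigma$-splitting, $\chi$-complete, and $\kappa$-Souslin, so the genuinely new work is producing the ascent path. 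By a trivial postprocessing that adjoins $(0,\emptyset)$ to every $C\in\mathcal{C}_\alpha$ with $0\notin\dm{C}$ (this preserves $\sq$-coherence and the hitting feature, since $0\notin\acc(X)$ for any set $X$), I may assume that $0\in\dm{C}_{\alpha,i}$ for every $\alpha\in\acc(\kappa)$ and every $i\in[i(\alpha),\mu)$, so that the recursion defining $\mathbf{b}^{C_{\alpha,i}}_\emptyset$ really does start at $b^{C_{\alpha,i}}_\emptyset(0):=\emptyset$ and is well-defined.

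For each $\alpha<\kappa$, set $\alpha^*:=\min(\Gamma\setminus\alpha)$; in particular $\alpha^*=\alpha$ when $\alpha\in\Gamma$, and $\alpha<\beta$ always entails $\alpha^*\le\beta^*$. Define $\vec{f}=\langle f_\alpha\mid\alpha<\kappa\rangle$ by
\[
f_\alpha(i) := \begin{cases} \mathbf{b}^{C_{\alpha^*,i}}_\emptyset \restriction \alpha, & \text{if } i(\alpha^*)\le i<\mu; \\ t_\alpha, & \text{otherwise,}\end{cases}
\]
where $t_\alpha$ denotes the $\lhd_\kappa$-least element of $T_\alpha$. Since streamlined trees are downward closed and $\alpha\le\alpha^*$, the first-case value lies in $T_\alpha$, so $f_\alpha$ is indeed a function from $\mu$ into $T_\alpha$.

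To verify the ascent-path property, fix $\alpha<\beta<\kappa$. If $\alpha^*=\beta^*$, then for every $i\in[i(\alpha^*),\mu)$, both $f_\alpha(i)$ and $f_\beta(i)$ are restrictions of the same node $\mathbf{b}^{C_{\alpha^*,i}}_\emptyset$ --- to heights $\alpha$ and $\beta$ respectively --- so $f_\alpha(i)\stree f_\beta(i)$. Otherwise $\alpha^*<\beta^*$, and since both belong to $\Gamma$ and are nonzero limits, $\alpha^*\in\acc(\beta^*)$. By the third bullet of Definition~\ref{indexedpb}, $\acc(\beta^*)=\bigcup_{i\in[i(\beta^*),\mu)}\acc(\dm{C}_{\beta^*,i})$ is an $\s$-increasing union, so some $i_0\in[i(\beta^*),\mu)$ satisfies $\alpha^*\in\acc(\dm{C}_{\beta^*,i})$ for every $i\in[i_0,\mu)$; the second bullet then delivers $i\ge i(\alpha^*)$ and $C_{\alpha^*,i}\sq C_{\beta^*,i}$ for all such $i$. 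A routine induction on $\delta\in\dm{C}_{\alpha^*,i}$, modeled on Claim~\ref{coherent-from-weakest} but simpler thanks to genuine $\sq$-coherence starting from the common base $b^{\cdot}_\emptyset(0)=\emptyset$, shows $b^{C_{\alpha^*,i}}_\emptyset(\delta) = b^{C_{\beta^*,i}}_\emptyset(\delta)$; taking unions gives $\mathbf{b}^{C_{\beta^*,i}}_\emptyset\restriction\alpha^* = \mathbf{b}^{C_{\alpha^*,i}}_\emptyset$, and restricting further to $\alpha$ yields $f_\alpha(i)=\mathbf{b}^{C_{\beta^*,i}}_\emptyset\restriction\alpha\stree\mathbf{b}^{C_{\beta^*,i}}_\emptyset\restriction\beta=f_\beta(i)$. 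In either case, the ascent set $\{i<\mu\mid f_\alpha(i)\stree f_\beta(i)\}$ contains a final segment of $\mu$, and is therefore co-bounded.

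The main obstacle, and the point at which the \emph{indexed} form of the proxy hypothesis is used essentially, is the design of the anchor map $\alpha\mapsto\alpha^*$: it funnels every level through an ordinal of $\Gamma$ so that the indexed $\sq$-coherence --- which yields literal equality of restrictions of canonical branches, rather than mere $\sq^*$-agreement modulo bounded error --- translates into \emph{co-bounded} ascent sets. Absent an indexed system of clubs cohering along a common index, one could at best extract a single successful index $i$ for each pair $(\alpha,\beta)$, which would fall short of what a $\mu$-ascent path requires.
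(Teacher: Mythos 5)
Your proposal is correct and follows essentially the same approach as the paper's proof: build the tree via the construction of Theorem~\ref{basicthm} (which applies since ${\sq}\implies{\sqx^*}$ and $\mu^+\le\kappa$), and then extract the $\mu$-ascent path from the canonical branches $\mathbf{b}^{C_{\alpha',i}}_\emptyset$ by funneling each level $\alpha$ through $\alpha':=\min(\Gamma\setminus\alpha)$ and exploiting the indexed $\sq$-coherence. The only cosmetic difference is in handling indices $i<i(\alpha')$: you assign a dummy node $t_\alpha$ there, whereas the paper defines $f_\alpha(i):=\mathbf{b}^{C_{\alpha',\max\{i,i(\alpha')\}}}_\emptyset\restriction\alpha$, clamping the index to $i(\alpha')$; both choices are equally valid, since the ascent-path condition only demands co-bounded agreement. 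Your explicit justification of the WLOG assumption $0\in\dm{C}$ by a trivial postprocessing is a nice touch that the paper leaves implicit.
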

\begin{proof} 
Recalling Definition~\ref{indexedpb},
we fix sequences $\cvec C=\langle \mathcal C_\alpha\mid \alpha<\kappa\rangle$ and $\vec B=\langle B_\iota\mid\iota<\kappa\rangle$
together witnessing $\p^\bullet(\kappa, \mu^+,{\sq}, 1,\{E^\kappa_{\geq\chi}\},\kappa)$,
along with
$\langle i(\alpha)\mid \alpha<\kappa\rangle$ satisfying for every $\alpha\in\acc(\kappa)$:
\begin{enumerate}
\item there exists a canonical enumeration $\langle C_{\alpha,i}\mid i(\alpha)\le i<\mu\rangle$ of $\mathcal C_\alpha$;
\item for all $i\in[i(\alpha),\mu)$ and $\bar\alpha\in\acc(\dm{C}_{\alpha,i})$, $i\ge i(\bar\alpha)$ and $C_{\bar\alpha,i}\sq C_{\alpha,i}$;
\item $\langle \acc(\dm{C}_{\alpha,i})\mid i(\alpha)\le i<\mu\rangle$
is $\s$-increasing with $\acc(\alpha) = \bigcup_{i\in[i(\alpha),\mu)}\acc(\dm C_{\alpha,i})$.
\setcounter{condition}{\value{enumi}}
\end{enumerate}
Without loss of generality, we may also assume that:
\begin{enumerate}
\setcounter{enumi}{\value{condition}}
\item $0\in\dm{C}$ for every $C\in\bigcup_{\alpha\in\acc(\kappa)}\mathcal C_\alpha$.
\end{enumerate}

Now, the construction of the tree $T$ is \emph{identical} to that in Theorem~\ref{basicthm},
using $\cvec C$ and $\vec B$.
We are left with demonstrating that $T$ admits a $\mu$-ascent path.

Referring to the construction, recall that $\Gamma := E^\kappa_{\geq\chi}$ is stationary in $\kappa$,
and that for every $\alpha'\in\Gamma$, $C\in\mathcal{C}_{\alpha'}$ and $x \in T\restriction\dm{C}$,
$\mathbf{b}^C_x$ is an element of $T_{\alpha'}$ extending $x$.

For every $\alpha<\kappa$, let $\alpha':=\min(\Gamma\setminus\alpha)$,
and then define $f_\alpha:\mu\rightarrow T_\alpha$ via
$$f_\alpha(i):=\mathbf{b}^{C_{\alpha',\max\{i,i(\alpha')\}}}_\emptyset\restriction\alpha.$$
\begin{claim} $\langle f_\alpha\mid\alpha<\kappa\rangle$ forms a $\mu$-ascent path through $T$.
\end{claim}
\begin{proof}
Fix an arbitrary pair $\alpha<\beta$ of ordinals in $\kappa$.
Write $\alpha':=\min(\Gamma\setminus\alpha)$ and $\beta':=\min(\Gamma\setminus\beta)$.

$\br$ If $\alpha'=\beta'$, then we trivially get that $f_{\alpha}(i) \stree f_\beta (i)$ for all $i<\mu$.

$\br$ If $\alpha'<\beta'$, then $\alpha'\in\Gamma\cap\beta' \s \acc(\beta')$.
By Clause~(3), find a large enough $j<\mu$ such that $\alpha'\in\acc(\dm{C}_{\beta',i})$ for all $i\in[j,\mu)$.
For any such $i$, we have $C_{\alpha',i}\sq C_{\beta',i}$, so that,
by Dependencies~\ref{depends651}, $\mathbf{b}^{C_{\alpha',i}}_\emptyset=\mathbf{b}^{C_{\beta',i}}_\emptyset\restriction\alpha'$.
In particular, $f_{\alpha}(i) \stree f_\beta(i)$ on a tail of $i$'s.
\end{proof}
This completes the proof.
\end{proof}

Theorem~A is the case $(\lambda, \mu) := (\aleph_1, \aleph_0)$ of the following:

\begin{cor}\label{cor713} Suppose that $\lambda$ is an uncountable cardinal,
and $\square(\lambda^+)+\ch_\lambda$ holds.
For every $\mu\in\reg(\lambda)$ satisfying $\lambda^\mu=\lambda$, there exists a $\lambda^+$-Souslin tree admitting a $\mu$-ascent path.
\end{cor}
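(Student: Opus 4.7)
The plan is to assemble this corollary directly from three previously established pieces: Corollary~\ref{corindex} (to obtain an indexed proxy sequence from $\square(\lambda^+)+{2^\lambda=\lambda^+}$), Corollary~\ref{upshotindx} (to upgrade $\p^-$ to $\p^\bullet$), and Theorem~\ref{souslinwithascentpath} (to build the tree with the ascent path). No genuinely new argument is needed; the content is in checking that the parameters match up.

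First I would fix $\mu\in\reg(\lambda)$ with $\lambda^\mu=\lambda$, set $\kappa:=\lambda^+$ and $\chi:=\mu$, and note that $\ch_\lambda$ gives $2^\lambda=\lambda^+$. Applying Corollary~\ref{corindex} to the pair $\mu\le\chi=\mu$ (both infinite regular cardinals satisfying $\lambda^\chi=\lambda$) yields
\[
\p^-(\lambda^+,\mu^{\ind},{\sq},1,\{E^{\lambda^+}_\mu\},\mu^+).
\]
Next I would verify the cardinal arithmetic needed to pass from $\p^-$ to $\p^\bullet$: for any $\rho\le\lambda$, $(\lambda^+)^\rho\le(2^\lambda)^\lambda=2^\lambda=\lambda^+$, so $(\lambda^+)^{<\lambda^+}=\lambda^+$. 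Thus, by Corollary~\ref{upshotindx}, the preceding instance upgrades to
\[
\p^\bullet(\lambda^+,\mu^{\ind},{\sq},1,\{E^{\lambda^+}_\mu\},\mu^+).
\]
The monotonicity properties recorded in Remark~\ref{monotonicity} then allow me to expand the element $E^{\lambda^+}_\mu$ of $\mathcal S$ to $E^{\lambda^+}_{\ge\mu}$, and to increase $\nu$ from $\mu^+$ up to $\lambda^+$, yielding
\[
\p^\bullet(\lambda^+,\mu^{\ind},{\sq},1,\{E^{\lambda^+}_{\ge\mu}\},\lambda^+).
\]

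Finally, to apply Theorem~\ref{souslinwithascentpath} with $\chi:=\mu$ and $\varsigma:=2$, I need to confirm that $\kappa=\lambda^+$ is $({<}\mu)$-closed. For any $\lambda'<\lambda^+$, we have $(\lambda')^{<\mu}\le\lambda^{<\mu}\le\lambda^\mu=\lambda<\lambda^+$, so this holds. Theorem~\ref{souslinwithascentpath} then produces a normal, prolific, splitting, $\mu$-complete $\lambda^+$-Souslin tree admitting a $\mu$-ascent path, giving the conclusion. The only subtle point worth flagging is the cardinal arithmetic ensuring both $(\lambda^+)^{<\lambda^+}=\lambda^+$ (needed to invoke Corollary~\ref{upshotindx}) and $({<}\mu)$-closedness of $\lambda^+$ (needed to invoke Theorem~\ref{souslinwithascentpath}); both follow cleanly from $\ch_\lambda$ together with the assumption $\lambda^\mu=\lambda$, so there is no serious obstacle here.
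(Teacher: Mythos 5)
Your proposal matches the paper's proof exactly: the paper proves this by citing Corollary~\ref{corindex}, Corollary~\ref{upshotindx}, and Theorem~\ref{souslinwithascentpath} with $\chi:=\mu$. You've simply spelled out the cardinal-arithmetic verifications ($\kappa^{<\kappa}=\kappa$ and $({<}\mu)$-closedness) and the monotonicity step (enlarging $E^{\lambda^+}_\mu$ to $E^{\lambda^+}_{\geq\mu}$ and $\mu^+$ to $\lambda^+$) that the paper leaves implicit; all of these are correct.
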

\begin{proof} By Corollary~\ref{corindex}, Corollary~\ref{upshotindx} and Theorem~\ref{souslinwithascentpath},
setting $\chi := \mu$.
\end{proof}

\begin{cor} In the Harrington--Shelah model \cite[Theorem~A]{MR783595}
in which every stationary subset of $E^{\aleph_2}_{\aleph_0}$ reflects,
there also exists an $\aleph_2$-Souslin tree with an $\omega$-ascent path.
\end{cor}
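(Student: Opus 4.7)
The plan is simply to invoke Corollary~\ref{cor713} inside the Harrington--Shelah model, so there is essentially no new work to do; the task is really one of pattern matching. Recall from the discussion following Fact~\ref{thm218} that the model of \cite{MR783595} satisfies $\gch$, $\square(\aleph_2)$, and the reflection of every stationary subset of $E^{\aleph_2}_{\aleph_0}$ (indeed, Fact~\ref{thm218}(3) was the very route by which $\boxtimes^-(\aleph_2)+\diamondsuit(\aleph_2)$ was extracted there from $\square(\aleph_2)+\gch$).

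Setting $\lambda := \aleph_1$ and $\mu := \aleph_0$, we have $\lambda$ uncountable, $\mu\in\reg(\lambda)$, and $\lambda^\mu = \aleph_1^{\aleph_0} = \aleph_1 = \lambda$ (by $\ch$, which is a consequence of $\gch$). Moreover, $\square(\lambda^+) = \square(\aleph_2)$ and $\ch_\lambda = \ch_{\aleph_1}$ both hold in the model. Hence Corollary~\ref{cor713} applies and delivers an $\aleph_2$-Souslin tree admitting an $\omega$-ascent path, as sought.

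There is really no obstacle here; the content is entirely in the earlier machinery. The only thing to notice is that the hypotheses of Corollary~\ref{cor713} are strictly weaker than the non-reflection hypothesis one might naively expect from classical ascent-path constructions (which would have required a nonreflecting stationary subset of $E^{\aleph_2}_{\aleph_0}$, a configuration explicitly destroyed in the Harrington--Shelah model). This is precisely the point being illustrated: the proxy-principle route through $\p^\bullet(\aleph_2,\aleph_0^{\ind},{\sq},1,\{E^{\aleph_2}_{\geq\aleph_0}\},\aleph_2)$ is compatible with full reflection on $E^{\aleph_2}_{\aleph_0}$, so the construction survives.
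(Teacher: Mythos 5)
Your proof is correct and takes essentially the same route as the paper: observe that the Harrington--Shelah model satisfies $\gch$ and $\square(\aleph_2)$ (the latter because $\aleph_2$ is the image of a Mahlo cardinal in $L$ and hence is not weakly compact in $L$), and then apply Corollary~\ref{cor713} with $(\lambda,\mu) := (\aleph_1,\aleph_0)$. The only cosmetic difference is that the paper explicitly spells out the inner-model-theoretic justification for $\square(\aleph_2)$, whereas you cite it via the earlier discussion following Fact~\ref{thm218}.
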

\begin{proof} The Harrington--Shelah model is a forcing extension of $L$ in which $\gch$ holds,
and the first Mahlo cardinal in $L$ becomes $\aleph_2$.
By standard results in inner model theory,
$\square(\aleph_2)$ holds, and hence Corollary~\ref{cor713} applies.
\end{proof}

\subsection{Omitting an ascending path}\label{subsection:omit}

In \cite[Definition~1.3]{MR3667758}, L\"ucke considered a weakening of a $\mu$-ascent path which he 
calls an \emph{ascending path of width $\mu$}.  
This is obtained by replacing Clause~(2) of Definition~\ref{defascentpath} by:
\begin{itemize}
\item[($2'$)] there are $i,j<\mu$ such that $f_\alpha(i) \stree f_\beta(j)$.
\end{itemize}

L\"ucke proved (see \cite[Theorem~1.9 and subsequent comment]{MR3667758}) that, assuming $\lambda^{<\lambda}=\lambda$, 
for every $\lambda^+$-Aronszajn tree $T$, the following are equivalent:
\begin{itemize}
\item for every ${\Lambda}<\lambda$, there is no ascending path of width $\Lambda$ through $T$;
\item $T$  is \emph{specializable}, that is, there exists a notion of forcing $\mathbb P$ that does not change the cardinal structure up to and including $\lambda^+$,
and such that, in $V^{\mathbb P}$, $T$ is the union of $\lambda$ many antichains.
\end{itemize}

We next show that by decreasing the value of $\nu$ in Theorem~\ref{basicthm}, we can get a $\kappa$-Souslin tree satisfying the additional property of omitting an ascending path.

\begin{thm}\label{omitpath}
Suppose that $\kappa$ is $({<}\chi)$-closed for a given $\chi\in\reg(\kappa)$. Let $\varsigma<\kappa$.
Given an infinite cardinal $\lambda<\kappa$, 
put $\mathcal S := \{ E^\kappa_{\geq\chi} \cap E^\kappa_{>\Lambda} \mid \Lambda<\lambda \}$.

If  $\p^\bullet(\kappa,\kappa,{\sqx^*},1, \mathcal S, 2)$ holds,
then there exists a normal, prolific, $\varsigma$-splitting, $\chi$\nobreakdash-complete, $\kappa$-Souslin tree $T$
such that, for all $\Lambda < \lambda$, there is no ascending path of width $\Lambda$ through $T$.
\end{thm}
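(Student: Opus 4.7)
The plan is to mirror the construction of Theorem~\ref{basicthm}, redirecting the choice of $b_x^C(\beta)$ at non-accumulation points of $\dm{C}$ according to the partition class of $\beta$, so as to simultaneously seal antichains and obstruct every ascending path of width $<\lambda$.

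First, I would invoke Corollary~\ref{Pbullet-equiv} to fix a witness $\cvec{C}=\langle\mathcal{C}_\alpha\mid\alpha<\kappa\rangle$ together with a freely chosen partition $\vec{B}=\langle B_\iota\mid\iota<\kappa\rangle$ of $\kappa$ into stationary sets. Setting $\Gamma:=E^\kappa_{\ge\chi}$, the construction of $T$ will be identical to Theorem~\ref{basicthm} at levels $\alpha\notin\Gamma$ (via $\chi$-completeness) and at successor levels (prolific and $\varsigma$-splitting), with the sole new ingredient appearing in the successor step of the recursion defining $b_x^C$ for $\alpha\in\Gamma$ and $C\in\mathcal{C}_\alpha$. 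For $\beta\in\nacc(\dm{C})$ with $\beta^-:=\sup(\dm{C}\cap\beta)$, I would set
\[
b_x^C(\beta):=\begin{cases}
\sealantichain(b_x^C(\beta^-),T\restriction(\beta+1),C(\beta)),&\text{if }\beta\in B_0;\\
\sealpath(b_x^C(\beta^-),T\restriction(\beta+1),\mho_{C,\beta}),&\text{if }\beta\in B_{1+\Lambda},\ \Lambda<\lambda,\ \beta^->\Lambda;\\
\defaultaction(b_x^C(\beta^-),T\restriction(\beta+1)),&\text{otherwise,}
\end{cases}
\]
where $\mho_{C,\beta}:=\im(C(\beta)(\beta^-+1))$ whenever $C(\beta)$ happens to be a function whose value at $\beta^-+1$ is itself a $\Lambda$-indexed sequence of nodes, and $\mho_{C,\beta}:=\emptyset$ otherwise. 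Since the branching decision depends only on $\beta$ and $\sup(\dm{C}\cap\beta)$, Dependencies~\ref{depends651} persists and Claim~\ref{coherent-from-weakest} transfers verbatim, yielding a normal, prolific, $\varsigma$-splitting, $\chi$-complete streamlined $\kappa$-tree $T$. Souslinity is then established exactly as in Claim~\ref{naantileft}, using $\Omega:=A$, $p:=\{T,A\}$, $\iota:=0$, and $S:=E^\kappa_{\ge\chi}\cap E^\kappa_{>0}=E^\kappa_{\ge\chi}\in\mathcal{S}$.

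To obstruct ascending paths, I would fix $\Lambda<\lambda$ and a candidate $\vec{f}=\langle f_\gamma\mid\gamma<\kappa\rangle$ of width $\Lambda$, then apply the hitting feature of $\p^\bullet(\kappa,\kappa,{\sqx^*},1,\mathcal{S},2)$ with $\Omega:=\vec{f}$, $p:=\{T,\vec{f},\Lambda\}$, $\iota:=1+\Lambda$, $S:=E^\kappa_{\ge\chi}\cap E^\kappa_{>\Lambda}\in\mathcal{S}$, and $n:=1$. This yields $\alpha\in S$ with $|\mathcal{C}_\alpha|<2$, so $\mathcal{C}_\alpha=\{C\}$ for a unique $C$, together with some $\gamma\in\dm{C}$ above $\Lambda$ for which $\beta:=\min(\dm{C}\setminus(\gamma+1))$ lies in $B_{1+\Lambda}(\vec{f},p,C)$. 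Fixing a witnessing $\mathcal{M}\prec H_{\kappa^+}$ with $p\in\mathcal{M}$, $\mathcal{M}\cap\kappa=\beta$, and $\mathcal{M}\cap\vec{f}=C(\beta)$, elementarity gives $\vec{f}\restriction\beta\s C(\beta)$; since $\beta^-=\gamma>\Lambda$ ensures $\beta^-+1<\beta$, I extract $C(\beta)(\beta^-+1)=f_{\beta^-+1}$ and hence $\mho_{C,\beta}=\{f_{\beta^-+1}(j)\mid j<\Lambda\}$. Because the tree is prolific and $\beta^->\Lambda$, the node $b_x^C(\beta^-)$ admits $\max\{\beta^-,\varsigma,\omega\}>\Lambda$ immediate successors in $T_{\beta^-+1}$, so one of them avoids $\mho_{C,\beta}$ and extends to $T_\beta$ by normality, ensuring that $\sealpath$ returns a $b_x^C(\beta)$ extending no member of $\mho_{C,\beta}$. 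As this holds uniformly in $x$, every $t\in T_\alpha=\{\mathbf{b}_x^C\mid x\in T\restriction\dm{C}\}$ fails to extend any $f_{\beta^-+1}(j)$; in particular $f_{\beta^-+1}(j)\not\stree f_\alpha(i)$ for all $i,j<\Lambda$, contradicting condition~($2'$) at the pair $(\beta^-+1,\alpha)$.

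The main obstacle in this plan is verifying that the $\sealpath$ action genuinely finds room to dodge all $\Lambda$ predicted bad immediate successors. This is precisely where the clause $E^\kappa_{>\Lambda}$ baked into $\mathcal{S}$ is essential: it forces $\beta^->\Lambda$, so that prolificity delivers a surplus of immediate successors above the threshold $\Lambda$. A secondary subtlety is confirming that the new branching in the definition of $b_x^C(\beta)$ does not disturb Dependencies~\ref{depends651}; this holds because the branching condition depends only on $\beta$ and $\beta^-$, quantities shared between $C$ and any $D\sq^*C$ beyond their coincidence point.
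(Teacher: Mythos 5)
Your construction is faithful to the paper's framework and your overall strategy is sound, but the obstruction argument has two genuine gaps, both of which the paper closes with a single move you omit: after fixing, for each $\xi<\Lambda$, a node $x_\xi\in T\restriction\dm{C}_\alpha$ with $f_\alpha(\xi)=\mathbf{b}^{C_\alpha}_{x_\xi}$, the paper sets $\delta:=\max\{\Lambda^+,\sup\{\dom(x_\xi)\mid\xi<\Lambda\}\}$ and chooses $\beta^->\delta$; you instead choose $\beta^-$ merely above $\Lambda$.

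The first gap is cardinal arithmetic. The $\sealpath$ action needs to dodge the set $\{f_{\beta^-+1}(j)\mid j<\Lambda\}$, which has cardinality up to $\Lambda$, while the node $b^C_x(\beta^-)$ has exactly $|\max\{\beta^-,\varsigma,\omega\}|=\max\{|\beta^-|,|\varsigma|,\aleph_0\}$ immediate successors. Since $\varsigma<\kappa$ is an arbitrary parameter (e.g., $\varsigma=2$) and $\Lambda$ is infinite, you need $|\beta^-|>\Lambda$. But $\beta^->\Lambda$ as ordinals does not give this: take $\beta^-=\Lambda+1$, which has $|\beta^-|=\Lambda$. Then the set of immediate successors may be exactly the bad set, $Q$ may be empty, $\sealpath$ falls back to $\defaultaction$, and no contradiction arises. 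Requiring $\beta^->\Lambda^+$ (the paper's $\delta\ge\Lambda^+$) fixes this, and is achievable since $\Lambda^+\le\lambda<\kappa$ and the hitting is stationary.

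The second gap is the claim that ``every $t\in T_\alpha=\{\mathbf{b}^C_x\mid x\in T\restriction\dm{C}\}$ fails to extend any $f_{\beta^-+1}(j)$.'' This is false: take $x\in T_\beta$ extending $f_{\beta^-+1}(j)$ (such $x$ exists by normality). Then $x\in T\restriction\dm{C}$, hence $\mathbf{b}^C_x\in T_\alpha$, and $\mathbf{b}^C_x\restriction(\beta^-+1)=x\restriction(\beta^-+1)=f_{\beta^-+1}(j)$. The $\sealpath$ dodge at $\beta$ applies only to $b^C_x$ with $\dom(x)\le\beta^-$; it does nothing for $x$ at or above level $\beta$. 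The paper therefore does not argue ``uniformly in $x$''; instead, it invokes condition $(2')$ at the pair $(\beta^-+1,\alpha)$ to get $\xi,\bar\xi<\Lambda$ with $f_{\beta^-+1}(\xi)\stree f_\alpha(\bar\xi)$, and then ensures via the $\sup\{\dom(x_\xi)\}$ term in $\delta$ that $\dom(x_{\bar\xi})<\beta^-$, so the $\sealpath$ computation is actually in effect along the branch $b^{C_\alpha}_{x_{\bar\xi}}$.

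A stylistic remark: your scheme of distributing $\sealpath$ across partition classes $B_{1+\Lambda}$ and preprocessing $C(\beta)$ into $\mho_{C,\beta}$ is more complicated than necessary. The paper takes $\Omega:=\{f_\gamma(\xi)\mid\xi<\Lambda,\gamma<\kappa\}$ (the set of nodes, not the function $\vec{f}$) so that $C(\beta)=\mathcal{M}\cap\Omega$ is already the node-set, and the clause $\dom(y)=\dom(x)+1$ baked into Definition~\ref{actions}\eqref{actionsealpath} automatically isolates the level-$(\beta^-+1)$ entries; a single class $B_1$ suffices, with no $\Lambda$-dependent trigger in the construction at all.
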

\begin{proof}
Suppose $\p^\bullet(\kappa,\kappa,{\sqx^*},1, \mathcal S, 2)$ holds,
as witnessed by 
$\cvec{C} = \langle \mathcal C_\alpha\mid \alpha<\kappa\rangle$ and
$\vec B=\langle B_\iota \mid \iota< \kappa \rangle$.
We recursively construct a sequence $\langle T_\alpha\mid \alpha<\kappa\rangle$ of levels
whose union will ultimately be the desired tree $T$, as in the proof of Theorem~\ref{basicthm},
using $\Gamma:=E^\kappa_{\ge\chi}$.
In particular, for every $\alpha\in\Gamma$, we shall have:
\[
T_\alpha := \{ \mathbf{b}^C_x \mid C \in \mathcal C_\alpha, x \in T \restriction \dm{C}\}.
\]
The only difference is in the definition of $b^C_x$ in the case $\alpha\in\Gamma$, 
where stage (2)(a) is now done as follows.
\[
b_x^C(\beta) :=
\begin{cases}
\sealantichain(b_x^C(\beta^-),T\restriction (\beta+1),C(\beta)), &\text{if } \beta\in B_0;\\
\sealpath(b_x^C(\beta^-),T\restriction (\beta+1),C(\beta)), &\text{if } \beta\in B_1;\\
\defaultaction(b^C_x(\beta^-),T\restriction(\beta+1)),&\text{otherwise}.
\end{cases}
\]
In all cases, since $b^C_x(\beta^-)$ belongs to the normal tree $T \restriction (\beta+1)$,
we infer from the Extension Lemma (Lemma~\ref{extendfact}) that
$b_x^C(\beta)$ is an element of $T_\beta$ extending $b^C_x(\beta^-)$.
Now, it is clear that Dependencies~\ref{depends651} and
Claims \ref{coherent-from-weakest}, \ref{tailwithsame} and \ref{naantileft} are all valid,
so that 
$T := \bigcup_{\alpha < \kappa} T_\alpha$
 is a normal, prolific, $\varsigma$-splitting, $\chi$-complete streamlined $\kappa$-Souslin tree.
Thus, we are left with verifying the following.

\begin{claim}\label{claim4163}
For every ${\Lambda} < \lambda$, $T$ admits no ascending path of width $\Lambda$.
\end{claim}
\begin{proof}
Fix a nonzero ${\Lambda}<\lambda$, and suppose toward a contradiction that
$\vec f = \langle f_\alpha : \Lambda \to T_\alpha \mid \alpha<\kappa \rangle$ is an
ascending path of width $\Lambda$ through $T$.
Let $p:=\vec f$ and $\Omega:=\{ f_\alpha(\xi)\mid \xi<\Lambda, \alpha<\kappa\}$,
so that $p\in H_{\kappa^+}$ and $\Omega\s H_\kappa$.
By our choice of $\cvec{C}$ and $\vec{B}$,
and recalling Definition~\ref{pbullet},
we now fix $\alpha\in E^\kappa_{\ge\chi}\cap E^\kappa_{>\Lambda}$ above $\lambda$ such that
$\mathcal C_\alpha$ is a singleton, say, $\mathcal C_\alpha=\{C_\alpha\}$, and
$$\sup\{ \gamma\in \dm{C}_\alpha \mid \suc_1(\dm{C}_\alpha \setminus \gamma) \subseteq B_1(\Omega,p,C_\alpha) \} = \alpha.$$

For every $\xi<\Lambda$, since $f_\alpha(\xi) \in T_\alpha$ and $\alpha\in\Gamma$,
by the construction of the level $T_\alpha$,
we can fix $x_\xi \in T \restriction\dm{C}_\alpha$
such that $f_\alpha(\xi) = \mathbf{b}^{C_\alpha}_{x_\xi}$.
Now let
\[
\delta := \max\{\Lambda^+,\sup\{ \dom(x_\xi) \mid \xi<\Lambda \}\}.
\]
Since $\dom(x_\xi)<\alpha$ for every $\xi<\Lambda$,
and $\Lambda^+\leq\lambda<\alpha$,
it follows from $\cf(\alpha)>\Lambda$  that $\delta<\alpha$.
Thus, we may find a large enough $\beta\in\nacc(\dm{C}_\alpha)\cap B_1(\Omega,p,C_\alpha)$ 
such that $\beta^-:=\sup(\dm{C}_\alpha\cap\beta)$ is greater than $\delta$.
Denote $\mho:=C_\alpha(\beta)$.
It follows that, for all $\xi<\Lambda$:
\begin{itemize}
\item $\beta^-\in \dm{C}_\alpha$;
\item $x_\xi \in T \restriction (\dm{C}_\alpha\cap\beta^-)$;
\item $f_\alpha(\xi) = \mathbf{b}^{C_\alpha}_{x_\xi}$;
\item $C_\alpha(\beta)=\mho$.
\end{itemize}

Since $\beta^- +1 \leq\beta<\alpha$ and we have assumed that $\vec f$ is an ascending path of width $\Lambda$ through $T$,
we can fix $\bar\xi<\Lambda$ such that, for some $\xi<\Lambda$, $f_{\beta^- +1}(\xi)\stree f_\alpha(\bar\xi)$.
As $\dom(x_{\bar\xi}) \in\dm C_\alpha \cap \beta$,
we obtain $b^{C_\alpha}_{x_{\bar\xi}}(\beta)\stree \mathbf{b}^{C_\alpha}_{x_{\bar\xi}} = f_\alpha(\bar\xi)$.
Let us examine how $b^{C_\alpha}_{x_{\bar\xi}}(\beta)$ was chosen.

As $\beta\in\nacc(\dm{C}_\alpha)\cap B_1$ and $\beta^- = \sup(\dm C_\alpha\cap\beta)$, we have
$$b^{C_{\alpha}}_{x_{\bar\xi}}(\beta) = \sealpath(b_{x_{\bar\xi}}^{C_{\alpha}}(\beta^-),T\restriction (\beta+1),C_\alpha(\beta)).$$
Returning to Definition~\ref{actions}\eqref{actionsealpath},
we consider the following set:
\[
Q := \{ z \in T_\beta\mid b_{x_{\bar\xi}}^{C_\alpha}(\beta^-)\s z \ \&\ \forall y \in \mho[\dom(y)=\beta^-+1\implies y\nsubseteq z]\}.
\]

Recalling Definition~\ref{defbsets},
let us fix an elementary submodel $\mathcal M\prec H_{\kappa^+}$ satisfying:
$$\begin{array}{ccc}
p\in\mathcal M,&
\mathcal M\cap\kappa=\beta,\text{ and}&
\mathcal M\cap\Omega=\mho.
\end{array}$$

Clearly, $\beta$ is a limit ordinal, and  $\mho = \{ f_\gamma(\xi) \mid \xi<\Lambda, \gamma<\beta \}$.
In particular, $\{y\in \mho\mid \dom(y)=\beta^- +1\}= \{f_{\beta^- +1}(\xi) \mid \xi<\Lambda\}$.
Since $T$ is a prolific tree and $\beta^- >\delta \ge \Lambda^+$,
the node $b_{x_{\bar\xi}}^{C_\alpha}(\beta^-)$ must have at least
$\Lambda^+$ many immediate successors in $T_{\beta^-+1}$.
Thus we can fix $w \in T_{\beta^-+1}$ extending $b_{x_{\bar\xi}}^{C_\alpha}(\beta^-)$
that is distinct from $f_{\beta^- +1}(\xi)$ for every $\xi<\Lambda$.
By normality, we can extend $w$ to some element $z \in T_\beta$.
It follows that $Q$ is nonempty,
so that $b^{C_\alpha}_{x_{\bar\xi}}(\beta)$ was chosen from $Q$,
and in particular is incomparable with $f_{\beta^- +1}(\xi)$ for every $\xi<\Lambda$.
It follows that $f_\alpha(\bar\xi)$ is incomparable with $f_{\beta^- +1}(\xi)$ for every $\xi<\Lambda$,
contradicting our choice of $\bar\xi$ and completing the proof.
\end{proof}
This completes the proof.
\end{proof}

\subsection{Rigid} 
Compared to the previous subsection, here we strengthen $\mathcal R$ from ${\sqx^*}$ to ${\sqx}$, and gain rigidity as a result.

\begin{definition}[cf.~{\cite[\S V.1]{devlin1974souslin}}]\label{automorphism}
An \emph{automorphism} of a streamlined tree $T$ is a bijection $\Omega:T\leftrightarrow T$ satisfying $(x\stree y\iff\Omega(x)\stree\Omega(y))$.
The identity map is known as the \emph{trivial automorphism}.
A streamlined tree is said to be \emph{rigid} iff its only automorphism is the trivial one.
\end{definition}

Note that any automorphism $\Omega$ of a streamlined tree $T$ is \emph{level-preserving}, that is,
$\Omega[T_\beta] = T_\beta$ for every ordinal $\beta$.

\begin{cor}
If $\p^\bullet(\kappa,\kappa,{\sq},1,\{E^\kappa_{\ge\lambda}\},2)$ holds, then there exists a rigid $\kappa$-Souslin tree
admitting no ascending path of width $\Lambda$ for every $\Lambda<\lambda$.
\end{cor}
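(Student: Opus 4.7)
The plan is to obtain the desired tree as an application of Theorem~\ref{rigid} (whose parameters are recorded in the table on page~\pageref{table1}), taking $\chi := \aleph_0$. As $\max\{\aleph_0,\lambda\}=\lambda$ and $\kappa$ is trivially $({<}\aleph_0)$-closed, the input required by that theorem is $\p^\bullet(\kappa,\kappa,{\sqx},1,\{E^\kappa_{\ge\lambda}\},2)$; by Remark~\ref{monotonicity}, weakening ${\sq}$ to the coarser ${\sqx}$ yields this instance from our hypothesis immediately. Thus all of the work is concentrated in proving Theorem~\ref{rigid}.

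To prove Theorem~\ref{rigid}, I would mimic the construction of Theorem~\ref{omitpath}, carving the partition $\vec B = \langle B_\iota\mid\iota<\kappa\rangle$ into three channels $B_0,B_1,B_2$ dedicated respectively to sealing antichains, sealing ascending paths of width $<\lambda$, and sealing non-trivial automorphisms. Letting $\beta^-:=\sup(\dm{C}\cap\beta)$, the choice at stage $\beta\in\nacc(\dm{C})$ becomes
\[
b^C_x(\beta):=\begin{cases}
\sealantichain(b^C_x(\beta^-),T\restriction(\beta+1),C(\beta)), & \beta\in B_0;\\
\sealpath(b^C_x(\beta^-),T\restriction(\beta+1),C(\beta)), & \beta\in B_1;\\
\sealautomorphism(b^C_x(\beta^-),T\restriction(\beta+1),b^C_x\restriction\beta,\mho_\beta), & \beta\in B_2;\\
\defaultaction(b^C_x(\beta^-),T\restriction(\beta+1)), & \text{otherwise},
\end{cases}
\]
where $\mho_\beta$ is a candidate automorphism of $T\restriction\beta$ read off from $C(\beta)$ (taken to be the empty map when no genuine automorphism is coded). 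The Extension Lemma (Lemma~\ref{extendfact}) ensures each value lies in $T_\beta$ and extends $b^C_x(\beta^-)$. Since the choice remains microscopic in the sense of Dependencies~\ref{depends651}, the coherence computation of Claim~\ref{coherent-from-weakest} transfers verbatim at accumulation points, yielding a normal, prolific, $\varsigma$-splitting streamlined $\kappa$-tree.

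The verifications of $\kappa$-Souslinness (via $B_0$, as in Claim~\ref{naantileft}) and of the omission of ascending paths of width $<\lambda$ (via $B_1$, as in Claim~\ref{claim4163}) will carry over unchanged. The new work is the verification of rigidity: if $\Omega$ were a non-trivial automorphism of $T$ with $\Omega(x_0)\neq x_0$ for some $x_0\in T$, then setting $p:=\{T,\Omega,x_0\}$ and coding $\Omega$ as a subset $\Omega^* \subseteq H_\kappa$ via its graph, Definition~\ref{pbullet} applied to index $2$ furnishes some $\alpha\in E^\kappa_{\ge\lambda}$ together with a unique $C\in\mathcal C_\alpha$ and some $\beta\in\dm{C}\cap B_2(\Omega^*,p,C)$ above $\dom(x_0)$. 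By Proposition~\ref{motivate}(3), the predicted map $\mho_\beta=\Omega\restriction(T\restriction\beta)$ is then a non-trivial automorphism of $T\restriction\beta$, whereupon the $\sealautomorphism$ action forces $\tilde y_0$ — which, by continuity of $\Omega$, is the node $\Omega(\tilde x_0)\in T_\beta$ — to be absent from $T_\beta$, contradicting the fact that $\Omega$ maps $T$ into $T$.

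The main obstacle will be to set up the coding of $\mho_\beta$ from $C(\beta)$ so that the limit computation in $\sealautomorphism$ genuinely produces $\Omega(\tilde x_0)$ at the predicted $\beta$, and to verify that excluding $\tilde y_0$ at each $B_2$-step does not spoil the coherence needed for Claim~\ref{coherent-from-weakest}. This is precisely where the stronger relation $\sqx$ (rather than $\sqx^*$ as in Theorem~\ref{omitpath}) is essential: the sealing of $\tilde y_0$ must happen at the genuine level $\beta\in\dm{C}$, with no tolerance for a bounded shift between the ladders at $\beta$ and at $\alpha$.
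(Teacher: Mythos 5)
The reduction of the corollary to Theorem~\ref{rigid} via $\chi:=\aleph_0$ is exactly the paper's one-line proof, and the reconciliation of $\{E^\kappa_{\ge\lambda}\}$ with $\mathcal S$ and of $\sq$ with $\sqx$ via Remark~\ref{monotonicity} (indeed $\sq$ and $\sqleft{\omega}$ coincide) is fine. The issues are all in your sketch of Theorem~\ref{rigid}.

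First, the third argument you pass to $\sealautomorphism$ is the wrong object: $b^C_x\restriction\beta$ is a function from ordinals to $T$, whereas Definition~\ref{actions}(\ref{autoaction}) requires $b$ to be a partial function from $T\restriction\beta(T)$ to $T\restriction\beta(T)$. The paper passes $\langle b^C_y(\beta^-)\mid y\in T\restriction(\dm{C}\cap\beta)\rangle$, i.e.\ the assignment $y\mapsto b^C_y(\beta^-)$ over the entire slab $T\restriction(\dm{C}\cap\beta)$. This is not a notational slip: the whole point of that map is to let $\sealautomorphism$ locate the $\lhd_\kappa$-least $x_0$ moved by the putative automorphism among \emph{all} of $T\restriction(\dm{C}\cap\beta)$, so that the choice of $\tilde y_0$ is independent of which particular $x$ one is extending. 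Relatedly, you do not need to decode a separate $\mho_\beta$; one simply passes $C(\beta)$ and the case analysis inside $\sealautomorphism$ handles non-automorphism values by defaulting.

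Second, your assertion that ``the choice remains microscopic in the sense of Dependencies~\ref{depends651}, the coherence computation of Claim~\ref{coherent-from-weakest} transfers verbatim'' is precisely what fails. Because the third argument to $\sealautomorphism$ depends on the whole of $\dm{C}\cap\beta$ (not merely on $\beta^-$ and $C(\beta)$), the correct dependency statement is the paper's Dependencies~\ref{depends6141}, not \ref{depends651}. This is the real reason $\sqx$ rather than $\sqx^*$ is required: to run the coherence argument at $\beta\in\acc(\dm C)$ one needs $D\in\mathcal C_\beta$ with $D\sq C$ so that $\dm{D}\cap\delta=\dm{C}\cap\delta$ \emph{exactly} for every $\delta$, making the domains $T\restriction(\dm D\cap\delta)$ and $T\restriction(\dm C\cap\delta)$ coincide and hence the same $x_0$ and $\tilde y_0$ be selected. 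Your explanation in terms of ``no tolerance for a bounded shift between the ladders at $\beta$ and at $\alpha$'' does not capture this.

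Third, and most seriously, your contradiction does not hold. The action $\sealautomorphism(x,\bar T,b,\mho)=\defaultaction(x,\bar T\setminus\{\tilde y_0\})$ only steers the single node $b^C_x(\beta)$ away from $\tilde y_0$; it does not and cannot ``force $\tilde y_0$ to be absent from $T_\beta$'' --- that level was constructed earlier, and indeed $\tilde y_0=\Omega(\tilde x_0)$ \emph{is} in $T_\beta$ since $\Omega$ is level-preserving. The genuine contradiction is more delicate and uses $\nu=2$ essentially: at the hitting ordinal $\alpha$ one has $\mathcal C_\alpha=\{C_\alpha\}$, so $T_\alpha=\{\mathbf b^{C_\alpha}_y\mid y\in T\restriction\dm{C}_\alpha\}$, and therefore $\Omega(\mathbf b^{C_\alpha}_{x_0})=\mathbf b^{C_\alpha}_x$ for some $x$; taking $x_0$ to be $\lhd_\kappa$-least in $T\restriction\dm{C}_\alpha$ with $\Omega(x_0)\neq x_0$, one then computes at a suitable $\beta\in\nacc(\dm{C}_\alpha)\cap B_2(\Omega,p,C_\alpha)$ that $b^{C_\alpha}_{x_0}(\beta)=\tilde x_0$ and hence $\Omega(b^{C_\alpha}_{x_0}(\beta))=\tilde y_0$, while $b^{C_\alpha}_x(\beta)\neq\tilde y_0$ by construction --- contradicting $\Omega(\mathbf b^{C_\alpha}_{x_0})\restriction\beta=\mathbf b^{C_\alpha}_x\restriction\beta$. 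You should also note that the partition must be chosen with $B_2\cap\nacc(\kappa)=\emptyset$ (licensed by Corollary~\ref{Pbullet-equiv}), so that $\beta(\bar T)$ is a limit ordinal and the union $\tilde y_0=\bigcup_{\gamma<\beta}\mho(\tilde x_0\restriction\gamma)$ actually equals $\Omega(\tilde x_0)$.
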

\begin{proof} Appeal to the next theorem with $\chi:=\aleph_0$.
\end{proof}

\begin{thm}\label{rigid}
Suppose that $\kappa$ is $({<}\chi)$-closed for a given $\chi\in\reg(\kappa)$. Let $\varsigma<\kappa$.
Given an infinite cardinal $\lambda<\kappa$, 
put $\mathcal S := \{ E^\kappa_{\geq\chi} \cap E^\kappa_{>\Lambda} \mid \Lambda<\lambda \}$.

If  $\p^\bullet(\kappa,\kappa,{\sqx},1, \mathcal S, 2)$ holds,
then there exists a normal, prolific, $\varsigma$-splitting, $\chi$-complete, rigid $\kappa$-Souslin tree $T$
such that, for all $\Lambda < \lambda$, there is no ascending path of width $\Lambda$ through $T$.
\end{thm}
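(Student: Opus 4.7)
The plan is to follow the construction in the proof of Theorem~\ref{omitpath} verbatim, using sequences $\cvec{C} = \langle \mathcal{C}_\alpha \mid \alpha<\kappa \rangle$ and $\vec{B} = \langle B_\iota \mid \iota<\kappa \rangle$ witnessing $\p^\bullet(\kappa,\kappa,{\sqx},1,\mathcal{S},2)$, except that at stage (2)(a) of the recursive definition of $b_x^C$ a third case is introduced for $\beta\in\nacc(\dm{C})\cap B_2$:
\[
b_x^C(\beta) := \sealautomorphism\bigl(b_x^C(\beta^-),\, T{\restriction}(\beta+1),\, b^\beta,\, C(\beta)\bigr),
\]
where $b^\beta$ denotes the identity function on $T{\restriction}\beta$. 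Since the successor step produces a prolific (hence ever-branching) tree, the Extension Lemma (Lemma~\ref{extendfact}) still applies and $b_x^C(\beta)$ lands in $T_\beta$ as an extension of $b_x^C(\beta^-)$. Dependencies~\ref{depends651} and Claims~\ref{coherent-from-weakest} and \ref{tailwithsame} remain valid, and so do the verifications that $T$ is Souslin (Claim~\ref{naantileft}) and admits no ascending path of width $<\lambda$ (Claim~\ref{claim4163}), since these are driven entirely by the $B_0$ and $B_1$ cases.

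The new content is the verification of rigidity. Suppose toward contradiction that $\Omega$ is a nontrivial automorphism of $T$. Set $\Omega_0:=\Omega$, $p_0:=\{T,\Omega\}$, $\iota_0:=2$, and $n:=1$. Applying Definition~\ref{pbullet} with $S:=E^\kappa_{\ge\chi}\in\mathcal{S}$, one obtains $\alpha\in\Gamma:=E^\kappa_{\ge\chi}$ with $|\mathcal{C}_\alpha|<2$—write $\mathcal{C}_\alpha=\{C_\alpha\}$—such that the ordinals $\gamma\in\dm{C_\alpha}$ satisfying $\suc_1(\dm{C_\alpha}\setminus\gamma)\subseteq B_2(\Omega,p_0,C_\alpha)$ are cofinal in $\alpha$. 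Fix any such $\gamma$ and set $\beta:=\min(\dm{C_\alpha}\setminus(\gamma+1))$, so that $\beta\in\nacc(\dm{C_\alpha})\cap B_2(\Omega,p_0,C_\alpha)$. By Definition~\ref{defbsets} and Proposition~\ref{motivate}(3), an elementary submodel witnessing $\beta\in B_2(\Omega,p_0,C_\alpha)$ forces $\mho:=C_\alpha(\beta)=\Omega{\restriction}(T{\restriction}\beta)$, which is a nontrivial automorphism of $T{\restriction}\beta$.

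Since $b^\beta$ is the identity on $T{\restriction}\beta$, the set $Q$ from Definition~\ref{actions}\eqref{autoaction} becomes $\{x\in T{\restriction}\beta\mid \mho(x)\neq x\}$, which is nonempty, and so the action extracts well-defined $x_0$, $\tilde{x}_0\in T_\beta$ above $x_0$, and $\tilde{y}_0=\bigcup_{\delta<\beta}\mho(\tilde{x}_0{\restriction}\delta)$. Because $\Omega$ is a global automorphism of $T$ whose restriction to $T{\restriction}\beta$ is $\mho$, we have $\Omega(\tilde{x}_0)\in T_\beta$ with $\Omega(\tilde{x}_0)=\bigcup_{\delta<\beta}\mho(\tilde{x}_0{\restriction}\delta)=\tilde{y}_0$, so $\tilde{y}_0\in T_\beta$. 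Normality of $T$ then supplies some $t\in T_\alpha$ with $\tilde{y}_0\stree t$; since $T_\alpha=\{\mathbf{b}_y^{C_\alpha}\mid y\in T{\restriction}\dm{C_\alpha}\}$, we may write $t=\mathbf{b}_y^{C_\alpha}$ for some $y$, and then $b_y^{C_\alpha}(\beta)=t{\restriction}\beta=\tilde{y}_0$, in direct contradiction with the fact that $\sealautomorphism$ at stage $\beta$ was designed to exclude $\tilde{y}_0$ from the range of $b_{\cdot}^{C_\alpha}(\beta)$.

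The main obstacle will be in the bookkeeping around the coherence relation: the upgrade from ${\sqx^*}$ to ${\sqx}$ is what guarantees that $b_y^{C_\alpha}(\beta)$ is determined honestly by $\beta$ and $C_\alpha$ rather than only up to a tail, and this is needed both so that the guessing step genuinely delivers $\mho=\Omega{\restriction}(T{\restriction}\beta)$ with the expected nontriviality, and so that the final normality-extension step in the preceding paragraph actually picks up $\tilde{y}_0$ as a value $b_y^{C_\alpha}(\beta)$ rather than merely as an agreement-on-a-tail.
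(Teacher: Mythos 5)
There is a genuine gap in the rigidity argument, rooted in your replacement of the partial function in the third slot of $\sealautomorphism$. The paper passes $\bar b = \langle b^C_y(\beta^-)\mid y\in T\restriction(\dm C\cap\beta)\rangle$ whereas you pass the identity on $T\restriction\beta$. This isn't a cosmetic change; it defeats the entire contradiction mechanism.

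Your claimed contradiction is that $\tilde y_0\in T_\beta$ has no extension in $T_\alpha$, conflicting with normality. But that is false. Since $\beta\in\nacc(\dm C_\alpha)\subseteq\dm C_\alpha$ and $\tilde y_0\in T_\beta$, the node $\tilde y_0$ is an element of $T\restriction\dm C_\alpha$, so $\mathbf{b}^{C_\alpha}_{\tilde y_0}$ is placed in $T_\alpha$ by $(**)_\alpha$, and it extends $\tilde y_0$. More generally, your step ``write $t = \mathbf{b}^{C_\alpha}_y$ and then $b^{C_\alpha}_y(\beta)=t\restriction\beta$'' silently assumes $\dom(y)<\beta$. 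Nothing licenses that: $y$ ranges over all of $T\restriction\dm C_\alpha$, including nodes at levels $\ge\beta$, and for those, $\mathbf b^{C_\alpha}_y\restriction\beta$ is just an initial segment of $y$, which can be any node of $T_\beta$ whatsoever. Claim~\ref{tailwithsame} only gives a \emph{final} segment of representatives, so one cannot push $y$ below $\beta$. In short, $\sealautomorphism$ excludes $\tilde y_0$ from the set of \emph{traversal values} $\{b^{C_\alpha}_y(\beta)\mid y\in T\restriction(\dm C_\alpha\cap\beta)\}$, not from the set of $\beta$-restrictions of $T_\alpha$-nodes.

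The paper's argument is structured differently and does not go through normality at all: it fixes $x_0$ as the $\lhd_\kappa$-least element of $T\restriction\dm C_\alpha$ moved by $\Omega$, writes $\Omega(\mathbf b^{C_\alpha}_{x_0}) = \mathbf b^{C_\alpha}_{x}$, and shows that $\Omega(b^{C_\alpha}_{x_0}(\beta)) = \tilde y_0$ while $b^{C_\alpha}_{x}(\beta)\ne\tilde y_0$ --- contradicting $\Omega(\mathbf b^{C_\alpha}_{x_0})\restriction\beta = \mathbf b^{C_\alpha}_{x}\restriction\beta$. The identity $\Omega(b^{C_\alpha}_{x_0}(\beta)) = \tilde y_0$ depends crucially on $\tilde x_0$ being computed as $\defaultaction(\bar b(x_0), \bar T) = \defaultaction(b^{C_\alpha}_{x_0}(\beta^-), \bar T)$, so that $\tilde x_0$ literally coincides with $b^{C_\alpha}_{x_0}(\beta)$; with your identity map, $\tilde x_0 = \defaultaction(x_0,\bar T)$, which is typically not equal to $b^{C_\alpha}_{x_0}(\beta)$, and the chain of equalities collapses. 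Relatedly, with your action the set $Q$ from Definition~\ref{actions}\eqref{autoaction} is taken over $T\restriction\beta$ rather than over $T\restriction(\dm C_\alpha\cap\beta)$, so the chosen $x_0$ need not even lie in $T\restriction\dm C_\alpha$ and cannot be matched with the externally chosen least-moved node. Finally, your closing speculation about $\sqx$ vs.\ $\sqx^*$ is inverted: the reason the paper needs $\sqx$ is that its $\bar b$ depends on $C\restriction\beta$, not just on $C(\beta)$, so Dependencies~\ref{depends651} is strengthened to Dependencies~\ref{depends6141} and the $\sq^*$-style coherence argument of Claim~\ref{coherent-from-weakest} no longer applies; your identity-map version actually has only the weaker dependency and would cohere under $\sqx^*$, but it is precisely that reduced dependency that throws away the information the rigidity argument needs.
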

\begin{proof}
Suppose $\p^\bullet(\kappa,\kappa,{\sqx},1, \mathcal S, 2)$ holds.
Fix a partition of $\kappa$ into stationary sets, $\vec B=\langle B_\iota \mid \iota< \kappa \rangle$,
such that $B_2\cap\nacc(\kappa)=\emptyset$. 
By Corollary~\ref{Pbullet-equiv}, then, fix $\cvec{C} = \langle \mathcal C_\alpha\mid \alpha<\kappa\rangle$ 
that, together with $\vec B$, witnesses $\p^\bullet(\kappa,\kappa,{\sqx},1, \mathcal S, 2)$.
We recursively construct a sequence $\langle T_\alpha\mid \alpha<\kappa\rangle$ of levels
whose union will ultimately be the desired tree $T$, as in the proof of Theorem~\ref{basicthm},
using $\Gamma:=E^\kappa_{\ge\chi}$.
In particular, for every $\alpha\in\Gamma$, we shall have:
\[
T_\alpha := \{ \mathbf{b}^C_x \mid C \in \mathcal C_\alpha, x \in T \restriction \dm{C}\}.
\]
The only difference is in the definition of $b^C_x$ in the case $\alpha\in\Gamma$, 
where stage (2)(a) is now done as follows.
\[
b_x^C(\beta) :=
\begin{cases}
\sealantichain(b_x^C(\beta^-),T\restriction (\beta+1),C(\beta)), &\text{if } \beta\in B_0;\\
\sealpath(b_x^C(\beta^-),T\restriction (\beta+1),C(\beta)), &\text{if } \beta\in B_1;\\
\sealautomorphism(b_x^C(\beta^-),T\restriction (\beta+1),
\langle b_{y}^C(\beta^-)\mid y\in T\restriction(\dm{C}\cap\beta)\rangle,C(\beta)), &\text{if } \beta\in B_2;\\
\defaultaction(b^C_x(\beta^-),T\restriction(\beta+1)),&\text{otherwise}.
\end{cases}
\]
In all cases, since $b^C_x(\beta^-)$ belongs to the normal tree $T \restriction (\beta+1)$ and $T\restriction\beta$ is ever-branching,
we infer from the Extension Lemma (Lemma~\ref{extendfact}) that
$b_x^C(\beta)$ is an element of $T_\beta$ extending $b^C_x(\beta^-)$.

The following is obvious.

\begin{dependencies}\label{depends6141}
For any two consecutive points $\beta^-<\beta$ of $\dom(b^C_x)$,
the value of $b^C_x(\beta)$ is completely determined by
$b^C_x(\beta^-)$, $T\restriction(\beta+1)$, $C(\beta)$,
and the map $y\mapsto b_y^C(\beta^-)$ over $T\restriction(\dm{C}\cap\beta)$.
\end{dependencies}

Note that, unlike Dependencies~\ref{depends651}, which only involves the value of $C$ at $\beta$,
here we have a dependency on $C\restriction\beta$ as well. The price we pay for this added dependency is that,
in order to carry out the proof of Claim~\ref{coherent-from-weakest} in this context,
we require $\cvec{C}$ to satisfy the stronger $\sqx$-coherence, rather than merely $\sqx^*$-coherence.

Claims \ref{coherent-from-weakest}, \ref{tailwithsame} and \ref{naantileft} remain valid,
so that 
$T := \bigcup_{\alpha < \kappa} T_\alpha$
 is a normal, prolific, $\varsigma$-splitting, $\chi$-complete streamlined $\kappa$-Souslin tree.
Also, Claim~\ref{claim4163} remains valid, so that 
for all $\Lambda < \lambda$, there is no ascending path of width $\Lambda$ through $T$.
Thus, we are left with proving the following.

\begin{claim} $T$ is rigid.
\end{claim}
\begin{proof}
Suppose $\Omega: T \to T$ is a nontrivial automorphism, and we will derive a contradiction.
Let $p:=\{\Omega,T\}$, so that $p\in H_{\kappa^+}$ and $\Omega\s H_\kappa$.
By our choice of $\vec{B}$ and $\cvec{C}$,
and recalling Definition~\ref{pbullet},
we now fix $\alpha\in E^\kappa_{\ge\chi}$ such that $\mathcal C_\alpha$ is a singleton, say, $\mathcal C_\alpha=\{C_\alpha\}$, and
$$\sup\{ \gamma\in \dm{C}_\alpha \mid \suc_1(\dm{C}_\alpha \setminus \gamma) \subseteq B_2(\Omega,p,C_\alpha) \} = \alpha.$$

By Proposition~\ref{motivate}(3), for any $\beta\in B_2(\Omega,p,C_\alpha)$,
$C_\alpha(\beta)=\Omega\restriction(T\restriction\beta)$ and the latter is a nontrivial automorphism of $T\restriction\beta$.
In particular, $\Omega\restriction (T\restriction\alpha)$ is a nontrivial automorphism of $T\restriction\alpha$.
Thus, by normality, we can let $x_0$ be the $\lhd_\kappa$-least element of $T \restriction\dm{C}_\alpha$
such that $\Omega (x_0) \neq x_0$.

As $\alpha\in\Gamma$, by the nature of the construction of $T_\alpha$,
it makes sense to consider the particular node ${\mathbf{b}}^{C_\alpha}_{x_0}$.
Since ${\mathbf{b}}^{C_\alpha}_{x_0} \in T_\alpha$ and $\Omega$ is an automorphism of $T$, we have $\Omega({\mathbf{b}}^{C_\alpha}_{x_0})\in T_\alpha$,
so that we may choose some $x \in T \restriction \dm{C}_\alpha$ such that $\Omega({\mathbf{b}}^{C_\alpha}_{x_0})={\mathbf{b}}^{C_\alpha}_x$.\footnote{Recall that $C_\alpha$ is the sole element of $\mathcal C_\alpha$.}

Fix a large enough $\beta \in \nacc (\dm{C}_\alpha) \cap B_2(\Omega,p,C_\alpha)$ such that $\sup(\dm{C}_\alpha\cap\beta)>\max\{ \dom(x_0), \dom(x) \}$.
Then $b^{C_\alpha}_{x_0}(\beta) = \mathbf{b}^{C_\alpha}_{x_0} \restriction\beta$
and $b^{C_\alpha}_{x}(\beta) = \mathbf{b}^{C_\alpha}_{x} \restriction\beta$.
Since $\Omega$ is an automorphism, it follows that
\[
\Omega(b^{C_\alpha}_{x_0}(\beta)) = \Omega(\mathbf{b}^{C_\alpha}_{x_0} \restriction\beta) =
\Omega(\mathbf{b}^{C_\alpha}_{x_0}) \restriction\beta = \mathbf{b}^{C_\alpha}_{x} \restriction\beta =
b^{C_\alpha}_{x}(\beta).
\]

Write $\beta^-:=\sup(\dm{C}_\alpha\cap\beta)$, $\bar T:=T\restriction (\beta+1)$, $\bar b:=\langle b_{y}^{C_\alpha}(\beta^-)\mid y\in T\restriction(\dm{C}_\alpha\cap\beta)\rangle $ and $\mho:=C_\alpha(\beta)$.
As $\beta\in\nacc(\dm{C}_\alpha)\cap B_2$, we infer:
\begin{itemize}
\item $b^{C_\alpha}_{x_0}(\beta)=\sealautomorphism(b_{x_0}^{C_\alpha}(\beta^-),\bar T,\bar b,\mho)$;
\item $b^{C_\alpha}_x(\beta)=\sealautomorphism(b_x^{C_\alpha}(\beta^-),\bar T,\bar b,\mho)$.
\end{itemize}

Consider an arbitrary $z\in\{b_{x_0}^{C_\alpha}(\beta^-),b_{x}^{C_\alpha}(\beta^-)\}$.
Recall Definition~\ref{actions}\eqref{autoaction}, and let us analyze $\sealautomorphism(z,\bar T,\bar b,\mho)$.
We have $z\in\bar T$, and $\dom(z)=\beta^-<\beta=\beta(\bar T)$.
Since $\beta\in B_2(\Omega,p,C_\alpha)$, $\beta$ is a limit ordinal
and $\mho$ is the automorphism $\Omega\restriction(\bar T\restriction\beta)$ of $\bar{T}\restriction\beta$. 
Also, $\bar b$ is a partial function from $\bar T \restriction(\beta(\bar T))$ to $\bar T \restriction(\beta(\bar T))$.
Since $x_0\in\dom(\bar b)$ and $\Omega(x_0)\neq x_0$,
the set $Q:=\{ x_0\in \dom(\bar b)\mid \mho(x_0)\neq x_0\}$ is nonempty.
By the very choice of $x_0$, moreover, $x_0=\min(Q,{\lhd_\kappa})$.
Write $\tilde x_0:=\defaultaction( \bar b(x_0),\bar T)$ and
$\tilde y_0: = \bigcup_{\gamma < \beta(\bar T)} \mho(\tilde x_0 \restriction \gamma)$.
Since $\beta(\bar T)=\beta$ is a limit ordinal and $\mho$ is the automorphism $\Omega\restriction(T\restriction\beta)$,
we obtain $\tilde y_0=\Omega(\tilde x_0)$.
Altogether:
\begin{itemize}
\item $b^{C_\alpha}_{x_0}(\beta)=\sealautomorphism(b_{x_0}^{C_\alpha}(\beta^-),\bar T,\bar b,\mho)=\defaultaction(b_{x_0}^{C_\alpha}(\beta^-),\bar T\setminus\{\tilde{y}_0\})$.
\item $b^{C_\alpha}_x(\beta)=\sealautomorphism(b_{x}^{C_\alpha}(\beta^-),\bar T,\bar b,\mho)=\defaultaction(b_{x}^{C_\alpha}(\beta^-),\bar T\setminus\{\tilde{y}_0\})$.
\end{itemize}

Since  $x_0\stree b_{x_0}^{C_\alpha}(\beta^-) = \bar b(x_0)\stree \defaultaction(\bar b(x_0),\bar T) = \tilde{x}_0$, we have $\Omega(x_0)\stree\Omega(\tilde{x}_0) = \tilde{y}_0$.
Thus, by 
the fact that $\Omega(x_0)$ and $x_0$ are two distinct elements of the same level, we infer that $b_{x_0}^{C_\alpha}(\beta^-)$ is incomparable with $\tilde{y}_0$.
Consequently,
\begin{align*}
b^{C_\alpha}_{x_0}(\beta)&=\defaultaction(b_{x_0}^{C_\alpha}(\beta^-),\bar T\setminus\{\tilde{y}_0\}) \\&=\defaultaction(b_{x_0}^{C_\alpha}(\beta^-),\bar T)=\defaultaction(\bar b(x_0),\bar T) = \tilde{x}_0,
\end{align*}
so that
$$\Omega(b^{C_\alpha}_{x_0}(\beta)) = \Omega(\tilde{x}_0) =\tilde{y}_0.$$
As
$b^{C_\alpha}_x(\beta)=\defaultaction(b_{x}^{C_\alpha}(\beta^-),\bar T\setminus\{\tilde{y}_0\})$,
we obtain $b^{C_\alpha}_x(\beta)\in \bar T\setminus\{\tilde{y}_0\}$. So
$$\Omega(b^{C_\alpha}_{x_0}(\beta))\neq b^{C_\alpha}_x(\beta).$$

This is a contradiction.
\end{proof}
This completes the proof.
\end{proof}

Theorem~B is the case $\lambda:=\aleph_1$ of the following corollary.
\begin{cor} Suppose that $\kappa$ is a strongly inaccessible cardinal, $\lambda<\kappa$ is regular and uncountable,
and there exists a sequence $\langle A_\alpha\mid \alpha\in S\rangle$ such that:
\begin{itemize}
\item $S$ is a nonreflecting stationary subset of $E^\kappa_{\ge\lambda}$;
\item For every $\alpha\in S$, $A_\alpha$ is a cofinal subset of $\alpha$;
\item For every cofinal $B\s\kappa$, there exists $\alpha\in S$
for which 
$$\{\delta<\alpha\mid \min(A_\alpha\setminus(\delta+1))\in B\}$$
is stationary in $\alpha$.
\end{itemize}

Then there is a rigid $\kappa$-Souslin tree admitting no ascending path of width $<\lambda$.
\end{cor}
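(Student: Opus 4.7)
The plan is to deduce the proxy instance $\p^\bullet(\kappa,\kappa,\sql,1,\{E^\kappa_{\geq\lambda}\},2)$ and then invoke Theorem~\ref{rigid} with $\chi:=\lambda$ and $\varsigma:=2$. Under these choices, $\kappa$ is $({<}\lambda)$-closed by strong inaccessibility, and the family $\mathcal S$ appearing in Theorem~\ref{rigid} collapses to the singleton $\{E^\kappa_{\geq\lambda}\}$, since $\cf(\alpha)\geq\lambda$ automatically gives $\cf(\alpha)>\Lambda$ for every $\Lambda<\lambda$; the theorem's conclusion is then exactly the one we want.

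To produce the required proxy instance, I would first rerun the proof of Theorem~\ref{cor430} essentially verbatim. Namely, using nonreflection of $S$, pick for each $\alpha\in S$ a club $B_\alpha\s\alpha$ with $\acc(B_\alpha)\cap S=\emptyset$, and set $\mathcal C_\alpha:=\{C_\alpha\}$, where $C_\alpha$ is the closure in $\alpha$ of $\{\min(A_\alpha\setminus(\delta+1))\mid\delta\in B_\alpha\}$; for $\alpha\in\acc(\kappa)\setminus S$, let $\mathcal C_\alpha$ consist of all clubs in $\alpha$ whose accumulation points avoid $S$. Since $\kappa$ is strongly inaccessible and $S$ is nonreflecting, the resulting sequence $\cvec C$ is a $\p^-(\kappa,\kappa,\sqleftup{S},\ldots)$-sequence, and the third bullet of the hypothesis on $\vec A$, combined with Lemma~\ref{thinning-out}, yields the hitting feature demanded by Theorem~\ref{thm416}(iv); thus $\p^-(\kappa,\kappa,\sqleftup{S},1,\{S\},2)$ holds.

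Next, by the monotonicity properties collected in Remark~\ref{monotonicity}, the very same $\cvec C$ also witnesses $\p^-(\kappa,\kappa,\sql,1,\{E^\kappa_{\geq\lambda}\},2)$: the relation $\sqleftup{S}$ is stronger than $\sq$ and hence than $\sql$, while $S\s E^\kappa_{\geq\lambda}$ ensures that the stationary set of $\alpha\in S$ at which the hitting occurs is stationary in the larger set $E^\kappa_{\geq\lambda}$. Since $\kappa^{<\kappa}=\kappa$ by strong inaccessibility, Corollary~\ref{Pbullet-equiv} upgrades this $\p^-$ statement to the desired $\p^\bullet(\kappa,\kappa,\sql,1,\{E^\kappa_{\geq\lambda}\},2)$, and Theorem~\ref{rigid} then delivers the sought-after rigid $\kappa$-Souslin tree that omits ascending paths of width $<\lambda$. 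No step is genuinely hard; the only point that deserves comment is that the Theorem~\ref{cor430}-style construction gives the stronger $\sqleftup{S}$-coherence, not merely its starred version, which is precisely what allows us to feed Theorem~\ref{rigid} rather than the weaker Theorem~\ref{omitpath}, thereby securing rigidity along with the omission of narrow ascending paths.
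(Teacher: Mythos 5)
Your proof is correct and follows essentially the paper's own route: invoke Theorem~\ref{cor430} for the proxy instance, upgrade from $\p^-$ to $\p^\bullet$ via strong inaccessibility, and feed the result into Theorem~\ref{rigid}. The one variation is your choice of completeness parameter: the paper applies Theorem~\ref{rigid} with $\chi:=\aleph_0$, which already suffices since $\sq=\sqleft{\aleph_0}$ and each $E^\kappa_{\geq\lambda}\s E^\kappa_{>\Lambda}$, so $\p^\bullet(\kappa,\kappa,\sq,1,\{E^\kappa_{\geq\lambda}\},2)$ works after a trivial expansion of the elements of $\mathcal S$ by monotonicity. Your choice $\chi:=\lambda$ is also valid and has the cosmetic side benefit of yielding a $\lambda$-complete tree (the coherence relation $\sqleft{\lambda}$ needed is weaker than $\sq$, and the family $\mathcal S$ collapses to $\{E^\kappa_{\geq\lambda}\}$ exactly as you say). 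Two small remarks: rerunning the proof of Theorem~\ref{cor430} is unnecessary --- a direct citation suffices, since its conclusion $\p^-(\kappa,\kappa,\sqleftup{S},1,\{S\},2)$ is already stated; and the Theorem~\ref{cor430} construction delivers $\sqleftup{S}$-coherence outright, so the distinction you draw between that relation and its starred version at the end, while true, is not a concern that arises.
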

\begin{proof} By Theorem~\ref{cor430}, in particular, $\p^-(\kappa,\kappa,{\sq},1,\{E^\kappa_{\ge\lambda}\},2)$ holds.
As $\kappa$ is strongly inaccessible, it follows from Theorem~\ref{mainpbullet} that $\p^\bullet(\kappa,\kappa,{\sq},1,\{E^\kappa_{\ge\lambda}\},2)$ holds, as well.
Now appeal to Theorem~\ref{rigid} with $\chi:=\aleph_0$.
\end{proof}

\subsection{Free}
\label{subsection:free}

Recall that the square of a tree $(T,{<_T})$ is the poset $({\hat T},{<_{\hat T}})$, where
\begin{itemize}
\item ${\hat T}:=\{ (t,t')\mid t,t'\in T, \height(t)=\height(t')\}$,
\item $(s,s')<_{\hat T}(t,t')$ iff $s<_Tt$ and $s'<_Tt'$.
\end{itemize}

By a theorem of Kurepa (see \cite[Lemma~14.14, Theorem~14.15]{just1997discovering}),
the square of an ever-branching tree of cardinality $\kappa$ has an antichain of size $\kappa$.
In particular:
\begin{fact}[Kurepa, \cite{MR0049973}]\label{square-not-Souslin}  The square of a $\kappa$-Souslin tree is not $\kappa$-Souslin.
\end{fact}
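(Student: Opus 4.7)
The plan is first to invoke the Fact stated immediately after Lemma~\ref{enough-no-antichains}, that every $\kappa$-Souslin tree admits a normal, ever-branching $\kappa$-Souslin subtree, so we may assume from the outset that $T$ itself is normal and ever-branching. Next I would dispose of the easy preliminaries: $|\hat{T}_\alpha|=|T_\alpha|^2<\kappa$ by regularity of $\kappa$, and any cofinal branch of $\hat{T}$ would project in either coordinate to a cofinal branch of $T$, which is impossible. Hence $\hat{T}$ is a $\kappa$-Aronszajn tree, and to show that it is not $\kappa$-Souslin it suffices to exhibit an antichain of size $\kappa$ in $\hat{T}$.

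For each $s\in T$, ever-branching provides a least ordinal $\ell(s)>\height(s)$ with $|\cone{s}\cap T_{\ell(s)}|\ge 2$, and using the fixed well-ordering $\lhd_\kappa$ of $H_\kappa$ I would canonically select distinct $s^0,s^1\in \cone{s}\cap T_{\ell(s)}$. The claim is that
\[
A:=\{(s^0,s^1)\mid s\in T\}
\]
is an antichain of size $\kappa$ in $\hat{T}$. For incomparability, suppose $(s^0,s^1)<_{\hat{T}}(t^0,t^1)$. Then $s\le s^0<_T t^0$ and $t<_T t^0$ both lie in the linearly ordered set $\pred{t^0}\cup\{t^0\}$, so $s$ and $t$ are comparable in $T$; the canonical choice rules out $s=t$ (which would force $(s^0,s^1)=(t^0,t^1)$), so we may assume $s<_T t$. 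A short case analysis on the relative positions of $s^0,s^1,t$ shows that if either $s^0\le t$ or $s^1\le t$, then the common height of $s^0,s^1$ and their incomparability immediately yield a contradiction. The remaining ``stem'' subcase $t<_T s^0$ and $t<_T s^1$ is the heart of the argument: here the minimality of $\ell(s)$ forces $t$ to lie on the unique stem of $\cone{s}$ up to level $\ell(s)$, so that $\cone{s}\cap T_\alpha=\cone{t}\cap T_\alpha$ for every $\alpha$ with $\height(t)\le\alpha\le\ell(s)$; this gives $\ell(t)=\ell(s)$, and canonicity of the selection then forces $(s^0,s^1)=(t^0,t^1)$, contradicting strictness.

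To finish, I would verify $|A|=\kappa$ by observing that two nodes $s,t$ yielding the same pair must lie on a common tree-stem of length at most $\ell(s)<\kappa$, so the equivalence classes of $T$ under the relation ``same pair'' have size $<\kappa$ each; by $|T|=\kappa$ and regularity of $\kappa$, there must be $\kappa$ many such classes, hence $|A|=\kappa$. The main obstacle I foresee is the ``stem'' subcase above, where one must combine the minimality built into $\ell(s)$ with the canonicity of $(s^0,s^1)$ to force what looks like a strict $\hat{T}$-extension to collapse into equality; the other subcases reduce at once to the fact that $s^0,s^1$ share a level while being incomparable, and the cardinality count is then routine.
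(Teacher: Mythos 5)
Your proof is correct and follows the same route the paper takes: the paper derives this from the quoted Kurepa theorem that the square of an ever-branching tree of cardinality $\kappa$ has a $\kappa$-sized antichain, combined with the earlier Fact that any $\kappa$-Souslin tree has a normal ever-branching $\kappa$-Souslin subtree. What you do is supply the content of that cited theorem: for each node $s$ of the (WLOG normal, ever-branching) tree you locate the first splitting level $\ell(s)$ of $\cone{s}$, canonically pick two of its members, and show these pairs form a $\kappa$-sized antichain in $\hat T$. Your case analysis is sound: if either coordinate of $(s^0,s^1)$ sits at or below $t$ then both $s^0,s^1$ lie under a common node of $\hat T$'s pair $(t^0,t^1)$, forcing comparability of two distinct same-height nodes; and in the stem subcase, minimality of $\ell(s)$ and $\lhd_\kappa$-canonicity collapse $(s^0,s^1)$ into $(t^0,t^1)$, contradicting strictness. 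The fiber bound $\left|\{t\mid (t^0,t^1)=(s^0,s^1)\}\right|\le\left|\pred{s^0}\right|<\kappa$ together with $|T|=\kappa$ and regularity gives $|A|=\kappa$.

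One small inaccuracy: you assert $\cone{s}\cap T_\alpha = \cone{t}\cap T_\alpha$ for all $\alpha$ with $\height(t)\le\alpha\le\ell(s)$, but at $\alpha=\height(t)$ the left side is $\{t\}$ while the right side is $\emptyset$; the equality holds only for $\height(t)<\alpha\le\ell(s)$. This does not affect the argument, since deducing $\ell(t)=\ell(s)$ and the agreement of canonical selections only uses the strict range. Also note that the appeal to $\lhd_\kappa$ is convenient but not essential: any choice function on finite subsets of $T$ would do, so the argument does not secretly rely on the $\kappa^{<\kappa}=\kappa$ standing hypothesis under which $\lhd_\kappa$ is introduced.
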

In contrast, as we shall soon see, the product of two $\kappa$-Souslin trees may still be Souslin.
In fact, this is also true for longer products:

\begin{definition}\label{def-classical-product-tree} 
For a sequence of trees $\langle \mathcal{T}^i \mid i<\tau \rangle$ 
with $\mathcal T^i = (T^i, {<_{T^i}})$ for each $i<\tau$,
the product
$\bigotimes_{i<\tau} \mathcal{T}^i$
is defined to be the tree $(\hat{T}, {<_{\hat{T}}})$,
where:
\begin{itemize}
\item
$\hat{T}$ consists of all $\vec t\in \prod_{i<\tau}T^i$
that are \emph{level sequences}, that is, 
$i\mapsto\height_{\mathcal T^i}(\vec t(i))$ is constant over $\tau$;
\item $\vec{s} <_{\hat{T}} \vec{t}$ iff $\vec s(i) <_{T^i} \vec t(i)$ for every $i<\tau$.
\end{itemize}
\end{definition}

Now, consider a fixed tree $\mathcal T=(T, {<_T})$.
For a node $s \in T$, let us denote $T(s):=\{x\in T\mid x\text{ is comparable with }s\}$.
That is, $T(s)=s_\downarrow\cup\{s\}\cup\cone s$.
Then, for any level sequence $\vec{s} = \langle s_i \mid i<\tau \rangle$ in $\bigotimes_{i<\tau}\mathcal T$,
we can consider the \emph{derived tree} $\bigotimes_{i<\tau}(T(s_i),{<_T})$ and ask whether it is $\kappa$-Souslin.

In order to bring the notion of derived trees within our framework of streamlined trees,
we work with a slightly different definition, as follows.

\begin{notation}[$i^\text{th}$-component]\label{notationcomp}
For every function $x:\alpha\rightarrow{}^\tau H_\kappa$ and every $i<\tau$,
let $(x)_i:\alpha\rightarrow H_\kappa$ denote the sequence $\langle x(\varepsilon)(i)\mid \varepsilon<\alpha\rangle$.
\end{notation}

\begin{defn}[Derived tree, {\cite[Definition~4.4]{paper32}}]\label{derived} 
Suppose that $T \s {}^{<\kappa}H_\kappa$ is a streamlined tree,
and $\vec{s}=\langle s_i \mid i < \tau \rangle$ is a nonempty sequence of elements of $T$.
We let $T(\vec{s})$ stand for the collection
of all $x:\alpha\rightarrow {}^\tau H_\kappa$ such that:
\begin{itemize}
\item $\alpha$ is an ordinal; and
\item for all $i<\tau$, $(x)_i\cup s_i$ is in $T$.
\end{itemize}
\end{defn}

Note that for a given level sequence $\vec{s} = \langle s_i \mid i<\tau \rangle$ 
of elements of a streamlined tree $T$,
the tree $(T(\vec{s}),{\stree})$ is order-isomorphic to 
$\bigotimes_{i<\tau}(T(s_i),{\stree})$.\footnote{Recall the last bullet of Lemma~\ref{streamlined-basics}.}

\begin{fact}[cf.~{\cite[Lemma~4.7]{paper32}}]\label{derivedtree} 
Suppose that $T \s {}^{<\kappa}H_\kappa$ is a streamlined tree.
For every $\chi<\kappa$ and every nonempty sequence $\vec s\in{}^{<\chi}T$:
\begin{enumerate}
\item $T(\vec s) \s {}^{<\kappa}H_\kappa$ is a streamlined tree;
\item If $T$ is normal, then so is $T(\vec s)$;
\item If $T$ is a normal $\kappa$-tree and $\kappa$ is $({<}\chi)$-closed, then $T(\vec s)$ is a $\kappa$-tree;
\item If $T$ is ever-branching, then so is $T(\vec s)$;
\item If $\vec{s} = \langle \emptyset \rangle$, then $(T(\vec{s}),{\stree})$ is order-isomorphic to $(T,{\stree})$
via the isomorphism $x \mapsto (x)_0$.
Likewise, if $\vec{s} = \langle \emptyset, \emptyset \rangle$, 
then $(T(\vec{s}),{\stree})$ is order-isomorphic to the square of $(T,{\stree})$.
\end{enumerate}
\end{fact}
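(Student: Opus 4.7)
The plan is to verify each of the five clauses by unpacking Definition~\ref{derived} coordinate-by-coordinate, reducing to the corresponding feature of $T$ on each component. The recurring subtlety is the interplay between $\dom((x)_i)$ and $\dom(s_i)$, which determines whether the union $(x)_i \cup s_i$ equals $(x)_i$, $s_i$, or in any case forces the two to agree on their common domain.

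For Clause~(1), one first observes that, since $\tau := \dom(\vec s) < \chi < \kappa$, every function from $\tau$ to $H_\kappa$ is itself an element of $H_\kappa$, so that ${}^\tau H_\kappa \s H_\kappa$ and the codomain constraint is correct. Closure under initial segments reduces to the assertion that whenever $(x)_i \cup s_i \in T$ and $\beta < \dom(x)$, also $((x)_i \restriction \beta) \cup s_i \in T$; this is a three-case split depending on whether $\dom(s_i)$ is at most $\beta$, lies in $(\beta, \dom(x)]$, or exceeds $\dom(x)$, with each case reducing to the closure of $T$ under initial segments.

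For Clauses~(2) and (3), I would dispose of normality first. Given $x \in T(\vec s)$ of length $\alpha$ together with a witness $z \in T(\vec s)$ of length $\beta$ for some $\beta > \alpha$, I would build the sought extension $y$ component by component: if $\alpha \ge \dom(s_i)$, use normality of $T$ together with $(z)_i \in T_\beta$ to extend $(x)_i$ to some $y_i \in T_\beta$; if $\alpha < \dom(s_i) \le \beta$, extend $s_i$ itself inside $T$ to some $y_i \in T_\beta$, which automatically refines $(x)_i = s_i \restriction \alpha$; and if $\dom(s_i) > \beta$, simply put $y_i := s_i \restriction \beta$. Then $y(\varepsilon)(i) := y_i(\varepsilon)$ defines the required extension. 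For Clause~(3), the same scheme applied with any single $\beta < \kappa$ above $\sup_{i<\tau} \dom(s_i)$ (which is $<\kappa$ by $\tau < \chi$ and regularity of $\kappa$) shows $T(\vec s)_\alpha \ne \emptyset$ for every $\alpha < \kappa$, yielding height $\kappa$. The bound on level sizes follows because each coordinate $(x)_i$ is either forced to equal $s_i \restriction \alpha$ (when $\alpha \le \dom(s_i)$) or may be any element of $T_\alpha$ extending $s_i$ (when $\alpha > \dom(s_i)$), giving $|T(\vec s)_\alpha| \le |T_\alpha|^\tau < \kappa$ by $({<}\chi)$-closedness of $\kappa$ combined with $|T_\alpha| < \kappa$ and $\tau < \chi$.

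For Clause~(4), fix any index $i^* < \tau$; ever-branching of $T$ applied above $(x)_{i^*} \cup s_{i^*}$ produces two incomparable nodes in $T$, which by truncating the longer one (via closure of $T$ under initial segments) may be assumed to lie at a common level $\gamma$. Then I would use the Clause~(2) construction to extend the remaining coordinates of $x$ up to level $\gamma$, producing two extensions of $x$ in $T(\vec s)$ that are forced to be incomparable because their $i^*$-th components already are. For Clause~(5), recalling Notation~\ref{notationcomp}, it is immediate that $x \mapsto (x)_0$ is a level-preserving order-isomorphism from $T(\langle \emptyset \rangle)$ onto $T$, and that $x \mapsto ((x)_0, (x)_1)$ is a level-preserving order-isomorphism from $T(\langle \emptyset, \emptyset \rangle)$ onto the square of $T$. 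The main obstacle is purely organizational: once the case analysis around $\dom(s_i)$ versus the working ordinal is set up uniformly across the clauses, each one reduces to a property already assumed for $T$ in the appropriate coordinate.
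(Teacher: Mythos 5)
The paper records this as a Fact with a pointer to the literature and no in-line proof, so I am assessing your argument standalone. Clauses (1), (2), (3), and (5) are sound: working coordinate-by-coordinate, splitting on the position of $\dom(s_i)$ relative to the working ordinal, is exactly the right reduction, and the cardinality bound in (3) is a correct use of $({<}\chi)$-closedness together with $|T_\alpha|<\kappa$ and $\tau<\chi$.

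Clause (4) has a genuine gap. You fix an arbitrary $i^*<\tau$, use ever-branching of $T$ to find incomparable $u,v$ above $(x)_{i^*}\cup s_{i^*}$, truncate to a common level $\gamma$, and then ``use the Clause (2) construction to extend the remaining coordinates.'' But the Clause (2) construction relies on normality of $T$ (together with a witness at the target level in $T(\vec{s})$), and normality is \emph{not} among the hypotheses of Clause (4). Ever-branching does not imply normality, and once $T$ is not normal, the coordinates $(x)_i\cup s_i$ for $i\neq i^*$ may have no extension at level $\gamma$ at all. Concretely, let $T$ consist of a root with two children $a_0$ and $a_1$, a full binary tree of height $\omega$ above $a_0$, and above $a_1$ a single branch of length $\omega$ followed by a full binary tree of height $\omega$; this $T$ is ever-branching but not normal ($a_0$ has no extension at level $\omega$). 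Take $\vec{s}=\langle a_1,a_0\rangle$ and $x=\emptyset$. Fixing $i^*=0$, ever-branching applied above $(x)_0\cup s_0=a_1$ first yields incomparable nodes only at level $\omega+1$, so $\gamma\ge\omega+1$; but then $(x)_1\cup s_1=a_0$ has no extension at level $\gamma$, and the construction stalls. The repair is to choose $i^*$ and $\gamma$ jointly: for each $i$, let $M_i$ be the least level at which $(x)_i\cup s_i$ admits two incomparable extensions in $T$ (this exists by ever-branching), pick $i^*$ minimizing $M_{i^*}$, and set $\gamma:=M_{i^*}$. Then for every $i$, $(x)_i\cup s_i$ has an extension at level $M_i\ge\gamma$, hence (by restriction) at level $\gamma$ whenever $\gamma>\dom((x)_i\cup s_i)$, while if $\dom(s_i)\ge\gamma$ one simply takes $s_i\restriction\gamma$ as the $i$-th coordinate. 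This eliminates the appeal to normality.
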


By Fact~\ref{square-not-Souslin},
if a level-sequence $\vec{s}$ is not injective, then the derived tree $T(\vec{s})$ will not be Souslin.
Altogether, this leads us to the following definition.

\begin{defn}[{\cite[Definition~4.5]{paper32}}] 
A streamlined $\kappa$-Souslin tree $T$ is said to be \emph{$\chi$-free} if for every $\delta < \kappa$ and every nonempty injection
$\vec s\in {}^{<\chi} T_\delta$, the derived tree $T(\vec s)$ is again a $\kappa$-Souslin tree.
\end{defn}

\begin{remarks}
\begin{enumerate}
\item By \cite[Lemma~A.7(1)]{rinot20}, if there exists a $\chi$-free $\kappa$-Souslin tree, then $\kappa$ is $({<}\chi)$-closed.
\item Instead of $\aleph_0$-free, we simply say \emph{free}.
\end{enumerate}
\end{remarks}

\begin{cor}
If $\p^\bullet(\kappa,\kappa,{\sq},\kappa,\{\kappa\}, 2)$ holds,
then there is a free $\kappa$-Souslin tree.
\end{cor}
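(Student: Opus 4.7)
The plan is to run the prototype construction of Theorem~\ref{basicthm} essentially verbatim, with $\chi := \aleph_0$ and $\varsigma := 2$, but at each non-accumulation step $\beta \in \nacc(\dm{C})$ inside the definition of the canonical branch $b^C_x$, we replace the call to $\sealantichain$ with a call to the $\free$ action from Definition~\ref{actions}(5). Concretely, after fixing sequences $\cvec{C} = \langle \mathcal{C}_\alpha \mid \alpha<\kappa \rangle$ and $\vec B = \langle B_\iota \mid \iota<\kappa \rangle$ together witnessing $\p^\bullet(\kappa,\kappa,{\sq},\kappa,\{\kappa\},2)$, I will use the $\kappa$ independent predictors supplied by $\theta = \kappa$ to aim, simultaneously, at all candidate pairs $(\vec b, \mho)$ where $\vec b \in {}^{<\omega}T$ is a finite injective level-sequence and $\mho$ is a (predicted) maximal antichain in the derived tree $T(\vec b)$. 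Since $\nu = 2$, each relevant $\mathcal{C}_\alpha$ will collapse to a singleton at the stages where hitting takes place, just as in Theorems~\ref{omitpath} and~\ref{rigid}.

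At every $\alpha\in\Gamma:=\acc(\kappa)$, the level is built by $T_\alpha := \{\mathbf{b}^C_x \mid C\in\mathcal{C}_\alpha,\ x\in T\restriction\dm{C}\}$; the microscopic coherence is an exact analogue of Dependencies~\ref{depends651} and Claim~\ref{coherent-from-weakest}, using $\sq$-coherence of $\cvec{C}$ (which is stronger than the $\sqx^*$-coherence needed in Theorem~\ref{basicthm}, and in particular accommodates the now-wider dependency on $C\restriction\beta$ that $\free$ introduces through $\vec b$). The verification that the resulting $T$ is a normal, prolific, splitting streamlined $\kappa$-tree is identical to the prototype, and that $T$ itself is $\kappa$-Souslin follows by a pumping-up argument mimicking Claim~\ref{naantileft}, applied to the predictor index $\iota = 0$ with $(\Omega,p) = (A, \{T, A\})$ for an arbitrary maximal antichain $A\subseteq T$.

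The main new point, and what I expect to be the genuine obstacle, is verifying that for every finite nonempty injective level-sequence $\vec s \in {}^{<\omega}T_\delta$, the derived tree $T(\vec s)$ has no antichain of size $\kappa$. Given a maximal antichain $\mho \subseteq T(\vec s)$, set $\Omega := \mho$ and $p := \{T, \vec s, \mho\}$, and choose $\iota$ so that $\vec s \in B_\iota$-worth of predictors is available. The hitting feature of $\p^\bullet$ together with Proposition~\ref{motivate} (lifted to the derived tree, using that $T(\vec s) \in H_{\kappa^+}$) produces a stationary set of $\alpha \in \Gamma$ at which $\mathcal{C}_\alpha = \{C\}$ is a singleton and cofinally many $\beta \in \nacc(\dm{C})$ satisfy: the reflected predictor $C(\beta)$ is a maximal antichain of the reflected derived tree $T(\vec b)\restriction\beta$, for $\vec b$ the level-sequence at $\beta^-$ sitting above $\vec s$ that was recursively built from $\vec s$ by the same $b^C_{(-)}$-recursion. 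At that point the call to $\free$ forces $b^C_{x}(\beta)$, for each coordinate $x$ of $\vec b$, to extend $\mho$ componentwise, sealing $\mho$ below level $\alpha$. Packaging these reflections so that they occur simultaneously for all coordinates of $\vec s$ is the delicate step; it relies crucially on the fact that each $x\in T\restriction\dm{C}$ is handled by its \emph{own} $b^C_x$-sequence but all such sequences agree on the choice of $C$ and $\mho$, so the seal at stage $\beta$ is uniform across the finitely many coordinates of $\vec s$.
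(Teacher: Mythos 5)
Your high-level plan is the same as the paper's: the corollary is exactly Theorem~\ref{free} with $\chi:=\aleph_0$, and that theorem is proved by swapping $\sealantichain$ for $\free$ in the prototype and upgrading the coherence relation from $\sqx^*$ to $\sqx$ (so here, $\sq$) to absorb the dependency on the map $y\mapsto b^C_y(\beta^-)$. You also correctly identify that $\nu=2$ is needed so that $\mathcal C_\alpha$ collapses to a singleton at the hitting stages, and you correctly flag that the hard part is packaging the reflections so that they act simultaneously on all coordinates.

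The genuine gap is that you have no mechanism to make the construction know, at stage $\beta$, which level-sequence $\vec b$ to pass into $\free$. You write ``choose $\iota$ so that $\vec s \in B_\iota$-worth of predictors is available,'' but the index must in fact be chosen \emph{inside} the verification, and it must encode a level-sequence $\vec x$ at a level $\gamma>\delta$ that depends on the particular node $v\in\hat T_\alpha$ being checked — not on the fixed base $\vec s$. (Given $v$, one uses Claim~\ref{tailwithsame} to find a single $\gamma\in\dm{C}_\alpha$ above $\delta$ so that $(v)_\xi=\mathbf{b}^{C_\alpha}_{x_\xi}$ with $x_\xi:=(v)_\xi\restriction\gamma$, and it is $\vec x=\langle x_\xi\mid\xi<\tau\rangle$, not $\vec s$, that must be fed to $\free$.) The paper resolves this by fixing $\pi$ with $\beta\in B_{\pi(\beta)}$, a bijection $\phi:\kappa\leftrightarrow H_\kappa$ witnessing $(\kappa,\in)\cong(H_\kappa,\lhd_\kappa)$, and $\psi:=\phi\circ\pi$; the $\free$ call is triggered only when $\psi(\beta)\in{}^{<\chi}(T\restriction(\dm{C}\cap\beta))$, with $\vec b:=\langle b^C_{\psi(\beta)(\xi)}(\beta^-)\mid\xi\in\dom(\psi(\beta))\rangle$. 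Then the $\kappa$ predictors are all taken with the \emph{same} $\Omega:=A$ and $p:=\{A,T\}$ but with $\iota_i:=i$ for $i<\kappa$, and in the verification one chooses $i:=\phi^{-1}(\vec x)$ and picks $\beta$ from the block $B_i(\Omega,p,C_\alpha)$, so that $\psi(\beta)=\phi(i)=\vec x$. Without this encoding device, the recursive construction has no way to select the right $\vec b$ at stage $\beta$, and your ``delicate step'' remains unresolved. You also omit a second, smaller point: to see that $\free$ returns the correct coordinate $(z)_\xi$ for $b^C_{x_\xi}(\beta)$, one must use that $\vec s=\vec w$ is injective, hence $\{w_\xi\mid\xi<\tau\}$ is an antichain, hence $\{(z)_\xi\mid\xi<\tau\}$ is an antichain, so each $b^C_{x_\xi}(\beta^-)$ is compatible with exactly one component of $z$.
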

\begin{proof} Appeal to the next theorem with $\chi:=\aleph_0$.
\end{proof}

The main theorem of this subsection reads as follows.
\begin{thm}\label{free}
Suppose that $\kappa$ is $({<}\chi)$-closed for a given $\chi\in\reg(\kappa)$. Let $\varsigma<\kappa$.

If $\p^\bullet(\kappa,\kappa,{\sqx},\kappa,\allowbreak\{E^\kappa_{\geq\chi}\}, 2)$ holds,
then there exists a
normal, prolific, $\varsigma$-splitting, $\chi$-complete, $\chi$-free $\kappa$-Souslin tree.
\end{thm}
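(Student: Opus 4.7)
The plan is to adapt the construction from Theorem~\ref{basicthm}, replacing the basic $\sealantichain$ action at non-accumulation points with the $\free$ action of Definition~\ref{actions}, which seals antichains in derived trees rather than in $T$ itself. As in Theorem~\ref{rigid}, the action at stage $\beta$ will depend on the whole initial segment $T\restriction(\beta+1)$, so the stronger $\sqx$-coherence (as opposed to $\sqx^*$) will be essential to reproduce the coherence argument of Claim~\ref{coherent-from-weakest}. Concretely, I will fix a partition $\vec B = \langle B_\iota \mid \iota<\kappa\rangle$ and a sequence $\cvec C$ together witnessing $\p^\bullet(\kappa,\kappa,{\sqx},\kappa,\{E^\kappa_{\geq\chi}\}, 2)$ via Corollary~\ref{Pbullet-equiv}, together with an enumeration $\langle e_\iota \mid \iota<\kappa \rangle$ of $H_\kappa$ through which each index codes a candidate nonempty injective level sequence of length $<\chi$. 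Setting $\Gamma := E^\kappa_{\geq\chi}$, the levels $T_0$, $T_{\alpha+1}$, and $T_\alpha$ for $\alpha\notin\Gamma$ are defined as in Theorem~\ref{basicthm}, and for $\alpha\in\Gamma$ we again put $T_\alpha := \{\mathbf{b}^C_x \mid C\in\mathcal C_\alpha,\ x\in T\restriction \dm{C}\}$. At stage $\beta \in \nacc(\dm{C})$, letting $\iota(\beta)$ be the unique index with $\beta \in B_{\iota(\beta)}$, if $e_{\iota(\beta)}$ decodes to a valid injective level sequence $\vec s$ of length $<\chi$ in $T\restriction(\beta+1)$, then I set $b^C_x(\beta) := \free(b^C_x(\beta^-), T\restriction(\beta+1), \vec s, C(\beta))$; otherwise I fall back to $\sealantichain(b^C_x(\beta^-), T\restriction(\beta+1), C(\beta))$, so that $T$ itself remains a $\kappa$-Souslin tree by the standard argument of Claim~\ref{naantileft}.

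For $\chi$-freeness, I consider a nonempty injection $\vec s \in {}^\tau T_\delta$ with $\tau<\chi$ and a putative maximal antichain $A$ of $T(\vec s)$ of size $\kappa$. Letting $\iota$ be the index coding $\vec s$, I set $\Omega := A$ and $p := \{T, A, \vec s\}$ and apply $\p^\bullet$ with $\theta=\kappa$ to the prediction $(\Omega, p, \iota)$ (using $n:=1$), obtaining stationarily many $\alpha \in \Gamma$ above $\max\{\iota,\delta\}$ for which $\mathcal C_\alpha = \{C_\alpha\}$ and cofinally many $\beta \in \nacc(\dm{C}_\alpha)$ satisfy $\beta \in B_\iota(\Omega, p, C_\alpha)$. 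At each such $\beta$, an elementary submodel witness together with the evident derived-tree analogue of Proposition~\ref{motivate}(2) shows $C_\alpha(\beta) = A \cap (T(\vec s)\restriction\beta)$ is a maximal antichain of $T(\vec s)\restriction\beta$, so the $\sealantichain(\emptyset, T(\vec s)\restriction(\beta+1), C_\alpha(\beta))$ subroutine inside the $\free$ action produces an element $z_\beta \in T(\vec s)_\beta$ extending some $y_\beta \in A$. For arbitrary $\vec w \in T(\vec s)_\alpha$, Claim~\ref{tailwithsame} lets me write $(w)_i = \mathbf{b}^{C_\alpha}_{y_i}$ for any $y_i$ in a tail of $\dm{C}_\alpha$; provided the $\free$ action succeeds at the corresponding $y_i$, the injectivity of $\vec s$ forces the chosen component index to be $i$ itself (the returned node extends $s_i$ and also extends some $s_\xi$, so $\xi=i$). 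This yields $(w)_i\restriction\beta = (z_\beta)_i$ for all $i$, hence $\vec w\restriction\beta = z_\beta \supseteq y_\beta \in A$, sealing $A$ at level $\alpha$ and contradicting $|A|=\kappa$.

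The main obstacle will be to ensure that the $\free$ action indeed succeeds (rather than falling through to the default) at the required stages for every $\vec w$ under consideration. The action selects $z_\beta$ as a $\lhd_\kappa$-least element of a set determined purely by $T\restriction(\beta+1)$ and $C_\alpha(\beta)$, so that its components $(z_\beta)_i$ need not a priori extend the specific initial segments $(w)_i\restriction\beta^-$ dictated by the given $\vec w$. Overcoming this will most likely require exploiting the $\theta=\kappa$ many predictions to simultaneously code, alongside $\vec s$, enough auxiliary data to pin down the relevant components of $z_\beta$ (a pigeonhole-style argument is available since $|\{\vec w\restriction\beta^- : \vec w \in T(\vec s)_\alpha\}| < \kappa$), or alternatively to replace $\vec s$ in the $\free$ invocation by a canonical extension of $\vec s$ to a level sequence at level $\beta^-$ computable from $T\restriction\beta$, so that extension compatibility with $(w)_i\restriction\beta^-$ is forced by the tree's normal structure.
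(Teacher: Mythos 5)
You have the right architecture — recycle Theorem~\ref{basicthm} with the $\free$ action in place of $\sealantichain$, rely on $\sqx$-coherence to salvage Claim~\ref{coherent-from-weakest}, and use elementary-submodel predictions to obtain maximal antichains in the derived tree. You also correctly flag the obstacle in your final paragraph, but you do not close it, and it is a genuine gap.

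The problem is that you feed the level-$\delta$ sequence $\vec s$ itself into $\free$, keyed by a single index $\iota$ depending only on $\vec s$. Then $z := \sealantichain(\emptyset, T(\vec s)\restriction(\beta+1), C_\alpha(\beta))$ is the $\lhd_\kappa$-least node of $T(\vec s)_\beta$ extending a member of $C_\alpha(\beta)$, and each component $(z)_\xi$ is only guaranteed to extend $s_\xi$ at level $\delta$ --- it has no reason whatsoever to extend the particular node $b^{C_\alpha}_{y_i}(\beta^-) \in T_{\beta^-}$ on the branch being built. So the set $\{\xi<\tau \mid b^{C_\alpha}_{y_i}(\beta^-)\cup(z)_\xi \in T\}$ inside the $\free$ action will generically be empty, $\free$ will fall through to $\defaultaction$, and nothing gets sealed. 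The injectivity argument you appeal to is conditional on $\free$ succeeding, and succeeding is exactly what your setup fails to guarantee.

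The paper's fix is a sharper version of the second alternative you float. The object coded at $\beta$ is not $\vec s$ but a level sequence $\vec x = \psi(\beta) \in {}^{\tau}T_\gamma$ at some level $\gamma\in\dm{C}\cap\beta$; and the third argument handed to $\free$ is not $\vec s$ or any a-priori canonical lift, but the sequence of branch values $\vec b := \langle b^{C}_{x_\xi}(\beta^-) \mid \xi<\tau\rangle$ already available from the recursion. Because $\vec b$ sits at the predecessor level $\beta^-$, every $z \in T(\vec b)_\beta$ automatically has $(z)_\xi \supseteq \vec b(\xi)$, so $\free(\vec b(\xi_0),\cdot,\vec b,\cdot)$ succeeds and returns $(z)_{\xi_0}$ by injectivity. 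The prediction index is then $i := \phi^{-1}(\vec x)$, which depends on the target node $v\in T(\vec w)_\alpha$ (one chooses $\gamma\in\dm{C}_\alpha$ above $\delta$ so that each $(v)_\xi = \mathbf{b}^{C_\alpha}_{x_\xi}$ with $x_\xi\in T_\gamma$); this is precisely where $\theta=\kappa$ is used, as the index must range over all of $\kappa$, not just the one $\iota$ coding $\vec s$. A fixed "canonical extension of $\vec s$ computable from $T\restriction\beta$" independent of $v$ cannot work, because compatibility at level $\beta^-$ with the branches of \emph{every} possible $v$ cannot be arranged by a single choice.

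Two minor notes: your fallback to $\sealantichain$ for undecodable indices is fine but unnecessary --- the paper uses $\defaultaction$ and gets Souslinness of $T$ itself for free via $T(\langle\emptyset\rangle)\cong T$. And the pigeonhole framing in your last paragraph ($|\{\vec w\restriction\beta^-\}|<\kappa$) is not the right quantity; what matters is that there are $<\kappa$ candidate sequences $\vec x\in{}^\tau T_\gamma$ over all $\gamma<\alpha$, each getting its own $B_i$-cell.
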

\begin{proof}
Suppose $\p^\bullet(\kappa,\kappa,{\sqx},\kappa, \{E^\kappa_{\geq\chi}\}, 2)$ holds,
as witnessed by $\langle \mathcal C_\alpha\mid \alpha<\kappa\rangle$
and $\langle B_\iota \mid \iota< \kappa \rangle$.
Let $\pi:\kappa\rightarrow\kappa$ be such that $\beta\in B_{\pi(\beta)}$ for all $\beta<\kappa$.
Let $\phi:\kappa\leftrightarrow H_\kappa$ witness the isomorphism $(\kappa,\in)\cong(H_\kappa,{\lhd_\kappa})$.
Put $\psi:=\phi\circ\pi$.

We recursively construct a sequence $\langle T_\alpha\mid \alpha<\kappa\rangle$ of levels
whose union will ultimately be the desired tree $T$, as in the proof of Theorem~\ref{basicthm},
using $\Gamma:=E^\kappa_{\ge\chi}$.
In particular, for every $\alpha\in\Gamma$, we shall have:
\[
T_\alpha := \{ \mathbf{b}^C_x \mid C \in \mathcal C_\alpha, x \in T \restriction \dm{C}\}.
\]
The only difference is in the definition of $b^C_x$ in the case $\alpha\in\Gamma$, 
where stage (2)(a) is now done as follows.
$$b_x^C(\beta):=
\begin{cases}
\free(b^C_x(\beta^-),T\restriction(\beta+1),\langle b^C_{\psi(\beta)(\xi)}(\beta^-)\mid\xi\in\dom(\psi(\beta))\rangle,C(\beta)),\\
\makebox[195pt][l]\hfill\text{if }\psi(\beta) \in {}^{<\chi}(T\restriction (\dm{C}\cap \beta));\\
\defaultaction(b^C_x(\beta^-),T\restriction(\beta+1)),\\
\hfill\text{otherwise}.
\end{cases}$$

In all cases, since $b^C_x(\beta^-)$ belongs to the normal tree $T \restriction (\beta+1)$,
we infer from the Extension Lemma (Lemma~\ref{extendfact}) that
$b_x^C(\beta)$ is an element of $T_\beta$ extending $b^C_x(\beta^-)$.

The following is obvious.

\begin{dependencies}
For any two consecutive points $\beta^-<\beta$ of $\dom(b^C_x)$,
the value of $b^C_x(\beta)$ is completely determined by
$b^C_x(\beta^-)$, $T\restriction(\beta+1)$, $C(\beta)$, $\psi(\beta)$,
and the map $y\mapsto b_y^C(\beta^-)$ over $T\restriction(\dm{C}\cap\beta)$.
\end{dependencies}

It follows that Claims \ref{coherent-from-weakest} and \ref{tailwithsame} remain valid,
where, as in the proof of Theorem~\ref{rigid}, 
we appeal to the stronger $\sqx^*$-coherence of $\cvec{C}$ in order to prove Claim~\ref{coherent-from-weakest}
despite the increased dependency.
It follows that $T := \bigcup_{\alpha < \kappa} T_\alpha$
 is a normal, prolific, $\varsigma$-splitting, $\chi$-complete streamlined $\kappa$-tree.
Thus, we are left with verifying the following.

\begin{claim} $T$ is a $\chi$-free $\kappa$-Souslin tree.
\end{claim}
\begin{proof}
We rely throughout on the various clauses of Fact~\ref{derivedtree}.
$T$ is $\kappa$-Souslin iff $T(\langle\emptyset\rangle)$ is $\kappa$-Souslin,
and hence it suffices to prove $\chi$-freeness.
Fix an arbitrary nonzero ordinal
$\tau < \chi$, some $\delta < \kappa$, and an injection
$\vec w=\langle w_\xi \mid \xi < \tau \rangle \in {}^\tau T_\delta.$
We need to show that the derived tree $\hat T:=T(\vec w)$ is a $\kappa$-Souslin tree.
For this, fix an arbitrary maximal antichain $A$ in $\hat T$. 
Let $\Omega:=A$ and $p:=\{A,T\}$.
Fix a transversal $\langle C_\alpha\mid\alpha<\kappa\rangle\in\prod_{\alpha<\kappa}\mathcal C_\alpha$.
Recalling Definition~\ref{pbullet}, we now infer that the following set is stationary:
\[
W: = \{ \alpha \in E^\kappa_{\geq\chi} \mid \mathcal C_\alpha=\{C_\alpha\} \text{ and }
    \forall i < \alpha [\sup ( \nacc (\dm C_\alpha ) \cap B_i(\Omega,p, C_\alpha) ) = \alpha ] \}.
\]

Since $\{ \alpha \in E^\kappa_{\ge\chi} \mid {}^{<\chi} (T \restriction \alpha) \subseteq \phi[\alpha]\}$ is a club relative to $E^\kappa_{\ge\chi}$,
let us fix an ordinal $\alpha \in W\setminus(\delta+1)$ such that ${}^{<\chi} (T \restriction \alpha) \subseteq \phi[\alpha]$.

Let $v\in\hat{T}_\alpha$ be arbitrary, and we shall show that it extends some element of $A\cap(\hat{T}\restriction\alpha)$.

Let $\xi<\tau$ be arbitrary. As $(v)_\xi\in T_\alpha$ and $\alpha\in W \s\Gamma$,
by Claim~\ref{tailwithsame}, there exists some $x_\xi \in T \restriction \dm{C}_\alpha$ such that $(v)_\xi = \mathbf{b}^{C_\alpha}_{x_\xi}$.
Furthermore, by $\tau<\chi\le\cf(\alpha)$
and Claim~\ref{tailwithsame}, we may fix a large enough $\gamma\in \dm{C}_\alpha\setminus\delta$ along with $\vec x := \langle x_\xi \mid \xi < \tau \rangle\in {}^{\tau}T_\gamma$
such that $(v)_\xi = \mathbf{b}^{C_\alpha}_{x_\xi}$ for all $\xi<\tau$.
Of course, $\vec x \in {}^{<\chi} (T \restriction \alpha)\s \phi[\alpha]$.
In particular, $i := \phi^{-1} (\vec x)$ is $< \alpha$,
so that our choice of $\alpha$ guarantees that $\sup (\nacc(\dm C_\alpha) \cap B_i(\Omega,p, C_\alpha) ) = \alpha$.

Fix a large enough $\beta \in \nacc(\dm C_\alpha) \cap B_i(\Omega,p, C_\alpha) $ with
$\gamma \leq \sup(\dm C_\alpha\cap\beta)<\beta < \alpha$.
Write $\bar T:=T\restriction(\beta+1)$, $\beta^-:=\sup(\dm C_\alpha\cap\beta)$,
and $\vec b:=\langle b^{C_\alpha}_{x_\xi}(\beta^-)\mid \xi<\tau\rangle$.
As $\beta\in B_i$, we infer
\[
\psi(\beta) = \phi(\pi(\beta)) = \phi(i)= \vec x \in {}^\tau(T_\gamma) \s {}^{<\chi}(T\restriction (\dm{C}_\alpha \cap \beta)),
\]
so that, for all $x\in T\restriction(\dm{C}_\alpha\cap\beta)$,
\[
b_x^{C_\alpha}(\beta) = \free( b_x^{C_\alpha}(\beta^-), \bar T, \vec b,C_\alpha(\beta)).
\]

Recalling Definition~\ref{actions}\eqref{actionfree}, we must analyze $z:=\sealantichain(\emptyset,\bar T(\vec b),C_\alpha(\beta))$.
Clearly, $\beta(\bar T(\vec b))=\beta(\bar T)=\beta$ and $\bar T(\vec b)=T(\vec b)\restriction (\beta+1)$. Thus, consider the set
$$Q:=\{ z\in T(\vec b)_\beta\mid \exists y\in C_\alpha(\beta)~( y\s z)\}.$$

Let $a:\beta^-\rightarrow{}^\tau H_\kappa$ be the unique function satisfying $(a)_\xi=\vec b(\xi)$ for all $\xi<\tau$.
For all $\xi<\tau$, we have $x_\xi\s b^{C_\alpha}_{x_\xi}(\beta^-)= (a)_\xi \stree \mathbf{b}_{x_\xi}^{C_\alpha}=(v)_\xi$, so that $a = v \restriction \beta^-$ is in $\hat T \restriction\beta$.
Since $\beta \in B_i(A,\{A,T\}, C_\alpha)$, Proposition~\ref{motivate}(2) entails that $C_\alpha(\beta) = A \cap (\hat T\restriction\beta)$ is a maximal antichain in $\hat T\restriction\beta$,
and we may pick some $y\in C_\alpha(\beta)$ such that $a\cup y\in\hat T\restriction\beta$. Since $T$ is normal, $\hat T$ is normal,
and hence there exists some $z'\in \hat T_\beta$ such that $a\cup y\s z'$. 
As $a\s z'$, we obtain $z'\in T(\vec b)$. Therefore, $Q$ is nonempty,
and $z=\min(Q,{\lhd_\kappa})$.

Notice that $\beta>\gamma\geq\delta$.
Since $\{ w_\xi\mid \xi<\tau\}$ is an antichain and $w_\xi \stree (z)_\xi$ for every $\xi<\tau$,
it follows that $\{(z)_\xi \mid \xi<\tau\}$ is also an antichain.  Thus,
we infer that, for all $\xi<\tau$, $$\{ \xi'<\tau\mid b_{x_\xi}^{C_\alpha}(\beta^-)\cup(z)_{\xi'}\in T\}=\{\xi\}.$$
Consequently, for all $\xi<\tau$, $b_{x_\xi}^{C_\alpha}(\beta) = \free( b_{x_\xi}^{C_\alpha}(\beta^-), \bar T, \vec b,C_\alpha(\beta))=(z)_\xi$.

Let $y \in A \cap(\hat T\restriction\beta)$ be a witness to the choice of $z$.
Then for all $\xi < \tau$ we infer
\[(y)_\xi \stree (z)_\xi=  b^{C_\alpha}_{x_\xi} (\beta) \stree \mathbf{b}^{C_\alpha}_{x_\xi} = (v)_\xi.\qedhere\]
\end{proof}
This completes the proof.
\end{proof}

Of course, there are cases in which one wants to construct free trees that are slim (or, moreover, regressive)
rather than complete.
For this, note that the above construction would be equally successful had we set $\Gamma := \acc(\kappa)$,
abandoning the goal of completeness,
provided that we assume the stronger coherence relation $\sq$ instead of $\sqx$.
In addition, if we require $|\mathcal{C}_\alpha|<\aleph_1$ for every $\alpha<\kappa$,
then the tree will satisfy $|T_\alpha|\leq|\alpha|$ for all infinite $\alpha<\kappa$.
Altogether, we have the following.\footnote{For more results in this direction, see \cite[\S6]{rinot20}.}

\begin{thm}
Suppose that $\kappa$ is $({<}\chi)$-closed for a given $\chi\in\reg(\kappa)$. 

If $\p^\bullet(\kappa,\aleph_1,{\sq},\kappa,\allowbreak\{E^\kappa_{\geq\chi}\}, 2)$ holds,
then there exists a
normal, prolific, slim, $\chi$-free $\kappa$-Souslin tree.\qed
\end{thm}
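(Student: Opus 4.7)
The plan is to mimic the construction of Theorem~\ref{free} essentially verbatim, incorporating the two modifications flagged in the paragraph immediately preceding the statement. First, I would set $\Gamma := \acc(\kappa)$ instead of $\Gamma := E^\kappa_{\ge\chi}$, so that every nonzero limit level of the tree is built by the canonical-branch-with-sealing procedure rather than by taking all branches through $T\restriction\alpha$. Second, I would exploit the fact that the coherence relation has been strengthened from ${\sqx}$ (as in Theorem~\ref{free}) to ${\sq}$, which supplies genuine $\sq$-coherence between $D\in\mathcal{C}_\beta$ and $C\in\mathcal{C}_\alpha$ at any $\beta\in\acc(\dm{C})$, irrespective of $\cf(\beta)$.

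Concretely, I would fix $\cvec C=\langle\mathcal C_\alpha\mid\alpha<\kappa\rangle$ and $\vec B=\langle B_\iota\mid\iota<\kappa\rangle$ witnessing $\p^\bullet(\kappa,\aleph_1,{\sq},\kappa,\{E^\kappa_{\ge\chi}\},2)$, along with auxiliary maps $\pi$, $\phi$, $\psi$ as in Theorem~\ref{free}. The tree is built recursively, setting $T_0:=\{\emptyset\}$, $T_{\alpha+1}:=\{\conc{t}{i}\mid t\in T_\alpha,\ i<\max\{|\alpha|,\omega\}\}$ to secure prolificity, and for $\alpha\in\acc(\kappa)$, $T_\alpha:=\{\mathbf{b}^C_x\mid C\in\mathcal{C}_\alpha,\ x\in T\restriction\dm{C}\}$, with each $b^C_x(\beta)$ defined by the same case split using $\free$ and $\defaultaction$ as in Theorem~\ref{free}. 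Slimness would be verified by a routine induction: if $|T\restriction\alpha|\le\max\{\aleph_0,|\alpha|\}$ and $|\mathcal{C}_\alpha|<\aleph_1$, then $|T_\alpha|\le|\mathcal{C}_\alpha|\cdot|T\restriction\alpha|\le\max\{\aleph_0,|\alpha|\}$, and the successor clause preserves this bound for $\alpha\ge\omega$.

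The main verification to revisit is the analogue of Claim~\ref{coherent-from-weakest}: for $\beta\in\acc(\dm{C})$, the sequence $b^C_x\restriction\beta$ agrees with $b^D_y\restriction\beta$ for $y:=b^C_x(\gamma)$ and some suitable $\gamma$, where $D\in\mathcal{C}_\beta$. Under the previous setup this required $\cf(\beta)\ge\chi$ to upgrade ${\sqx^*}$ to ${\sq^*}$; here, full ${\sq}$-coherence gives $D\sq C$ directly, so the inductive argument goes through with no cofinality assumption on $\beta$. The $\chi$-freeness argument then carries over unchanged: for a maximal antichain $A$ in a derived tree $T(\vec w)$ with $\vec w\in{}^\tau T_\delta$ and $\tau<\chi$, the hitting class $\{E^\kappa_{\ge\chi}\}$ yields an $\alpha\in E^\kappa_{\ge\chi}\subseteq\acc(\kappa)=\Gamma$ at which the sealing takes place, and since $\cf(\alpha)\ge\chi>\tau$, one locates a common $\gamma\in\dm{C}_\alpha$ with $\vec x\in{}^\tau T_\gamma$ exactly as before. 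The only conceptual obstacle is the mild tension between enlarging $\Gamma$ to all of $\acc(\kappa)$ (so that limit levels of arbitrary cofinality must be generated from $\mathcal{C}_\alpha$) and the fact that the diamond-like hitting is only guaranteed on $E^\kappa_{\ge\chi}$; this tension dissolves because $E^\kappa_{\ge\chi}\subseteq\acc(\kappa)$, so the sealing levels are automatically among those constructed by the canonical-branch procedure.
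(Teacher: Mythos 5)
Your approach is exactly the one the paper intends --- the paper's ``proof'' of this theorem consists of the paragraph of remarks immediately preceding it, which you have unpacked faithfully: rerun the construction of Theorem~\ref{free} with $\Gamma := \acc(\kappa)$ instead of $E^\kappa_{\geq\chi}$, rely on full $\sq$-coherence (in place of $\sqx$-coherence) so that the coherence claim goes through at limit levels of small cofinality as well, and read off slimness from the bound $|\mathcal{C}_\alpha|<\aleph_1$. You also correctly observe that the antichain-sealing verification is untouched, since the witnessing ordinals already land in $E^\kappa_{\geq\chi}\subseteq\acc(\kappa)=\Gamma$.

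One small slip worth flagging: you define $T_{\alpha+1}:=\{\conc{t}{i}\mid t\in T_\alpha,\ i<\max\{|\alpha|,\omega\}\}$ and claim this secures prolificity, but the definition of prolific requires each $t\in T_\alpha$ to admit the immediate successors $\{\conc{t}{i}\mid i<\max\{\omega,\alpha\}\}$, not merely $\max\{\omega,|\alpha|\}$-many --- these differ whenever $\alpha>\omega$ is not a cardinal. Replacing $|\alpha|$ with $\alpha$ (as in Theorem~\ref{basicthm}) fixes this and leaves the slimness bound intact, since $\left|\max\{\alpha,\omega\}\right|=\max\{|\alpha|,\aleph_0\}$.
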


\subsection{Uniformly homogeneous and uniformly coherent}\label{homogeneoussection}
\begin{definition} For two elements $x,y$ of $H_\kappa$, we define $x*y$ to be the empty set,
unless $x,y\in{}^{<\kappa}H_\kappa$ with $\dom(x)<\dom(y)$, in which case $x*y:\dom(y)\rightarrow H_\kappa$ is defined by stipulating:
$$(x*y)(\varepsilon):=\begin{cases}x(\varepsilon),&\text{if }\varepsilon\in\dom(x);\\
y(\varepsilon),&\text{otherwise.}\end{cases}$$
\end{definition}

\begin{definition} A streamlined $\kappa$-tree $T$ is said to be:
\begin{itemize}
\item \emph{coherent} iff $\{ \varepsilon\in\dom(x)\cap\dom(y)\mid x(\varepsilon)\neq y(\varepsilon)\}$ is finite
for all $x,y\in T$;
\item \emph{uniformly homogeneous} iff for all $y\in T$
and $x\in T\restriction\dom(y)$, $x*y$ is in $T$;
\item \emph{uniformly coherent} iff it is coherent and uniformly homogeneous.
\end{itemize}
\end{definition}

\begin{cor}
If $\p^\bullet(\kappa,\kappa,{\sq},\kappa, \{\kappa\}, \kappa)$ holds,
then there exists a uniformly homogeneous $\kappa$-Souslin tree.
\end{cor}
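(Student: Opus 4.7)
The plan is to reduce the corollary to the upcoming Theorem~\ref{corhom}, which (per the summary table at the start of this section) constructs a normal, prolific, $\chi$-complete, uniformly homogeneous $\kappa$-Souslin tree from $\p^\bullet(\kappa,\kappa,{\sqx^*},\kappa,\{E^\kappa_{\geq\chi}\},\kappa)$, assuming $\kappa$ is $({<}\chi)$-closed. Setting $\chi := \aleph_0$, the closedness assumption is vacuous, $E^\kappa_{\geq\aleph_0} = \acc(\kappa)$ is a club in $\kappa$, and ${\sq}$ is stronger than ${\sqx^*}$ at $\chi = \aleph_0$. Hence by the monotonicity properties of Remark~\ref{monotonicity}, our hypothesis $\p^\bullet(\kappa,\kappa,{\sq},\kappa,\{\kappa\},\kappa)$ implies the hypothesis of Theorem~\ref{corhom} at $\chi := \aleph_0$, and the conclusion follows.

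In case Theorem~\ref{corhom} itself needs to be addressed, I would adapt the prototype construction of Theorem~\ref{basicthm}, diverging only at limit levels $\alpha \in \Gamma := E^\kappa_{\geq\chi}$. For each $C \in \mathcal C_\alpha$, compute the canonical root-extending limit $\mathbf{b}^C := \mathbf{b}^C_\emptyset$ via the same $b^C_x$ recursion, and then set
\[
T_\alpha \;:=\; \{ y * \mathbf{b}^C \mid C \in \mathcal C_\alpha,\ y \in T \restriction \alpha\}.
\]
Uniform homogeneity at $\alpha$ is immediate from the definition of $*$: if $t = y * \mathbf{b}^C \in T_\alpha$ and $x \in T\restriction\dom(t)$, then $x * t = x * \mathbf{b}^C \in T_\alpha$. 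Normality follows since $x * \mathbf{b}^C$ properly extends $x$ for every $x \in T\restriction\alpha$, and the level bound $|T_\alpha| \le |T\restriction\alpha|\cdot|\mathcal C_\alpha| < \kappa$ comes from regularity. Successor levels are made prolific as before (closure under $*$ at successors is automatic from prolificity), and $\chi$-completeness at $\alpha \notin \Gamma$ is handled exactly as in Theorem~\ref{basicthm}.

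The main obstacle will be sealing maximal antichains. In Theorem~\ref{basicthm} every $t \in T_\alpha$ is literally $\mathbf{b}^C_x$, so $t\restriction(\beta+1) = b^C_x(\beta)$ is the node produced by the $\sealantichain$ call and thus extends some $a \in A \cap T\restriction\beta$. Here a generic $t = y * \mathbf{b}^C$ disagrees with $\mathbf{b}^C$ on $[0,\dom(y))$, so this inheritance is not direct. The remedy exploits the full strength $\theta = \kappa$ and $\nu = \kappa$ in the hypothesis: by encoding, through the partition $\vec B$, pairs $(A, y)$ consisting of a maximal antichain $A$ and a tree node $y$, and by allowing $|\mathcal C_\alpha|$ up to $\kappa$ ladders, one arranges that for every such pair some $\beta \in \nacc(\dm C)$ is predicted at which $b^C_\emptyset(\beta)$ is chosen via a $y$-parameterized variant of $\sealantichain$, so that $(y * \mathbf{b}^C)\restriction(\beta+1)$ extends an element of $A\cap T\restriction\beta$. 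The Souslin conclusion then falls out of Lemma~\ref{equiv-no-antichains} in analogy with Claim~\ref{naantileft}.
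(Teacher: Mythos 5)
Your first paragraph gives a correct proof of the corollary, and it is essentially the paper's own: the paper simply says ``appeal to the next theorem with $\chi:=\aleph_0$'', referring to Theorem~\ref{homogeneous} (which uses ${\sqx}$, i.e., ${\sq}$ when $\chi=\aleph_0$); you appeal instead to the downstream Theorem~\ref{corhom} (which uses ${\sqx^*}$), but since ${\sq}\implies{\sqx^*}$ and $E^\kappa_{\geq\aleph_0}=\acc(\kappa)$ differs from $\kappa$ only by the nonstationary set of successor ordinals, both routes are equally valid.

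Your backup sketch of Theorem~\ref{corhom} itself, however, contains a conceptual misstep in the sealing paragraph. You propose ``encoding, through the partition $\vec B$, pairs $(A,y)$ consisting of a maximal antichain $A$ and a tree node $y$.'' This cannot work: $\vec B$ is fixed at the outset of the construction, whereas the maximal antichain $A$ of the eventual tree is not known until the construction is complete, so it cannot be coded into $\vec B$. In the paper's Theorem~\ref{homogeneous}, the partition $\vec B$ (via the map $\psi=\phi\circ\pi$) codes only the tree node $y$; the restricted antichains $A\cap(T\restriction\beta)$ are anticipated exclusively via the diamond-like payload $C(\beta)$ of the $\p^\bullet$-sequence, exactly as in Theorem~\ref{basicthm}. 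The other missing ingredient is the precise ``shift, seal, unshift'' recipe
$b^C(\beta):=b^C(\beta^-)*\sealantichain(\psi(\beta)*b^C(\beta^-),T\restriction(\beta+1),C(\beta))$,
which is what actually makes $y*\mathbf{b}^C$ pass through $\sealantichain(y*b^C(\beta^-),\ldots)$ at level $\beta$ and hence extend an element of $A\cap(T\restriction\beta)$. You gesture at ``a $y$-parameterized variant of $\sealantichain$'' but do not give it, and without the explicit formula the verification of $\sqx$-coherence (the analogue of Claim~\ref{coherent-from-weakest}) is also left unaddressed. Finally, your remark that the proof ``allows $|\mathcal{C}_\alpha|$ up to $\kappa$ ladders'' misattributes the role of $\nu=\kappa$; what is actually exploited is $\theta=\kappa$, which lets a single antichain be sealed relative to every node $y$ coded by some $i<\alpha$.
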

\begin{proof} Appeal to the next theorem with $\chi:=\aleph_0$.
\end{proof}

\begin{thm}\label{homogeneous}
Suppose that $\kappa$ is $({<}\chi)$-closed for a given $\chi\in\reg(\kappa)$. Let $\varsigma<\kappa$.

If $\p^\bullet(\kappa,\kappa,{\sqx},\kappa, \{E^\kappa_{\geq\chi}\}, \kappa)$ holds,
then there exists a normal, prolific, $\varsigma$-splitting, $\chi$-complete, uniformly homogeneous $\kappa$-Souslin tree.
\end{thm}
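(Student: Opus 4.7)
The plan is to adapt the construction from Theorem~\ref{basicthm}, modifying the definition of $T_\alpha$ at limit levels $\alpha\in\Gamma:=E^\kappa_{\ge\chi}$ in order to guarantee uniform homogeneity. Fix sequences $\cvec C$ and $\vec B$ witnessing $\p^\bullet(\kappa,\kappa,{\sqx},\kappa,\{E^\kappa_{\ge\chi}\},\kappa)$. Set $T_0$, $T_{\alpha+1}$, and $T_\alpha$ for $\alpha\in\acc(\kappa)\setminus\Gamma$ exactly as in Theorem~\ref{basicthm}. For $\alpha\in\Gamma$, I first build, for each $C\in\mathcal C_\alpha$, a single canonical limit $\mathbf{b}^C\in{}^\alpha\kappa$ by running the ladder procedure from the root $\emptyset$, using $\defaultaction$ at the $\nacc(\dm C)$ stages and taking unions at the $\acc(\dm C)$ stages. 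Then declare
$$T_\alpha:=\{\,z*\mathbf{b}^C\mid C\in\mathcal C_\alpha,\ z\in (T\restriction\alpha)\cup\{\emptyset\}\,\}.$$

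Uniform homogeneity of $T$ will follow by induction on $\alpha$. At successor levels it reduces to the identity $z*\conc{t}{i}=\conc{(z*t)}{i}$ together with the inductive closure of $T_\alpha$ under $*$. At $\alpha\in\acc(\kappa)\setminus\Gamma$, $z*y$ is the union of the branch $\langle z*(y\restriction\beta)\mid\beta<\alpha\rangle$, whose members lie in $T_\beta$ by the inductive hypothesis, so $z*y\in T_\alpha$. At $\alpha\in\Gamma$, writing $y=x*\mathbf{b}^C$, we have $z*y=(z*x)*\mathbf{b}^C$ whenever $\dom(z)\le\dom(x)$ (since then $z*x\in T\restriction\alpha$ by induction), and $z*y=z*\mathbf{b}^C$ whenever $\dom(z)>\dom(x)$; either way, $z*y\in T_\alpha$.

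To confirm that $T$ is a streamlined $\kappa$-tree, the bound $|T_\alpha|\le|\mathcal C_\alpha|\cdot(|T\restriction\alpha|+1)<\kappa$ is immediate from regularity. For downward closure, an initial segment of $z*\mathbf{b}^C$ at a level $\beta<\alpha$ is either $z\restriction\beta$ (if $\beta\le\dom(z)$, trivially in $T$) or $z*(\mathbf{b}^C\restriction\beta)$ (otherwise), and the latter sits in $T_\beta$ provided $\mathbf{b}^C\restriction\beta\in T_\beta$ and $T_\beta$ is closed under $*$, both by the inductive hypothesis. The crux is thus to show $\mathbf{b}^C\restriction\bar\alpha\in T_{\bar\alpha}$ for each $\bar\alpha\in\acc(\dm C)$. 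When $\cf(\bar\alpha)<\chi$ this is automatic because $\bar\alpha\notin\Gamma$ and $T_{\bar\alpha}$ contains all $\bar\alpha$-branch limits, while for $\cf(\bar\alpha)\ge\chi$, $\sqx$-coherence supplies $D\in\mathcal C_{\bar\alpha}$ with $D\sq C$, and a microscopic-dependencies argument in the spirit of Claim~\ref{coherent-from-weakest} shows that $b^D$ and $b^C\restriction\dm D$ share their starting value $\emptyset$ and hence agree throughout $\dm D$, yielding $\mathbf{b}^C\restriction\bar\alpha=\mathbf{b}^D\in T_{\bar\alpha}$. Normality at $\alpha\in\Gamma$ then follows, since $y*\mathbf{b}^C\in T_\alpha$ extends any given $y\in T\restriction\alpha$.

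For the $\kappa$-Souslin property, I apply Lemmas~\ref{enough-no-antichains} and~\ref{equiv-no-antichains}: given a maximal antichain $A\s T$, take $\Omega:=A$, $p:=\{A,T\}$, and use the hitting property of $\p^\bullet$ to find $\alpha\in\Gamma$ together with some $C\in\mathcal C_\alpha$ and $\beta\in\nacc(\dm C)\cap B_0(\Omega,p,C)$. By Proposition~\ref{motivate}(2), $A\cap(T\restriction\beta)$ is a maximal antichain in $T\restriction\beta$, so for any $t\in T_\alpha$ the initial segment $t\restriction\beta\in T_\beta$ is comparable with some $a\in A\cap(T\restriction\beta)$, and $\dom(a)<\beta$ forces $a\stree t\restriction\beta\stree t$, sealing $A\cap(T\restriction\alpha)$ at $\alpha$. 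The principal obstacle is the coherence verification that $\mathbf{b}^C\restriction\bar\alpha=\mathbf{b}^D$: this is precisely where the stronger relation $\sqx$ (rather than $\sqx^*$) is needed, so that the initial portions of the ladders $b^C$ and $b^D$ match exactly, not merely on a tail.
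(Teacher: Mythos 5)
Your construction of the level sets $T_\alpha=\{z*\mathbf b^C\mid C\in\mathcal C_\alpha, z\in T\restriction\alpha\cup\{\emptyset\}\}$ has the right outer shape, and your coherence argument for $\mathbf b^C\restriction\bar\alpha\in T_{\bar\alpha}$ via $\sqx$-coherence is fine.  But the heart of the proof — the mechanism that kills large antichains — is missing, and as a result the Souslin verification is wrong.

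You build $\mathbf b^C$ using only $\defaultaction$ at the nonaccumulation points of $\dm C$.  That action never consults $C(\beta)$, so the ladder climbs up to $\alpha$ ignoring all of the predictions encoded in $C$; nothing in your construction forces any node of $T_\alpha$ to extend an element of a predicted antichain.  Your attempt to repair this in the verification step does not work: from Proposition~\ref{motivate}(2) you get that $A\cap(T\restriction\beta)$ is a maximal antichain \emph{in $T\restriction\beta$}, i.e.\ that every node of height $<\beta$ is comparable with some element of $A\cap(T\restriction\beta)$.  That says nothing at all about the nodes on level $T_\beta$ itself.  Indeed the claim ``$t\restriction\beta$ is comparable with some $a\in A\cap(T\restriction\beta)$'' is exactly what sealing is supposed to \emph{produce}, and it is false in general — e.g.\ in $2^{<\omega}$ the set $A=\{\conc{0^n}{1}\mid n<\omega\}$ is a maximal antichain below $\omega$, yet the branch $0^\omega$ extends no element of it.  So there is no path from ``maximal in $T\restriction\beta$'' to ``sealed at level $\beta$'' without building the sealing into the recursion.

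The subtler point that your construction glosses over is this: because $T_\alpha$ consists of translates $x*\mathbf b^C$ for \emph{all} $x\in T\restriction\alpha$, sealing one branch $\mathbf b^C$ is not enough — you have to guarantee that for every $x$ and every maximal antichain $A$, the translate $x*\mathbf b^C$ extends an element of $A\cap(T\restriction\alpha)$.  This is precisely why the paper's version of the ladder step reads
\[
b^C(\beta):=b^C(\beta^-)*\sealantichain\bigl(\psi(\beta)*b^C(\beta^-),\,T\restriction(\beta+1),\,C(\beta)\bigr)
\qquad\text{when }\psi(\beta)\in T\restriction\beta^-,
\]
where $\psi=\phi\circ\pi$ and $\pi$ reads off the block $B_\iota$ containing $\beta$: the value $\psi(\beta)$ encodes the candidate offset $x$, the antichain is sealed \emph{relative to that offset} (note the $\psi(\beta)*b^C(\beta^-)$ inside, and the recomposition with $b^C(\beta^-)*$ on the outside so that the resulting node still sits on the unshifted branch), and the $\theta=\kappa$ hitting feature of $\p^\bullet$ is then invoked in the verification to find a single $\alpha$ at which this succeeds simultaneously for all $i<\alpha$ — i.e.\ for all offsets $x\in T\restriction\alpha$, since $T\restriction\alpha\s\phi[\alpha]$ on a club.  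Your proposal never exercises the $\theta=\kappa$ feature, which is a structural sign that something essential is unused.  To fix your proof you must incorporate $\sealantichain$ with a $\psi$-type offset into the definition of $\mathbf b^C$, and then run the Souslin verification using the full $\theta=\kappa$ hitting, as in the paper.
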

\begin{proof} 

Suppose $\p^\bullet(\kappa,\kappa,{\sqx},\kappa,\{E^\kappa_{\geq\chi}\},\kappa)$ holds,
as witnessed by $\langle \mathcal C_\alpha\mid \alpha<\kappa\rangle$ and $\langle B_\iota\mid \iota<\kappa\rangle$.
We may assume that $0\in\dm{C}$ for every $C\in\bigcup_{\alpha\in\acc(\kappa)}\mathcal C_\alpha$.
Let $\pi:\kappa\rightarrow\kappa$ be such that $\beta\in B_{\pi(\beta)}$ for all $\beta<\kappa$.
Let $\phi:\kappa\leftrightarrow H_\kappa$ witness the isomorphism $(\kappa,\in)\cong(H_\kappa,{\lhd_\kappa})$.
Put $\psi:=\phi\circ\pi$.

We recursively construct a sequence $\langle T_\alpha\mid \alpha<\kappa\rangle$ of levels
whose union will ultimately be the desired tree $T$.
Denote $\Gamma:=E^\kappa_{\ge\chi}$.
For every $\alpha\in \Gamma$ and $C\in\mathcal C_\alpha$, we shall define an $\alpha$-branch $\mathbf{b}^C$ through $T\restriction\alpha$;
then, we will ensure that, for every $\alpha\in\acc(\kappa)$:
\begin{equation}\tag*{$(\otimes)_\alpha$}
T_\alpha=\begin{cases}\{ x* \mathbf{b}^C\mid C\in\mathcal C_\alpha, x\in T\restriction\alpha\},&\text{if }\alpha\in\Gamma;\\
\{t\in{}^\alpha H_\kappa\mid\forall\bar\alpha<\alpha(t\restriction\bar\alpha\in T_{\bar\alpha})\},&\text{otherwise.}
\end{cases}
\end{equation}

Here we go. 
Let $T_0:=\{\emptyset\}$, and for all $\alpha<\kappa$,
let $T_{\alpha+1}:=\{ \conc{t}{i}\mid t\in T_\alpha,\allowbreak i<\max\{\alpha,\varsigma,\omega\}\}$.
Next, suppose that $\alpha\in\acc(\kappa)$ and that $\langle T_\beta\mid \beta<\alpha\rangle$ has already been defined.
The construction splits into two cases:

$\br$ If $\alpha\in\acc(\kappa)\setminus\Gamma$, then $\cf(\alpha)<\chi$, and we let $T_\alpha$ consist of the limits of all $\alpha$-branches through $T\restriction\alpha$.
This coheres with $(\otimes)_\alpha$ and ensures that the outcome tree will be $\chi$-complete.

$\br$ Now suppose $\alpha\in\Gamma$.
For every $C\in\mathcal C_\alpha$, we shall define a node $\mathbf{b}^C$, and then we would let:
$$T_\alpha:=\{ x* \mathbf{b}^C\mid C\in\mathcal C_\alpha, x\in T\restriction\alpha\}.$$

To obtain $\mathbf{b}^C$, we define a sequence $b^C\in\prod_{\beta\in\dm{C}}T_\beta$ by recursion.
Let $b^C(0):=\emptyset$.
Next, suppose $\beta^-<\beta$ are successive points of $\dm{C}$, and $b^C(\beta^-)$ has already been defined.
Let
$$b^C(\beta):=
\begin{cases}
b^C(\beta^-)*\sealantichain(\psi(\beta)*b^C(\beta^-),T\restriction(\beta+1),C(\beta)),&\text{if }\psi(\beta) \in T\restriction\beta^-;\\
\defaultaction(b^C(\beta^-),T\restriction(\beta+1)),&\text{otherwise}.
\end{cases}$$

If $\psi(\beta) \in T\restriction\beta^-$, 
then, by the induction hypothesis $(\otimes)_{\beta}$,
$\psi(\beta)*b^C(\beta^-)$ belongs to the normal tree $T \restriction (\beta+1)$,
so that the Extension Lemma (Lemma~\ref{extendfact}) together with $(\otimes)_{\beta}$ imply $b^C(\beta)$ is an element of $T_\beta$ extending $b^C(\beta^-)$.
If $\psi(\beta) \notin T\restriction\beta^-$, then again the Extension Lemma implies that $b^C(\beta)$ is in $T_\beta$ and extends $b^C(\beta^-)$.

The following is obvious.
\begin{dependencies}\label{dep7311}
For any two consecutive points $\beta^-<\beta$ of $\dom(b^C)$,
the value of $b^C(\beta)$ is completely determined by
$b^C(\beta^-)$, $\psi(\beta)$,$T\restriction(\beta+1)$ and $C(\beta)$.
\end{dependencies}

In the case $\beta\in\acc(\dm{C})$, we let $b^C(\beta):=\bigcup(\im(b^C\restriction\beta))$. The fact that the latter belongs to $T_\beta$ follows from Dependencies~\ref{dep7311}, $(\otimes)_\beta$ and $\sqx$-coherence.
 
This completes the definition of the level $T_\alpha$.

Having constructed all levels of the tree, we then let $T := \bigcup_{\alpha < \kappa} T_\alpha$.
It is clear that $T$ is a normal, prolific, $\varsigma$-splitting, $\chi$-complete and uniformly homogeneous
streamlined $\kappa$-tree.

\begin{claim} Let $A\s T$ be a maximal antichain.
Then there exists $\alpha<\kappa$ such that every node of $T_\alpha$ extends some element of $A \cap (T\restriction\alpha)$.
\end{claim}
\begin{proof} Let $\Omega:=A$ and $p:=\{T,A\}$.
Consider the club $D:=\{ \alpha\in\acc(\kappa)\mid T\restriction\alpha\s\phi[\alpha]\}$,
and then pick $\alpha\in D\cap\Gamma$ such that, for all $C\in\mathcal C_\alpha$ and $i < \alpha$,
$$\sup(\nacc(\dm{C})\cap B_{i}(\Omega,p,C))=\alpha.$$

Now, let $v\in T_\alpha$ be arbitrary, and we shall show that $v$ extends some element of $A\cap(T\restriction\alpha)$.
As $\alpha\in\Gamma$, let us fix $C\in\mathcal C_\alpha$ and $x \in T \restriction \alpha$ such that $v = x* \mathbf{b}^C$.
As $\alpha\in D$ and $x\in T\restriction\alpha$, we may also find $i<\alpha$ such that $\phi(i)=x$.

Fix $\beta \in \nacc(\dm{C}) \cap B_{i}(\Omega,p,C)$ 
large enough to ensure that $\beta^->\dom(x)$ for $\beta^-:=\sup(\dm C\cap\beta)$.
As $\beta\in B_i$, we obtain $\psi(\beta)=\phi(\pi(\beta))=\phi(i)=x$.
Thus, by writing $\bar x:=x*b^C(\beta^-)$, $\bar T:=T\restriction(\beta+1)$ and $\mho:=C(\beta)$,
we get that
$$b^C(\beta):=b^C(\beta^-)*\sealantichain(\bar x,\bar T,\mho).$$

Recalling Definition~\ref{actions}(2), we consider the set:
$$Q:=\{ z\in \bar T_{\beta(\bar{T})}\mid \exists y\in\mho( \bar x\cup y\s z)\}.$$
By now, we know that
$$Q=\{ z\in T_{\beta}\mid \exists y\in A\cap (T\restriction\beta)( (x*b^C(\beta^-))\cup y\s z)\}.$$
Since $\beta \in B_i(A,\{T,A\},C)$, we infer from Proposition~\ref{motivate}(2)
that $C(\beta) = A \cap (T \restriction \beta)$ is a maximal antichain in $T \restriction \beta$.
So $Q$ is nonempty, and $b^C(\beta)=b^C(\beta^-)*z$, for $z=\min(Q,{\lhd_\kappa})$.
Pick $y\in A\cap(T\restriction\beta)$ witnessing that $z\in Q$. Then:
$$v = x*\mathbf{b}^C \supsetneq =x* b^C(\beta)=x*(b^C(\beta^-)*z)=z\supsetneq y,$$
as sought.
\end{proof}
This completes the proof.
\end{proof}

\begin{thm}\label{corhom} If $\kappa$ is $({<}\chi)$-closed,
then $\p^\bullet(\kappa,\kappa,{\sqx^*},\kappa, \{E^\kappa_{\geq\chi}\}, \kappa)$ implies the existence of a
normal, prolific, $\chi$-complete, uniformly homogeneous $\kappa$-Souslin tree.
\end{thm}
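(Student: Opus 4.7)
The plan is to reduce Corollary~\ref{corhom} to Theorem~\ref{homogeneous} by upgrading the coherence parameter from $\sqx^*$ to $\sqx$ (in fact, all the way to $\sq$), at no cost to any other parameter. To begin, applying Proposition~\ref{Pbullet->CH} and Lemma~\ref{Pbullet->Pminus} to the hypothesis $\p^\bullet(\kappa,\kappa,{\sqx^*},\kappa,\{E^\kappa_{\ge\chi}\},\kappa)$ yields $\kappa^{<\kappa}=\kappa$ together with $\p^-(\kappa,\kappa,{\sqx^*},\kappa,\{E^\kappa_{\ge\chi}\},\kappa)$.

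Recognizing that $\sqx^*$ is precisely $\sqstarleftboth{\emptyset}{\chi}$ and $\sq$ is $\sqleftup{\emptyset}$, and noting that our distinguished class $\mathcal S=\{E^\kappa_{\ge\chi}\}$ satisfies $\mathcal S\subseteq\mathcal P(E^\kappa_{\ge\chi})$ while $\kappa$ is $({<}\chi)$-closed by hypothesis, I would invoke Theorem~\ref{get-full-coherence-from-sqleft*} (with $\Omega:=\emptyset$, $\theta:=\kappa$, $\nu:=\kappa$) to obtain a $\p^-(\kappa,\kappa,{\sq},\kappa,\{E^\kappa_{\ge\chi}\},\kappa^*)$-sequence $\cvec D=\langle\mathcal D_\alpha\mid\alpha<\kappa\rangle$. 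A crucial further observation is that, examining the proof of that theorem, its Claim~\ref{P-get-sq-wide} explicitly establishes (by induction on $\alpha$, via the stronger bound $|\mathcal A_\alpha|<\kappa$) that $|\mathcal D_\alpha|<\kappa$ for every $\alpha\in\acc(\kappa)$; so the $\kappa^*$ clause is trivially witnessed by taking $\mathcal C:=\mathcal D_\alpha$ itself, meaning that $\cvec D$ is in fact a bona fide $\p^-(\kappa,\kappa,{\sq},\kappa,\{E^\kappa_{\ge\chi}\},\kappa)$-sequence.

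Retaining $\kappa^{<\kappa}=\kappa$, Theorem~\ref{mainpbullet} now lifts this back to $\p^\bullet(\kappa,\kappa,{\sq},\kappa,\{E^\kappa_{\ge\chi}\},\kappa)$. Since $\sq$ is stronger than $\sqx$ (the latter being obtained by adding the disjunct ``$\cf(\alpha_C)<\chi$''), the very same sequence witnesses $\p^\bullet(\kappa,\kappa,{\sqx},\kappa,\{E^\kappa_{\ge\chi}\},\kappa)$, and Theorem~\ref{homogeneous} (used with, say, $\varsigma:=\omega$) then delivers the sought-after normal, prolific, $\chi$-complete, uniformly homogeneous $\kappa$-Souslin tree. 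The only nontrivial step is the passage from $\nu^*=\kappa^*$ back to $\nu=\kappa$, which is verified not from the statement of Theorem~\ref{get-full-coherence-from-sqleft*} but rather from a direct look inside its proof at the explicit cardinality bound on $\mathcal D_\alpha$; everything else is a routine monotonicity argument or a direct citation.
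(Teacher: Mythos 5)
Your proposal is correct and takes essentially the same route as the paper, whose proof is the one-liner ``By Theorem~\ref{homogeneous}, using Theorem~\ref{get-full-coherence-from-sqleft*}''; you have simply spelled out the implicit $\p^\bullet\leftrightarrow\p^-$ translations via Corollary~\ref{Pbullet-equiv} and the coherence upgrade. One small simplification: the passage from $\kappa^*$ back to $\kappa$ needs no inspection of the proof of Theorem~\ref{get-full-coherence-from-sqleft*} --- Remark~\ref{remark-nu*} already gives $\p^-_\xi(\kappa,\mu,\pvec,\nu^*)\implies\p^-_\xi(\kappa,\mu,\pvec,\mu)$, which with $\mu=\nu=\kappa$ is exactly the step you want (this is precisely how the paper handles the same issue in Corollary~\ref{get-full-coherence}).
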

\begin{proof} By Theorem~\ref{homogeneous}, using Theorem~\ref{get-full-coherence-from-sqleft*}
\end{proof}

\begin{definition} For a streamlined $\kappa$-tree $T$ and a subset $\Delta\s\kappa$,
we say that $T$ is $\Delta$-similar iff, for all $\delta\in\Delta\cap\acc(\kappa)$ and $x,y\in T_\delta$,
$\sup\{ \varepsilon<\delta\mid x(\varepsilon)\neq y(\varepsilon)\}<\delta$.
\end{definition}

It is easy to see that a streamlined $\kappa$-tree is coherent iff it is $\kappa$-similar.
It is also easy to see that in the construction of Theorem~\ref{homogeneous},
the outcome tree $T$ is $\Delta$-similar for the set $\Delta:=\{\delta\in\Gamma\mid |\mathcal C_\delta|=1\}$.
Therefore, we get:

\begin{thm}\label{prop25} Suppose $\p^\bullet(\kappa,2,{\sq},\kappa,\{\kappa\},2)$ holds. 
Then there exists a normal, slim, prolific, club-regressive, uniformly coherent $\kappa$-Souslin tree.\qed
\end{thm}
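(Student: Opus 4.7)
The plan is to apply the construction from the proof of Theorem~\ref{homogeneous} with the parameter choices $\chi := \aleph_0$ and $\varsigma := 2$, and then to exploit the specific values $\mu = 2$ and $\chi = \aleph_0$ in order to derive the extra properties asserted here (slim, club-regressive, coherent). Since $\alpha_C$ is always a limit ordinal for $C\in\mathcal{F}(\kappa)$, the relation ${\sq_{\aleph_0}}$ coincides with ${\sq}$, and $({<}\aleph_0)$-closedness is vacuous. By the monotonicity properties recorded in Remark~\ref{monotonicity} (increasing $\mu$ from $2$ to $\kappa$, increasing $\nu$ from $2$ to $\kappa$), together with the observation that hitting equations such as \eqref{hittingeqn2} can only be satisfied at limit ordinals, our hypothesis $\p^\bullet(\kappa,2,{\sq},\kappa,\{\kappa\},2)$ implies the hypothesis $\p^\bullet(\kappa,\kappa,{\sq_\chi},\kappa,\{E^\kappa_{\geq\aleph_0}\},\kappa)$ of Theorem~\ref{homogeneous} with $\chi := \aleph_0$. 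Running that construction yields a normal, prolific, $2$-splitting, uniformly homogeneous $\kappa$-Souslin tree~$T$.

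Next, I would leverage the choice $\mu = 2$ to obtain slimness and coherence. Since the witnessing $\p^\bullet$-sequence satisfies $|\mathcal{C}_\alpha| < 2$, each $\mathcal{C}_\alpha$ is a singleton, say $\{C_\alpha\}$. Referring to the construction, this gives
\[
T_\alpha \;=\; \{x * \mathbf{b}^{C_\alpha} \mid x \in T \restriction \alpha\}
\]
for every $\alpha \in \Gamma = \acc(\kappa)$, so that $|T_\alpha| \leq |T\restriction\alpha|$ at limits, and a routine induction (combined with $T_{\alpha+1} = \{t^\frown i \mid t\in T_\alpha,\ i<\max\{|\alpha|,\omega\}\}$) yields $|T_\alpha| \leq \max\{|\alpha|, \aleph_0\}$ for every $\alpha<\kappa$, establishing slimness. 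For coherence, note that the remark immediately preceding this theorem observes that the output of the construction in Theorem~\ref{homogeneous} is $\Delta$-similar for $\Delta := \{\delta \in \Gamma \mid |\mathcal{C}_\delta|=1\}$; under the present parameter choices $\Delta = \acc(\kappa)$, so $T$ is $\kappa$-similar. By the easy equivalence between being $\kappa$-similar and being coherent that was noted in the same paragraph, $T$ is coherent, and combining this with the uniform homogeneity delivered by Theorem~\ref{homogeneous}, $T$ is uniformly coherent.

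Finally, club-regressivity is a property of the construction ensured by taking $\Gamma := \acc(\kappa)$; this is exactly what the choice $\chi = \aleph_0$ gives, and it is the situation documented in the footnote following Proposition~\ref{prop23}. The main point requiring care is the verification of the equivalence between $\kappa$-similarity and coherence: if two nodes $x, y \in T$ admitted an infinite disagreement set $D$, one picks a limit ordinal $\delta \leq \sup(D)$ with $\sup(D \cap \delta) = \delta$, and then $x\restriction\delta,\,y\restriction\delta \in T_\delta$ would have cofinal disagreement in $\delta$, contradicting $\kappa$-similarity; the converse is immediate. The remaining properties (normality, prolificity, Souslinity) are delivered directly by the construction inherited from Theorem~\ref{homogeneous}.
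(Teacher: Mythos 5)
Your proposal is correct and follows the paper's own route: apply the construction of Theorem~\ref{homogeneous} with $\chi:=\aleph_0$ (so that $\Gamma=\acc(\kappa)$ and, since $\alpha_C$ is always a limit ordinal for $C\in\mathcal F(\kappa)$, the coherence requirement degenerates to plain ${\sq}$), note that $\mu=2$ forces every $\mathcal C_\alpha$ to be a singleton (hence the set $\Delta$ from the remark immediately preceding the theorem equals $\acc(\kappa)$), and finish via the coherent-iff-$\kappa$-similar equivalence together with the uniform homogeneity delivered by Theorem~\ref{homogeneous}. One small notational slip: Theorem~\ref{homogeneous} is stated with the left-subscript relation $\sqx$ (that is, $\sqleft{\chi}$) rather than the right-subscript $\sq_\chi$ you wrote when invoking it, but both coincide with ${\sq}$ when $\chi=\aleph_0$, so your argument is unaffected.
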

\begin{remark} The definition of \emph{club-regressive} trees may be found in \cite[\S2]{paper22}. For further analysis, see \cite[Proposition~2.5]{paper22}.
\end{remark}

\section{Some open problems}

It follows from Theorem~\ref{thm61}(11) that after forcing to add a single Cohen real over a model of ${\ch}+{\neg\diamondsuit(\aleph_1)}$,
a very strong instance of $\p^\bullet(\aleph_1,\ldots)$ holds, while $\diamondsuit(\aleph_1)$ fails.
We conjecture that Radin forcing can be used to answer the following question in the affirmative:
\begin{q} Is a conjunction of the form ${\p^\bullet(\kappa,\ldots)}+{\neg \diamondsuit(\kappa)}$ consistent for $\kappa$ inaccessible? 
\end{q}

Corollaries \ref{corollaryalajensen} and \ref{get-full-coherence} leave the following question open:
\begin{q} Assume ${\square(\kappa)}+{\diamondsuit(\kappa)}$.
\begin{enumerate}[(i)]
\item  Does $\p^-(\kappa,2,{\sq},1,\{\kappa\},2)$ hold?
\item Does (the weaker instance) $\p^-(\kappa,\kappa,{\sq},1,\{\kappa\},\kappa)$ hold?
\end{enumerate}
\end{q}

By Clauses (1) and (5) of Theorem~\ref{thm61}, the answer to (i) is affirmative for $\kappa=\aleph_1$
and $\kappa=\lambda^+$ such that $\lambda\ge\beth_\omega$.
By \cite[Theorem~4.3 and Corollary~2.5]{paper24}, the answer to (i) is also affirmative for $\kappa=\lambda^+$ such that $\lambda^{\aleph_0}=\lambda$.
Thus, the remaining cases are $\kappa$ inaccessible or $\aleph_1<\kappa<\beth_\omega$.
For $\kappa$ strongly inaccessible, a sufficient condition for (ii) is given by Theorem~\ref{cor430}.

\medskip

Our next question has to do with the parameter $\theta$ of the proxy principle.
\begin{q}\label{q73} For $\mathcal R\in\{ {\sqleftboth{\Omega}{\chi}}, {\sqstarleftboth{\Omega}{\chi}} , {\sqleftup{\Omega}_\chi}, {^\Omega{\sq}_\chi^*}\}$:
\begin{enumerate}[(i)]
\item Does $\p^-(\kappa,\mu,\mathcal R,1,\{\kappa\},\nu)$ entail $\p^-(\kappa,\mu,\mathcal R,\kappa,\{\kappa\},\nu)$?
\item Does $\p^\bullet(\kappa,\mu,\mathcal R,1,\{\kappa\},\nu)$ entail $\p^\bullet(\kappa,\mu,\mathcal R,\kappa,\{\kappa\},\nu)$?
\end{enumerate}
\end{q}

Some partial answers to (ii) include \cite[Lemma~3.7]{paper29}, \cite[Lemma~3.20]{paper32} and \cite[Lemmas 3.8 and 3.9]{paper28},
but there are more findings which haven't yet been published.
Motivated by \cite[Lemma~2.13]{paper28}, we hope to prove that, for $\kappa$ inaccessible, $\p^\bullet(\kappa,2,{\sq},1,\{\kappa\},2)$
entails the existence of graph of size and chromatic number $\kappa$ all of whose smaller subgraphs are countably chromatic.

\begin{q}\label{q74} Assume $\gch$. For a regular uncountable cardinal $\lambda$:
\begin{enumerate}[(i)] 
\item  Does  $\square_\lambda$ entail the existence of a uniformly coherent $\lambda^+$-Souslin tree?
\item Does the existence of a nonreflecting stationary subset of $E^{\lambda^+}_{<\lambda}$ entail the existence of a $\lambda$-free $\lambda^+$-Souslin tree?
\end{enumerate}
\end{q}

By \cite[Theorem~C]{paper29} and \cite[Theorem~B]{paper32}, the answer is affirmative for the analogous questions concerning $\lambda$ singular.
Also, note that an affirmative answer to Question~\ref{q73}(ii) would provide an affirmative answer to Question~\ref{q74}.

\begin{q} 
\begin{enumerate}[(i)] 
\item May $\clubsuit(\kappa)$ be waived from the hypothesis of Theorem~\ref{get-sigma-finite}? 
\item May the hypothesis $\p_\xi^-(\kappa, \mu, \mathcal R, \theta, \mathcal S,  \nu)$ of Theorem~\ref{mainpbullet} be weakened to $\p_\xi^-(\kappa, \mu, \mathcal R, \theta, \mathcal S,  \nu,1)$?\footnote{Recall Convention~\ref{omitsigma}.}
\end{enumerate}
\end{q}

\section{Acknowledgements}
The first author was supported by the Center for Absorption in Science,
Ministry of Aliyah and Integration, State of Israel.
The second author was partially supported by the European Research Council (grant agreement ERC-2018-StG 802756) and by the Israel Science Foundation (grant agreement 2066/18).

This paper is submitted to the proceedings of \emph{50 Years of Set Theory in Toronto}.
The two authors first met at the Toronto Set Theory Seminar, 
when Brodsky was a Ph.D. student of Stevo Todorcevic 
and Rinot was a Fields-Ontario postdoctoral fellow of Ilijas Farah, Stevo Todorcevic and Bill Weiss.

We thank James Cummings, Moti Gitik, Yair Hayut, Adi Jarden, Menachem Kojman, Chris Lambie-Hanson, Philipp L\"{u}cke, Matti Rubin (of blessed memory), Stevo Todorcevic, and Bill Weiss,
with whom we had stimulating discussions on the subject matter of this project throughout the years.

\bibliographystyle{alpha}

\end{document}